\documentclass[11pt]{article}

\usepackage[letterpaper,margin=1in]{geometry}
\usepackage{graphicx}
\usepackage{todonotes}
\usepackage{amsmath,amssymb,amsthm,amsfonts,mathtools,bm}
\usepackage{enumitem}
\usepackage{algorithm,algpseudocode}
\usepackage{booktabs}
\usepackage{microtype}
\usepackage[normalem]{ulem}
\usepackage{xcolor}
\usepackage{xr-hyper}
\usepackage{lineno}
\nolinenumbers
\usepackage{tikz}
\usepackage{subcaption}
\usepackage[colorlinks=true,linkcolor=blue,citecolor=blue,urlcolor=blue]{hyperref} 

\usetikzlibrary{positioning,calc,fit,backgrounds,arrows.meta}

\makeatletter
\newcommand*{\addFileDependency}[1]{%
  \typeout{(#1)}%
  \@addtofilelist{#1}%
  \IfFileExists{#1}{}{\typeout{No file #1.}}%
}
\makeatother

\addFileDependency{supp_v9.tex}
\addFileDependency{supp_v9.aux}

\definecolor{editorange}{RGB}{204,85,0}
\definecolor{editblue}{RGB}{0,82,180}
\definecolor{burntorange}{rgb}{0.8, 0.33, 0.0}
\newif\ifediting
\definecolor{editblue}{rgb}{0.11, 0.22, 0.73}
\newcommand{\mg}[1]{\ifediting\textcolor{editorange}{#1}\else#1\fi}
\newcommand{\mgb}[1]{\ifediting\textcolor{magenta}{#1}\else#1\fi}
\newcommand{\mgc}[1]{\ifediting\textcolor{editblue}{#1}\else#1\fi}
\renewcommand{\varepsilon}{\epsilon}

\def\sa#1{\textcolor{black}{#1}}

\newcommand{\C}{\mathbb{C}}
\newcommand{\I}{I}
\newcommand{\Rea}{\operatorname{Re}}
\newcommand{\D}{\mathbb{D}}

\graphicspath{{Delayed_Acceleration_Arxiv/}{./}}

\definecolor{hamzapurple}{HTML}{6A0DAD}
\definecolor{hamzabackground}{HTML}{E8D9FF}

\newcommand{\hamzatext}[1]{%
    \textcolor{black}{#1}%
}
\newenvironment{hamzablock}
    {\begingroup\color{black}}
    {\endgroup}

\definecolor{wubteal}{HTML}{00695C}
\definecolor{wubbackground}{HTML}{E0F2F1}
\definecolor{wubdeleted}{HTML}{666666}
\definecolor{wubnotetext}{HTML}{C2185B}
\definecolor{wubnotebackground}{HTML}{FCE4EC}
\newcounter{wubcomment}
\newcommand{\wubid}[1]{%
    [WUB-\arabic{wubcomment}%
    \if\relax\detokenize{#1}\relax\else\ / #1\fi]%
}

\newcommand{\wubtext}[1]{{\color{black}#1}}

\definecolor{originaltextcolor}{HTML}{8A4B08}

\allowdisplaybreaks

\newtheorem{theorem}{Theorem}[section]
\newtheorem{proposition}[theorem]{Proposition}
\newtheorem{lemma}[theorem]{Lemma}
\newtheorem{corollary}[theorem]{Corollary}
\theoremstyle{definition}
\newtheorem{definition}[theorem]{Definition}

\newtheorem{remark}[theorem]{Remark}

\DeclareMathOperator{\Ree}{Re}

\newcommand{\Sml}{\mathcal S_{\mu,L}}
\newcommand{\Fl}{\mathcal F_{L}}
\newcommand{\R}{\mathbb R}

\newcommand{\abs}[1]{\lvert #1\rvert}
\newcommand{\norm}[1]{\lVert #1\rVert}

\newcommand{\QmuL}{\mathcal Q_{\mu,L}}
\newcommand{\rhost}{\rho^\star}
\newcommand{\Pt}{\widetilde P}
\newcommand{\Pinf}{P_\infty}
\newcommand{\beq}{\begin{eqnarray}}
\newcommand{\eeq}{\end{eqnarray}}
\newcommand{\beqs}{\begin{eqnarray*}}
\newcommand{\eeqs}{\end{eqnarray*}}
\newcommand{\cO}{\mathcal{O}}
\newcommand{\nagP}{P^{\scriptscriptstyle \mathrm{NAG}}_{\tau,\delta}}
\newcommand{\nagPfast}{P^{\scriptscriptstyle \mathrm{NAG}}_{\tau,\Delta}}
\newcommand{\nagPslow}{P^{\scriptscriptstyle \mathrm{NAG}}_{\tau,\delta_\mu}}
\newcommand{\nagPslowone}{P^{\scriptscriptstyle \mathrm{NAG}}_{1,\delta_\mu}}
\newcommand{\hbP}{P^{\scriptscriptstyle \mathrm{HB}}_{\tau,\delta}}
\newcommand{\hbPfast}{P^{\scriptscriptstyle \mathrm{HB}}_{\tau,\Delta}}
\newcommand{\hbPslow}{P^{\scriptscriptstyle \mathrm{HB}}_{\tau,\delta_\mu}}
\newcommand{\hbK}{K^{\scriptscriptstyle \mathrm{HB}}_{\tau,\delta}}

\newcommand{\hbKslow}{K^{\scriptscriptstyle \mathrm{HB}}_{\tau,\delta_\mu}}
\newcommand{\hbPzero}{P^{\scriptscriptstyle \mathrm{HB}}_{0,\delta}}
\newcommand{\hbKone}{K^{\scriptscriptstyle \mathrm{HB}}_{1,\delta}}
\newcommand{\dist}{\operatorname{dist}}

\newcommand{\DREAM}{$\mathsf{DREAM}$}

\title{
\DREAM: Delay-Resistant Acceleration with Momentum\\
}
\author{%
Mert G\"urb\"uzbalaban\thanks{%
M. G\"urb\"uzbalaban (Departments of Management Science and Information Systems, Electrical and Computer Engineering, and Statistics), M. H. Rafi (Department of Electrical and Computer Engineering), and W. U. Bajwa (Departments of Electrical and Computer Engineering and Statistics) are at Rutgers University--New Brunswick, NJ 08854, USA (e-mails: \texttt{\{mg1366, hamza.rafi, waheed.bajwa\}@rutgers.edu}). Part of this work was carried out while Mert G\"urb\"uzbalaban was visiting the Department of Industrial Engineering and Operations Research at Columbia University, New York, NY 10027.}\hspace{0.3em}, M. Hamza Rafi\footnotemark[2]\hspace{0.3em}, Necdet Serhat Aybat\thanks{%
N. S. Aybat is with the Department of Industrial and Manufacturing Engineering, The Pennsylvania State University, University Park, PA 16802, USA (e-mail: \texttt{nsa10@psu.edu}).}\hspace{0.3em}\ and Waheed U. Bajwa\footnotemark[2]\hspace{0.3em}}
\date{June 2026}

\begin{document} 
\setcounter{footnote}{1}
\maketitle

\begingroup
\renewcommand{\thefootnote}{}
\footnotetext{This research was supported in part by the Office of Naval Research under award numbers N00014-24-1-2628 and N00014-24-1-2666.}
\endgroup


\begin{abstract}
When minimizing $L$-smooth, $\mu$-strongly convex objectives using exact gradients without delay, Nesterov's accelerated gradient (NAG) method achieves the optimal $\sqrt{\kappa}$ dependence on the condition number $\kappa=L/\mu$, while the heavy-ball (HB) method achieves the same scaling on strongly convex quadratics. In asynchronous and distributed optimization, however, gradients are typically inexact and may arrive with delays that vary across workers and iterations. To isolate the effect of gradient staleness \wubtext{on accelerated methods in such problems}, we study an idealized \wubtext{mathematical} model with a fixed delay $\tau\ge1$, in which the update at iteration $k$ uses a stale gradient $\nabla f(y_{k-\tau})$ instead of $\nabla f(y_k)$, where $y_k$ is the gradient-evaluation point \wubtext{at iteration $k$ and $y_{k-\tau}$ is the corresponding point from $\tau$ iterations earlier}. For this fixed-delay model, \wubtext{Arjevani, Shamir, and Srebro} %
establish a worst-case lower bound of $\Omega(\tau\sqrt{\kappa}\log(1/\varepsilon))$ iterations \wubtext{to reach objective-value error at most $\varepsilon$ as $\varepsilon\to0$}, raising the question of whether NAG and HB can attain this scaling by adapting their parameters to the delay\wubtext{, while using the available stale gradient at every iteration}. We answer negatively: under any fixed positive delay, neither NAG nor HB can retain $\sqrt{\kappa}$-acceleration on strongly convex quadratics \wubtext{when these methods take a step in each iteration}, regardless of how their constant step sizes and positive momentum coefficients are chosen to ensure stability. We then introduce delay-resistant accelerated momentum (\DREAM), a generalized momentum method with explicit delay-dependent parameters that \wubtext{decouples gradient-query extrapolation from momentum, updates at every iteration, and} achieves a linear convergence rate of $1-\Theta(1/(\tau\sqrt{\kappa}))$ on all smooth strongly convex objectives, including quadratics, as $\kappa \to \infty$. This rate yields an iteration complexity that matches the delayed first-order lower bound \wubtext{of Arjevani et al.} up to logarithmic factors. Our proof combines characteristic-polynomial stability analysis \wubtext{of the delayed momentum recurrences on quadratics} with an analytic Zames--Falb integral quadratic constraint certificate. We further extend the approach to smooth convex objectives, obtaining near-optimal \wubtext{iteration} complexity up to logarithmic factors in the regime when the target accuracy is sufficiently small, and present numerical experiments illustrating \DREAM's accelerated convergence. Numerical experiments on strongly convex quadratics and regularized logistic regression corroborate the theoretical results, demonstrating that \DREAM~preserves accelerated condition-number dependence under delays, unlike classical momentum methods. Additional experiments suggest that this accelerated scaling persists even when the algorithm parameters are chosen using a conservative upper bound on the actual delay.

\end{abstract}

\paragraph{Keywords.} accelerated first-order methods; delayed gradients; heavy-ball method; Nesterov's accelerated gradient; generalized momentum methods; integral quadratic constraints; strong convexity; Zames--Falb multipliers.

\paragraph{Mathematics Subject Classification.} 65K05, 90C25, 90C30, 93D10, 93D20.

\section{Introduction}\label{sec:intro}

We consider unconstrained minimization of a smooth convex function,
\begin{equation}\label{eq:intro-problem}
    \sa{f_\star:=\inf_{x\in\R^d}} f(x), \qquad f\in\wubtext{\mathcal S_{0,L}},
\end{equation}
where \wubtext{$\mathcal S_{0,L}$} denotes the class of continuously differentiable \wubtext{convex} functions $f:\R^d\to\R$ \wubtext{with $L$-Lipschitz gradients \sa{for some $L>0$}. We study both this class and its strongly convex subclass $\Sml\subseteq\mathcal S_{0,L}$, with most of our analysis devoted to the subclass $\mathcal{S}_{\mu,L}$.} More precisely, \wubtext{for} $0<\mu\le L$, $\Sml$ \wubtext{consists of functions} satisfying
\begin{equation}\label{eq:intro-function-class}
    \frac{\mu}{2}\norm{x-y}^2
    \le f(x)-f(y)-\langle\nabla f(y),x-y\rangle
    \le \frac{L}{2}\norm{x-y}^2,
    \qquad x,y\in\R^d,
\end{equation}
where we use \wubtext{$\norm{\cdot}$ for} the Euclidean norm. \wubtext{Setting $\mu=0$ in~\eqref{eq:intro-function-class} gives the smooth convex class $\mathcal S_{0,L}$ introduced above. For this class,} we assume that the minimum is attained, %
\sa{i.e., %
there exists $x_\star\in\R^d$ such that $f_\star=f(x_\star)$.} 
\wubtext{For functions in $\Sml$, the minimizer is unique, and we write} $\kappa\mathrel{\wubtext{:=}}L/\mu$ \wubtext{for the condition number} \sa{of the function $f$}.

In the classical first-order oracle model \wubtext{for iteratively solving~\eqref{eq:intro-problem}}, where an exact gradient \wubtext{$\nabla f(y_k)$ at the current query point $y_k$} is available \wubtext{without \sa{any} delay} at each iteration \wubtext{$k$}, the optimal worst-case iteration complexity for the class $\Sml$ of $L$-smooth, $\mu$-strongly convex functions scales as $\Theta(\sqrt{\kappa}\log(1/\varepsilon))$~\wubtext{\cite{NesterovBook}}, where $\varepsilon>0$ is the target objective-value error. For the class $\mathcal{S}_{0,L}$ of $L$-smooth convex functions, the corresponding complexity is $\Theta(\sqrt{LR^2/\varepsilon})$~\wubtext{\cite{NesterovBook}}, where $R$ bounds the initial distance to a minimizer. \mgc{These lower bounds are established through
hard instances in sufficiently high dimensions.
The underlying finite-iteration lower bounds
hold for every $\kappa>1$ in the strongly convex
case; the commonly stated
$\Theta(\sqrt{\kappa}\log(1/\varepsilon))$
complexity scaling is understood for condition
numbers bounded away from %
$1$ and sufficiently
small target accuracy $\epsilon$.} %
{ \wubtext{In terms of achievability, gradient descent (GD) does not attain these bounds in general: its worst-case %
\sa{iteration complexity} is \sa{$\cO(\kappa\log(1/\varepsilon))$} in the strongly convex case and $\cO(1/\varepsilon)$ in the convex case.} \wubtext{On the other hand,} Nesterov's accelerated gradient method (NAG)~\cite{Nesterov1983} achieves these accelerated rates with constant step-size and momentum parameters for functions in $\Sml$, and with appropriately varying momentum parameters for functions in $\mathcal{S}_{0,L}$~\cite{NesterovBook}. Polyak's heavy-ball method (HB)~\cite{Polyak1964} also achieves the accelerated $\sqrt{\kappa}$ dependence on strongly convex quadratics, although the same guarantee does not extend to arbitrary smooth strongly convex objectives under the classical HB parameter choices~\wubtext{\cite{LessardRechtPackard2016}}. These optimal rates of NAG and HB provide a natural benchmark for \wubtext{characterizing the performance of} first-order methods under settings that deviate from the classical first-order oracle model.

In this paper, we study a \emph{fixed-delay} \wubtext{variation of the classical first-order oracle} model \wubtext{for solving~\eqref{eq:intro-problem}, with a focus on \emph{non-idling} methods}. Specifically, for a fixed integer $\tau\ge1$, the oracle returns the exact gradient $\nabla f(y_{k-\tau})$ \wubtext{at iteration $k$}, where \sa{$y_{k-\tau}$} is the point queried at iteration \sa{$k-\tau$}~\cite{ArjevaniShamirSrebro2020, stich_delayed_sgd, choi2024non}. We call a method non-idling if it uses this arriving gradient to perform an update at every iteration. An \sa{\textit{idling}} (\emph{waiting}) method instead postpones its next update until the gradient at its current query point is available. This non-idling, fixed-delay setting is motivated by asynchronous and distributed optimization. For example, a worker \wubtext{in such a system} may evaluate a gradient at one iterate while updates from other workers change the shared iterate. When the gradient arrives, it \wubtext{therefore corresponds to an earlier iterate and} is \emph{stale}~\cite{bertsekas2015parallel,soori2020dave,async_saeed,can2021dqn}. %
Although delays in such systems can vary across workers and iterations, we consider exact gradients and a common fixed delay to isolate the effect of staleness. \wubtext{Analyzing this setting can help develop tools for studying more general asynchronous distributed methods, including those with time-varying delays and stochastic gradient errors. \emph{We are particularly interested in whether non-idling methods can attain the optimal iteration-complexity scaling in this fixed-delay model.}}

We use the worst-case lower bounds of Arjevani, Shamir, and Srebro~\cite[Theorem~4]{ArjevaniShamirSrebro2020} as the target iteration-complexity scale for non-idling methods. These bounds hold for a deterministic fixed-delay first-order \emph{span model}, which permits both waiting and non-idling methods. At iteration $k$, the span model requires the new iterate
\sa{to be a linear combination of gradients from queries made no later than iteration $k-\tau$}.
\wubtext{The precise span condition is stated later in Theorem~\ref{thm:asr-lower}.}
For sufficiently large dimension \wubtext{and under the same accuracy and conditioning qualifications as above}, their results imply the following iteration-complexity lower bounds for this span model%
\begin{equation}\label{eq:intro-delayed-lower-bounds}
    \begin{aligned}
      \Omega\!\left((\tau+1)\sqrt\kappa\log\frac1{\wubtext{\varepsilon}}\right)
      &\quad\text{\sa{for $f\in \mathcal S_{\mu,L},$}}
      ,\\
      \Omega\!\left((\tau+1)\sqrt{\frac{LR^2}{\varepsilon}}\right)
      &\quad\text{for \sa{$f\in\mathcal S_{0,L}$}.}
    \end{aligned}
\end{equation}
The key point is that, compared with the undelayed accelerated rates, these lower bounds introduce an additional factor of $\tau+1$. This delay penalty occurs even for quadratic objectives.

\wubtext{In terms of achievability in this fixed-delay, non-idling setting,} Arjevani, Shamir, and Srebro\wubtext{~\cite{ArjevaniShamirSrebro2020}} analyze \emph{delayed} gradient descent (GD),
\begin{equation}\label{eq:intro-dgd}
    x_{k+1}=x_k-\alpha\nabla f(x_{k-\tau}),
\end{equation}
which processes a $\tau$-stale gradient at every iteration. On smooth strongly convex quadratics, their guarantees \wubtext{for delayed GD} have complexity of order $(\tau+1)\kappa\log(1/\epsilon)$, and on smooth convex quadratics the iteration complexity is of order $(\tau+1)LR^2/\varepsilon$~\cite{ArjevaniShamirSrebro2020}. Hence delayed GD matches the linear dependence on delay in the lower bounds, but, as in the undelayed setting, it does not achieve the accelerated dependence on $\kappa$ in the strongly convex case or $\varepsilon$ in the convex case. In particular, they leave open whether an accelerated gradient method can attain these rates while updating with a stale gradient at every iteration. \emph{This raises two related questions:} can the classical HB and NAG recurrences retain acceleration under delay with suitably chosen parameters while remaining non-idling, and, if not, can a more general \wubtext{non-idling} momentum recurrence attain these %
lower complexity bounds? %

To answer these questions, we study a \emph{generalized momentum method} (GMM) with constant parameters under a fixed delay in the non-idling setting. \wubtext{With $x_k$ denoting the main iterate and $y_k$ the gradient query point,} \sa{the \wubtext{\emph{delayed} GMM} updates take the following form:}
\begin{equation}\label{eq:gmm-intro}
    \begin{aligned}
        y_k&=x_k+\nu(x_k-x_{k-1}),\\
        x_{k+1}&=x_k-\alpha\nabla f(y_{k-\tau})
                   +\beta(x_k-x_{k-1}),\qquad k\ge0.
    \end{aligned}
\end{equation}
Here $\alpha>0$ is the stepsize, $\beta\ge0$ is the momentum coefficient, and $\nu\ge0$ is the gradient-query extrapolation coefficient. The parameters \wubtext{$(\alpha,\beta,\nu)$} remain fixed across iterations, although their values may depend %
on the smoothness and strong-convexity %
\sa{constants} and the delay. The initial values $x_{-\tau-1},\ldots,x_0$ determine the query points \wubtext{needed to start the iteration}. We use the terms \emph{delayed HB} and \emph{delayed NAG} for the specializations \wubtext{$\nu=0$ and} $\nu=\beta$\wubtext{, respectively, in~\eqref{eq:gmm-intro}}. \wubtext{These two specializations are the classical HB and NAG updates using the arriving $\tau$-delayed gradient at every iteration.} %
Delayed GMM allows $\nu$ to be chosen independently of $\beta$, rather than requiring $\nu=0$ or $\nu=\beta$. We use this freedom to adapt the extrapolation to the delay parameter $\tau$.

The main results \wubtext{in this paper} identify a sharp qualitative difference between the delayed HB and NAG specializations and the general \wubtext{delayed GMM} family. Under every fixed positive delay \wubtext{$\tau\ge1$}, \wubtext{delayed} HB \wubtext{with constant parameters} fails to retain $\sqrt\kappa$-acceleration on strongly convex quadratics; the same is true of delayed NAG, even when gradients are delayed by only one iteration ($\tau=1$), for every choice of constant parameters in the standard positive-momentum regime \wubtext{that ensures convergence on strongly convex quadratics}. In contrast, we give an explicit choice of parameters, with $\nu$ proportional to the delay, for which delayed GMM achieves $\sqrt{\kappa}$-acceleration on the entire class $\Sml$ for every fixed delay $\tau\geq 1$ and for all sufficiently large $\kappa$. We call this method \emph{delay-resistant acceleration with momentum} (\DREAM). We also extend this approach to \emph{the smooth convex class $\mathcal S_{0,L}$} \wubtext{by using \DREAM\ as an inner solver in an accelerated proximal-point construction}. For sufficiently small target error, the resulting iteration complexity matches the delay and accuracy dependence of the convex lower bound in~\eqref{eq:intro-delayed-lower-bounds} \wubtext{up to logarithmic factors}.

\subsection{Main results and contributions}
We now state the \wubtext{main} contributions \wubtext{of this paper} more precisely. \wubtext{But first, we clarify their relation to the lower bounds in~\eqref{eq:intro-delayed-lower-bounds}.
These bounds apply to both waiting and non-idling methods, whereas our focus is on non-idling methods.} Indeed, a waiting method can match the order of these lower bounds by
running NAG, making an update only after %
{the gradient at its current query point} has been returned, and remaining idle for the intervening $\tau$ iterations~\cite{ArjevaniShamirSrebro2020}. Thus one accelerated update is performed every $\tau+1$ iterations. This construction establishes attainability of the lower-bound \emph{scaling}, but it does not exploit gradients that arrive while the algorithm is waiting. In an asynchronous distributed architecture, forcing every update to wait for a fresh gradient can sacrifice the benefits of pipelining or require waiting for the slowest worker. The question we address is therefore whether non-idling methods can achieve acceleration while \emph{using \wubtext{the arriving} $\tau$-stale gradient at every iteration}.

\wubtext{We now turn to our main results, presented in Sections~\ref{sec:quadratic}--\ref{sec:convex-restart}.} In what follows, \wubtext{we also use the class} $\mathcal Q_{\mu,L}\subset\Sml$ of quadratic objectives whose symmetric Hessians have spectra contained in $[\mu,L]$. Throughout these results, $\tau$ is \wubtext{a} fixed positive integer.

\paragraph{Limitations of delayed HB and NAG.} \wubtext{In Section~\ref{sec:quadratic},} we first characterize the convergence of \wubtext{delayed} GMM on $\mathcal Q_{\mu,L}$. For fixed parameters $(\alpha,\beta,\nu)$, the \wubtext{delayed} GMM iterates
converge to the minimizer $x_\star$ for every $f\in\mathcal Q_{\mu,L}$ and every initial history if and only if
\begin{equation*}
    \rho(\alpha,\beta,\nu)
    :=\sup_{\lambda\in[\mu,L]}
    \max\bigl\{|z|:\wubtext{z\in\mathbb C,\,}P_{\tau,\alpha\lambda}(z)=0\bigr\}<1,
\end{equation*}
where \wubtext{the characteristic polynomial of the scalar delayed GMM error recurrence along a Hessian eigendirection with eigenvalue $\lambda$ is given by}
\begin{equation}\label{eq:intro-characteristic}
    P_{\tau,\alpha\lambda}(z)
    \mathrel{\wubtext{:}=}z^\tau(z-1)(z-\beta)
      +\alpha\lambda\bigl((1+\nu)z-\nu\bigr).
\end{equation}
When $\rho(\alpha,\beta,\nu)\in(0,1)$, the iterates satisfy
\begin{equation*}
    \norm{x_k-x_\star}
    \le c_k\,\rho(\alpha,\beta,\nu)^k,
    \qquad k\ge 0,
\end{equation*}
for some non-negative sequence $\{c_k\}_{k\ge 0}$
that grows at most polynomially in $k$. 
We call such a parameter choice \emph{Schur stable}, as %
\sa{it implies} %
that the polynomial $P_{\tau,\alpha\lambda}$ is \emph{Schur stable}, i.e., \sa{all the roots lie} strictly inside the unit circle for all $\lambda \in [\mu,L]$. Thus, $\rho(\alpha,\beta,\nu)$ characterizes
the worst-case asymptotic convergence factor
over $\mathcal Q_{\mu,L}$. In particular,
for every $\widehat\rho$ satisfying
$\rho(\alpha,\beta,\nu)<\widehat\rho<1$,
the iterates admit a geometric bound of the form
$\|x_k-x_\star\|\le C_{\widehat\rho}
\widehat\rho^k$ for some constant $C_{\widehat\rho}\wubtext{>0}$ \wubtext{independent of $k$}.%

For the HB and NAG specializations of delayed GMM, let $\rho^*_{\rm HB}(\tau,\kappa)$ and $\rho^*_{\rm NAG}(\tau,\kappa)$ denote the infima of $\rho(\alpha,\beta,0)$ and $\rho(\alpha,\beta,\beta)$, respectively, over Schur-stable choices with $\alpha>0$ and $0<\beta<1$.\footnote{\wubtext{For these two specializations, Schur stability with $\tau\ge1$ requires $\beta<1$, as established in \mgc{Lemma \ref{lem:q-nag-beta-upper} and Lemma \ref{prop:q-hbfast} of Section~\ref{sec:quad-setup}}.}} Theorems~\ref{thm:q-hblower} and~\ref{thm:q-naglower} establish that, for every fixed $\tau\ge1$, as $\kappa\to\infty$,
\begin{equation}\label{eq:intro-classical-barriers}
\begin{aligned}
1-\rho^*_{\rm HB}(\tau,\kappa)
&=\cO_\tau(\kappa^{-1}), &&\tau\ge1,\\
1-\rho^*_{\rm NAG}(\tau,\kappa)
&\le(2+o(1))\kappa^{-2/3}, &&\tau=1,\\
1-\rho^*_{\rm NAG}(\tau,\kappa)
&=\cO_\tau(\kappa^{-1}), &&\tau\ge2.
\end{aligned}
\end{equation}
{Here and throughout, \wubtext{$o(1)$ is with respect to $\kappa\to\infty$, while} $\cO_\tau(\cdot)$ denotes
big-O notation with constants that may depend
on $\tau$ but are independent of $\kappa$.}
Thus, even with parameters adapted to the delay, neither delayed HB nor delayed NAG retains the accelerated $\sqrt{\kappa}$ dependence under any fixed positive delay. {For the single-delay case \(\tau=1\), Lemma~\ref{lem:q-nag-tau1-achievability} also establishes that, for every \(\kappa\ge64\),
\[
1-\rho^*_{\rm NAG}(1,\kappa)
\ge\frac{7}{22}\kappa^{-2/3}.
\]
Together with the upper bound above, this shows that the optimal spectral gap for delayed NAG with \(\tau=1\) has order \(\Theta(\kappa^{-2/3})\). Thus, for %
\sa{$\tau=1$}, \wubtext{delayed} NAG improves on the \(\kappa^{-1}\) gap of delayed HB, but neither method attains the accelerated \(\kappa^{-1/2}\) gap.}
\wubtext{In summary, our first contribution is a characterization of delayed GMM convergence on quadratics through its characteristic polynomial. We then use this characterization to derive stability and spectral-gap bounds showing that neither delayed HB nor delayed NAG retains acceleration under any fixed positive delay.}}

\paragraph{Acceleration and optimal parameters for delayed GMM on quadratics.} \mgc{Our next contribution, established in Section~\ref{sec:quadratic}, shows that allowing the extrapolation parameter $\nu$ to be
chosen independently of $\beta$ enables delayed GMM to achieve acceleration on $\mathcal Q_{\mu,L}$. Specifically, for the one-step delay $\tau=1$,
Proposition~\ref{prop:q-gmmaccel} gives an explicit
\wubtext{delayed} GMM tuning with $\rho(\alpha,\beta,\nu)=1-1/(2\sqrt{\kappa})$ for every $\kappa\ge1$.
Moreover, Proposition~\ref{prop:tau-one-optimality}
shows that the coefficient $1/2$ \wubtext{in the spectral gap $1-\rho(\alpha,\beta,\nu)=1/(2\sqrt{\kappa})$}
is asymptotically sharp \wubtext{as $\kappa\to\infty$} among Schur-stable,
constant-parameter \wubtext{delayed} GMM tunings with $\beta,\nu\ge0$.}

\mgc{To obtain accelerated tunings for every fixed delay $\tau\ge1$,\footnote{\wubtext{For $\tau=1$, the characteristic polynomial in \eqref{eq:intro-characteristic} is cubic, allowing an explicit tuning with an exact worst-case spectral radius. The general-delay result in Theorem~\ref{thm:q-gate} also applies when $\tau=1$, but gives a rate bound for sufficiently large $\kappa$.}} we \wubtext{next} consider the
parameterization
\[
    \alpha=\frac{\delta_L}{L},
    \qquad
    \beta=1-\frac{2\sqrt{\delta_L}}{\sqrt{\kappa}},
\]
with $\delta_L>0$ and $\nu$ to be chosen.
Theorem~\ref{thm:q-gate} \wubtext{then} shows that, on $\mathcal Q_{\mu,L}$,
this parameterization satisfies $\rho(\alpha,\beta,\nu)\le1-\gamma/\sqrt{\kappa}$ for every
$0<\gamma<\sqrt{\delta_L}$ and all sufficiently
large $\kappa$, provided that \wubtext{the limiting characteristic polynomial as $\kappa\to\infty$}
\[
    P_\infty(z;\delta,\nu)
    :=z^\tau(z-1)^2+\delta[(1+\nu)z-\nu]
\]
is Schur stable for every
$0<\delta\le\delta_L$.
The theorem also shows that $\nu>\tau$ is necessary
for this stability condition.}

Since Theorem~\ref{thm:q-gate} allows every $0<\gamma<\sqrt{\delta_L}$, increasing the admissible value of $\delta_L$ improves the guaranteed convergence rate. We therefore choose $\delta_L$ and $\nu$ to make $\delta_L$ as large as the stated stability condition permits.
Let $\delta_\tau^{\mathrm{crit}}$ denote the supremum
of admissible values of $\delta_L$ over
$\nu>\tau$.
Corollary~\ref{cor:q-designoptimal} characterizes
this threshold exactly and identifies the
corresponding optimal extrapolation parameter
$\nu_\tau^{\mathrm{crit}}$.
In particular, every $0<\gamma<\sqrt{\delta_\tau^{\mathrm{crit}}}$
is achievable for sufficiently large $\kappa$
by choosing $\nu=\nu_\tau^{\mathrm{crit}}$ and
$\gamma^2<\delta_L<\delta_\tau^{\mathrm{crit}}$.
Thus,$\sqrt{\delta_\tau^{\mathrm{crit}}}$ is the
supremum of the rate coefficients $\gamma$ guaranteed
by Theorem~\ref{thm:q-gate} when $\delta_L$ and $\nu$ are varied. These results provide explicit accelerated tunings of delayed GMM on $\mathcal Q_{\mu,L}$, with an asymptotically sharp spectral-gap coefficient for $\tau=1$ and, for each fixed $\tau\ge1$, an exact characterization of the supremum of rate coefficients guaranteed by Theorem~\ref{thm:q-gate} within the displayed parameterization. We also consider a simple explicit tuning. Writing $M:=2\tau+1$, we take
$\delta_L=1/M^2$ and $\nu=M-\tfrac12$. Theorem~\ref{thm:q-alldelay} then guarantees an accelerated rate constant $\gamma=1/(2M)$ \wubtext{on $\mathcal Q_{\mu,L}$}.
Moreover, Proposition \ref{prop:q-scaling} in the appendix shows that, as $\tau\to\infty$,
\[
\frac{\sqrt{
\wubtext{\delta_\tau^{\mathrm{crit}}}}}{1/(2M)}
\longrightarrow
4a_\star\sqrt{\cos a_\star}<3,
\]
where $a_\star\in(0,\pi/2)$ is the unique solution
of $a_\star\tan a_\star=2$. Thus, for sufficiently large delays, this simple explicit tuning
retains more than one third of the best rate coefficient guaranteed
by Theorem~\ref{thm:q-gate}. We adopt this tuning for \DREAM{} because the
same parameters also guarantee accelerated convergence on $\Sml$,
as discussed next.

\paragraph{Accelerated convergence of \DREAM\ on smooth strongly convex objectives.}
Our next contribution, presented in Sections~\ref{sec-strongly-convex}--\ref{subsec:main-proof}, is an accelerated convergence guarantee for \DREAM\ on the full class $\Sml$. Recall that \DREAM{} uses the explicit parameter choice:
\begin{equation}\label{eq:intro-dream-parameters}
    \alpha=\frac{1}{M^2L},\qquad
    \beta=1-\frac{2}{M\sqrt\kappa},\qquad
    \nu=M-\frac12=2\tau+\frac12.
\end{equation}

Proving convergence over the broader function class $\Sml$ for \DREAM\ requires a different proof approach from that used for quadratics. Theorem~\ref{thm:main-nonexplicit-constants},
\wubtext{stated in Section~\ref{sec-strongly-convex} and proved in Section~\ref{sec:master}},
shows that, for every fixed integer
$\tau\ge1$ and every $c\in(0,2/3)$, there exists
$\kappa_0(\tau,c)$ such that, for
$\kappa\ge\kappa_0(\tau,c)$, \wubtext{the \DREAM\ iterates for} every $f\in\Sml$
and every choice of initial iterates $x_{-\tau-1},\ldots,x_0\in\mathbb R^d$ satisfy
\begin{equation}\label{eq:intro-main-rate}
    \norm{x_k-x_\star}\le C\rho^k,
    \qquad
    \rho=1-\frac{c}{M\sqrt\kappa},
    \qquad k\ge0,
\end{equation}
where $C$ may depend on the initial history and
problem parameters but is independent of $k$. Note that this bound does not imply that the distance to the minimizer decreases at every iteration.

Next, for the constant-history initialization,
$x_{-\tau-1}=\cdots=x_0=x_{\rm init}$,
Theorem~\ref{thm:main} \wubtext{in Section~\ref{subsec:main-proof}} gives polynomial bounds \wubtext{on the threshold $\kappa_0(\tau,c)$ and the prefactor $C$ in~\eqref{eq:intro-main-rate}}:
\begin{equation}\label{eq:intro-polynomial-bounds}
    \kappa_0(\tau,c)=\cO_c(M^8),
    \qquad
    C=\cO_c(M^5\kappa^2)
       \norm{x_{\rm init}-x_\star},
\end{equation}
where $\cO_c(\cdot)$ %
{hides} constants depending
\wubtext{only} on $c$.
Consequently, for
$R\mathrel{\wubtext{:}=}\norm{x_{\rm init}-x_\star}>0$ and
$0<\epsilon<\mu R^2/4$, smoothness implies
that the accuracy $f(x_k)-f_\star\le\epsilon$
is attained within
\begin{equation}\label{eq:intro-sc-complexity}
    \cO_c\!\left(
    M\sqrt\kappa
    \left[
      \log\frac{\mu R^2}{\epsilon}
      +\log M+\log\kappa
    \right]\right)
\end{equation}
updates. %
\wubtext{Thus, \DREAM\ achieves accelerated convergence on $\Sml$ without idle iterations. In particular, its iteration complexity matches the strongly convex lower bound in~\eqref{eq:intro-delayed-lower-bounds} up to the displayed logarithmic factors}.

\paragraph{\wubtext{Extending \DREAM\ to smooth convex optimization}.}
\wubtext{Our next contribution, presented in Section~\ref{sec:convex-restart}, is an accelerated method for smooth convex optimization that uses \DREAM\ to approximately solve a sequence of strongly convex regularized problems, with the outer iterations following the Catalyst accelerated proximal-point framework~\cite{lin2018catalyst}.
Specifically, let} $f\in\mathcal S_{0,L}$ \wubtext{have} a nonempty minimizer set. Fix $x_\star\in\arg\min f$ and \wubtext{write} $R_\star\mathrel{\wubtext{:}=}\norm{x_0-x_\star}$.
\wubtext{At each outer iteration $s\ge1$, we run \DREAM\ for a prescribed number of steps to obtain an approximate minimizer $x_s$ of}
\[
    H_s(x)=f(x)+\frac{\lambda}{2}\norm{x-v_{s-1}}^2
\]
\wubtext{for a regularization parameter $\lambda>0$. Here $v_{s-1}$ is obtained by extrapolating the preceding outer iterates, as specified later in Section~\ref{sec:convex-restart}. Before each run of \DREAM, its starting point and all past iterates required by the delayed recurrence are set to $v_{s-1}$.}
Since $H_s$ is $\lambda$-strongly convex and $(L+\lambda)$-smooth, \wubtext{its} condition number is $\chi_\lambda=1+L/\lambda$. \wubtext{Set} $\bar\kappa_0:=\max\{2,\kappa_0(\tau,c)\}$ and choose $\lambda=L/(\bar\kappa_0-1)$\wubtext{. Then} $\chi_\lambda=\bar\kappa_0\mathrel{\wubtext{\ge}}\wubtext{\kappa_0(\tau,c)}$\wubtext{, so \DREAM's strongly convex convergence guarantee in Theorem~\ref{thm:main} applies to minimizing each regularized objective $H_s$}.
\wubtext{Consequently,} Theorem~\ref{thm:convex-regularized} \wubtext{and Corollary~\ref{cor:main-complexity-convex} show that, for every $\varepsilon>0$, the method produces} an outer iterate \wubtext{$x_s$} satisfying $f(x_{\wubtext{s}})-f_\star\le\varepsilon$ \wubtext{using a total of}
\begin{equation}\label{eq:intro-convex-complexity}
    \widetilde{\cO}_c\!\left(
        M\left[\sqrt{\bar\kappa_0}
             +\sqrt{\frac{LR_\star^2}{\varepsilon}}\right]\right)
\end{equation}
inner delayed-gradient updates\wubtext{. Here} $\widetilde{\cO}_c$ \wubtext{denotes big-O notation with logarithmic factors omitted and a constant that may depend on $c$}. The \wubtext{$M\sqrt{\bar\kappa_0}$} term \wubtext{in this expression} accounts for \wubtext{the cost of the first run of \DREAM, and the prescribed inner iteration counts do not require prior knowledge of $R_\star$. When} $\varepsilon\le\lambda R_\star^2$, \wubtext{the first term} is \wubtext{bounded} by \wubtext{a constant multiple of} the second \sa{term\wubtext{.}} \wubtext{Recalling that $M=2\tau+1$, the bound therefore simplifies to}
\[
    \widetilde{\cO}_c\!\left((\tau+1)
        \sqrt{\frac{LR_\star^2}{\varepsilon}}\right).
\]
\wubtext{Thus, when $\varepsilon\le\lambda R_\star^2$, the accelerated proximal-point method with \DREAM\ as its inner solver matches the} delay and accuracy dependence \sa{of} the convex lower bound in~\eqref{eq:intro-delayed-lower-bounds}\wubtext{, up to logarithmic factors}.

\paragraph{An analytic proof technique valid for every fixed delay.}
Our convergence guarantees for general $f\in\Sml$ do not follow from quadratic stability analysis alone: the gradient mapping is nonlinear, and stability on quadratics does not necessarily imply convergence at the same rate over the full function class. To overcome this difficulty, we represent~\eqref{eq:gmm-intro} as a feedback interconnection between a linear delayed system and the nonlinear gradient mapping. Our analysis relies on finite-horizon, rate-weighted Zames--Falb integral quadratic constraints (IQCs), which exploit 
\sa{relations among gradients} evaluated at different iterates. Unlike pointwise inequalities derived from smoothness and strong convexity, these constraints capture additional relations between gradients along the trajectory. We then separate the gradient mapping into a linear part, with slope $(\mu+L)/2$, and a nonlinear remainder. This transformation allows us to establish stability of the rate-weighted linear dynamics and derive a frequency-domain condition that, through the Kalman--Yakubovich--Popov lemma, yields a quadratic Lyapunov certificate for the desired convergence rate. For our delay-dependent parameterization, we show that, as $\kappa\to\infty$, 
\sa{verification of this condition reduces to proving positivity of an explicit trigonometric polynomial.} To extend the result to sufficiently large finite $\kappa$, we establish quantitative estimates separately for frequencies near zero, where the limiting approximation requires special care, and for frequencies away from zero. This yields explicit convergence guarantees and the polynomial bounds in~\eqref{eq:intro-polynomial-bounds}, without requiring semidefinite programming or numerical search.

The value of this argument is constructive: it identifies the parameters in~\eqref{eq:intro-dream-parameters} \emph{analytically}, and certifies an accelerated rate for the full nonlinear function class at every fixed delay, without a delay-by-delay semidefinite optimization, numerical frequency search, or empirical parameter fitting. \mgc{The main contribution of this work is an explicitly parameterized, constant-parameter momentum method that achieves accelerated convergence using stale gradients without %
\sa{idling}, together with a fully analytic convergence certificate valid for every fixed positive delay and sufficiently large condition number. Our analysis also provides explicit polynomial bounds \sa{on the %
condition-number threshold and the transient prefactor,} 
under constant-history initialization.}%

\subsection{Related work}

\paragraph{Delayed gradients and asynchronous first-order methods.}
Choi, Choi, and Seok~\cite{choi2024non} establish non-ergodic linear-convergence estimates for delayed gradient descent under smooth strong convexity and extend their analysis to objectives satisfying the Polyak--\L{}ojasiewicz condition, including stochastic, time-varying delays. In the stochastic setting, Stich and Karimireddy~\cite{stich_delayed_sgd} establish nonasymptotic bounds for delayed stochastic gradient descent on smooth quasi-convex and nonconvex objectives; delays affect the deterministic optimization term but not the leading stochastic term, and \sa{\textit{accelerated} methods are \textit{not}} considered. Fang, Huang, and Lin~\cite{FangHuangLin2018} introduce momentum compensation for asynchronous first-order methods with bounded, potentially time-varying delays. By modifying Nesterov's extrapolation point to account for momentum accumulated during the delay, their accelerated method attains accelerated dependence on the condition number~\cite[Corollary~1]{FangHuangLin2018}, but its iteration complexity depends \textit{quadratically} on the delay bound. 
Ajanthan et al.~\cite{pmlr-v267-ajanthan25a} propose a modified Nesterov method for asynchronous pipeline-parallel optimization and establish sublinear convergence under fixed gradient delays. Pond, Zhao, and Hale~\cite{pond2026distributedasynchronousgeneralizedmomentum} study distributed generalized momentum methods allowing arbitrarily long computation and communication delays, obtaining convergence guarantees under additional structural assumptions. These settings and guarantees differ from the fixed-delay, constant-parameter recurrence studied here. Devolder, Glineur, and Nesterov~\cite{devolder2014first,devolder2013stronglyconvex} develop inexact first-order oracle frameworks encompassing gradients evaluated at shifted points. Although delayed gradients can be modeled in this way, the resulting errors depend on the iterate history, and standard inexact-oracle bounds do not directly establish accelerated convergence. \mgc{
Related asynchronous acceleration results have also
been established under different oracle and
computational models. Hannah, Feng, and Yin
\cite{hannah2018abcd} develop an accelerated
asynchronous randomized block coordinate descent
method with optimal complexity under suitable
restrictions on the delay. Hendrikx, Bach, and
Massoulie \cite{HendrikxBachMassoulie2019}
establish accelerated convergence for decentralized
optimization with local updates, with complexity
depending on communication delays and network
parameters. These results demonstrate that
acceleration under asynchronous execution is
possible, but their decentralized
oracle models differ from the deterministic,
fixed-delay full-gradient model studied here.\looseness=-1
}

\paragraph{Foundations of IQCs and exponential-rate certificates.}
Zames and Falb~\cite{ZamesFalb1968} introduce multipliers for monotone and slope-restricted nonlinearities, while Megretski and Rantzer~\cite{MegretskiRantzer1997} develop the integral-quadratic-constraint (IQC) framework for robust stability analysis. Related foundations include Willems' dissipativity theory~\cite{Willems1972} and Rantzer's frequency-domain formulation of the Kalman--Yakubovich--Popov lemma~\cite{Rantzer1996}; Pfifer and Seiler~\cite{PfiferSeiler2015} develop IQCs for delayed systems. Lessard, Recht, and Packard~\cite{LessardRechtPackard2016} introduce a systematic IQC approach to \sa{analyze} first-order optimization algorithms \sa{in a unified manner}, including gradient descent, heavy-ball, and Nesterov's method, \sa{while} Hu and Lessard~\cite{hu2017dissipativity} provide a complementary dissipativity analysis of acceleration. Boczar, Lessard, and Recht~\cite{BoczarLessardRecht2015} develop rate-weighted IQCs, including Zames--Falb multipliers for slope-restricted nonlinearities, to certify prescribed exponential convergence rates through frequency-domain inequalities that can be checked numerically using linear matrix inequalities.\vspace*{-2mm}

\paragraph{Zames--Falb analysis and undelayed momentum methods.}
Generalized momentum parameterizations separating extrapolation from inertia have also been studied without delays~\cite{VanScoyFreemanLynch2018,gurbuzbalaban2023robustly,can2025entropic}. Cyrus, Hu, Van Scoy, and Lessard~\cite{cyrus_robust_momentum} employ a rate-weighted Zames--Falb IQC to analyze robust momentum without delays and derive an analytical tuning that trades convergence speed against robustness to gradient errors. Freeman~\cite{Freeman2018} extends exponential-rate Zames--Falb analysis to noncausal multipliers for feedback interconnections of linear time-invariant systems and static nonlinearities, yielding less conservative convergence-rate certificates. He also establishes the validity of these multipliers for vector-valued gradient mappings by exploiting cyclic monotonicity. Zhang, Seiler, and Carrasco~\cite{ZhangSeilerCarrasco2022} show that nonlinear feedback systems can converge more slowly than their linear counterparts and develop a convex search over noncausal Zames--Falb multipliers supported on finitely many lags to certify exponential convergence rates. Van Scoy and Lessard~\cite{scoy_lessard_15} develop an undelayed generalized momentum method for twice continuously differentiable smooth strongly convex objectives. Using an off-by-one Zames--Falb multiplier, they establish global convergence and characterize the asymptotic rate through linearization at the minimizer, obtaining a bound of the form $\|x_k-x_\star\|\le c(k)\rho^k$ with \sa{a} subexponentially growing $c(k)$. Their leading-order iteration-complexity comparison neglects the contribution of $\log c(k)$ as $k\to\infty$.\vspace*{-2mm}

\paragraph{IQC-based numerical certificates with delayed gradients.}
Michalowsky, Scherer, and Ebenbauer~\cite{michalowsky2021robust} develop an IQC-based framework for analyzing and synthesizing gradient algorithms with certified convergence rates and robustness guarantees, using rate-weighted Zames--Falb multipliers and exploiting structural properties of the objective. Scherer and Ebenbauer~\cite{SchererEbenbauer2021} develop a general convex synthesis framework based on rate-weighted Zames--Falb IQCs and the Youla parameterization, in which algorithm parameters and convergence certificates are obtained through semidefinite programming. Their framework accommodates arbitrary fixed gradient delays, as illustrated by numerical synthesis and convergence-rate certification for one- and two-step delays; Scherer, Ebenbauer, and Holicki~\cite{SchererEbenbauerHolicki2023} provide a tutorial treatment, including delayed gradient information. These works establish rigorous and general IQC-based frameworks for analyzing and synthesizing optimization algorithms. However, their application to delayed gradients relies on numerical semidefinite optimization to obtain algorithm parameters and certify convergence rates for prescribed delays, with numerical illustrations for one- and two-step delays. Although the resulting certificates imply global linear convergence with finite, certificate-dependent prefactors, these numerical results do not provide closed-form delay-dependent parameter choices, explicit bounds on the dependence of convergence rates and transient prefactors on the delay and condition number, or corresponding quantitative iteration-complexity guarantees. In contrast, we derive closed-form, delay-dependent parameters for a generalized momentum method and analytically verify a delay- and rate-dependent Zames--Falb certificate for every fixed positive delay and sufficiently large condition number, without numerical optimization or semidefinite feasibility searches. Our convergence guarantees hold over the full class of smooth strongly convex objectives and include a quantitative condition-number threshold and explicit polynomial bounds on the transient prefactor under constant-history initialization. These results yield near-optimal iteration complexity in the delay and condition number, up to logarithmic factors. We further extend our guarantees to general smooth convex objectives through a regularized proximal-point construction, obtaining near-optimal complexity in the high-accuracy regime while explicitly accounting for initialization and transient costs.

\subsection{Organization}\label{subsec:roadmap}
Section~\ref{sec:prelim} sets out the delayed first-order model and notation. Section~\ref{sec:quadratic} studies the characteristic polynomials and proves the limitations of classical momentum for strongly convex quadratics. Section \ref{sec-strongly-convex} considers the general strongly convex setting beyond quadratics. In particular, Sections~\ref{sec:certificate} and~\ref{sec:master} develop the control-theoretic certificate and prove convergence on $\Sml$; Section~\ref{subsec:main-proof} quantifies its threshold and transient factor. Section~\ref{sec:convex-restart} extends the construction to smooth convex objectives, followed by experiments and concluding remarks. Appendix~\ref{sec:design} examines parameter design in the quadratic setting.

\subsection{Notation}\label{subsec:notation}
For nonempty $S\subset\mathbb C$, set
$\operatorname{dist}(w,S)=\inf_{\zeta\in S}|w-\zeta|$;
$\mathbf 1_{\{E\}}$ denotes the indicator of a statement $E$. For a square matrix $A$, its \emph{spectrum} $\sigma(A)$ is the set of its eigenvalues, and its \emph{spectral radius} is
$\rho(A)=\max_{\lambda\in\sigma(A)}|\lambda|$.
For a nonconstant polynomial $P$, we similarly define
$\rho(P)=\max\{|\lambda|:P(\lambda)=0\}$; equivalently,
$\rho(P)=\rho(C_P)$, where $C_P$ is its companion matrix.
A square matrix or polynomial is \emph{Schur stable} if its spectral radius is strictly less than one. A polynomial is \emph{Hurwitz stable} if all its roots have strictly negative real parts.

Consider the following discrete-time dynamics with state $s_k\in\mathbb C^n$, input $u_k\in\mathbb C^m$, and output $y_k\in\mathbb C^p$, described by
\begin{equation*}
    s_{k+1}=As_k+Bu_k,\qquad y_k=Cs_k+Du_k.
\end{equation*}
where $A, B, C, D$ are fixed matrices. For a two-sided sequence $(x_k)_{k\in\mathbb Z}$, its \emph{bilateral $z$-transform} is
$
    X(z)=\sum_{k\in\mathbb Z}x_kz^{-k},
$
wherever the series converges. The \emph{zero-state response} of the system is obtained by setting $s_0=0$ and applying an input with $u_k=0$ for $k<0$. Thus, for $k\ge1$, the state is determined entirely by the input:
$
    s_k=\sum_{j=0}^{k-1}A^{k-1-j}Bu_j.
$
Extending the resulting state and output sequences by zero to negative indices, their $z$-transforms satisfy
\begin{equation*}
    Y(z)=G(z)U(z),\qquad
    G(z)=C(zI-A)^{-1}B+D,
\end{equation*}
where the relevant transforms converge and $zI-A$ is invertible. The rational matrix $G$ is called the \emph{transfer matrix}, or the \emph{transfer function} in the scalar case when $m=p=1$. On the unit circle, the \emph{discrete-time Fourier transform} is
$
    X(e^{i\theta})
    =\sum_{k\in\mathbb Z}x_ke^{-ik\theta}$ for
    $\theta\in[0,2\pi)$, 
interpreted pointwise when the series converges and in the $L^2$ sense for square-summable sequences.

For a rational transfer matrix $F(z)$, a \emph{pole} is a finite complex number at which at least one entry of $F$ has a singularity after cancellation of common polynomial factors. The transfer matrix is \emph{stable} if all its poles lie in the open unit disk
$\mathbb D:=\{z\in\mathbb C:|z|<1\}$.
It is \emph{proper} if the entrywise limit
$
    F(\infty):=\lim_{z\to\infty}F(z)
$
exists and is finite. Equivalently, each nonzero entry of $F$, expressed as a ratio of coprime polynomials in $z$, has numerator degree no greater than denominator degree. For a rational transfer matrix $F$, its \emph{para-Hermitian adjoint} is
$F^{\sim}(z):=F(1/\overline z)^*$, where $^*$ denotes conjugate
transpose; thus $F^{\sim}(e^{i\theta})=F(e^{i\theta})^*$. We use standard terminology from discrete-time linear systems theory; for further background on state-space representations, $z$-transforms, Fourier transforms, transfer matrices, and stability, 
see Hinrichsen and Pritchard
~\cite[Secs.~2.2--2.3 and Appendix~A.3]{hinrichsen_pritchard_book}.
\section{Preliminaries}
\label{sec:prelim}
\subsection{The \sa{problem} class and the algorithm}
\paragraph{%
\sa{Smooth (strongly) convex functions.}} Let $0<\mu\le L$ and set $\kappa = L/\mu$. Let $\Sml$ denote the class of $\mu$-strongly convex, continuously differentiable functions $f\colon\R^d\to\R$ that are $L$-smooth, \sa{i.e., %
$\nabla f$ is $L$-Lipschitz.} These are precisely the
 functions satisfying, for all $x,y\in\R^d$,
 \begin{equation}\label{eq:interp}
     \frac{\mu L}{\mu+L}\,\norm{x-y}^2
     + \frac{1}{\mu+L}\,\norm{\nabla f(x)-\nabla f(y)}^2
     \le \langle \nabla f(x)-\nabla f(y),\,x-y\rangle ;
 \end{equation}
 see, e.g., \cite[Theorem~2.1.12]{NesterovBook}. %
 \sa{Since $\mu>0$,} $f$ admits a unique minimizer $x_*$, %
 i.e., the global minimum $f_*:=\inf_{x\in\mathbb{R}^d} f(x)$ \sa{is achieved at $x_*$}. The subclass of
$\Sml$ consisting of quadratic functions is denoted by $\QmuL$.
                
We will also consider the class of \sa{convex functions, i.e., $\mu=0$, that are $L$-smooth}, which consist of \sa{convex} functions with an $L$-Lipschitz gradient \cite{NesterovBook}. We will denote this class by $\Fl$. Clearly, $\QmuL \subset\Sml \sa{\subset \Fl}$. 
\paragraph{Generalized momentum methods.} Given $f\in \Sml$, we consider the generalized momentum methods  (GMM) for solving the unconstrained minimization problem $\min_{x\in\mathbb{R}^d} f(x)$. When subject to time-delayed gradients, GMM admits the update
\beq
x_{k+1}=x_{k}-\alpha \left( \nabla f( y_{k-\tau} ) \right) +\beta(x_{k}- x_{k-1}), \quad
y_{k}=x_k+\nu (x_k- x_{k-1}), \label{eq:gmm} 
\eeq
starting from the initialization $x_0, x_{-1}, \dots, x_{-1-\tau} \in \mathbb{R}^d$ where $\tau\in \mathbb{Z}_+$ is the time delay in the gradients and $w_k\in \mathbb{R}^d$ is the additive gradient error at step $k$. For simplicity, we take $x_0=x_{-1}=\dots=x_{-1-\tau}$. GMM admits three parameters $\alpha, \beta$ and $\nu$: The parameter $\alpha>0$ is the stepsize, %
\sa{and} $\beta,\nu\geq 0$ are the \emph{momentum parameters}. In the abscence of delays (when $\tau=0$), these methods have been previously studied in the literature with and without gradient errors \cite{gurbuzbalaban2023robustly, VanScoyFreemanLynch2018}. %
\sa{GMM} encompasses several well-known momentum-based first-order algorithms as special cases in the abscence of time-delays and gradient errors. For the setting $\tau=0$, when $\nu=0$, GMM reduces to the Polyak's heavy-ball (HB) method; whereas when $\nu=\beta$, it recovers Nesterov's accelerated gradient (NAG). In addition, triple momentum method (TMM) is obtained by the specific parameter choice $
				\alpha=\tfrac{1+\bar \rho}{L}$,
				$\beta=\tfrac{\bar\rho^2}{2-\bar\rho}$, 
				$\nu=\tfrac{\bar\rho^2}{(1+\bar\rho)(2-\bar\rho)}$,
				where $\bar\rho = 1 - 1/\sqrt{\kappa}$~\cite{VanScoyFreemanLynch2018}. In the abscence of momentum (when $\beta=\nu=0$), GMM recovers \sa{the gradient descent method.}

\paragraph{GMM as a dynamical system.} We next write GMM as a dynamical system. Notice that if we introduce the vector
 $$\xi_k = \begin{bmatrix} x_k \\ x_{k-1} \\ 
  \cdots \\ x_{k-\tau-1}  \end{bmatrix} \in \mathbb{R}^{(\tau+2)d}, $$
which concatenates the iterates from time $k-\tau-1$ to time $k$, then noisy time-delayed GMM dynamics in  \eqref{eq:gmm} can be rewritten as a discrete-time dynamical system
 \begin{subequations}
				\label{Sys:GMM}
				\begin{eqnarray} 
					\xi_{k+1}&=&A \xi_{k}+B\nabla f(C\xi_k),
				\end{eqnarray}
			\end{subequations}
where the matrices $A,B$ and $C$ are defined as
			\beq A = \tilde{A}\otimes I_d, \quad B = \tilde{B} \otimes I_d, \quad C= \tilde{C} \otimes I_d, 
			\label{def-A-B-C}
			\eeq
with 
$$\tilde{B} = \begin{bmatrix} -\alpha & 0 & \dots  & 0 \end{bmatrix}^\top \in \mathbb{R}^{\sa{(\tau+2)\times 1}}, \quad \tilde{C} = \begin{bmatrix}  0_{1\times \tau} & 1+\nu & -\nu \end{bmatrix}  \in \mathbb{R}^{1\times (\tau+2)},
$$
and 
$$  \tilde{A} = 
			\begin{bmatrix}
1+\beta & -\beta & 0      & \cdots & 0      & 0 \\
1       & 0      & 0      & \cdots & 0      & 0 \\
0       & 1      & 0      & \cdots & 0      & 0 \\
\vdots  & \ddots & \ddots & \ddots & \vdots & \vdots \\
0       & \cdots & 0      & 1      & 0      & 0 \\
0       & \cdots & 0      & 0      & 1      & 0
\end{bmatrix} 
 \in \mathbb{R}^{(\tau+2)\times(\tau+2)}   \quad \mbox{for} \quad \tau > 0,
$$
whereas 
$$
\tilde{A} = \begin{bmatrix}
1+\beta & -\beta   \\
1 & 0 
\end{bmatrix}  \in \mathbb{R}^{2 \times 2}   \quad \mbox{for} \quad \tau=0.
$$  
\paragraph{Linear convergence rate.}
For $f\in\Sml$, we say that GMM converges (globally) linearly with rate $\rho$ if for every initialization $x_0$, it holds that $\|x_k - x_*\| \leq a_k \rho^{k}\| x_0- x_* \|$ for some $\rho \in [0,1)$ and scalar coefficients $a_k$ having at most polynomial growth, i.e., there exist real scalars $c_1, c_2\geq 0$ (that does not depend on the initialization $x_0$) such that $0 \leq a_k \leq c_1 (\mg{1}+k)^{c_2}$ for every $k\geq 0$. By $L$-smoothness, we have
$f(x_k)-f(x_*) \leq L \|x_k - x_*\|^2$ so that if GMM converges linearly with rate $\rho$ then
$f(x_k)-f(x_*) \leq L a_k^2 \rho^{2k} \sa{\|x_0 - x_*\|^2}$.
\subsection{Lower bounds}
\begin{theorem}[Lower bound under delay; {\cite[Theorem~4]{ArjevaniShamirSrebro2020}}]
\label{thm:asr-lower}
Fix a delay $\tau\ge 0$ and consider any first-order method whose iterates
$x_0,x_1,\dots$ are generated from delayed gradients, in the sense that
\begin{equation}\label{eq:span-delay}
  x_0=\dots=x_\tau=0,
  \qquad
  x_{k+1}\in\operatorname{span}\!\bigl\{\nabla F(x_0),\dots,\nabla F(x_{k-\tau})\bigr\}
  \quad\text{for all }k\ge\tau .
\end{equation}
Then for every \mg{$\kappa>1$}, $k\ge\tau+1$ and every sufficiently large dimension $d$:
\begin{enumerate}
  \item\emph{(Strongly convex case.)} There exists $F\in\Sml$ that is quadratic,
  with minimizer $x_\star$, such that
  \begin{equation}\label{eq:asr-sc}
    F(x_k)-F(x_\star)\;\ge\;
    \frac{\mu}{4}\,
    \exp\!\left(-\frac{5k}{(\sqrt{\kappa}-1)\,(\tau+1)}\right)
    \norm{x_0-x_\star}^2 .
  \end{equation}
  \item\emph{(Convex case.)} There exists an $L$-smooth convex quadratic
  $F\colon\R^d\to\R$, with minimizer $x_\star$, such that
  \begin{equation}\label{eq:asr-cvx}
    F(x_k)-F(x_\star)\;\ge\;
    \frac{L\,(\tau+1)^2\,\norm{x_0-x_\star}^2}{45\,k^2}.
  \end{equation}
\end{enumerate}
\end{theorem}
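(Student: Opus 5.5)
The plan is to reduce the delayed span model to the classical (undelayed) span model by time compression, and then invoke Nesterov's classical lower-bound constructions. The key structural observation is how many new gradient directions \eqref{eq:span-delay} can use. Each new iterate $x_{k+1}$ lies in the span of gradients at $x_0,\dots,x_{k-\tau}$, and $x_0=\dots=x_\tau=0$. By induction on blocks of length $\tau+1$, this forces every $x_j$ to lie in the span generated by roughly $\lfloor j/(\tau+1)\rfloor+1$ rounds of gradient queries, where each round may query only at points built from the previous rounds. Concretely, I will show the following for the zero-chain (Krylov-type) hard instances, whose gradients can reveal at most one new coordinate per query round: the support of $x_k$ is contained in the first $\lceil (k-\tau)/(\tau+1)\rceil\le \lceil k/(\tau+1)\rceil$ coordinates. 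The induction uses the fact that $\nabla F$ at a point supported on the first $m$ coordinates is supported on the first $m+1$. A new coordinate can therefore only appear once a gradient at a point already exposing the previous coordinate has arrived, and that takes $\tau+1$ steps.

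For the strongly convex case, I take Nesterov's tridiagonal quadratic in dimension $d\gg k$, scaled so that $F\in\Sml$. Its minimizer has geometrically decaying coordinates $x_\star^{(i)}=q^i$ with $q=(\sqrt\kappa-1)/(\sqrt\kappa+1)$. Any point supported on the first $m$ coordinates satisfies $\|x-x_\star\|^2\ge q^{2m}\|x_0-x_\star\|^2$, up to the finite-dimension truncation correction handled by taking $d$ large. Then $F(x_k)-F_\star\ge\frac\mu2\|x_k-x_\star\|^2\ge \frac\mu2 q^{2m}\|x_0-x_\star\|^2$ with $m\le k/(\tau+1)+1$. Using $\log(1/q)\le 2/(\sqrt\kappa-1)$ and absorbing the additive $+1$ into the constant (valid since $k\ge\tau+1$, so $m\le 2k/(\tau+1)$), I get $q^{2m}\ge\exp(-c k/((\sqrt\kappa-1)(\tau+1)))$ with $c\le 5$. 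The factor $\tfrac12$ becomes $\tfrac14$ after accounting for truncation.

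For the convex case, I use Nesterov's $L$-smooth chain quadratic on the first $2m+1$ coordinates, with $m\approx k/(\tau+1)$. The classical bound for iterates supported on $m$ coordinates gives $F(x)-F_\star\ge \frac{3L\|x_0-x_\star\|^2}{32(m+1)^2}$. Substituting $m+1\le 2k/(\tau+1)$ yields a bound of the form $L(\tau+1)^2\|x_0-x_\star\|^2/(c\,k^2)$, and the constants are tracked to land at $45$. The main obstacle is the bookkeeping of the support-growth induction under the delay: the query points are $x_{k-\tau}$ themselves, and exactly $\tau+1$ iterations are needed per new coordinate. Getting this right determines the $(\tau+1)$ factor and the constants $5$ and $45$. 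Since this is exactly \cite[Theorem~4]{ArjevaniShamirSrebro2020}, I would ultimately defer the constant-level verification to that reference.
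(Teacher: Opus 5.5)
\textbf{Overall.} The paper gives no proof of this statement; it is quoted from \cite[Theorem~4]{ArjevaniShamirSrebro2020}. Your route is the standard argument behind that result: the delay lets the iterate reveal at most one new chain coordinate per $\tau+1$ iterations, and you then apply Nesterov's chain quadratics.

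\textbf{What already works.} Your support induction is correct, and in fact exact: $\lceil (k-\tau)/(\tau+1)\rceil=\lfloor k/(\tau+1)\rfloor$. The convex case closes as you describe. Take $m=\lfloor k/(\tau+1)\rfloor\ge1$ (because $k\ge\tau+1$) and the chain on $2m+1\le d$ coordinates. Then $F(x_k)-F(x_\star)\ge 3L\|x_0-x_\star\|^2/(32(m+1)^2)$. Using $m+1\le 2k/(\tau+1)$, this becomes $3L(\tau+1)^2\|x_0-x_\star\|^2/(128k^2)$, which suffices since $3/128>1/45$.

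\textbf{The strongly convex exponent is wrong as written.} From $m\le 2k/(\tau+1)$ and $\log(1/q)\le 2/(\sqrt\kappa-1)$ you only get $q^{2m}\ge\exp\bigl(-8k/((\sqrt\kappa-1)(\tau+1))\bigr)$. That is $c=8$, weaker than the exponent $5$ in the statement. Unlike the convex case, here the additive $+1$ sits in the exponent. Keeping it costs a factor $q^2$ in the prefactor, which is not bounded below as $\kappa\downarrow1$. Absorbing it by doubling costs a factor $2$ in the exponent. The repair is simply not to weaken your own support bound: $m\le\lfloor k/(\tau+1)\rfloor\le k/(\tau+1)$ gives $q^{2m}\ge\exp\bigl(-4k/((\sqrt\kappa-1)(\tau+1))\bigr)$, so $c=4\le5$.

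\textbf{The truncation step needs an argument, not a remark.} Consider the chain quadratic $\frac{L-\mu}{8}(x^\top A_dx-2x_1)+\frac{\mu}{2}\|x\|^2$ on $\R^d$, where $A_d$ is tridiagonal with $2$ on the diagonal and $-1$ off it. Its minimizer is $x^\star_i=(q^i-q^{2(d+1)-i})/(1-q^{2(d+1)})$. For fixed $m$, the tail ratio $\sum_{i>m}(x^\star_i)^2/\|x^\star\|^2$ therefore tends to $q^{2m}$ as $d\to\infty$. Hence it is at least $q^{2m}/2$ for all sufficiently large $d$; this is where $d$ must be allowed to depend on $k,\tau,\kappa$. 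Combine this with $F(x_k)-F(x_\star)\ge\frac{\mu}{2}\|x_k-x_\star\|^2$ and $x_0=0$ to obtain the prefactor $\frac{\mu}{4}$.

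With these two changes, your argument proves the statement with the stated constants.
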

\sa{One can interpret} \hamzatext{Theorem} \eqref{thm:asr-lower} %
as an obstruction to delay-free
acceleration: no first-order method of the form~\eqref{eq:span-delay} can retain
the undelayed accelerated contraction factor once a delay is present. We make this
precise below.

\begin{corollary}[The contraction factor must degrade with the delay]
\label{cor:c-degrades}
\mgc{Fix $\kappa=L/\mu>1$ and a delay $\tau\ge1$,
and consider a first-order method satisfying
\eqref{eq:span-delay} in every dimension.
Suppose there exist constants $C_0>0$ and
$c\in(0,\sqrt{\kappa}]$, depending only on
$\mu,L,\tau$ and the chosen method, such that
for every dimension $d$, every
$F:\R^d\to\R$ in $\Sml$, and every $k\ge0$,
\begin{equation}\label{eq:linrate-ansatz}
    F(x_k)-F(x_\star)
    \le C_0\rho^{2k}\norm{x_0-x_\star}^2,
    \qquad
    \rho=1-\frac{c}{\sqrt{\kappa}}\in[0,1).
\end{equation}
Then, \sa{necessarily it holds that}
\begin{equation}\label{eq:c-bound}
    c\le
    \frac{5\sqrt{\kappa}}
    {2(\sqrt{\kappa}-1)(\tau+1)}.
\end{equation}}
\mg{In particular, for $\kappa\geq 2$, we have $c \leq \frac{c_0}{\tau}$ for the universal constant $c_0=5\sqrt{2}/(\sqrt{2}-1)>0$.}  
\mg{Hence,} $c$ cannot be chosen independently of $\tau$: any constant rate of
the form $\rho=1-c/\sqrt{\kappa}$ with $c$ bounded away from $0$ uniformly in $\tau$
is unattainable.%
\end{corollary}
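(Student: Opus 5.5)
The plan is to pit the assumed upper bound \eqref{eq:linrate-ansatz} against the lower bound \eqref{eq:asr-sc} of Theorem~\ref{thm:asr-lower} on the same hard instance, and then let $k\to\infty$. Fix $k\ge\tau+1$ and pick a dimension $d$ large enough for Theorem~\ref{thm:asr-lower}. That theorem supplies a quadratic $F\in\Sml$ with
\[
\frac{\mu}{4}\exp\!\Bigl(-\frac{5k}{(\sqrt\kappa-1)(\tau+1)}\Bigr)\norm{x_0-x_\star}^2\le F(x_k)-F(x_\star).
\]
The hypothesis applies in every dimension to every $F\in\Sml$, so it covers this $F$ and gives $F(x_k)-F(x_\star)\le C_0\rho^{2k}\norm{x_0-x_\star}^2$.

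One subtlety: the lower bound uses the initialization $x_0=0$, so $\norm{x_0-x_\star}$ must be nonzero in order to divide by it. This holds because the hard instance has $x_\star\neq 0$; otherwise the lower bound would be trivially false, since its right-hand side is positive. Dividing gives
\[
\frac{\mu}{4}\exp\!\Bigl(-\frac{5k}{(\sqrt\kappa-1)(\tau+1)}\Bigr)\le C_0\rho^{2k}.
\]
If $\rho=0$ this is already a contradiction, so we may assume $\rho>0$.

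Take logarithms and divide by $2k$:
\[
-\frac{5}{2(\sqrt\kappa-1)(\tau+1)}+\frac{\log(\mu/(4C_0))}{2k}\le\log\rho .
\]
Since $C_0$, $\mu$, $\tau$ and $c$ do not depend on $k$ or $d$, we can let $k\to\infty$ to obtain $\log\rho\ge -\frac{5}{2(\sqrt\kappa-1)(\tau+1)}$. Now apply $\log(1-x)\le -x$ with $x=c/\sqrt\kappa\in(0,1]$, which gives $\log\rho\le -c/\sqrt\kappa$. Combining the two inequalities yields
\[
\frac{c}{\sqrt\kappa}\le\frac{5}{2(\sqrt\kappa-1)(\tau+1)},
\]
which is \eqref{eq:c-bound}. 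The case $c=\sqrt\kappa$ gives $\rho=0$, which was excluded above.

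For the "in particular" claim, note that $\sqrt\kappa/(\sqrt\kappa-1)$ is decreasing in $\kappa$, so for $\kappa\ge2$ it is at most $\sqrt2/(\sqrt2-1)$. Combined with $1/(2(\tau+1))\le 1/\tau$, this gives $c\le c_0/\tau$ with $c_0=5\sqrt2/(\sqrt2-1)$, and the stated constant is generous enough. Consequently, for large $\tau$, $c$ cannot stay bounded away from zero.

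The main point requiring care is the order of quantifiers. The hard instance $F$ and the dimension $d$ depend on $k$, so $C_0$ and $c$ must be uniform over $d$ and $F$; the hypothesis "in every dimension, for every $F$" provides exactly this. We also need $x_\star\ne0$ for the hard instance, which is implicit in Theorem~\ref{thm:asr-lower}.
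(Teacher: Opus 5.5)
Your proof is correct and follows the same route as the paper. Both arguments pit the Arjevani--Shamir--Srebro lower bound \eqref{eq:asr-sc} against the ansatz \eqref{eq:linrate-ansatz} on the same hard instance, and both compare decay rates via $\rho^{2k}\le e^{-2kc/\sqrt{\kappa}}$, which is your $\log(1-x)\le -x$.

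Your write-up is in fact tighter than the paper's in several places:
\begin{itemize}
\item You correctly restrict to $k\ge\tau+1$. The paper writes ``for every $k\ge0$'', although \eqref{eq:asr-sc} is only available for $k\ge\tau+1$.
\item You make ``comparing the exponents'' rigorous by dividing by $2k$ and letting $k\to\infty$, so that the constant $\log(\mu/(4C_0))$ drops out.
\item You handle $\rho=0$ and the uniformity over $d$ and $F$ explicitly.
\end{itemize}

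One justification needs fixing: your reason for $\norm{x_0-x_\star}>0$ is backwards. If $x_\star=x_0=0$, the right-hand side of \eqref{eq:asr-sc} is zero, so the lower bound would hold vacuously rather than be ``trivially false''. Saying that right-hand side is positive already presupposes $\norm{x_0-x_\star}>0$, so the argument is circular.

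Read literally, Theorem~\ref{thm:asr-lower} is even satisfied by $F(x)=\tfrac{\mu}{2}\norm{x}^2$, since the iterates never leave $0$. The nondegeneracy $x_\star\neq x_0$ therefore has to come from the Arjevani--Shamir--Srebro construction itself, which is a Nesterov-type worst-case quadratic with a nonzero minimizer. The paper relies on the same fact tacitly when it cancels $\norm{x_0-x_\star}^2$.
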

\begin{proof} \mg{By the upper bound~\eqref{eq:linrate-ansatz} and the lower bound \eqref{eq:asr-sc},
$$
 \frac{\mu}{4}\,
    \exp\!\left(-\frac{5k}{(\sqrt{\kappa}-1)\,(\tau+1)}\right)
    \norm{x_0-x_\star}^2  \leq F(x_k)-F(x_\star) \leq \mg{C_0}\,\rho^{\,2k}\,\norm{x_0-x_\star}^2,
$$
for every $k\geq 0$. Since $\rho^{2k}=(1-\frac{c}{\sqrt{\kappa}})^{2k} \leq e^{-\frac{c}{\sqrt{\kappa}}2k}$, comparing the exponents, this is only possible if
$$ -\frac{5k}{(\sqrt{\kappa}-1)\,(\tau+1)} \geq -\frac{c}{\sqrt{\kappa}}2k,$$
which implies $c\leq \frac{5\sqrt{\kappa}}{2(\sqrt{\kappa}-1) (\tau+1)}$. For $\kappa\geq 2$, we get $c\leq \frac{5\sqrt{2}}{2(\sqrt{2}-1) (\tau+1)}= \cO(1/\tau)$ as claimed.}
\end{proof}

\section{The quadratic case: necessity, exact constants, and optimality}\label{sec:quadratic}
Throughout this section, we assume that there are no gradient errors, i.e., $w_k=0$ for all $k\geq0$. Consider a strongly convex quadratic objective $f\in \QmuL$. Then, $f$ admits the representation \beq f(x) = \frac{1}{2} x^\top Q x + \sa{b^\top} x  + c,
\label{eq-quad-bis}
\eeq
where $Q$ is a symmetric positive-definite $d \times d$ matrix, $b\in \mathbb{R}^d$ and $c\in \mathbb{R}$. %
Consider the eigenvalue decomposition $Q = U \Lambda U^\top$ where $U$ is orthogonal satisfying $UU^\top = U^\top U = I_d$ and $\Lambda$ is diagonal where $\lambda_i := \Lambda_{ii}$ are the eigenvalues of $Q$ in non-decreasing order, i.e., $$\mu\mathrel{\leq}\lambda_1 \leq 
\lambda_2 \leq \dots \leq \lambda_d\mathrel{\leq}L.$$
Note that for quadratic $f$, we have $ \nabla f(x) = Q(x-x_*)$ so that the GMM system \sa{in} \eqref{Sys:GMM} becomes 
\beq \xi_{k+1} -\xi_* =A_Q (\xi_k - \xi_*)   \quad \mbox{with} \quad A_Q:= A + B Q C,
\label{gmm-quadratic-recursion}
\eeq
where  we used $A \xi_* = \xi_*$ and $C\xi_* = x_*$ and the vector
				 $$\xi_* := \begin{bmatrix} x_* ^\top & x_{*}^\top & 
  \cdots & x_{*}^\top  \end{bmatrix}^\top \in \mathbb{R}^{(\tau+2)d}, $$
concatenates $\tau+2$ copies of the unique optimum $x_*$. By the recursion \eqref{gmm-quadratic-recursion}, $%
\sa{\xi_{k}} -\xi_* = A_Q^{k} (\xi_0 - \xi_*)$ for any $k\geq 0$; therefore, the linear convergence rate $\rho$ is given by the spectral radius of the iteration matrix $A_Q$. The following result shows that the convergence rate can be characterized by the roots of polynomials. 
\begin{theorem}\label{thm-spec-rad-iter-matrix} Consider time-delayed GMM with delay parameter $\tau\geq 0$ when $f$ is a quadratic of the form \eqref{eq-quad-bis}. For any given $\lambda>0$, consider the following degree $(\tau+2)$-polynomial 
  \begin{equation}\label{eq:q-charpoly}
P_{\tau,\delta}(z)=z^{\tau}(z-1)(z-\beta)+\delta\bigl[(1+\nu)z-\nu\bigr],
\qquad \delta:=\alpha \lambda .
\end{equation}
Then, %
\sa{$z\in\sigma(A_Q)$ %
if and only if $P_{\tau,\delta_i}(z)=0$ with $\delta_i = \alpha \lambda_i$ and $\lambda_i\in\sigma(Q)$} for some $1\leq i \leq d$; \sa{therefore,} we have $$ \rho(A_Q) = \max_{1\leq i\leq d} \max \{ |z|:  P_{\tau, \delta_i}(z) = 0, \quad \delta_i = \alpha \lambda_i \}.$$ 
In particular, if the GMM parameters $(\alpha,\beta,\nu) \in \mathcal{S}_{\text{GMM}}\sa{:= \{ (\alpha,\beta,\nu) ~:~ \rho(A_Q)<1 \}}$,
then GMM will converge linearly with rate $\rho=\rho(A_Q)$.
 \end{theorem}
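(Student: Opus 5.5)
The plan is to diagonalize the iteration matrix using the Kronecker structure. Since $A=\tilde A\otimes I_d$, $B=\tilde B\otimes I_d$, $C=\tilde C\otimes I_d$ and $Q=U\Lambda U^\top$, we have
\[
A_Q=\tilde A\otimes I_d+(\tilde B\tilde C)\otimes Q .
\]
Conjugating by the orthogonal matrix $I_{\tau+2}\otimes U$ gives
\[
(I\otimes U^\top)A_Q(I\otimes U)=\tilde A\otimes I_d+(\tilde B\tilde C)\otimes\Lambda .
\]
After a permutation similarity (reordering coordinates by eigendirection), this becomes block diagonal with blocks
\[
T_i:=\tilde A+\lambda_i\tilde B\tilde C\in\mathbb R^{(\tau+2)\times(\tau+2)},\qquad i=1,\dots,d.
\]
Hence $\sigma(A_Q)=\bigcup_i\sigma(T_i)$, and it remains to show that the characteristic polynomial of $T_i$ is exactly $P_{\tau,\delta_i}$.

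To identify $\det(zI-T_i)$, I would avoid a direct determinant expansion and argue through the scalar recurrence. The matrix $T_i$ is the companion-type matrix of the scalar delayed recurrence
\[
e_{k+1}=(1+\beta)e_k-\beta e_{k-1}-\alpha\lambda_i\bigl[(1+\nu)e_{k-\tau}-\nu e_{k-\tau-1}\bigr].
\]
Its first row is $(1+\beta,\,-\beta,\,0,\dots,0,\,-\delta_i(1+\nu),\,\delta_i\nu)$, with the last two entries added to the appropriate positions (for $\tau=0$ they coincide with the first two columns). The other rows are shifts. For a companion matrix whose first row is $(a_1,\dots,a_{n})$ with $n=\tau+2$, the characteristic polynomial is $z^{n}-a_1z^{n-1}-\dots-a_n$. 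Substituting the entries gives
\[
z^{\tau+2}-(1+\beta)z^{\tau+1}+\beta z^{\tau}+\delta_i(1+\nu)z-\delta_i\nu ,
\]
which equals $z^\tau(z-1)(z-\beta)+\delta_i[(1+\nu)z-\nu]$. I will check the indexing carefully, since $y_{k-\tau}$ involves $x_{k-\tau}$ and $x_{k-\tau-1}$, which are the last two components of $\xi_k$; this is exactly why $\tilde C$ has the form $(0_{1\times\tau},1+\nu,-\nu)$. I will also check the degenerate case $\tau=0$ separately, where the entries overlap but the formula still holds. The alternative route is to plug $e_k=z^k v$ into the recurrence to get an eigenvector, but that only gives the root set, not multiplicities. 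For the stated claim the root set suffices, so either route works.

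This establishes both the "if and only if" statement and the formula for $\rho(A_Q)$ as a maximum over $i$. For the final claim, I would use $\xi_k-\xi_*=A_Q^k(\xi_0-\xi_*)$ together with the Jordan form of $A_Q$, or the standard bound
\[
\|A_Q^k\|\le c\,(1+k)^{n-1}\rho(A_Q)^k\qquad (n=(\tau+2)d),
\]
which follows from $A_Q=S(D+N)S^{-1}$ by a binomial expansion with nilpotent $N$. Then
\[
\|x_k-x_*\|\le\|\xi_k-\xi_*\|\le c(1+k)^{n-1}\rho^k\|\xi_0-\xi_*\|.
\]
With the constant initialization, $\|\xi_0-\xi_*\|=\sqrt{\tau+2}\,\|x_0-x_*\|$, so the polynomial-growth definition of linear convergence is met. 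The case $\rho(A_Q)=0$ is harmless, since then $A_Q$ is nilpotent.

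The main obstacle is purely bookkeeping: getting the companion-matrix characteristic polynomial exactly right with the delay offsets and the $\tau=0$ overlap. Everything else is standard linear algebra.
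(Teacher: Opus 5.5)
Your proposal is correct and follows the paper's proof. Both conjugate $A_Q$ by $I_{\tau+2}\otimes U$ and a coordinate permutation into $(\tau+2)\times(\tau+2)$ companion-type blocks $\tilde A+\lambda_i\tilde B\tilde C$, and read off $P_{\tau,\delta_i}$ as each block's characteristic polynomial from the first row. Your explicit Jordan-form bound $\|A_Q^k\|\le c\,(1+k)^{n-1}\rho(A_Q)^k$ simply spells out the ``standard stability theory'' the paper cites for the final convergence claim.
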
 
\begin{proof} In the absence of gradient errors, the dynamics is linear. It is well-known from the standard stability theory of linear difference equations that we have linear convergence for GMM if and only if the spectral radius $\rho(A_Q)<1$ \cite{varga1999matrix} and in this case the convergence rate will be
$\rho = \rho(A_Q)$. In the rest of the proof, we will derive the claimed formula for $\rho(A_Q)$. Using the definition of $B$ and $C$, we first note that the matrix $A_Q= A + BQC$ admits a block representation of the form $A_Q = \big[M_{ij} + N_{ij}(Q)\big]_{i,j=1}^{\tau+2}$ where each $M_{ij}$ and $N_{ij}(Q)$ is a $d\times d$ block given by%
{\small
\begin{equation}
\label{eq:MN}
M_{ij} =
\begin{cases}
(1+\beta)I_d, & (i,j)=(1,1), \\[4pt]
\sa{-\beta I_d},%
& (i,j)=(1,2), \\[4pt]
I_d, & i=j+1, \\[4pt]
0_d, & \text{otherwise},
\end{cases}
\quad \mbox{and} \quad
N_{ij}(Q) =
\begin{cases}
-\alpha(1+\nu)Q, & (i,j)=(1,\tau+1), \\[4pt]
\alpha\nu Q, & (i,j)=(1,\tau+2), \\[4pt]
0_d, & \text{otherwise}.
\end{cases}
\end{equation}}
The matrix $N_{ij}(Q)$ is a function of the input matrix $Q$. For example, for $\tau\geq 3$, we have
\[
A_{Q} \;=\;
\begin{bmatrix}
(1+\beta)I_d & -\beta I_d & 0_d & \cdots & 0_d & -\alpha(1+\nu)Q  & \alpha\nu Q  \\[6pt]
I_d & 0_d & 0_d & \cdots & 0_d & 0_d & 0_d \\[2pt]
0_d & I_d & 0_d & \cdots & 0_d & 0_d & 0_d \\[2pt]
0_d & 0_d & I_d & \cdots & 0_d & 0_d & 0_d \\[2pt]
\vdots & \vdots & \vdots & \ddots & \vdots & \vdots & \vdots \\[2pt]
0_d & 0_d & 0_d & \cdots & I_d & 0_d & 0_d \\[2pt]
0_d & 0_d & 0_d & \cdots & 0_d & I_d & 0_d
\end{bmatrix} \in \mathbb{R}^{(\tau+2)d \times (\tau+2)d}.
\]
In the following, we will argue that $A_Q$ is similar to a block diagonal matrix with $(\tau+2)\times (\tau+2)$ diagonal blocks. %
\sa{Since} $Q$ admits the eigenvalue decomposition $Q = U\Lambda U^\top$, using the fact that $UU^\top = I_d$, if we define 
$$ V =\mbox{diag}(U, U, \dots, U)$$
then $V^{-1}=V^\top$ and
\beq A_Q = V A_\Lambda V^\top \quad \mbox{where} \quad
A_\Lambda = \big[ M_{ij} + N_{ij}(\Lambda)\big]_{i,j=1}^{\tau+2}
\label{def-A-lambda}
\eeq
is a block matrix with diagonal blocks\sa{---here, $N_{ij}(\Lambda)$ is defined as in \eqref{eq:MN}, replacing $Q$ with $\Lambda$.}
Let  
$P\in \mathbb{R}^{(\tau+2)d\times (\tau+2)d}$ be the permutation matrix that reorders the elements of $\xi_k = \begin{bmatrix} x_k^\top & x_{k-1}^\top & \dots & x_{k-\tau-1}^\top \end{bmatrix}^\top$ in increasing coordinates, i.e., $P$ is the permutation matrix such that given any $\xi_k$,  %
 \sa{$P\xi_k$ is formed according to}
 {\small
$$ P\xi_k %
= \begin{bmatrix} \xi_k^{[1]} \\ 
 \xi_k^{[2]} \\ 
 \vdots  \\
 \xi_k^{[d]} 
\end{bmatrix}  \quad \mbox{with} \quad \xi_k^{[i]} =   \begin{bmatrix} {(x_k)}_i  \\ {(x_{k-1})}_i \\ 
  \cdots \\ {(x_{k-\tau-1})}_i  \end{bmatrix},\quad\sa{\forall~i=1,\ldots,d,}$$}%
where ${(x_k)}_i$ denotes the $i$-th (element) coordinate of $x_k$. Note that such a matrix $P$ is independent of \sa{the iteration counter $k$}. \sa{Thus,} it is straightforward to check 
{\small
\beq P A_\Lambda P^\top = \mbox{blkdiag} (\tilde{A}^{(\lambda_1)}, \tilde{A}^{(\lambda_2)}, \dots, \tilde{A}^{(\lambda_d)}), 
\label{eq-blk-diag-ALambda-bis}
\eeq
}
with 
 $$ \tilde{A}^{(\lambda)}  = \big[\tilde{M}_{ij} + \tilde{N}_{ij}(\lambda)\big]_{i,j=1}^{\tau+2} \in \mathbb{R}^{(\tau+2) \times (\tau+2)},$$ where each $\tilde{M}_{ij}$ and $\tilde{N}_{ij}(\lambda)$ is a scalar given by
 {\small
\[
\tilde{M}_{ij} =
\begin{cases}
\sa{1+\beta}, & (i,j)=(1,1), \\[4pt]
-\beta, & (i,j)=(1,2), \\[4pt]
1, & i=j+1, \\[4pt]
0, & \text{otherwise},
\end{cases}
\quad \mbox{and} \quad
\tilde{N}_{ij}(\lambda) =
\begin{cases}
-\alpha(1+\nu)\lambda, & (i,j)=(1,\tau+1), \\[4pt]
\alpha\nu \lambda, & (i,j)=(1,\tau+2), \\[4pt]
0, & \text{otherwise}.
\end{cases}
\]}
Hence, from \eqref{eq-blk-diag-ALambda-bis}, \eqref{def-A-lambda} we conclude that \sa{for $S := V P^\top$, we have}
$$ A_Q = S  \ \mbox{blkdiag} (\mg{\tilde{A}}^{(\lambda_1)}, \mg{\tilde{A}}^{(\lambda_2)}, \dots, \mg{\tilde{A}}^{(\lambda_d)}) \
S^{-1},$$
\sa{which follows from $S^{-1}=S^\top$ due to} $P^\top = P^{-1}$ and $V^{-1} = V^\top$. Then, \sa{using a similarity transformation implies that} the characteristic polynomial of $A_Q$ satisfies
$$ p_\tau(z):= \mbox{det}(z I - A_Q) = \mbox{det} (z I - \mbox{blkdiag} (\mg{\tilde{A}}^{(\lambda_1)}, \mg{\tilde{A}}^{(\lambda_2)}, \dots, \mg{\tilde{A}}^{(\lambda_d)})
=\prod_{i=1}^d  p_{\tau,\lambda_i}(z)$$
where
\beq p_{\tau,\lambda_i}(z) := \det ( zI - \mg{\tilde{A}}^{(\lambda_i)}),\quad\sa{\forall~i=1,\ldots,d.} 
\label{def-p-tau-lambda}
\eeq 
It is straightforward to see that the matrix $\mg{\tilde{A}}^{(\lambda)}$ is similar to a companion matrix and admits the characteristic polynomial
\mg{$$ p_{\tau,\lambda}(z) = z^{\tau+2}+\sum_{j=0}^{\tau+1}a_j(\lambda)z^j,
$$}
with $a_{\tau+2-j}(\lambda) := -(\tilde{M}_{1j} + \tilde{N}_{1j}(\lambda))$ for $1\leq j\leq \tau+2$. Equivalently, %
{\small
\begin{equation}
a_{\tau+1}(\lambda)=\begin{cases} 
-1-\beta  & \mbox{if} \quad \tau\geq 2, \\
-1-\beta  & \mbox{if} \quad \tau=1, \\
 -1-\beta + \alpha (1+\nu)\lambda & \mbox{if} \quad \tau =0,
 \end{cases}\qquad
 a_{\tau}(\lambda)=\begin{cases} 
 \beta & \mbox{if} \quad \tau\geq 2, \\
 \beta + \alpha \lambda (1+\nu)  & \mbox{if} \quad \tau= 1, \\
 \beta - \alpha\nu\lambda & \mbox{if} \quad \tau= 0,
 \end{cases} 
\end{equation} 
}
{\small
\begin{equation}
a_{1}(\lambda)=\begin{cases} 
\alpha(1+\nu) \lambda & \mbox{if} \quad \tau\geq 2, \\ 
\beta+\alpha(1+\nu)\lambda & \mbox{if} \quad \tau= 1, \\
-1-\beta+\alpha(1+\nu) \lambda  & \mbox{if} \quad \tau= 0,
 \end{cases}\qquad
 a_0 (\lambda)=\begin{cases} 
 -\alpha\nu\lambda & \mbox{if} \quad \tau\geq 2, \\
 -\alpha\nu\lambda & \mbox{if} \quad \tau = 1, \\
\beta- \alpha\nu\lambda  & \mbox{if} \quad \tau= 0,
 \end{cases} 
\end{equation}}and $a_n(\lambda)=0$ if $\tau \geq 3$ and $n \in [2,\tau-1]$. We also notice $p_{\tau,\lambda}(z)=  P_{\tau, \delta}(z)$ where the latter is a more compact representation. \sa{Thus, $z$ is an eigenvalue of $A_Q$ if and only if} $p_\tau(z) = \prod_{i=1}^d p_{\tau,\lambda_i}(z)=\prod_{i=1}^d P_{\tau,\delta_i}(z) =0$, \sa{which implies that}
$$\rho(A_Q) = \max_i \rho(\mg{\tilde{A}}^{(\lambda_i)}) = \max_{1\leq i\leq d} \max \{ |z|:  P_{\tau, \delta_i}(z) = 0\}.$$
 This concludes the proof.
\end{proof} 
{Let $Q=U\Lambda U^{\top}$ be the eigenvalue decomposition of $Q$, with orthonormal eigenvectors $u_1,\dots,u_d$ (the columns of $U$) and $Qu_i=\lambda_iu_i$. Let $e_k:=x_k-x_\star$ denote the error and let
\[
\varepsilon^{(i)}_k:=u_i^{\top}e_k,\qquad i=1,\dots,d,
\]
denote its $i$-th coordinate in the eigenvector basis, so that $e_k=\sum_{i=1}^d\varepsilon^{(i)}_k u_i$. Two facts make these coordinates convenient. First, since $\nabla f(y)=Q(y-x_\star)$ and $u_i^{\top}Q=\lambda_iu_i^{\top}$,
\[
u_i^{\top}\nabla f(y)=\lambda_i\,u_i^{\top}(y-x_\star)\qquad\sa{\forall~y\in\R^d,}
\]
\sa{i.e., in the $i$-th coordinate of the gradient $\nabla f(x_k)$ under this basis is equal to the error $\varepsilon_k^{(i)}$ multiplied by the curvature $\lambda_i$.} Second, \sa{$y_{k}=x_k+\nu (x_k- x_{k-1})$ in \eqref{eq:gmm} implies that} $y_{k-\tau}-x_\star=(1+\nu)e_{k-\tau}-\nu e_{k-\tau-1}$; %
taking the inner product of \sa{both sides of the first equality in \eqref{eq:gmm}} with $u_i$ therefore gives, for each $i$ separately,
\begin{equation}\label{eq:scalar-quadratic-recursion}
\varepsilon^{(i)}_{k+1}=(1+\beta)\varepsilon^{(i)}_k-\beta\varepsilon^{(i)}_{k-1}
-\alpha\lambda_i\bigl[(1+\nu)\varepsilon^{(i)}_{k-\tau}-\nu\varepsilon^{(i)}_{k-\tau-1}\bigr].
\end{equation}
Thus, the $d$-dimensional error recursion splits into $d$ independent scalar recursions, one for each eigenvalue of $Q$; this is the block-diagonalization in the proof of Theorem~\ref{thm-spec-rad-iter-matrix}. Substituting $\varepsilon^{(i)}_k=z^{k}$ into \eqref{eq:scalar-quadratic-recursion} shows that its characteristic polynomial is $P_{\tau,\delta_i}$ with $\delta_i=\alpha\lambda_i$.
In \eqref{eq:scalar-quadratic-recursion}, the step size $\alpha$ and the curvature $\lambda_i$ appear only through their product $\delta_i=\alpha\lambda_i$, which is the factor by which the recursion scales the delayed, extrapolated error before subtracting it from the current iterate. We call $\delta=\alpha\lambda$ the \emph{gain} associated with the eigenvalue $\lambda$. The name comes from control theory, where the update is viewed as a feedback loop (the iterate determines the point at which the gradient is evaluated, and the gradient is fed back to correct the iterate) and the factor multiplying the fed-back signal is called the loop gain. In optimization terms, $\delta_i$ is the effective, dimensionless step size along the eigendirection $u_i$: for gradient descent ($\beta=\nu=0$, $\tau=0$), \eqref{eq:scalar-quadratic-recursion} reduces to $\varepsilon^{(i)}_{k+1}=(1-\delta_i)\varepsilon^{(i)}_k$. Since $P_{\tau,\delta}$ depends on $\alpha$ and $\lambda$ only through $\delta$, the stability and convergence properties of GMM on quadratics are functions of the gains $\delta\in[\alpha\mu,\alpha L]$ together with $\beta$ and $\nu$.}

Before turning to general smooth strongly convex objectives, %
we analyze the quadratic case, where \sa{the augmented error dynamics are \emph{linear} and} the worst-case convergence rate is governed exactly by the roots of the scalar characteristic polynomial $P_{\tau,\delta}(z)$. This exactly-solvable case is where the two sharpest claims of the paper are established \sa{--which we summarize below in the next two paragraphs as well.} %

First, it shows that the classical one-parameter momentum families are insufficient.  The Schur-stable positive-momentum Nesterov ($\nu=\beta$) and heavy-ball ($\nu=0$) tunings provably cannot retain $\sqrt\kappa$-acceleration under any fixed positive delay (Theorems~\ref{thm:q-naglower} and~\ref{thm:q-hblower}): a fixed delay confines heavy ball to spectral gap at most $\cO_\tau(\kappa^{-1})$ and confines Nesterov to gap at most $\cO(\kappa^{-2/3})$ at $\tau=1$ and $\cO_\tau(\kappa^{-1})$ for $\tau\ge2$.
\sa{The additional extrapolation freedom provided by the GMM framework therefore plays a substantive role in the convergence behavior, rather than merely amounting to a cosmetic reparametrization.}

Second, it shows that the delay-matched method has the right asymptotic scale.  With the very parameters of Theorem~\ref{thm:main}, \sa{i.e.,} $\alpha=1/(M^2L)$, $\beta=1-2q/M$, and $\nu=M-\tfrac12$ with $M=2\tau+1$, an elementary root-location argument certifies acceleration on every quadratic for each fixed positive delay and sufficiently large $\kappa$ (Theorem~\ref{thm:q-alldelay}); a separately optimized one-delay tuning sharpens this at $\tau=1$ to the \emph{exact} worst-case spectral radius $1-\tfrac{1}{2}\kappa^{-1/2}$ (Proposition~\ref{prop:q-gmmaccel}).
\mg{In particular, for each fixed $\tau\geq 1$ and $\kappa$ sufficiently large, Theorem~\ref{thm:q-alldelay} gives $\rho \mathrel{\leq} 1 - c_{\mathrm{quad}}/(\tau \sqrt{\kappa})$ for a universal constant $c_{\mathrm{quad}}>0$, matching the dependence on $\tau$ and $\kappa$ indicated by Corollary~\ref{cor:c-degrades}. %
}

The main theorem then carries the positive half of this picture---acceleration with these parameters---from quadratics to the entire class $\Sml$, where root-location is unavailable and the analytic certificate in Lemma~\ref{lemm:ratecert} (Section~\ref{sec:master}) is required. Throughout the present section $f$ is a strongly convex quadratic, i.e., $f\in\QmuL$, the quadratics in $\Sml$ whose Hessian spectrum lies in $[\mu,L]$; the scalar characteristic polynomial defined in \eqref{eq:q-charpoly} governs the quadratic dynamics.%

\subsection[NAG and HB cannot achieve acceleration with sqrt-kappa scaling]{NAG and HB cannot achieve acceleration with $\sqrt{\kappa}$ scaling}
\label{sec:quad-setup}

Fix \wubtext{an integer delay $\tau\ge1$ and} a strongly convex quadratic $f\in\QmuL$ with condition number $\kappa=L/\mu\ge1$. For each
eigenvalue $\lambda\in[\mu,L]$ of the Hessian, recall the scalar characteristic polynomial
\begin{equation}
P_{\tau,\delta}(z)=z^{\tau}(z-1)(z-\beta)+\delta\bigl[(1+\nu)z-\nu\bigr],
\qquad \delta:=\alpha \lambda.
\end{equation}
defined earlier in \eqref{eq:q-charpoly}. \emph{Nesterov} is the one-parameter subfamily $\nu=\beta$, for which the characteristic polynomial $P_{\tau,\delta}(z)$ reduces to
\begin{equation}\label{eq:q-nag}
\nagP(z)=z^{\tau}(z-1)(z-\beta)+\delta\bigl[(1+\beta)z-\beta\bigr].
\end{equation}
\paragraph{Terminology.}
At \(\delta=0\), the characteristic polynomial \(P_{\tau,\delta}\) has
\(\tau\) roots at the origin, together with the roots \(z=\beta\) and
\(z=1\). Hence, when \(\delta\) is sufficiently small and \(\beta\) is
close to \(1\), \(\tau\) roots remain close to the origin while two roots
remain close to \(1\). The modes associated with the roots near the origin
decay rapidly and are nonbinding for the asymptotic convergence rate in the
regime considered. We therefore refer to the roots of
\(P_{\tau,\delta}\) that remain close to \(1\) as the \emph{slow iteration
modes}, and to those near the origin as the \emph{fast iteration modes}, or
simply the \emph{nonbinding roots}.

The Hessian spectral interval has endpoints \(\mu\) and \(L\). Since the
characteristic polynomial depends on a Hessian eigenvalue
\(\lambda\in\sigma(Q)\) only through
\[
\delta=\alpha\lambda,
\]
we write
\begin{equation}
\label{eq:s-Delta-def}
s:=1-\beta\in(0,1),
\qquad
\Delta:=\alpha L,
\qquad
\delta_\mu:=\alpha\mu=\frac{\Delta}{\kappa}.
\end{equation}
The quantity \(\delta=\alpha\lambda\) can be viewed as the effective step
size along the Hessian eigendirection associated with \(\lambda\). Thus,
as \(\lambda\) ranges over \([\mu,L]\),
$
\delta\in[\delta_\mu,\Delta],
$
with the low-curvature endpoint \(\lambda=\mu\) corresponding to the
smallest effective step size \(\delta_\mu\), and the high-curvature endpoint
\(\lambda=L\) corresponding to the largest effective step size \(\Delta\).
We use the terms \emph{low-curvature endpoint} and
\emph{high-curvature endpoint} for these two Hessian directions, reserving
the terminology \emph{slow} and \emph{fast iteration modes} for the roots
of \(P_{\tau,\delta}\).

\paragraph{Proof strategy.}
The two spectral endpoints play complementary roles in the lower-bound
argument. The low-curvature endpoint is used to obtain a lower bound on the
overall spectral radius. Indeed, since
\[
\rho
=
\sup_{\lambda\in[\mu,L]}
\rho\!\left(P_{\tau,\alpha\lambda}\right),
\]
we have
$
\rho
\geq
\rho\!\left(P_{\tau,\delta_\mu}\right).
$
Because
$
\delta_\mu={\Delta}/{\kappa}
$
is small when \(\kappa\) is large, the slow iteration modes of
\(P_{\tau,\delta_\mu}\) remain close to \(1\). Locating these roots yields
the lower bounds on the overall spectral radius established in
Lemmas~\ref{lem:q-floorA}, \ref{lem:q-floorB}, and~\ref{lem:q-hbslow}.
Importantly, the argument does not require the worst-case spectral radius
to be attained at \(\lambda=\mu\); the lower endpoint is used only to
produce a valid lower bound.\looseness=-1

The high-curvature endpoint is used instead to derive necessary stability
conditions. Any choice of parameters that is Schur stable for every
\(\lambda\in[\mu,L]\) must, in particular, make
$
P_{\tau,\Delta}
$
Schur stable. Evaluating the stability conditions at \(\lambda=L\)
therefore yields necessary restrictions on the largest effective step size
\(\Delta=\alpha L\); these are the bounds obtained in
Propositions~\ref{prop:q-fast} and~\ref{prop:q-hbfast}.

The lower-bound theorems combine these two endpoint arguments. For heavy
ball at every fixed delay, and for Nesterov when \(\tau\geq2\), the
high-curvature stability conditions force \(\Delta\) to be sufficiently
small relative to \(s=1-\beta\). Consequently,
$
\delta_\mu=\frac{\Delta}{\kappa}
$
is too small to move the slow iteration modes of
\(P_{\tau,\delta_\mu}\) sufficiently far from \(1\), leading to the
\(\cO_\tau(\kappa^{-1})\) spectral-gap bound. For Nesterov with
\(\tau=1\), the high-curvature stability restriction is weaker, of the form
$
\Delta=\cO(\sqrt{s}),
$
and balancing this restriction with the low-curvature root bounds instead
gives the \(\cO(\kappa^{-2/3})\) spectral-gap barrier in
Theorem~\ref{thm:q-naglower}.

All statements below about heavy ball and Nesterov---including the quantifier ``every tuning'' in the theorems and their summaries---are for the standard positive-momentum regime $\beta\in(0,1)$, equivalently $s\in(0,1)$.
Within a given one-parameter family---Nesterov ($\nu=\beta=1-s$) or heavy ball ($\nu=0$, treated in \S\ref{sec:quad-hb})---a tuning is fixed by the pair $(\alpha,s)$. \sa{For either algorithm family, $\mathrm{fam}\in\{\mathrm{NAG},\mathrm{HB}\}$, on problems with condition number $\kappa$ and gradient delay $\tau$, its \emph{worst-case spectral
radius} and the family's \emph{best certifiable rate}} in view of Theorem \ref{thm-spec-rad-iter-matrix} are
\[
\sa{\rho(\alpha,\beta):=\sup_{\lambda\in[\mu,L]}\ \max\bigl\{|z|:P_{\tau,\alpha \lambda}(z)=0\bigr\},
\quad
\rhost_{\mathrm{fam}}=\rhost_{\mathrm{fam}}(\tau,\kappa):=\inf\{\rho(\alpha,\beta): \alpha>0,~\beta\in (0,1)\}.}
\]
A tuning is \emph{Schur stable} iff $\rho=\sa{\rho(\alpha,\beta)}<1$. The family accelerates at delay $\tau$ iff
$1-\rhost_{\mathrm{fam}}=\Theta(\kappa^{-1/2})$. In the theorem below, we will show $\wubtext{1}\mathbin{\wubtext{-}}\rhost_{\mathrm{NAG}}$ never %
\sa{achieves this dependence on $\kappa$}. Before we present this result, we provide some lemmas about stability in the next section.

Note that for HB, we have $\nu=0$; so that the  polynomial \sa{$P_{\tau,\delta}$} given in \eqref{eq:q-charpoly} simplifies to
\beq \hbP(z) =  z \hbK(z), \quad
\mbox{where} \quad \hbK(z):=z^{\tau-1}(z-1)(z-\beta)+\sa{\delta}.
\label{eq:q-hb}
\eeq
\subsubsection{Three stability lemmas}\label{sec:quad-lemmas}
The analysis rests on locating the roots of \eqref{eq:q-nag} relative to the unit circle. The classical tool for this purpose is the \emph{Schur--Cohn} criterion \cite{Schur1917,Cohn1922,Marden,Jury}. The following lemma recalls a standard result: the Schur stability of a polynomial \(P\) is equivalent to the Hurwitz stability of its \emph{Cayley transform} \(\widetilde P\). %

\begin{lemma}[Cayley transformation]\label{lem:q-cayley}
Let $P$ be a real polynomial of degree $n$ with $P(-1)\neq0$, and define its Cayley transform as the polynomial
\[
\Pt(w):=(1-w)^{n}\,P\!\left(\frac{1+w}{1-w}\right).
\]
Then $\Pt$ is a real polynomial of degree $n$, and $P$ is \emph{Schur stable}
\textup{(}all roots in $|z|<1$\textup{)} if and only if $\Pt$ is \emph{Hurwitz}
\textup{(}all roots in $\operatorname{Re}w<0$\textup{)}.
\end{lemma}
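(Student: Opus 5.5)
The plan is to analyze the Möbius map $w\mapsto z=(1+w)/(1-w)$ and its inverse $z\mapsto w=(z-1)/(z+1)$. These are mutually inverse bijections between $\mathbb C\setminus\{1\}$ and $\mathbb C\setminus\{-1\}$. Moreover, the open left half-plane $\operatorname{Re}w<0$ is mapped onto the open unit disk $|z|<1$, since $|1+w|^2-|1-w|^2=4\operatorname{Re}w$.

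\textbf{Step 1: degree and reality.} Write $P(z)=\sum_{j=0}^n p_j z^j$ with $p_n\neq0$. Then
\[
\Pt(w)=\sum_{j=0}^n p_j(1+w)^j(1-w)^{n-j},
\]
which is visibly a polynomial of degree at most $n$ with real coefficients. The coefficient of $w^n$ is $\sum_j p_j(-1)^{n-j}=(-1)^nP(-1)$. This is nonzero by hypothesis, so $\deg\Pt=n$ exactly.

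\textbf{Step 2: root correspondence.} Let $w\neq1$. Then $\Pt(w)=0$ if and only if $P\bigl((1+w)/(1-w)\bigr)=0$, because $(1-w)^n\neq0$. The point $w=1$ is not a root, since
\[
\Pt(1)=2^n p_n\neq0.
\]
Conversely, every root $z_0$ of $P$ satisfies $z_0\neq-1$, because $P(-1)\neq0$. Hence $w_0=(z_0-1)/(z_0+1)$ is a root of $\Pt$.

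Counting with multiplicity needs care: near $w_0$, the factor $(1-w)^n$ is analytic and nonzero, and the map $w\mapsto(1+w)/(1-w)$ is conformal, so multiplicities are preserved. In fact only the root sets matter for the Schur and Hurwitz conditions. Both polynomials have degree $n$, so the map is a bijection between their root sets.

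\textbf{Step 3: locations.} For $z=(1+w)/(1-w)$ we have
\[
|z|<1 \iff |1+w|<|1-w| \iff \operatorname{Re}w<0.
\]
Therefore all roots of $P$ lie in $|z|<1$ exactly when all roots of $\Pt$ lie in $\operatorname{Re}w<0$.

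The only delicate point is Step 1. It ensures that no root of $\Pt$ escapes to infinity, which would correspond to the excluded value $z=-1$. It also ensures that $w=1$ is never a spurious root. The hypothesis $P(-1)\neq0$ handles both issues.
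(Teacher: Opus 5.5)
Your proof is correct and follows essentially the same route as the paper: the leading coefficient $(-1)^nP(-1)$ of $\Pt$, the root correspondence under $z=(1+w)/(1-w)$, and the disk/half-plane equivalence. Your explicit check $\Pt(1)=2^np_n\neq0$ is a slightly more careful version of the paper's appeal to the bijection and matching degrees. One small correction to your closing sentence: it is $p_n\neq0$ (that is, $\deg P=n$), not $P(-1)\neq0$, that rules out $w=1$ as a root of $\Pt$.
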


\begin{proof}
The M\"obius map $w\mapsto z=\frac{1+w}{1-w}$ is a bijection of the Riemann sphere with inverse
$z\mapsto w=\frac{z-1}{z+1}$; it sends the open left half-plane $\{\sa{w:}~\operatorname{Re}w<0\}$ onto
the open unit disk $\{\sa{z:}~|z|<1\}$, \sa{and also} the \wubtext{extended} imaginary axis onto the unit circle, and $w=1\mapsto
z=\infty$, $w=-1\mapsto z=0$. Clearing denominators, $\Pt(w)=(1-w)^nP\!\big(\frac{1+w}{1-w}\big)$
is a polynomial of degree %
\sa{at most $n$}; its leading coefficient is
$\Pt$'s coefficient of $w^n$, which equals $(-1)^nP(-1)\neq0$ %
--equivalently, as $w\to\infty$ the transformed argument tends to $z=-1$, so this leading coefficient can vanish only if $P(-1)=0$; %
hence $\deg\Pt=n$.
Each finite root $z_0\neq-1$ of $P$ corresponds to the root $w_0=\frac{z_0-1}{z_0+1}$ of
$\Pt$, with $|z_0|<1\iff\operatorname{Re}w_0<0$. As the map is a bijection and degrees agree,
$P$ has all roots in the disk iff $\Pt$ has all roots in the left half-plane.
\end{proof}
When $\tau=1$, the characteristic polynomial $P_{\tau,\delta}$ is a cubic polynomial. The following lemma characterizes Schur stability of cubic polynomials and will be used in our analysis.
\begin{lemma}[cubic Schur--Cohn conditions for stability]\label{lem:q-cubic}
Let $P(z)=z^3+a_2z^2+a_1z+a_0$ be real with $P(-1)\neq0$. $P$ is Schur stable if and only if the four \emph{Jury conditions} hold:
\begin{equation}\label{eq:q-jury}
\textup{(i) } P(1)>0,\qquad
\textup{(ii) } P(-1)<0,\qquad
\textup{(iii) } |a_0|<1,\qquad
\textup{(iv) } |a_0a_2-a_1|<1-a_0^2 .
\end{equation}
\end{lemma}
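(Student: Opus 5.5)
The plan is to go through the Cayley transform of Lemma~\ref{lem:q-cayley}. Since $P(-1)\neq0$, $P$ is Schur stable if and only if $\Pt(w)=(1-w)^3P\bigl(\tfrac{1+w}{1-w}\bigr)$ is Hurwitz, and $\Pt$ is a real cubic. For real cubics the Routh--Hurwitz criterion is elementary. Writing $\Pt(w)=b_3w^3+b_2w^2+b_1w+b_0$, the polynomial is Hurwitz if and only if all $b_j$ have the same strict sign and $b_2b_1>b_3b_0$ (after normalizing to $b_3>0$). Necessity can be taken from the standard product-of-roots and sum-of-roots argument; sufficiency from continuity of roots along a homotopy, or we can simply cite it as standard. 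The task is then to translate these conditions back into the $a_j$.

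Expanding $(1-w)^3P\bigl(\tfrac{1+w}{1-w}\bigr)=(1+w)^3+a_2(1+w)^2(1-w)+a_1(1+w)(1-w)^2+a_0(1-w)^3$ gives four coefficients:
\begin{align*}
b_0&=1+a_2+a_1+a_0=P(1),\\
b_3&=1-a_2+a_1-a_0=-P(-1),\\
b_1&=3+a_2-a_1-3a_0,\\
b_2&=3-a_2-a_1+3a_0.
\end{align*}

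First, the conditions $b_0>0$ and $b_3>0$ are exactly (i) and (ii). I will argue that the all-negative case cannot occur: the sum $b_0+b_1+b_2+b_3=8>0$, so the $b_j$ cannot all be negative. Hence Hurwitz stability forces $b_3>0$, and the remaining requirements are $b_1>0$, $b_2>0$ and $b_1b_2>b_0b_3$.

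Next I compute the two combinations that should produce (iii) and (iv). Directly, $b_1+b_2=6(1-a_0)$ and $b_0+b_3=2(1+a_1)$. The key identity to verify is
\[
b_1b_2-b_0b_3=8\bigl(1-a_0^2-a_1+a_0a_2\bigr).
\]
Both sides are quadratic in the $a_j$, so checking the monomials is routine algebra. A similar identity for the other sign, expected to involve $1-a_0^2+a_1-a_0a_2$, should come from the $w\mapsto -w$ symmetry combined with the sign pattern. The main obstacle is the logical bookkeeping that shows the set $\{b_j>0,\ b_1b_2>b_0b_3\}$ coincides with (i)--(iv), in particular the recovery of the absolute values in (iii) and (iv).

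For necessity, $|a_0|<1$ follows directly from Schur stability, since $|a_0|$ is the product of the root moduli. Then (iv) is the standard second Jury row condition, obtained from the Schur--Cohn reduction $P\mapsto \bigl(\bar a_0P(z)-z^3P^*(z)\bigr)/z$, which yields a quadratic that must be Schur stable. For the quadratic the conditions reduce to (iv) together with a consequence of (i)--(ii). Rather than fight the Routh signs, I would actually run this Schur--Cohn step-down argument directly for both directions, using Rouch\'e/Cohn's theorem that $P$ is Schur stable if and only if $|a_0|<1$ and the reduced quadratic is Schur stable. I would then apply the known quadratic Jury conditions $|c_0|<1$ and $|c_1|<1+c_0$ to the reduced quadratic and simplify; the simplification is where (i), (ii) and (iv) emerge. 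The Cayley computation above serves as a cross-check.
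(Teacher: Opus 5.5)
Your final route (Cohn's step-down to a quadratic, then the quadratic Jury test) is correct and genuinely different from the paper's. However, the reduction formula you wrote fails as stated.

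Read $z^3P^*(z)$ as the reversed polynomial $a_0z^3+a_1z^2+a_2z+1$. Then the numerator $\bar a_0P(z)-z^3P^*(z)=(a_0a_2-a_1)z^2+(a_0a_1-a_2)z+(a_0^2-1)$ is not divisible by $z$. Up to sign, it is the reversal of the correct reduced quadratic, so its roots are the reciprocals of the right ones. Applying $|c_0|<1$ to it would give $1-a_0^2<|a_0a_2-a_1|$, the opposite of (iv). The factor $a_0$ belongs on the reversed polynomial:
\[
Q(z)=\frac{P(z)-a_0\,z^3P(1/z)}{z}=(1-a_0^2)z^2+(a_2-a_0a_1)z+(a_1-a_0a_2).
\]
Assume $|a_0|<1$ and let $c_1,c_0$ be the coefficients of $Q/(1-a_0^2)$. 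Then $|c_0|<1$ is exactly (iv). The two halves of $|c_1|<1+c_0$ are $(1-a_0)P(1)>0$ and $-(1+a_0)P(-1)>0$, i.e.\ (i) and (ii). Combined with Cohn's theorem ($P$ is Schur iff $|a_0|<1$ and $Q$ is Schur), this closes both directions.

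Your Cayley cross-check also has a slip: $b_1+b_2=2(3-a_1)$, not $6(1-a_0)$. The combinations encoding (iii) are $b_0+b_2=4(1+a_0)$ and $b_1+b_3=4(1-a_0)$, which follow from $\widetilde P(1)=8$ and $\widetilde P(-1)=8a_0$.

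Your identity $b_1b_2-b_0b_3=8(1-a_0^2+a_0a_2-a_1)$ is correct. The other half of (iv) does not come from $w\mapsto -w$: that map gives the Cayley image of the reversed polynomial and merely flips the sign of the same determinant. It comes instead from $1-a_0^2+a_1-a_0a_2=\tfrac12\bigl[(1-a_0)P(1)-(1+a_0)P(-1)\bigr]$, which is positive under (i)--(iii). For the converse in that route, $b_1b_2>b_0b_3>0$ forces $b_1,b_2$ to share a sign. Both negative is impossible, since then $b_0>|b_2|$ and $b_3>|b_1|$.

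This Cayley route has exactly the paper's structure. The paper cites Gardini et al.'s Theorem~A, whose fourth condition is only the one-sided Hurwitz-determinant inequality $-(a_0a_2-a_1)<1-a_0^2$. It then shows, via $r=a_2+a_0$ and $|a_0r|<1+a_1$, that the other half of the absolute value is implied by (I)--(III).

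Your step-down argument instead produces the two-sided (iv) directly as $|c_0|<1$. It is self-contained modulo Cohn's reduction theorem and never uses $P(-1)\neq0$. The paper's proof is shorter but leans on the citation.
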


\begin{proof}
The Schur--Cohn stability conditions for cubics are classical. In the notation
$P(z)=z^3+a_2z^2+a_1z+a_0$, they reduce to \cite[Theorem A]{gardini2021necessary}
\begin{equation}\label{eq:q-jury-bis}
\textup{(I) } P(1)>0,\qquad
\textup{(II) } P(-1)<0,\qquad
\textup{(III) } |a_0|<1,\qquad
\textup{(IV) } 
-(a_0a_2-a_1)<1-a_0^2 .
\end{equation}
Clearly $\textup{(I)}$--$\textup{(III)}$ coincide with
$\textup{(i)}$--$\textup{(iii)}$. It remains to show that, under these three
conditions, $\textup{(IV)}$ is equivalent to $\textup{(iv)}$. Set
\[
J:=a_0a_2-a_1,\qquad r:=a_2+a_0 .
\]
Condition $\textup{(IV)}$ gives the lower bound $J>-(1-a_0^2)$. We show that the upper bound $J<1-a_0^2$ follows from
$\textup{(I)}$--$\textup{(III)}$. Indeed,
\[
P(1)=1+a_1+r>0,
\qquad
P(-1)=-1-a_1+r<0;
\]
hence,
$
-1-a_1<r<1+a_1.
$
Thus $1+a_1>0$ and $|r|<1+a_1$. Since $|a_0|<1$, it follows that
$
|a_0r|<1+a_1,
$
and in particular $a_0r<1+a_1$. Now
$
J=a_0a_2-a_1=a_0(r-a_0)-a_1=a_0r-a_0^2-a_1,
$
\sa{which implies that}
\[
J\leq |a_0r|-a_0^2-a_1 <(1+a_1)-a_0^2-a_1=1-a_0^2.
\]
Therefore, under $\textup{(I)}$--$\textup{(III)}$, condition $\textup{(IV)}$ is equivalent to
$
-(1-a_0^2)<J<1-a_0^2.
$
Since $|a_0|<1$, we have $1-a_0^2>0$, and this double inequality is exactly
$
|J|<1-a_0^2.
$
Substituting $J=a_0a_2-a_1$ gives
$
|a_0a_2-a_1|<1-a_0^2,
$
which is condition $\textup{(iv)}$. The proof is complete.
\end{proof}
When $\tau=2$, the characteristic polynomial $P_{\tau,\delta}$ is a quartic polynomial. The following result characterizes Hurwitz stability of quartic polynomials.
\begin{lemma}[stability for quartic polynomials]\label{lem:q-quartic}
A real quartic $A_4w^4+A_3w^3+A_2w^2+A_1w+A_0$ with \mg{$A_i>0$ for $i=0,1,2,3,4$}
is Hurwitz if and only if its third Hurwitz determinant is positive:%
\begin{equation}\label{eq:q-Delta3}
\mg{H}_3:=A_3A_2A_1-A_1^2A_4-A_0A_3^2>0.
\end{equation}
\end{lemma}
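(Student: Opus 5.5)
We would derive the statement from the Routh--Hurwitz criterion for degree four, using the assumption that all coefficients are positive to remove the redundant conditions. Write $p(w)=A_4w^4+A_3w^3+A_2w^2+A_1w+A_0$ with $A_4>0$. The Hurwitz matrix is
\[
\mathcal H=\begin{bmatrix} A_3 & A_1 & 0 & 0\\ A_4 & A_2 & A_0 & 0\\ 0 & A_3 & A_1 & 0\\ 0 & A_4 & A_2 & A_0\end{bmatrix}.
\]
The classical Routh--Hurwitz theorem says $p$ is Hurwitz if and only if all leading principal minors are positive:
\begin{itemize}
\item $\mathcal H_1=A_3$;
\item $\mathcal H_2=A_3A_2-A_1A_4$;
\item $\mathcal H_3=A_3A_2A_1-A_1^2A_4-A_0A_3^2$, obtained by expanding the $3\times 3$ minor along its last column;
\item $\mathcal H_4=A_0\mathcal H_3$.
\end{itemize}
We would cite this theorem (e.g., Gantmacher, or the Li\'enard--Chipart refinement) rather than reprove it.

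\textbf{Necessity.} If $p$ is Hurwitz, all minors are positive, so in particular $\mathcal H_3>0$.

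\textbf{Sufficiency.} Assume every $A_i>0$ and $\mathcal H_3>0$.
\begin{itemize}
\item $\mathcal H_1=A_3>0$ holds directly.
\item $\mathcal H_4=A_0\mathcal H_3>0$, since $A_0>0$.
\item For $\mathcal H_2$, rewrite $\mathcal H_3>0$ as $A_1(A_3A_2-A_1A_4)>A_0A_3^2>0$. Because $A_1>0$, this gives $\mathcal H_2=A_3A_2-A_1A_4>0$.
\end{itemize}
All Hurwitz minors are then positive, so $p$ is Hurwitz.

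Alternatively, we could invoke the Li\'enard--Chipart criterion directly. It states that when all coefficients are positive, it suffices to check the odd-indexed minors $\mathcal H_1$ and $\mathcal H_3$ (or the even ones). The first is automatic, which yields the claim immediately.

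The only real obstacle is bookkeeping. The Hurwitz matrix convention must place the coefficients so that $\mathcal H_3$ matches the displayed expression \eqref{eq:q-Delta3}. We would verify this by direct expansion and by a sanity check on $(w+1)^4$: coefficients $1,4,6,4,1$ give $\mathcal H_3=96-16-16=64>0$.
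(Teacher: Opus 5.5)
Your proof is correct and follows the paper's route. Both arguments compute the Hurwitz determinants $H_1=A_3$, $H_2=A_3A_2-A_1A_4$, $H_3=A_1H_2-A_0A_3^2$, $H_4=A_0H_3$, and reduce Hurwitz stability to $H_3>0$ once all coefficients are positive. The one minor difference is that your main argument obtains $H_2>0$ directly from $A_1H_2>A_0A_3^2>0$, so it needs only the classical Routh--Hurwitz theorem. The paper instead cites the Li\'enard--Chipart refinement, which is the alternative route you also mention.
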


\begin{proof}
The Hurwitz determinants of the quartic are $H_1=A_3$, $H_2=A_3A_2-A_4A_1$,
$H_3=A_1H_2-A_3^2A_0=A_3A_2A_1-A_1^2A_4-A_0A_3^2$, and $H_4=A_0H_3$. By the
Li\'enard--Chipart criterion \cite{lieenard1914signe,barnett1971new} once all coefficients are positive, it
suffices to test the odd-indexed Hurwitz determinants $H_1,H_3$; since $A_3=H_1>0$ is given,
Hurwitz stability is equivalent to $H_3>0$.
\end{proof}
\mgc{Before deriving stability restrictions for delayed NAG,
we observe that the condition $\beta<1$ is not an
additional assumption in the positive-momentum regime.
Indeed, as the following lemma shows, Schur stability
itself requires $\beta<1$ for every positive delay.}
\mgc{
\begin{lemma}[Necessary momentum bound for delayed NAG]
\label{lem:q-nag-beta-upper}
Let $\tau\ge1$ be an integer, $\delta>0$, and
$\beta \in (0,\infty)$. Consider the delayed NAG
characteristic polynomial
\[
P_{\tau,\delta}(z)
=z^\tau(z-1)(z-\beta)
+\delta\bigl[(1+\beta)z-\beta\bigr].
\]
If $P_{\tau,\delta}$ is Schur stable, then
$\beta<1$.
\end{lemma}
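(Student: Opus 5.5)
The plan is to argue by contradiction: assume $\beta\ge1$ and $P_{\tau,\delta}$ Schur stable, and derive a violated necessary condition. I would work in the Cayley-transformed variable of Lemma~\ref{lem:q-cayley}, where $P_{\tau,\delta}$ has a very clean form. First, $P_{\tau,\delta}(-1)=(-1)^\tau 2(1+\beta)-\delta(1+2\beta)$ can vanish only on an exceptional set of $\delta$. That case would be handled separately, since $z=-1$ is then a root on the unit circle, contradicting Schur stability directly. Substituting $z=(1+w)/(1-w)$, so that $z-1=2w/(1-w)$, $z-\beta=\bigl((1-\beta)+(1+\beta)w\bigr)/(1-w)$ and $z^\tau=(1+w)^\tau/(1-w)^\tau$, gives, with $n=\tau+2$,
\[
\Pt(w)=2w(1+w)^\tau\bigl((1-\beta)+(1+\beta)w\bigr)+\delta(1-w)^\tau\bigl(1+(1+2\beta)w\bigr).
\]
By Lemma~\ref{lem:q-cayley}, Schur stability of $P_{\tau,\delta}$ is equivalent to $\Pt$ being Hurwitz. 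A real Hurwitz polynomial has all coefficients of one strict sign; here the constant term is $\delta>0$, so all coefficients must be positive. The first necessary condition I would extract is positivity of the $w$-coefficient, $2(1-\beta)+\delta(1+2\beta-\tau)>0$. Equivalently, on the $z$-side it reads $2P'(1)/P(1)>n$, i.e.\ $\sum_i\operatorname{Re}\frac{1}{1-z_i}>\frac n2$. It follows from the Blaschke product $P/P^{*}$, which equals $1$ at $z=1$ and has positive angular derivative there. When $\beta\ge1$, this forces $\delta(1+2\beta-\tau)>2(\beta-1)\ge0$, which already yields a contradiction whenever $1+2\beta\le\tau$ and also handles the boundary case $\beta=1$ with $\tau\ge3$.

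For the remaining range I would add two further necessary conditions and show they are jointly infeasible when $\beta\ge1$. The first is $|P(0)|=\delta\beta<1$, the product of the roots. The second is the Blaschke inequality $|P_{\tau,\delta}(t)|<|P_{\tau,\delta}^{*}(t)|$ for real $t\in(-1,1)$, where $P^{*}(z)=z^{n}P(1/z)$. This inequality is especially convenient at $t=1/\beta$ when $\beta>1$, since the factor $(1-\beta t)$ then kills the $\delta$-free part of $P^{*}$. That leaves $P^{*}(1/\beta)=\delta\beta^{-\tau}$, while $P(1/\beta)=\beta^{-\tau}(\beta^{-1}-1)(\beta^{-1}-\beta)+\delta(1+\beta^{-1}-\beta)$. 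The resulting inequality says that the positive quantity $\beta^{-\tau}(1-\beta^{-1})(\beta-\beta^{-1})$ must be absorbed by an $\cO(\delta)$ term. Together with $\delta\beta<1$, I expect this to pin $\delta$ into an interval that is incompatible with the $w$-coefficient condition above. As a sanity check on the strategy, for $\tau=1$ the Jury conditions of Lemma~\ref{lem:q-cubic} do give such a contradiction. For example, at $\beta=1$ condition (iv) reduces to $0<-\delta^2$, and for $\beta>1$ one branch of (iv) forces $\delta(\beta+1)>1$, which must then be played against (iii).

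The main obstacle is making this uniform in $\tau$ and $\delta$: each individual necessary condition is satisfiable for some $(\beta,\delta)$ with $\beta\ge1$, so the contradiction must come from combining them. If the combination above does not close, my fallback is a root-locus argument in $\delta$. At $\delta=0$ the roots are $0$ (multiplicity $\tau$), $\beta$ and $1$, so at least one root lies in $|z|\ge1$ when $\beta\ge1$. I would then show that no root can cross the unit circle inward as $\delta$ increases. To do this, I would write $P(e^{i\theta})=0$ as $\delta=-e^{i\tau\theta}(e^{i\theta}-1)(e^{i\theta}-\beta)/\bigl((1+\beta)e^{i\theta}-\beta\bigr)$ and require this to be real and positive. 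The aim is to check that every crossing point $(\theta,\delta)$ has the unstable direction, i.e.\ that $\operatorname{Re}\bigl(\bar z\,dz/d\delta\bigr)\ge0$ there, so that the number of roots outside the open disk never decreases. The crossing analysis is where I expect the real work to lie.
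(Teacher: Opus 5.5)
There is a genuine gap, and it starts with an algebra slip. In your Cayley transform the $\delta$-term should be $\delta(1-w)^{\tau+1}\bigl(1+(1+2\beta)w\bigr)$, not $\delta(1-w)^{\tau}\bigl(1+(1+2\beta)w\bigr)$. The factor $(1+\beta)z-\beta=\bigl(1+(1+2\beta)w\bigr)/(1-w)$ carries only one power of $(1-w)^{-1}$, while you multiply by $(1-w)^{\tau+2}$. Hence the coefficient of $w$ is $2(1-\beta)+\delta(2\beta-\tau)$, not your displayed $2(1-\beta)+\delta(1+2\beta-\tau)$. This corrected value is also what your own (correct) $z$-side form $2P'(1)>(\tau+2)P(1)$ gives, with $P(1)=\delta$ and $P'(1)=(1-\beta)+\delta(1+\beta)$. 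Your Blaschke-product derivation of that $z$-side condition is fine.

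With the correct coefficient, the two conditions you already list close the case $\tau\ge2$ immediately. Indeed, $\delta\beta<1$ and $\beta\ge1$ give $0<\delta<1$, so
\[
2(1-\beta)+\delta(2\beta-\tau)\le 2(1-\beta)+2\delta(\beta-1)=2(\beta-1)(\delta-1)\le0,
\]
which contradicts positivity. This is exactly the paper's argument for $\tau\ge2$. The Blaschke evaluation at $t=1/\beta$ and the root-locus fallback are not needed there.

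As written, however, your proposal obtains a contradiction only where the coefficient condition by itself is violated (essentially $2\beta\le\tau$). The step ``I expect this to pin $\delta$ into an interval\ldots'' and the crossing-direction analysis are never carried out, so the general case is not established.

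The case $\tau=1$ also needs more than you give. There the coefficient condition together with $\delta\beta<1$ genuinely does not suffice: at $\beta=1$ the coefficient equals $\delta>0$. Your Jury sketch does not close either, because $\delta(\beta+1)>1$ is compatible with $\delta\beta<1$. You must use condition (iv) of Lemma~\ref{lem:q-cubic} quantitatively, as follows.
\begin{itemize}
\item Under (iii), $a_0a_2-a_1=\delta(\beta^2-1)-\beta<0$, so (iv) is equivalent to $1-\beta+\delta(\beta^2-1)-\delta^2\beta^2>0$.
\item With $t:=\delta\beta\in(0,1)$, the left-hand side equals $-\bigl[t^2+(\beta-1)(\beta-t)(1-t)\bigr]/\beta$.
\item This is strictly negative for $\beta\ge1$, which is the required contradiction.
\end{itemize}
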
}

\begin{proof}
\mgc{Suppose, for contradiction, that $\beta\ge1$.
Since $\tau\ge1$, the polynomial $P_{\tau,\delta}$
is monic and has constant coefficient
$-\delta\beta$. Schur stability therefore implies
\[
0<\delta\beta<1;
\qquad\text{hence,}\qquad
0<\delta<\frac1\beta\le1.
\]
We consider two cases.}

\mgc{\emph{Case 1: $\tau=1$.}
The characteristic polynomial is
\[
P_{1,\delta}(z)
=z^3-(1+\beta)z^2
+\bigl[\beta+\delta(1+\beta)\bigr]z
-\delta\beta.
\]
Writing $P_{1,\delta}(z)=z^3+a_2z^2+a_1z+a_0$,
we have
\[
a_2=-(1+\beta),\qquad
a_1=\beta+\delta(1+\beta),\qquad
a_0=-\delta\beta.
\]
By Lemma~\ref{lem:q-cubic}, Schur stability
requires $
|a_0a_2-a_1|<1-a_0^2$. In particular, since $a_0a_2-a_1<0$, this means 
\[
a_0a_2-a_1>-(1-a_0^2).
\]
Substituting the coefficients gives
\begin{equation}\label{eq:nag-beta-necessary-cubic}
1-\beta+\delta(\beta^2-1)-\delta^2\beta^2>0.
\end{equation}
However, setting $t=\delta\beta\in(0,1)$,
we obtain the identity
\[
1-\beta+\delta(\beta^2-1)-\delta^2\beta^2
=
-\frac{
t^2+(\beta-1)(\beta-t)(1-t)
}{\beta}<0,
\]
where the strict inequality follows from
$\beta\ge1$ and $0<t<1$.
This contradicts
\eqref{eq:nag-beta-necessary-cubic}.}

\mgc{\emph{Case 2: $\tau\ge2$.}
Consider the Cayley transform
\[
\widetilde P(w)
=(1-w)^{\tau+2}
P_{\tau,\delta}
\left(\frac{1+w}{1-w}\right).
\]
By Lemma~\ref{lem:q-cayley}, Schur stability
of $P_{\tau,\delta}$ implies that
$\widetilde P$ is Hurwitz stable.
Moreover,
$
\widetilde P(0)=P_{\tau,\delta}(1)=\delta>0.
$
Since a real Hurwitz polynomial with
positive constant coefficient has strictly
positive coefficients, necessarily
$\widetilde P'(0)>0$.} \mgc{On the other hand, direct differentiation gives
\begin{align*}
\widetilde P'(0)
&=-(\tau+2)P_{\tau,\delta}(1)
  +2P_{\tau,\delta}'(1)
=2(1-\beta)+\delta(2\beta-\tau).
\end{align*}
Using $\tau\ge2$, $\beta\ge1$, and
$0<\delta<1$, we obtain
$
\widetilde P'(0)
\le 2(1-\beta)+2\delta(\beta-1)
=2(\beta-1)(\delta-1)
\le0,
$
contradicting $\widetilde P'(0)>0$.}
\mgc{Thus $\beta\ge1$ is impossible in either
case, and consequently $\beta<1$.}
\end{proof}

\subsubsection{Fast-mode necessary conditions \sa{for NAG}}
\label{sec:quad-fast}

At the fast mode $\lambda=L$, we have \sa{$\delta=\alpha L=\Delta$}. %
Schur stability of $P_{\tau,\Delta}$ caps
the admissible step. The following proposition provides a bound on $\Delta$ for Schur stability.

\begin{proposition}[fast-mode caps]\label{prop:q-fast}
Let $0<s<\tfrac12$ \textup{(}equivalently $\beta>\tfrac12$\textup{)} and suppose the Nesterov
polynomial $\nagPfast$ of \eqref{eq:q-nag} is Schur stable. Then:
\begin{enumerate}[label=\textup{(\alph*)},nosep,leftmargin=2.2em]
\item \textup{(}$\tau=1$\textup{)}\quad $\Delta^2\beta^2<s$; hence, $\Delta<\sqrt{s}/\beta\le2\sqrt s$;
\item \textup{(}$\tau=2$\textup{)}\quad $\Delta<s$;
\item \textup{(}$\tau\ge3$\textup{)}\quad
$\displaystyle\Delta<\frac{2s}{\tau-2+2s}\le\frac{2s}{\tau-2}$.
\end{enumerate}
\end{proposition}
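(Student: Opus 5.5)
The plan is to derive each cap from a necessary Schur-stability condition: the Jury conditions of Lemma~\ref{lem:q-cubic} when $\tau=1$, and Hurwitz conditions on the Cayley transform (Lemma~\ref{lem:q-cayley}) when $\tau\ge2$. Throughout I set $\beta=1-s$ and $\delta=\Delta$ in \eqref{eq:q-nag}.

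\emph{Case (a), $\tau=1$.} Write $\nagPfast(z)=z^3+a_2z^2+a_1z+a_0$ with
\[
a_2=-(1+\beta),\qquad a_1=\beta+\Delta(1+\beta),\qquad a_0=-\Delta\beta .
\]
Schur stability gives Jury condition (iv), and in particular $a_0a_2-a_1>-(1-a_0^2)$. Substituting the coefficients, this becomes
\[
1-\beta-\Delta(1+\beta)(1-\beta)-\Delta^2\beta^2>0,
\qquad\text{i.e.}\qquad
\Delta^2\beta^2<s\bigl(1-\Delta(1+\beta)\bigr)<s ,
\]
since $\Delta>0$. Taking square roots gives $\Delta<\sqrt s/\beta$, and $\beta>\tfrac12$ then yields $\Delta<2\sqrt s$. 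This case is routine.

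\emph{Case (c), $\tau\ge3$.} I would follow the argument of Lemma~\ref{lem:q-nag-beta-upper}. The Cayley transform $\Pt$ of $\nagPfast$ is Hurwitz and $\Pt(0)=\nagPfast(1)=\Delta>0$, so all coefficients of $\Pt$ must be positive; in particular $\Pt'(0)>0$. The same computation as in that lemma gives
\[
\Pt'(0)=-(\tau+2)\nagPfast(1)+2\,(\nagPfast)'(1)=2s+\Delta(2\beta-\tau)=2s-\Delta(\tau-2+2s).
\]
Here $\tau-2+2s>0$, so $\Pt'(0)>0$ is exactly $\Delta<2s/(\tau-2+2s)$. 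The second inequality in (c) is trivial because $2s>0$.

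\emph{Case (b), $\tau=2$.} The same linear-coefficient test only gives $2s(1-\Delta)>0$, i.e.\ $\Delta<1$, which is too weak. So I would compute all five coefficients $A_4,\dots,A_0$ of the quartic $\Pt(w)=(1-w)^4\nagPfast\bigl(\tfrac{1+w}{1-w}\bigr)$ explicitly. Equivalently, I would expand $\nagPfast$ around $z=1$ and use the standard correspondence; here $A_0=\Delta$ and $A_1=2s(1-\Delta)$. Positivity of every $A_i$ is necessary. Granting positivity, Lemma~\ref{lem:q-quartic} says stability is equivalent to
\[
H_3=A_3A_2A_1-A_1^2A_4-A_0A_3^2>0 ,
\]
and I would show that $H_3>0$ forces $\Delta<s$.

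I expect this $H_3$ computation to be the main obstacle. My first attempt would be to expand $H_3$ as a polynomial in $(\Delta,s)$ and look for a factor $(s-\Delta)$ multiplied by a quantity that is positive for $0<s<\tfrac12$ and $0<\Delta<1$ (the latter range comes from the $A_1>0$ condition). This factor is expected because, at $\Delta=s$, a pair of roots of $\nagPfast$ should lie exactly on the unit circle, which is the boundary $H_3=0$. If no clean factorization appears, I would instead prove $H_3\le0$ directly on the region $\Delta\ge s$: first check the boundary $\Delta=s$, where $H_3=0$, and then show that $H_3$ is decreasing in $\Delta$ on $[s,1)$.
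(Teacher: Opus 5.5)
Your parts (a) and (c) coincide with the paper's proof. For $\tau=1$ you use Jury condition (iv) for the cubic. For $\tau\ge3$ you use positivity of the $w$-coefficient $\widetilde P'(0)=2s-\Delta(\tau-2+2s)$ of the Cayley transform.

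For (b) you also take the paper's route: Cayley transform, all $A_i>0$ because $A_0=\Delta>0$, then $H_3>0$ via Lemma~\ref{lem:q-quartic}. However, you stop before the step that carries the argument, namely evaluating $H_3$, and the structure you predict for it is wrong. Substituting $A_4=4-2s-3\Delta+2\Delta s$, $A_3=8-2s+8\Delta-6\Delta s$, $A_2=4+2s-6\Delta+6\Delta s$, $A_1=2s(1-\Delta)$ and $A_0=\Delta$ gives
\[
H_3=64\Bigl[s-g(s)\,\Delta-(2-s)\Delta^2-(1-s)^3\Delta^3\Bigr],
\qquad g(s):=1+s(1-s)^2 .
\]
At $\Delta=s$ this becomes
\[
H_3=-64s^2\bigl[(1-s)^2+(2-s)+s(1-s)^3\bigr]<0 .
\]
Consequences:
\begin{itemize}
\item $H_3$ has no factor $(s-\Delta)$.
\item $\Delta=s$ is not a point where a pair of roots of $\nagPfast$ sits on the unit circle. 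All $A_i$ are positive there, so $H_3<0$ means the polynomial is already unstable.
\item The true stability boundary lies strictly inside $\{\Delta<s\}$; to leading order it is $\Delta\approx s-3s^2$ as $s\downarrow0$. So the cap $\Delta<s$ is not sharp.
\end{itemize}
Your first strategy therefore cannot succeed, because the factorization you look for does not exist. Your fallback rests on the false premise ``$H_3=0$ at $\Delta=s$''. It goes through only after that premise is replaced by $H_3<0$.

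Once the identity above is available, no boundary or monotonicity analysis is needed. For $s\in(0,1)$ and $\Delta>0$, every $\Delta$-dependent term in the bracket is negative and $g(s)\ge1$. Hence $H_3>0$ yields
\[
s>g(s)\,\Delta+(2-s)\Delta^2+(1-s)^3\Delta^3\ge g(s)\,\Delta\ge\Delta,
\]
which is exactly how the paper closes (b). The missing ingredient in your proposal is this explicit expansion of $H_3$.
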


\begin{proof} For simplicity, throughout the proof we write $P_{\tau,\Delta}$ for the degree-$(\tau+2)$ Nesterov polynomial $\nagPfast$, suppressing the superscript. Thus, $\tau=1$ gives a cubic and $\tau=2$ a quartic.

\emph{(a) The cubic.} For $\tau=1$,
$P_{1,\Delta}(z)=z^3-(1+\beta)z^2+\bigl[\beta+\Delta(1+\beta)\bigr]z-\Delta\beta$, \sa{i.e.,}
$a_2=-(1+\beta)$, $a_1=\beta+\Delta(1+\beta)$, $a_0=-\Delta\beta$. Since Schur stability is assumed, all four Jury conditions hold. The first two are $P(1)=\Delta>0$ and $P(-1)=-2-2\beta-\Delta(1+2\beta)<0$, \sa{the third one implies $\Delta\beta<1$}, while the step restriction we need comes from condition~(iv).
Compute
\[
a_0a_2-a_1=\Delta\beta(1+\beta)-\beta-\Delta(1+\beta)=-\beta-\Delta(1+\beta)s,
\]
so $|a_0a_2-a_1|=\beta+\Delta(1+\beta)s$, and (iv) reads
\[
\beta+\Delta(1+\beta)s<1-\Delta^2\beta^2
\iff s-\Delta(1+\beta)s-\Delta^2\beta^2>0 .
\]
Since the left side is positive, $\Delta^2\beta^2<s\bigl(1-\Delta(1+\beta)\bigr)\le s$,
i.e.\ $\Delta<\sqrt s/\beta\le2\sqrt s$ using $\beta>\tfrac12$.

\emph{(b) The quartic.} For $\tau=2$,
$P_{2,\Delta}(z)=z^4+(s-2)z^3+(1-s)z^2+\Delta(2-s)z-\Delta(1-s)$. By
Lemma~\ref{lem:q-cayley}, $P_{2,\Delta}$ is Schur iff its Cayley transform
$\Pt(w)=(1-w)^4P_{2,\Delta}\!\big(\tfrac{1+w}{1-w}\big)=A_4w^4+A_3w^3+A_2w^2+A_1w+A_0$ is
Hurwitz, with
\begin{equation}\label{eq:q-quartcoeff}
\;
\begin{aligned}
A_4&=4-2s-3\Delta+2\Delta s, &\qquad A_3&=8-2s+8\Delta-6\Delta s,\\
A_2&=4+2s-6\Delta+6\Delta s, &\qquad A_1&=2s(1-\Delta), &\qquad A_0&=\Delta .
\end{aligned}\;
\end{equation}
Since \(P_{2,\Delta}\) is Schur stable, Lemma~\ref{lem:q-cayley}
implies that its Cayley transform \(\widetilde P\) is Hurwitz. A real Hurwitz polynomial necessarily has coefficients of the same sign.\footnote{\mg{This follows directly from the root factorization of \(\widetilde{P}(w)\). Each real root \(-a\), with \(a>0\), contributes a factor \(w+a\), while each complex-conjugate pair \(-a\pm ib\) contributes
$
(w+a+ib)(w+a-ib)
=
w^2+2aw+a^2+b^2.
$
Each such factor has strictly positive coefficients, and therefore so does their product, which agrees with \(\widetilde{P}(w)\) up to the constant factor determined by the monic normalization.\looseness=-1
}} Since \(A_0=\Delta>0\), all
coefficients of \(\widetilde P\) are positive. Therefore
Lemma~\ref{lem:q-quartic} applies:
Schur stability is equivalent to $\mg{H}_3>0$. %
\sa{Expanding \mg{$H_3$} defined in \eqref{eq:q-Delta3}, \sa{and substituting} \eqref{eq:q-quartcoeff}, we obtain}
\begin{equation}\label{eq:q-nagquartic}
\mg{H}_3=64\Bigl[\,s-
\sa{g(s)}\Delta
-(2-s)\Delta^2-(1-s)^3\Delta^3\Bigr],
\end{equation}
\sa{where $g(s):=1+s(1-s)^2$}. 
\mg{Using %
$g(s)\geq 1$ on $[0,1]$, 
$\Delta>0$ and $s\in(0,1)$;} %
\sa{$H_3>0$ implies that}
\[
s>g(s)\,\Delta+(2-s)\Delta^2+(1-s)^3\Delta^3\ \ge\ g(s)\,\Delta\ \ge\ \Delta, \quad \mbox{i.e.,} \quad \Delta<s.
\]
\emph{(c) General $\tau$.} Write $\Pt(w)=(1-w)^{\tau+2}P_{\tau,\Delta}\!\big(\tfrac{1+w}{1-w}\big)$. Then
$\Pt(0)=P_{\tau,\Delta}(1)=\Delta>0$, and
\begin{align*}
\Pt{}'(0):=\frac{d\Pt(w)}{dw}\Big |_{w=0} &= -(\tau+2)\,P_{\tau,\Delta}(1)+2\,P_{\tau,\Delta}'(1)
=-(\tau+2)\Delta+2\bigl[s+\Delta(1+\beta)\bigr]\\
&=2s+\Delta(2\beta-\tau)=2s-\Delta(\tau-2+2s),
\end{align*}
\sa{where we used} $P_{\tau,\Delta}'(1)=s+\Delta(1+\beta)$ and $\beta=1-s$. \sa{Since} a Hurwitz polynomial has all coefficients of
the \sa{same} sign %
\sa{with} its constant term, %
and $\Pt(0)=\Delta>0$, 
the coefficient of $w^1$, which equals $\Pt{}'(0)$ is also positive, %
\sa{implying that}
\begin{equation}
\Delta<\frac{2s}{\tau-2+2s}\le\frac{2s}{\tau-2}\quad \mbox{for}\footnote{Note that for $\tau=2$, the same analysis using this inequality gives the crude bound $\Delta<1$; therefore, for $\tau=2$, a more precise analysis based on the stability of quartic polynomials was given in part (b) of the proof.} \quad \tau\ge3,
\label{delta-bound-general-tau}
\end{equation}
which completes the proof.
\end{proof} 

\begin{remark}%
\label{rem:q-summary}
The three regimes use three different facets of the same stability machinery. 
For $\tau \ge 3$, the argument relies only on the signs of the coefficients 
of the Cayley transform $\Pt$. This analysis, however, is too loose for 
$\tau=2$ and does not apply to $\tau=1$. Accordingly, the case $\tau=1$ is 
handled using the cubic Schur--Cohn condition~(iv) in Lemma~\ref{lem:q-cubic}, 
whereas the case $\tau=2$ is handled using the quartic Li\'enard--Chipart 
determinant~\eqref{eq:q-nagquartic}.
\end{remark}

\subsubsection{Slow-mode bounds \sa{for NAG and HB}}%
\label{sec:quad-slow}
At the slow mode $\lambda=\mu$, we have $\delta=\delta_\mu=\Delta/\kappa$ small. We next provide two lower bounds on
$\rho$ \sa{for NAG} based on the location of the slow modes. 
\begin{lemma}[Lower bound on $\rho$ for NAG]\label{lem:q-floorA}
Fix $\tau\ge1$ and suppose $0<\delta_\mu\le s^2/8$ and $2\tau\delta_\mu/s\le\tfrac13$. Then
$\nagPslow$ %
has a real root in $\bigl(1-\tfrac{2\delta_\mu}{s},\,1\bigr)$; hence,
$
\rho\ > 1-\frac{2\delta_\mu}{s}.
$
\end{lemma}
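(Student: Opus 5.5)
Since $\nagPslow$ is a real polynomial, we will locate the root by the intermediate value theorem on the real interval $[1-2\delta_\mu/s,\,1]$. Write $\delta:=\delta_\mu$ and $P(z):=z^\tau(z-1)(z-\beta)+\delta[(1+\beta)z-\beta]$ with $\beta=1-s$. At $z=1$ we have $P(1)=\delta>0$. It therefore suffices to show $P(z_0)<0$ at $z_0:=1-h$ with $h:=2\delta/s$. Continuity then gives a real root $z^\star\in(z_0,1)$, and $\rho\ge \rho(\nagPslow)\ge z^\star>1-2\delta/s$ follows from the definition of $\rho$ as a supremum over $\lambda\in[\mu,L]$.

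To evaluate $P(z_0)$, substitute $z_0-1=-h$ and $z_0-\beta=s-h$. The polynomial term becomes $-(1-h)^\tau h(s-h)$. For the linear term, $(1+\beta)z_0-\beta=1-(1+\beta)h=1-(2-s)h$. Hence
\[
P(z_0)=-(1-h)^\tau h(s-h)+\delta\bigl(1-(2-s)h\bigr),
\]
and using $\delta=hs/2$,
\[
P(z_0)=h\Bigl[\tfrac{s}{2}\bigl(1-(2-s)h\bigr)-(1-h)^\tau (s-h)\Bigr].
\]
It remains to show that the bracket is negative.

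For the bracket we use the two hypotheses as follows. From $\delta\le s^2/8$ we get $h\le s/4$, so $s-h\ge 3s/4$. From $2\tau\delta/s=\tau h\le 1/3$ and Bernoulli's inequality, $(1-h)^\tau\ge 1-\tau h\ge 2/3$. Consequently $(1-h)^\tau(s-h)\ge \tfrac23\cdot\tfrac34 s=s/2$. On the other side, $\tfrac{s}{2}(1-(2-s)h)<s/2$ strictly because $h>0$ and $s<2$. Therefore the bracket is strictly negative, so $P(z_0)<0$, which completes the argument.

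The main obstacle is making the constants close. The margin is exactly tight at $\tfrac23\cdot\tfrac34=\tfrac12$, and strictness comes only from the $-(2-s)h$ term, so the estimates $h\le s/4$ and Bernoulli must be applied exactly as above rather than more crudely. We will also check that $z_0\in(0,1)$, which holds since $h\le 1/3$, so that $(1-h)^\tau$ is positive and Bernoulli applies.
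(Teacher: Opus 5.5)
Your proposal is correct and follows essentially the same route as the paper: you apply the intermediate value theorem between $z_0=1-2\delta_\mu/s$ and $1$, using $z_0-\beta\ge 3s/4$ (from $\delta_\mu\le s^2/8$) and the Bernoulli bound $z_0^\tau\ge 2/3$. As in the paper, strictness comes from $(1+\beta)z_0-\beta<1$, which uses $1+\beta>0$ (your $s<2$, automatic in the standing regime $\beta\in(0,1)$).
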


\begin{proof}
Put $z_0=1-\tfrac{2\delta_\mu}{s}$. From $\delta_\mu\le s^2/8$, we get
$\tfrac{2\delta_\mu}{s}\le\tfrac s4$; \sa{therefore,} $z_0\in[1-\tfrac s4,1)\subset(\beta,1)$,
$z_0-\beta=s-\tfrac{2\delta_\mu}{s}\ge\tfrac{3s}{4}>0$, and $z_0>0$. Now
$\nagPslow(1)=\delta_\mu>0$; \sa{moreover,} using $z_0-1=-\tfrac{2\delta_\mu}{s}$ and
$(1+\beta)z_0-\beta<(1+\beta)-\beta=1$, \sa{we get}
\[
\nagPslow(z_0)
=z_0^\tau(z_0-1)(z_0-\beta)+\delta_\mu\bigl[(1+\beta)z_0-\beta\bigr]
<-z_0^\tau\,\frac{2\delta_\mu}{s}\cdot\frac{3s}{4}+\delta_\mu
=\delta_\mu\Bigl(1-\tfrac32 z_0^\tau\Bigr).
\]
\sa{Since $\tfrac{2\delta_\mu}{s}\leq\tfrac{s}{4}\leq 1$,} by Bernoulli inequality, $z_0^\tau=(1-\tfrac{2\delta_\mu}{s})^\tau\ge1-2\tau\tfrac{\delta_\mu}{s}\ge1-\tfrac13=\tfrac23$, %
where in the last inequality we used our assumption on $\delta_\mu$. Therefore, $1-\tfrac32 z_0^\tau\le0$, i.e., $\nagPslow(z_0)<0<\nagPslow(1)$. The
intermediate value theorem yields a real root in $(z_0,1)$; hence, Theorem~\ref{thm-spec-rad-iter-matrix} implies that $\rho> z_0$.
\end{proof}

\begin{lemma}[Lower bound on the spectral radius for $\tau=1$]\label{lem:q-floorB}
There exists $\delta_0>0$ such that the following holds for $0<\delta_\mu\le\delta_0$: For $\tau=1$ and $0<s\le\tfrac12$, the two roots $z_1,z_2$ of $\nagPslowone$ %
other than the one near $0$ are
either a positive real pair or a complex-conjugate pair, with product
\begin{equation}\label{eq:q-slowprod}
z_1z_2=1-s+\mathcal{O}(\delta_\mu^2),
\end{equation}
\mg{where the implied constant is uniform over
$
0<s\le\tfrac12$ and $
0<\delta_\mu\le\delta_0;
$
that is, it is independent of both $s$ and $\delta_\mu$.}
Consequently $\rho\ge\sqrt{|z_1z_2|}=\sqrt{1-s}+\mathcal{O}(\delta_\mu^2)$, and
$ 
1-\rho\ \le\ 1-\sqrt{1-s}+\mathcal{O}(\delta_\mu^2)
\ =\ \frac{s}{1+\sqrt{1-s}}+\mathcal{O}(\delta_\mu^2).
$
\end{lemma}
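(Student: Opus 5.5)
For $\tau=1$ the Nesterov polynomial is the cubic
\[
P(z)=z^3-(1+\beta)z^2+\bigl[\beta+\delta_\mu(1+\beta)\bigr]z-\delta_\mu\beta ,
\]
and I will work directly with Vieta's formulas. Let $z_0$ be the root near the origin. I will show that $z_0=\delta_\mu+\mathcal O(\delta_\mu^2)$, uniformly in $s\in(0,\tfrac12]$. Then the product relation $z_0z_1z_2=\delta_\mu\beta$ gives $z_1z_2=\delta_\mu\beta/z_0$, and the claimed expansion follows.

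\emph{Step 1: locating the small root.} Write $P(z)=z(z-1)(z-\beta)+\delta_\mu[(1+\beta)z-\beta]$. On a small disk $|z|\le r$ the unperturbed part has the simple root $z=0$. Since $(z-1)(z-\beta)$ is near $\beta\ge\tfrac12$ there, with derivative at $0$ bounded below uniformly in $s$, Rouch\'e's theorem on $|z|=r$ (with $r$ a fixed constant, say $1/8$, and $\delta_\mu\le\delta_0$ small) gives exactly one root in the disk. This root is real by conjugate symmetry.

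To expand it, set $z_0=\delta_\mu w$ and substitute:
\[
w(\delta_\mu w-1)(\delta_\mu w-\beta)+(1+\beta)\delta_\mu w-\beta=0 .
\]
At $\delta_\mu=0$ this reduces to $\beta w-\beta=0$, so $w=1$. The derivative in $w$ is $\beta$, which is bounded away from $0$. The implicit function theorem, or a direct contraction argument on $|w-1|\le 1$, then gives $w=1+\mathcal O(\delta_\mu)$ with a constant independent of $s\in(0,\tfrac12]$, since all coefficients are bounded and $\beta\in[\tfrac12,1)$.

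Carrying the expansion one order further,
\[
\beta w=\beta-\delta_\mu w\bigl[(1+\beta)-(1+\beta)w\bigr]+\mathcal O(\delta_\mu^2)=\beta+\mathcal O(\delta_\mu^2),
\]
because the first-order correction is proportional to $(1-w)=\mathcal O(\delta_\mu)$. Hence $w=1+\mathcal O(\delta_\mu^2)$, and therefore
\[
z_1z_2=\beta/w=1-s+\mathcal O(\delta_\mu^2).
\]

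\emph{Step 2: nature of the other two roots.} The remaining roots $z_1,z_2$ are the roots of the real quadratic obtained by deflating $P$ by $z_0$. Their product $\beta/w$ is positive and their sum $1+\beta-z_0$ is positive. So they are either a complex-conjugate pair or two real roots of the same sign, and that sign must be positive.

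\emph{Step 3: the spectral radius bound.} Since $\max(|z_1|,|z_2|)\ge\sqrt{|z_1z_2|}$, Theorem~\ref{thm-spec-rad-iter-matrix} gives
\[
\rho\ge\sqrt{1-s+\mathcal O(\delta_\mu^2)}=\sqrt{1-s}+\mathcal O(\delta_\mu^2),
\]
using $1-s\ge\tfrac12$, which keeps the square root Lipschitz. Rearranging gives the stated bound on $1-\rho$.

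\emph{Main obstacle.} The delicate point is the second-order cancellation in Step 1: the naive expansion only yields an $\mathcal O(\delta_\mu)$ error, and the claim requires $\mathcal O(\delta_\mu^2)$. The uniformity in $s$ must also be tracked explicitly. I would handle both with the explicit fixed-point map
\[
w\mapsto 1-\frac{\delta_\mu w\bigl[w\delta_\mu-(1+\beta)\bigr](w-1)}{\beta},
\]
obtained by rearranging the equation exactly. It has the factor $(w-1)$, so one iteration from $w=1$ shows $|w-1|\le C\delta_\mu|w-1|$. Inserting the $\mathcal O(\delta_\mu)$ bound back into this map gives the $\mathcal O(\delta_\mu^2)$ bound, with $C$ depending only on $\beta\ge\tfrac12$.
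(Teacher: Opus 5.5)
Your route is sound in outline and differs from the paper's. The paper shifts to $\xi=z-1$ and isolates the root $\xi_3$ near $-1$ by Rouch\'e. It then obtains $\xi_3$ as a $C^\infty$ function of $(\delta_\mu,s)$ via the implicit function theorem and needs only the first-order calibration $\xi_3=-1+\delta_\mu+\mathcal O(\delta_\mu^2)$. This suffices because it writes $z_1z_2=-s-\xi_3-\delta_\mu/\xi_3$, combining the sum and product of roots so that the first-order terms cancel. Uniformity in $s$ comes from compactness of $[0,\delta_0]\times[0,\tfrac12]$, and as a by-product the paper gets $C^2$-in-$s$ control of the remainder, which the lemma does not need. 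You use only the product relation $z_1z_2=\delta_\mu\beta/z_0$. Because you divide by $z_0=\mathcal O(\delta_\mu)$, you need $z_0$ to \emph{third} order, $z_0=\delta_\mu+\mathcal O(\delta_\mu^3)$. In exchange you can drop the implicit function theorem entirely and get explicit, visibly $s$-uniform constants. Your bootstrap display for $\beta w$ in Step 1 is correct and delivers this, once $w=1+\mathcal O(\delta_\mu)$ is known uniformly.

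However, the fixed-point map in your last paragraph, which you offer as the rigorous device, is wrong. Expanding the rescaled equation exactly gives
\[
(w-1)\bigl[\beta-(1+\beta)\delta_\mu w\bigr]=-\delta_\mu^2w^3 ,
\]
so the right-hand side is not proportional to $w-1$; your map actually solves $P(z)=\delta_\mu z^2$. As written it has $w=1$ as an exact fixed point. The inequality $|w-1|\le C\delta_\mu|w-1|$ would then force $w=1$, i.e.\ $z_0=\delta_\mu$, contradicting $P(\delta_\mu)=\delta_\mu^3\neq0$.

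The correct identity is simpler and removes the ``delicate cancellation'' you worry about. Equivalently, $P(z)=z^3+(z-\delta_\mu)\bigl[\beta-(1+\beta)z\bigr]$. At the unique root with $|z_0|\le\tfrac18$ (from your Rouch\'e step) you have $|\beta-(1+\beta)z_0|\ge\tfrac14$, hence $|z_0-\delta_\mu|\le4|z_0|^3\le\tfrac1{16}|z_0|$. This gives $|z_0|\le\tfrac{16}{15}\delta_\mu$ and $|w-1|\le4\bigl(\tfrac{16}{15}\bigr)^3\delta_\mu^2$, uniformly in $\beta\in[\tfrac12,1)$. With this in place of the faulty map, $z_1z_2=\beta/w=1-s+\mathcal O(\delta_\mu^2)$ follows, and your Steps 2 and 3 go through as written.
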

\begin{proof}
Substituting $z=1+\xi$ into \eqref{eq:q-nag} with $\tau=1$, $\delta=\delta_\mu$ and \sa{$\beta=1-s$} gives
\begin{equation}\label{eq:Qdef}
\nagPslowone(1+\xi)
=
Q(\xi,\delta_\mu,s)
:=
\xi^3+(1+s)\xi^2+\bigl[s+\delta_\mu(1+\beta)\bigr]\xi+\delta_\mu,
\end{equation}
which we regard as a polynomial---hence a $C^\infty$ function---jointly in the real
variables $(\xi,\delta_\mu,s)$, \sa{and we also note that} the momentum coefficient $\beta=\beta(s)$ is itself polynomial in $s$, so after substituting $\beta=1-s$, $Q$ remains a polynomial in $(\xi,\delta_\mu,s)$. Denote by $\xi_1,\xi_2,\xi_3$ the three
roots of $Q(\cdot,\delta_\mu,s)$; by Vieta's formulas,
\begin{equation}\label{eq:vieta}
\xi_1+\xi_2+\xi_3=-(1+s),
\qquad
\xi_1\xi_2\xi_3=-\delta_\mu .
\end{equation}
At the unperturbed value $\delta_\mu=0$,
\begin{equation}\label{eq:Qfactor}
Q(\xi,0,s)=\xi^3+(1+s)\xi^2+s\xi=\xi(\xi+1)(\xi+s),
\end{equation}
so the root corresponding to $z\mathrel{=}0$ is \sa{$\xi_3^0=-1$}, while the remaining two sit at
\sa{$\xi_{1}^0=0$ and $\xi_{2}^0=-s$}.

\medskip
\noindent\emph{Step 1: a uniformly isolated, smooth root near \sa{$\xi_{3}^0=-1$}.}
We first show that, for all small \sa{$\delta_\mu>0$} and all $s\in[0,\tfrac12]$, the polynomial
$Q(\cdot,\delta_\mu,s)$ has exactly one root in the disk
$\mathbb{D}:=\{\xi\in\mathbb{C}:|\xi+1|<\tfrac14\}$ \sa{centered at the unperturbed root $\xi_3^0=-1$}, and that this root is real, simple,
and a $C^\infty$ function of $(\delta_\mu,s)$.

Fix $s\in[0,\tfrac12]$ and view $\xi\mapsto Q(\xi,\delta_\mu,s)$ as a polynomial on
$\mathbb{C}$. On the circle $\partial\mathbb{D}=\sa{\{\xi\in\mathbb{C}:|\xi-\xi_3^0|=\tfrac14\}}=\{\sa{\xi\in\mathbb{C}:}|\xi+1|=\tfrac14\}$, we compare $Q$ with
its unperturbed value $Q(\cdot,0,s)$ from \eqref{eq:Qfactor}. Their difference is
\[
Q(\xi,\delta_\mu,s)-Q(\xi,0,s)=\delta_\mu\bigl[(1+\beta)\xi+1\bigr],
\]
and %
$s\in[0,\tfrac12]$ %
\sa{implies that $\beta=1-s\in[\tfrac12,1]$, i.e., $1+\beta\le2$; thus,}
every $\xi\in\partial\mathbb{D}$ (for which $|\xi|\le\tfrac54$) satisfies
\begin{equation}\label{eq:rouche-pert}
\bigl|Q(\xi,\delta_\mu,s)-Q(\xi,0,s)\bigr|
\le \mg{|\delta_\mu|}\bigl(2\cdot\tfrac54+1\bigr)=\tfrac72\,\mg{|\delta_\mu|\leq \tfrac72\ \delta_0.}
\end{equation}
On the same circle, the distances from $\xi$ to the other two unperturbed roots, \sa{$\xi_{1}^0$ and $\xi_{2}^0$,} are
bounded below uniformly in $s$: $\sa{|\xi-\xi_{1}^0|=}|\xi|\ge 1-\tfrac14=\tfrac34$ and
$\sa{|\xi-\xi_{2}^0|=}|\xi+s|\ge(1-s)-\tfrac14\ge\tfrac14$; \sa{hence, \eqref{eq:Qfactor} implies that}
\begin{equation}\label{eq:rouche-main}
\bigl|Q(\xi,0,s)\bigr|=|\xi|\,|\xi+1|\,|\xi+s|\ \ge\ \tfrac34\cdot\tfrac14\cdot\tfrac14=\tfrac{3}{64},
\qquad \forall~\xi\in\partial\mathbb{D}.
\end{equation}
\sa{Fix $\delta_0\in(0,\tfrac{3}{224})$, then we have} $\tfrac72\delta_\mu<\tfrac{3}{64}$ for
$0\le\delta_\mu\le\delta_0$, so \eqref{eq:rouche-pert}--\eqref{eq:rouche-main} give the
strict inequality $|Q(\xi,\delta_\mu,s)-Q(\xi,0,s)|<|Q(\xi,0,s)|$ on $\partial\mathbb{D}$.
By Rouché's theorem, $Q(\cdot,\delta_\mu,s)$ and $Q(\cdot,0,s)$ have the same number of zeros in $\mathbb{D}$, counted with multiplicity; \sa{as $\xi_{3}^0=-1$ is the only zero of $Q(\cdot,0,s)$ in $\mathbb{D}$, which is also simple, $Q(\cdot,\delta_\mu,s)$ has exactly one zero in
$\mathbb{D}$ as well, and it is simple. Let $\xi_3(\delta_\mu,s)$ denote that zero as a function of $\delta_\mu$ and $s$.} Because $Q$ has real coefficients and $\mathbb{D}$ is
symmetric about the real axis, the conjugate $\overline{\xi_3}$ is also a zero in
$\mathbb{D}$; %
\sa{{uniqueness of the zero in $\mathbb{D}$} implies that} $\xi_3=\overline{\xi_3}$, i.e., $\xi_3$ is \emph{real}.
 All the estimates above are strict and uniform in $s\in[0,\tfrac12]$, and depend
 continuously on $(\delta_\mu,s)$; hence, they persist on an open neighbourhood
 \sa{$\Omega\mg{\supset}\{(\delta_\mu,s)\in\mathbb{R}^2:~(\delta_\mu,s)\in[0,\delta_0]\times[0,\tfrac12]\}$}, 
 on which
Simplicity means \sa{$\partial_\xi Q(\xi,\delta_\mu,s)\mid_{\xi=\xi_3}\neq0$}
throughout $\Omega$, so the implicit
function theorem \cite[Section~3]{krantz2002implicit} applies at every point of $\Omega$:
near each point the unique root in $\mathbb{D}$ is given by a $C^\infty$ function, and by
uniqueness these local representations all coincide with $\xi_3$. Therefore
\[
\xi_3:\Omega\to\mathbb{D}\cap\mathbb{R}
\qquad\text{is of class } C^\infty,
\qquad \xi_3(0,s)=-1 .
\]
In particular $\xi_3$ stays within $\tfrac14$ of $-1$, so
\begin{equation}\label{eq:xi3-away}
\xi_3(\delta_\mu,s)\in[-\tfrac54,-\tfrac34],
\qquad\text{whence } |\xi_3|\ge\tfrac34>0 \text{ on } \Omega.
\end{equation}
\sa{In the rest, we adopt the notation $Q_\xi(\tilde\xi,\delta,s):=\partial_\xi Q(\xi,\delta,s)\mid_{\xi=\tilde\xi}$ and $Q_\delta(\xi,\tilde\delta,s):=\partial_\delta Q(\xi,\delta,s)\mid_{\delta=\tilde\delta}$.}
\medskip
\noindent\emph{Step 2: first-order calibration.}
\sa{Recall that $\xi_3(0,s)=\xi_3^0=-1$ for $s\in[0,\tfrac{1}{2}]$; hence, $Q_\xi(-1,0,s)=3-2(1+s)+s=1-s$ and $Q_{\delta_\mu}(-1,0,s)=-(1+\beta)+1=-\beta$. Thus, the implicit function theorem implies that}
\begin{equation}\label{eq:calibration}
\partial_{\delta_\mu}\xi_3(0,s)
=-\frac{Q_{\delta_\mu}(-1,0,s)}{Q_\xi(-1,0,s)}
=\frac{\beta}{1-s}
=1 ,
\end{equation}
\sa{where} the last equality is due to \sa{the definition of $s:=1-\beta$.}
Since
$\partial_{\delta_\mu}^2\xi_3\sa{(\cdot,\cdot)}$ is continuous, hence bounded, on the compact set
$[0,\delta_0]\times[0,\tfrac12]$, Taylor's theorem in $\delta_\mu$ yields
\begin{equation}\label{eq:xi3-expand}
\xi_3(\delta_\mu,s)=-1+\delta_\mu+\cO(\delta_\mu^2)
\qquad\text{uniformly in } s\in[0,\tfrac12].
\end{equation}

\medskip
\noindent\emph{Step 3: the product $z_1z_2$ and its $C^2$ dependence on $s$.}
Let $z_3=1+\xi_3$ be the root of $P_{1,\delta_\mu}$ near $0$, and $z_1=1+\xi_1$,
$z_2=1+\xi_2$ \sa{be} the other two \sa{roots of $\nagP(z)$ defined in \eqref{eq:q-nag}}. Eliminating $\xi_3$ from \eqref{eq:vieta},
\[
\xi_1+\xi_2=-(1+s)-\xi_3,
\qquad
\xi_1\xi_2=-\frac{\delta_\mu}{\xi_3},
\]
where \sa{the quotient is well-defined as %
the denominator $\xi_3(\delta_\mu,s)\neq 0$ on $\Omega$} by \eqref{eq:xi3-away}. Hence
\begin{equation}\label{eq:gdef}
g(s,\delta_\mu):=z_1z_2=(1+\xi_1)(1+\xi_2)=1+\xi_1+\xi_2+\xi_1\xi_2
=-s-\xi_3(\delta_\mu,s)-\frac{\delta_\mu}{\xi_3(\delta_\mu,s)}.
\end{equation}
\sa{Since $\xi_3$ belongs to $C^\infty$ and is bounded away from $0$ on $\Omega$, we can conclude that
$g$ also belongs to the class $C^\infty$ jointly in $(s,\delta_\mu)$ on $\Omega$.}
\sa{Define} $R(s,\delta_\mu):=g(s,\delta_\mu)-(1-s)$ \sa{for $(\delta_\mu,s)\in\Omega$. Since} $\xi_3(0,s)=-1$, \eqref{eq:gdef} implies that
\[
g(s,0)=-s-(-1)-0=1-s,
\quad\text{\sa{i.e.,}}\quad
R(s,0)\equiv0 \quad \sa{\forall}~s\in[0,\tfrac12].
\]
Differentiating \eqref{eq:gdef} in $\delta_\mu$ and using
$\partial_{\delta_\mu}\xi_3(0,s)=1$ from \eqref{eq:calibration}, \sa{we get}
\[
\partial_{\delta_\mu}g(s,0)
=-\partial_{\delta_\mu}\xi_3(0,s)-\frac{1}{\xi_3(0,s)}
=-1-\frac{1}{-1}=0,
\quad\text{\sa{i.e.,}}\quad
\partial_{\delta_\mu}R(s,0)\equiv0 .
\]
Thus $R(\cdot,\delta_\mu)$ vanishes to second order in $\delta_\mu$, uniformly in $s$.
Taylor's theorem with integral remainder gives, for all
$(s,\delta_\mu)\in[0,\tfrac12]\times[0,\delta_0]$, \sa{that}
\begin{equation}\label{eq:R-int}
R(s,\delta_\mu)
=\biggl(\underbrace{\int_0^1(1-t)\,\partial_{\delta_\mu}^2R(s,t\delta_\mu)\,dt}_{=:\,r(s,\delta_\mu)}\biggr)\delta_\mu^2 .
\end{equation}
Since $\partial_{\delta_\mu}^2R=\partial_{\delta_\mu}^2g$ is $C^\infty$, its
$s$-derivatives of orders $0,1,2$ are continuous, hence bounded, on the compact set
$[0,\tfrac12]\times[0,\delta_0]$; differentiating \eqref{eq:R-int} twice under the
integral sign is therefore justified, and shows that $s\mapsto r(s,\delta_\mu)$ is of
class $C^2$ (indeed $C^\infty$) with
\begin{equation}\label{eq:r-bounded}
\sup_{\substack{0\le s\le 1/2\\ 0\le\delta_\mu\le\delta_0}}
\bigl(|r|+|\partial_s r|+|\partial_s^2 r|\bigr)<\infty .
\end{equation}
Combining \eqref{eq:R-int} with %
\sa{ $R(s,\delta_\mu):=g(s,\delta_\mu)-(1-s)$ and $g(s,\delta_\mu)=z_1z_2$ for $(\delta_\mu,s)\in\Omega$, we get}
\begin{equation}\label{eq:prod-final}
z_1z_2=1-s+r(s,\delta_\mu)\,\delta_\mu^2 ,
\end{equation}
which establishes \eqref{eq:q-slowprod}; \sa{moreover, we also see that the
$\cO(\delta_\mu^2)$ term in \eqref{eq:q-slowprod} has the explicit form $r(s,\delta_\mu)\,\delta_\mu^2$ such that $s\mapsto r(s,\delta_\mu)$ is of
class $C^2$ and \eqref{eq:r-bounded} holds. Therefore, in addition to \eqref{eq:q-slowprod}, we can also conclude that $\frac{\partial}{\partial s}(z_1z_2)
=-1+\cO(\delta_\mu^2)$ and $\frac{\partial^2}{\partial s^2}(z_1z_2)=
\cO(\delta_\mu^2)$ uniformly in $s$.}

\medskip
\noindent\emph{Step 4: the spectral-radius bound.}
After shrinking $\delta_0$ if necessary so that $\delta_0^2\sup_s|r|<\tfrac14$, the
lower bound $1-s\ge\tfrac12$ and \eqref{eq:prod-final} give
\begin{equation}\label{eq:prod-pos}
z_1z_2=1-s+r\,\delta_\mu^2\ \ge\ \tfrac12-\tfrac14=\tfrac14>0 .
\end{equation}
Moreover, by \eqref{eq:xi3-expand},
$z_1+z_2=2+(\xi_1+\xi_2)=2-s-\delta_\mu+\cO(\delta_\mu^2)\ge\tfrac32-\delta_\mu+\cO(\delta_\mu^2)>0$
for $\delta_0$ small. Since $P_{1,\delta_\mu}$ has real coefficients, the pair
$(z_1,z_2)$ is either real or complex-conjugate; in the real case,
\eqref{eq:prod-pos} together with $z_1+z_2>0$ forces both to be positive. This proves the
dichotomy in the statement. In either case,
\[
\rho\ \ge\ \max\{|z_1|,|z_2|\}\ \ge\ \sqrt{|z_1z_2|}\ =\ \sqrt{\,1-s+r\,\delta_\mu^2\,}.
\]
Because $1-s+r\delta_\mu^2\ge\tfrac14$ by \eqref{eq:prod-pos}, the elementary identity
$\sqrt{a+h}-\sqrt a=h/(\sqrt{a+h}+\sqrt a)$ with $a=1-s$ and $h=r\delta_\mu^2$ gives
\begin{equation}\label{eq:rho-expand}
\sqrt{\,1-s+r\,\delta_\mu^2\,}=\sqrt{1-s}+\tilde r(s,\delta_\mu)\,\delta_\mu^2,
\qquad
\tilde r:=\frac{r}{\sqrt{1-s+r\delta_\mu^2}+\sqrt{1-s}} .
\end{equation}
The denominator in \eqref{eq:rho-expand} is bounded below by
$2\sqrt{1-s}\ge\sqrt2$ and is $C^2$ in $s$, while the numerator $r$ obeys
\eqref{eq:r-bounded}; hence $\tilde r$ is $C^2$ in $s$ with $\tilde r,\partial_s\tilde
r,\partial_s^2\tilde r$ bounded uniformly in $(s,\delta_\mu)$. Therefore
$\rho\ge\sqrt{1-s}+\cO(\delta_\mu^2)$, and
\[
1-\rho\ \le\ 1-\sqrt{1-s}+\cO(\delta_\mu^2)\ =\ \frac{s}{1+\sqrt{1-s}}+\cO(\delta_\mu^2),
\]
\sa{where we use the elementary identity} $1-\sqrt{1-s}=\dfrac{s}{1+\sqrt{1-s}}$. \sa{Note that} 
the $\cO(\delta_\mu^2)$ remainder inherits the same $C^2$-in-$s$ dependence from
\eqref{eq:rho-expand}. This completes the proof.
\end{proof}

\begin{remark}\label{rem:q-floorBnum}
The proof of Theorem~\ref{thm:q-naglower} uses only the uniform $\cO(\delta_\mu^2)$ control in \eqref{eq:q-slowprod}; no sign information about the remainder is needed.
\end{remark}
\begin{lemma}[The regime $s=\cO(1/\kappa)$]
\label{lem:q-tinys-slowpair}
Fix \(\tau\ge1\) and \(B>0\), and define
\[
   C:=B+\sqrt{B}+2 ,
\]
which depends only on \(B\), not on \(\tau\). \sa{Then,} there is a threshold
\(\kappa_0=\kappa_0(\tau,B)\) such that, for every \(\kappa\ge\kappa_0\) and every pair
\((s,\delta_\mu)\) satisfying
\[
   0<s\le\frac{B}{\kappa},\qquad
   0\le\delta_\mu\le\frac{B}{\kappa^2},\qquad
   \beta=1-s,
\]
the delayed Nesterov polynomial \sa{\(P^{\mathrm{NAG}}_{\tau,\delta}\), defined in
\eqref{eq:q-nag},} %
has two roots\footnote{Counted with
multiplicity; in the case of a double root we set \(z_1=z_2\).} \(z_1,z_2\) \sa{for \(\delta=\delta_\mu\) such that}
\[
   \sa{\max\{|z_1-1|,~|z_2-1|\}}\le \frac{C}{\kappa}. %
\]
Consequently its spectral radius satisfies \(\rho\ge 1-C\kappa^{-1}\); equivalently,
\(1-\rho\le C\kappa^{-1}\). The same conclusion holds for the delayed heavy-ball polynomial
\sa{\(P^{\mathrm{HB}}_{\tau,\delta}\)}, defined in \eqref{eq:q-hb}, \sa{for \(\delta=\delta_\mu\)}: its spectral radius likewise
satisfies \(\rho\ge 1-C\kappa^{-1}\), with the same constant \(C\).
\end{lemma}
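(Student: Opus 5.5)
The plan is to localize two roots near $z=1$ with Rouché's theorem, comparing the polynomial against $z^\tau(z-1)^2$ on the circle $|z-1|=r:=C/\kappa$. In the NAG case, write $\beta=1-s$ and $z=1+\xi$. Then
\[
P^{\mathrm{NAG}}_{\tau,\delta_\mu}(1+\xi)=(1+\xi)^\tau\,\xi\,(\xi+s)+\delta_\mu\bigl[1+(1+\beta)\xi\bigr].
\]
Take the comparison function $F(\xi):=(1+\xi)^\tau\xi^2$, which has exactly two zeros (a double zero at $\xi=0$) inside $|\xi|<r$ once $r<1$. Its remaining zeros sit at $\xi=-1$, outside the disk. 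The perturbation is
\[
G(\xi):=(1+\xi)^\tau s\,\xi+\delta_\mu\bigl[1+(1+\beta)\xi\bigr].
\]
If $|G|<|F|$ on $|\xi|=r$, Rouché gives exactly two roots of $P$ in $|z-1|<r$. Both then have modulus at least $1-r$, which yields $\rho\ge 1-C/\kappa$.

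The estimates on the circle $|\xi|=r$ go as follows. For the comparison term, $|F|\ge(1-r)^\tau r^2$. For the perturbation,
\[
|G|\le(1+r)^\tau s r+\delta_\mu(1+2r)\le(1+r)^\tau\frac{B}{\kappa}\,r+\frac{B}{\kappa^2}(1+2r).
\]
Substituting $r=C/\kappa$ and multiplying by $\kappa^2$, the required inequality $|G|<|F|$ becomes
\[
(1+r)^\tau BC+B(1+2r)<(1-r)^\tau C^2 .
\]
As $\kappa\to\infty$ we have $r\to0$, and since $\tau$ is fixed, $(1\pm r)^\tau\to1$. The limiting condition is therefore $BC+B<C^2$, i.e.\ $C^2-BC-B>0$. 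Its positive root is
\[
C_*=\frac{B+\sqrt{B^2+4B}}{2}\le\frac{B+B+2\sqrt B}{2}=B+\sqrt B .
\]
So $C=B+\sqrt B+2$ exceeds $C_*$ by a margin of at least $2$, which makes the strict inequality hold with slack. It then survives the factors $(1\pm r)^\tau$ and $1+2r$ for all $\kappa\ge\kappa_0(\tau,B)$. The threshold $\kappa_0$ can be fixed explicitly, for instance by requiring $(1+r)^\tau/(1-r)^\tau\le1+\epsilon$ for a small $\epsilon$ depending on $B$, which is where the dependence on $\tau$ enters. I expect this bookkeeping to be the only delicate point, because the constant $C$ must stay independent of $\tau$ while $\kappa_0$ is allowed to depend on it. The slack of $2$ in $C$ is what makes this work.

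For heavy ball, the same argument applies to $P^{\mathrm{HB}}_{\tau,\delta_\mu}(z)=z\,K(z)$. Written in terms of $\xi$, this is the NAG computation with the linear term $\delta_\mu[1+(1+\beta)\xi]$ replaced by the constant $\delta_\mu$. That replacement gives a perturbation bound $\delta_\mu$, smaller than the NAG bound $\delta_\mu(1+2r)$, so the same Rouché comparison and the same $C$ work. Equivalently, one can apply the argument to $K$ directly, compared against $(1+\xi)^{\tau-1}\xi^2$. The extra factor $z$ only contributes a root at $0$ and does not affect the two roots near $1$.

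Finally, the case where the two roots coincide (a double root) needs no separate treatment, since Rouché counts zeros with multiplicity, matching the footnote in the statement.
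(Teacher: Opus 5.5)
Your proof is correct and is essentially the paper's argument. Both apply Rouch\'e's theorem on the circle $|z-1|=C/\kappa$, and your limiting condition $C^2-BC-B>0$ is exactly the paper's margin $m_R=R(R-B)-B>0$ with $R=C$; the only difference is that the paper rescales $z=1+w/\kappa$ and compares with the quadratic $w(w+\sigma)+\eta$, whereas you compare with $(1+\xi)^\tau\xi^2$ and treat the $s$- and $\delta_\mu$-terms as the perturbation.

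One harmless slip: $P^{\mathrm{HB}}_{\tau,\delta_\mu}(1+\xi)$ contains $\delta_\mu(1+\xi)$, not the constant $\delta_\mu$ (the constant appears in $K$). Since $|1+\xi|\le 1+2r$ on the circle, your bound and conclusion are unaffected.
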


\begin{proof}
\sa{Given $B>0$, %
let \(\varepsilon:=\kappa^{-1}\), %
and we set the two small parameters, $s$ and $\delta_\mu$, and $\beta$ as follows:}
\[
   s=\sigma\varepsilon,\quad \delta_\mu=\eta\varepsilon^2,\quad \beta=1-\sigma\varepsilon,
   \quad%
   \text{\sa{for}}\quad
   \sigma\in(0,B],\quad \eta\in[0,B].
\]
We locate the roots near \(z=1\) through the substitution \(z=1+\varepsilon w\) \sa{for some $w\in\mathbb{C}$}. Since
\[
   z-1=\varepsilon w,\qquad z-\beta=\varepsilon(w+\sigma),
   \qquad (1+\beta)z-\beta=1+(2-\sigma\varepsilon)\varepsilon w,
\]
both characteristic polynomials factor through the same power of \(\varepsilon\):
\begin{align*}
   P^{\mathrm{NAG}}_{\tau,\delta_\mu}(1+\varepsilon w)
      &=\varepsilon^2\,G_\varepsilon(w),
      & G_\varepsilon(w)&:=(1+\varepsilon w)^{\tau}\,w(w+\sigma)
         +\eta\bigl[1+(2-\sigma\varepsilon)\varepsilon w\bigr],\\
   \hbKslow(1+\varepsilon w)
      &=\varepsilon^2\,\widetilde G_\varepsilon(w),
      & \widetilde G_\varepsilon(w)&:=(1+\varepsilon w)^{\tau-1}\,w(w+\sigma)+\eta,
\end{align*}
using $\hbPslow = z \hbKslow$, \sa{where $\hbKslow$ is defined in \eqref{eq:q-hb}}. As \(\varepsilon\to0\) both \(G_\varepsilon\) and \(\widetilde G_\varepsilon\) reduce to the %
\emph{same} \(\varepsilon\)-independent quadratic
\[
   g(w):=w(w+\sigma)+\eta .
\]

\smallskip
\noindent\emph{Step 1: a circle enclosing the roots of \(g\).}
If \(g(w)=0\), then \(|w|^2\le\sigma|w|+\eta\), so
\(|w|\le\tfrac12\bigl(\sigma+\sqrt{\sigma^2+4\eta}\bigr)\le\sigma+\sqrt{\eta}\le B+\sqrt{B}\).
Hence, with \(R:=C=B+\sqrt{B}+2\), both roots of \(g\) lie in \sa{\(\{w: |w|<R-1\}\). Moreover,} on the
circle \sa{\(\{w:~|w|=R\}\)}, using \(|w+\sigma|\ge R-\sigma\ge R-B\) and \(\eta\le B\),
\[
   |g(w)|\ \ge\ |w|\,|w+\sigma|-\eta\ \ge\ R(R-B)-B=:m_R .
\]
Since \(R-B=\sqrt{B}+2\ge2\), we have \(m_R\ge 2R-B=B+2\sqrt{B}+4>0\).

\smallskip
\noindent\emph{Step 2: a uniform \(O(\varepsilon)\) perturbation bound on \(|w|=R\).}
For \(\varepsilon\le1/R\) we have \(|\varepsilon w|\le1\) on \sa{\(\{w:~|w|=R\}\)}, so for every integer
\(p\ge0\),
\[
   \bigl|(1+\varepsilon w)^{p}-1\bigr|
   \ \le\ \sum_{k=1}^{p}\binom{p}{k}|\varepsilon w|^{k}
   \ \le\ (2^{p}-1)\,\varepsilon R .
\]
\sa{Since} \(|w(w+\sigma)|\le R(R+B)\) and \(|2-\sigma\varepsilon|\le2\), %
\sa{for $w$ such that \(|w|=R\), we get}
\beq
   |G_\varepsilon(w)-g(w)|
   \ &\le& \ (2^{\tau}-1)\varepsilon R\cdot R(R+B)+2B\varepsilon R
   \ \le\ c_1\varepsilon,
 \\
   |\widetilde G_\varepsilon(w)-g(w)|
   \ &\le&\ (2^{\tau-1}-1)\varepsilon R\cdot R(R+B)\ \le\ c_1\varepsilon,
\eeq
where \(c_1:=(2^{\tau}-1)R^{2}(R+B)+2BR\) depends only on \((\tau,B)\).

\smallskip
\noindent\emph{Step 3: \sa{Rouché's theorem} and conclusion.}
Set \(\kappa_0:=\max\{\,R,\ 1+c_1/m_R\,\}\). For \(\kappa\ge\kappa_0\), we have
\(\varepsilon\le1/R\), and using the definition $\varepsilon=1/\kappa$, we have \(c_1\varepsilon<m_R\). \sa{Hence, on \(\{w:|w|=R\}\), we have}
\[ 
   |G_\varepsilon(w)-g(w)|<m_R\le|g(w)|,
   \qquad
   |\widetilde G_\varepsilon(w)-g(w)|<m_R\le|g(w)| .
\]
By Rouch\'e's theorem each of \(G_\varepsilon,\widetilde G_\varepsilon\) has the same number
of zeros in \(\{w:~|w|<R\}\) as \(g\), \sa{counted with multiplicity. Since $g$ has two zeros in \(\{w:~|w|<R\}\), let
\(w_1,w_2\) be the zeros of $G_\varepsilon$ (resp. $\widetilde G_\varepsilon$) in \(\{w:~|w|<R\}\); hence,} \(z_i:=1+\varepsilon w_i\) \sa{for $i=1,2$} are roots of $\nagPslow$ \textup{(}resp.\ \(\hbPslow\)\textup{)}, and
\[
   |z_i-1|=\varepsilon|w_i|<\varepsilon R=\frac{R}{\kappa}=\frac{C}{\kappa},\quad i=1,2.
\]
In particular \(|z_i|\ge 1-C/\kappa\), so the spectral radius satisfies
\(\rho\ge\max_i|z_i|\ge 1-C\kappa^{-1}\).
\end{proof}

\subsubsection[Nesterov cannot sqrt-kappa-accelerate under delay]{Nesterov cannot $\sqrt\kappa$-accelerate under delay}\label{sec:quad-main}

\begin{theorem}[Nesterov cannot \sa{achieve $\sqrt\kappa$-acceleration} under delay]\label{thm:q-naglower}
For the delayed Nesterov method $(\nu=\beta)$ on $\QmuL$, the best certifiable rate
$\rhost_{\mathrm{NAG}}(\tau,\kappa)$ \sa{over Schur-stable tunings} satisfies
\begin{equation}\label{eq:q-barrier}
1-\rhost_{\mathrm{NAG}}\ \le
\begin{cases}
\bigl(2+o(1)\bigr)\,\kappa^{-2/3}, & \tau=1,\\[4pt]
\mathcal{O}(\kappa^{-1}), & \tau=2,\\[4pt]
\mathcal{O}_\tau(\kappa^{-1}), & \tau\ge3.
\end{cases}
\end{equation}
\sa{for every fixed delay $\tau\in\mathbb{Z}_+$ and all sufficiently large
$\kappa$.} In particular,
\[
        1-\rhost_{\mathrm{NAG}}=o(\kappa^{-1/2}),
\]
\sa{i.e.,} a fixed delay prevents the classical Nesterov choice $\nu=\beta$ from retaining
$\sqrt\kappa$-acceleration. %
\end{theorem}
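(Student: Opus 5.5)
Fix a Schur-stable Nesterov tuning $(\alpha,\beta)$ with $\beta=1-s\in(0,1)$ and write $\Delta=\alpha L$, $\delta_\mu=\Delta/\kappa$. Since $\rho(\alpha,\beta)\ge\rho(P^{\mathrm{NAG}}_{\tau,\delta_\mu})$ and Schur stability forces $P^{\mathrm{NAG}}_{\tau,\Delta}$ to be Schur stable, the plan combines the fast-mode caps of Proposition~\ref{prop:q-fast} (upper bounds on $\Delta$ in terms of $s$) with the slow-mode root bounds of Lemmas~\ref{lem:q-floorA}, \ref{lem:q-floorB} and~\ref{lem:q-tinys-slowpair}. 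The goal is a bound on $1-\rho(\alpha,\beta)$ that is uniform over tunings; taking the infimum then bounds $1-\rhost_{\mathrm{NAG}}$. Throughout we split on the size of $s$.

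\emph{Case $s\ge\tfrac12$.} Here there is no fast-mode cap from Proposition~\ref{prop:q-fast}. Instead we use the constant coefficient $-\Delta\beta$, so $\Delta\beta<1$. For $\tau\ge2$ we also use the Cayley coefficient $\widetilde P'(0)=2s-\Delta(\tau-2+2s)>0$, which gives $\Delta\le 2s/(2s+\tau-2)\le1$. For $\tau=1$ we use Jury condition (iv) in Lemma~\ref{lem:q-cubic}, which gives $\Delta(1+\beta)<1$. In every case $\Delta=\mathcal O(1)$, so $\delta_\mu=\mathcal O(1/\kappa)$. At $\delta=0$ the polynomial has a simple root at $1$, with derivative $P'(1)=s\ge\tfrac12$ bounded away from zero. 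The intermediate value argument of Lemma~\ref{lem:q-floorA} therefore extends: $P(1)=\delta_\mu>0$, and $P(1-c\delta_\mu)<0$ for a suitable constant $c$, using $1-c\delta_\mu>\beta$ when $\beta\le1/2$, and handling $\beta\in(1/2,1)$ as in that lemma. This gives $1-\rho=\mathcal O(\kappa^{-1})$.

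\emph{Case $s<\tfrac12$, $\tau\ge2$.} Proposition~\ref{prop:q-fast}(b),(c) gives $\Delta\le c_\tau s$ with $c_2=1$ and $c_\tau=2/(\tau-2)$, so $\delta_\mu\le c_\tau s/\kappa$. If $s\ge s_0/\kappa$ for a large constant $s_0$, the hypotheses of Lemma~\ref{lem:q-floorA} hold: $\delta_\mu\le c_\tau s/\kappa\le s^2/8$ once $s\ge 8c_\tau/\kappa$, and $2\tau\delta_\mu/s\le 2\tau c_\tau/\kappa\le\tfrac13$ for large $\kappa$. The lemma then yields $1-\rho<2\delta_\mu/s\le 2c_\tau/\kappa$. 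If instead $s\le B/\kappa$ with $B=s_0$, then $\delta_\mu\le c_\tau B/\kappa^2$, and Lemma~\ref{lem:q-tinys-slowpair} (with $B$ replaced by $\max\{B,c_\tau B\}$) gives $1-\rho\le C/\kappa$. For $\tau=2$ all constants are absolute, giving $\mathcal O(\kappa^{-1})$; for $\tau\ge3$ they depend on $\tau$. The case $\tau\ge3$ with $s$ close to $1/2$ needs no separate treatment, since the cap holds for all $s<1/2$.

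\emph{Case $s<\tfrac12$, $\tau=1$.} This is the main obstacle, since the cap is only $\Delta<\sqrt s/\beta\le 2\sqrt s$, so $\delta_\mu<\sqrt s/(\beta\kappa)$. We take the minimum of two bounds. Lemma~\ref{lem:q-floorB} gives $1-\rho\le s/(1+\sqrt{1-s})+\mathcal O(\delta_\mu^2)=s/2+\mathcal O(s^2)+\mathcal O(\kappa^{-2})$, valid since $\delta_\mu\to0$. Lemma~\ref{lem:q-floorA}, when applicable, gives $1-\rho<2\delta_\mu/s\le 2\sqrt s/(\beta s\kappa)=2/(\beta\sqrt s\,\kappa)$. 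Its hypothesis $\delta_\mu\le s^2/8$ holds when $s^{3/2}\gtrsim 16/\kappa$, i.e.\ $s\ge C\kappa^{-2/3}$; the condition $2\delta_\mu/s\le 1/3$ is then automatic.

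The balancing is where the constant $2$ must be recovered. For $s\le t\kappa^{-2/3}$ we get $1-\rho\le (t/2+o(1))\kappa^{-2/3}$. For $s\ge t\kappa^{-2/3}$ we get $1-\rho\le (2/\sqrt t+o(1))\kappa^{-2/3}$, using $\beta\to1$ for small $s$; for $s$ bounded away from $0$, the floorB bound is fine or floorA gives $\mathcal O(\kappa^{-1})$. Both pieces are at most $2\kappa^{-2/3}$ for $t\in[4,4]$, i.e.\ $t=4$ gives $2$ and $1$ respectively. The gap at $t=4$ versus the applicability threshold $s\ge 16^{2/3}\kappa^{-2/3}\approx 6.35\,\kappa^{-2/3}$ is the delicate point. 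To close it I would sharpen Lemma~\ref{lem:q-floorA} for $\tau=1$, choosing $z_0=1-c\delta_\mu/s$ with $c$ close to $1$, or use the product bound $z_1z_2$ from Lemma~\ref{lem:q-floorB} on the intermediate window $4\kappa^{-2/3}\le s\le 6.4\kappa^{-2/3}$, where $s/2\le 3.2\kappa^{-2/3}$. If an exact $2$ cannot be reached this way, I would accept a weaker absolute constant as the first fallback.

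Finally, tiny $s\lesssim \kappa^{-1}$ is covered by Lemma~\ref{lem:q-tinys-slowpair}, or directly by the floorB bound. The conclusion $o(\kappa^{-1/2})$ follows from all three cases.
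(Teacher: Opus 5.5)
Your plan follows the paper's proof closely for $\tau\ge2$: the same split at $s=\tfrac12$, the same fast-mode caps from Proposition~\ref{prop:q-fast}, and the same pairing of Lemma~\ref{lem:q-floorA} (moderate $s$) with Lemma~\ref{lem:q-tinys-slowpair} (tiny $s$). For $s\ge\tfrac12$ you bound $\Delta$ through Jury condition (iv) and the Cayley coefficient, whereas the paper uses the Vieta estimate $|a_1|<\tau+2$. Your route is valid and even gives $\Delta<1$ uniformly. After that, Lemma~\ref{lem:q-floorA} applies verbatim, since its proof uses only $s\in(0,1)$. So no perturbation argument around $P'(1)=s$ is needed, and the case $\beta\in(\tfrac12,1)$ cannot occur when $s\ge\tfrac12$.

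The genuine gap is the constant for $\tau=1$: as written, your proposal does not establish $(2+o(1))\kappa^{-2/3}$. Neither of your suggested remedies gets there.
\begin{itemize}
\item Applying Lemma~\ref{lem:q-floorB} on the window up to $6.35\,\kappa^{-2/3}$ only gives about $3.2\,\kappa^{-2/3}$.
\item Moving the test point of Lemma~\ref{lem:q-floorA} to $1-c\delta_\mu/s$ with $c$ near $1$ goes the wrong way. Negativity of $P$ there needs roughly $c(1-c\delta_\mu/s^2)>1$, so the admissible range of $\delta_\mu/s^2$ shrinks to zero as $c\downarrow1$. This raises the threshold that controls the binding floorB piece.
\end{itemize}
Your fallback would still give $o(\kappa^{-1/2})$, but not the displayed bound.

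The window between $4\kappa^{-2/3}$ and $6.35\kappa^{-2/3}$ is self-inflicted. You checked the hypothesis $\delta_\mu\le s^2/8$ using the crude cap $\Delta<2\sqrt s$. The cap you already quoted, $\Delta<\sqrt s/\beta=\sqrt s/(1-s)$, makes that hypothesis hold whenever $(1-s)\kappa s^{3/2}\ge 8$. Since $s\mapsto(1-s)s^{3/2}$ is increasing on $(0,\tfrac12)$, this holds for all $s\in[S_\kappa,\tfrac12)$. Here $S_\kappa$ is the root of $(1-S_\kappa)\kappa S_\kappa^{3/2}=8$, so $S_\kappa=4\kappa^{-2/3}(1+o(1))$, which is exactly your $t=4$.

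With this threshold the two pieces close as follows.
\begin{itemize}
\item On $[S_\kappa,\tfrac12)$, Lemma~\ref{lem:q-floorA} gives $1-\rho<2/((1-s)\sqrt{s}\,\kappa)$. Its supremum over that range is $\max\{S_\kappa/4,\,4\sqrt2/\kappa\}=(1+o(1))\kappa^{-2/3}$.
\item For $s<S_\kappa$ you have $\delta_\mu<2\sqrt{S_\kappa}/\kappa=\mathcal O(\kappa^{-4/3})$. Lemma~\ref{lem:q-floorB} then gives $1-\rho\le S_\kappa/(1+\sqrt{1-S_\kappa})+\mathcal O(\kappa^{-8/3})=(2+o(1))\kappa^{-2/3}$.
\end{itemize}
This is precisely the paper's argument, and it recovers the stated constant $2$ without sharpening either lemma.
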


\begin{proof}
Fix a Schur-stable Nesterov tuning and write $s=1-\beta\in(0,1)$,
$\Delta=\alpha L$, and $\delta_\mu=\Delta/\kappa$. We prove an upper bound on
$1-\rho$ for every such tuning; taking the supremum over tunings is the same
as bounding $1-\rhost_{\mathrm{NAG}}$.

We first dispose of the small-momentum regime \mg{$\beta\leq 1/2$ which corresponds to $s\ge\tfrac12$}. In this case, as shown below, $\Delta=O_\tau(1)$ for every Schur-stable tuning. Indeed, the fast-mode
polynomial $\nagPfast$ is monic of degree $n=\tau+2$ and all its roots
lie in the unit disk. Write
$
\nagPfast(z)=\prod_{i=1}^n (z-z_i)
=z^n+\sum_{j=0}^{n-1} a_{j}z^j,
$ with roots $|z_i|<1$. By Vieta formulas,
$
a_{n-k}=(-1)^k
\sum_{|I|=k}\prod_{i\in I}z_i.
$
\sa{Since $|z_i|<1$ for $i=1,\ldots,n$, we have}
$|a_{n-k}|<\binom{n}{k}$. In particular, %
\sa{as $n=\tau+2$, %
for $k=\tau+1$} we obtain that the coefficient $a_1$ (which is the coefficient of $z$ in 
$\nagPfast$) satisfies $\sa{|a_1|} < \tau+2$.
If $\tau=1$, we have the coefficient $\sa{0<}a_1 = \beta+\Delta(1+\beta)<3$; therefore, $\Delta=\cO(1)$. If $\tau\ge2$, then $\sa{0<}a_1 = \Delta(1+\beta)\le \tau+2$; hence, we again have $\Delta=\cO_\tau(1)$. Consequently, \sa{using \eqref{eq:s-Delta-def},}
\[
        \delta_\mu=\frac{\Delta}{\kappa}=\cO_\tau(\kappa^{-1}).
\]
For all sufficiently large $\kappa$, Lemma~\ref{lem:q-floorA} applies to the
slow mode and yields
\[
        1-\rho %
        \sa{<} \frac{2\delta_\mu}{s}\le 4\delta_\mu
        =\cO_\tau(\kappa^{-1}),
\]
because $s\ge\tfrac12$. Thus, tunings with $s\ge\tfrac12$ are nonaccelerated.
It remains to consider \sa{the setting with $0<s<\tfrac12$, for which} Proposition~\ref{prop:q-fast} applies.

\medskip
\noindent\textbf{Case $\tau\ge3$.}
By Proposition~\ref{prop:q-fast}(c), \sa{due to Schur stability,
we have}
\[
        \Delta<\frac{2s}{\tau-2+2s}\le \frac{2s}{\tau-2}.
\]
First suppose that
\[
        s\ge \frac{16}{(\tau-2)\kappa}.
\]
Then
\[
        \delta_\mu=\frac{\Delta}{\kappa}
        <\frac{2s}{(\tau-2)\kappa}
        \le \frac{s^2}{8}
\quad \mbox{and} \quad  
        \frac{2\tau\delta_\mu}{s}
        <\frac{4\tau}{(\tau-2)\kappa},
\]
where $\frac{4\tau}{(\tau-2)\kappa} \leq 1/3$ 
for all sufficiently large $\kappa$ at fixed $\tau$. \sa{Thus, Lemma~\ref{lem:q-floorA} %
applies to the setting we consider here, i.e.,}
\beq
        1-\rho
        \sa{<} \frac{2\delta_\mu}{s}
        =\frac{2\Delta}{\kappa s}
        <\frac{4}{(\tau-2)\kappa}.
        \label{nag-rate-bound-large-s}
\eeq
\sa{Next,} we consider the complementary \sa{scenario for $s$, i.e.,} 
$
        s<\frac{16}{(\tau-2)\kappa}.
$
Then $s=\cO_\tau(\kappa^{-1})$, and the same fast-mode cap gives
\[
        \delta_\mu=\frac{\Delta}{\kappa}
        <\frac{2s}{(\tau-2)\kappa}
        =\cO_\tau(\kappa^{-2}).
\]
Thus, the hypotheses of Lemma~\ref{lem:q-tinys-slowpair} hold, \sa{and it shows that}
$
        1-\rho\le C\kappa^{-1}
$
\sa{for some $C>0$.} From this inequality and \eqref{nag-rate-bound-large-s}, we conclude every Schur-stable tuning with
$\tau\ge3$ satisfies
$
        1-\rho=\cO_\tau(\kappa^{-1})
$
for any $0<s<1/2$.

\medskip
\noindent\textbf{Case $\tau=2$.}
By Proposition~\ref{prop:q-fast}(b), \sa{due to Schur stability, we have} 
$
        \Delta<s.
$
First, suppose that
$
        s\ge {16}/{\kappa}.
$
Then,
\[
        \delta_\mu=\frac{\Delta}{\kappa}<\frac{s}{\kappa}
        \le \frac{s^2}{16}<\frac{s^2}{8}
\quad
\mbox{and}
\quad
        \frac{2\tau\delta_\mu}{s}
        =\frac{4\delta_\mu}{s}
        <\frac{4}{\kappa},
\]
where $\frac{4}{\kappa} \leq 1/3$ for all sufficiently large $\kappa$. Lemma~\ref{lem:q-floorA}
therefore gives
$
        1-\rho
        \le \frac{2\delta_\mu}{s}
        =\frac{2\Delta}{\kappa s}
        <\frac{2}{\kappa}$. 
        \sa{On the other hand, if one has}
$
        s<\frac{16}{\kappa},
$
then $s=\cO(\kappa^{-1})$ and
$
        \delta_\mu=\frac{\Delta}{\kappa}<\frac{s}{\kappa}=\cO(\kappa^{-2});
$
\sa{hence, for this scenario Lemma~\ref{lem:q-tinys-slowpair} implies that}
$
        1-\rho\le C\kappa^{-1}
$
for some $C>0$.
\sa{Thus, we can conclude that} every Schur-stable tuning with $\tau=2$ satisfies
$
        1-\rho=\cO(\kappa^{-1}).
$

\medskip
\noindent\textbf{Case $\tau=1$ (the $\kappa^{-2/3}$ barrier).}
By Proposition~\ref{prop:q-fast}(a), every Schur-stable tuning with
$0<s<\tfrac12$ satisfies
\begin{equation}\label{eq:q-tau1cap}
        \Delta<\frac{\sqrt s}{\beta}\le 2\sqrt s,
        \qquad
        \delta_\mu=\frac{\Delta}{\kappa}<\frac{\sqrt s}{\beta\kappa}
        \le \frac{2\sqrt s}{\kappa}.
\end{equation}
    
\sa{For sufficiently large $\kappa$, let $S_\kappa \in (0,\frac{1}{2})$ be a solution of}
\begin{equation}\label{eq:q-Skappa}
        (1-S_\kappa)\kappa S_\kappa^{3/2}=8.
\end{equation}
\sa{Note that for all sufficiently large $\kappa$, the solution $S_\kappa$ is unique}
because $s\mapsto(1-s)s^{3/2}$ is strictly increasing on $(0,\tfrac12)$; \sa{moreover, as $\kappa\to\infty$, it holds that}
\begin{equation}\label{eq:q-Skappa-asymp}
        S_\kappa=(8/\kappa)^{2/3}(1+o(1))=4\kappa^{-2/3}(1+o(1)),
\end{equation}
\sa{where $o(1)$ means a quantity tending to zero as $\kappa\to\infty$.}
Note that by \eqref{eq:q-tau1cap}, every Schur-stable tuning satisfies $\Delta < \Delta_{\max}:= \sqrt{s}/\beta$. Next, we will consider the cases $s\ge S_\kappa$ and $s< S_\kappa$ separately. 

\textbf{Case $s \ge S_\kappa$:} \sa{
For \sa{$s$ such that $\frac{1}{2}\geq s\geq S_\kappa$}, we can argue that $\sa{\delta_\mu<}\Delta_{\max}/\kappa \leq s^2/8$. %
Indeed, for $s=S_\kappa$, we have exactly $\Delta_{\max}/\kappa=\sa{S_\kappa^2}/8$. Therefore, using the fact that
$(1-s)s^{3/2}$ is increasing on $(0,\tfrac12)$, we get
\[
        \delta_\mu\le \frac{\Delta_{\max}}{\kappa}
        =\frac{\sqrt{s}}{(1-s)\kappa}\le \frac{s^2}{8}.
\]
Moreover, we also have $2\delta_\mu/s\le s/4\le1/3$ as $s<1/2$. Thus, Lemma~\ref{lem:q-floorA} is applicable here, and together with $\Delta<\Delta_{\max}$, Lemma~\ref{lem:q-floorA} implies that}
\[
        1-\rho
        \sa{<} \frac{2\delta_\mu}{s}
        =\frac{2\Delta}{\kappa s}
        \le G_\kappa(s):= \frac{2}{(1-s)\sqrt{s}\kappa}.
\]
\sa{The function $G_\kappa(s)$ is decreasing on $(0,1/3)$ and increasing on
$(1/3,1/2]$. %
Hence,
\[
\sup_{s\in[S_\kappa,1/2]} G_\kappa(s)
=
\max\left\{G_\kappa(S_\kappa),G_\kappa(1/2)\right\}.
\]
From \eqref{eq:q-Skappa}, we have
$G_\kappa(S_\kappa)
=
\frac{S_\kappa}{4}
=
(1+o(1))\kappa^{-2/3}$,
whereas
$G_\kappa(1/2)
=
\frac{4\sqrt{2}}{\kappa}
=
o(\kappa^{-2/3})$.
Therefore, for all sufficiently large $\kappa$,
\[
1-\rho
\le
\sup_{s\in[S_\kappa,1/2]}G_\kappa(s)
=
(1+o(1))\kappa^{-2/3}.
\]}

\sa{\textbf{Case $s<S_\kappa$:}
Since every admissible
tuning satisfies $\Delta<\Delta_{\max}$, for $0<s<S_\kappa$, we have
\begin{equation}
\label{eq:delta-mu-bound}
\delta_\mu=\frac{\Delta}{\kappa}
<
\frac{\Delta_{\max}}{\kappa}
=
\frac{\sqrt{s}}{(1-s)\kappa}
\le
\frac{2\sqrt{S_\kappa}}{\kappa}
=
\mathcal{O}(\kappa^{-4/3}),
\end{equation}
where we used \eqref{eq:q-tau1cap}. Thus, for all sufficiently large $\kappa$, we have
$\delta_\mu\le\delta_0$ uniformly for $0<s<S_\kappa$ and over all
admissible tunings. Lemma~\ref{lem:q-floorB} therefore gives
\[
1-\rho
\le
\frac{s}{1+\sqrt{1-s}}
+\mathcal{O}(\delta_\mu^2),
\]
where the $O(\delta_\mu^2)$ term is uniform in $s$ and over all
admissible tunings.} The first term is increasing in $s$, so over $s<S_\kappa$,
\sa{
\[
\frac{s}{1+\sqrt{1-s}}
\le
\frac{S_\kappa}{1+\sqrt{1-S_\kappa}}
=
\frac{S_\kappa}{2}(1+o(1)).
\]}%
Moreover, \sa{\eqref{eq:delta-mu-bound} implies that 
$\delta_\mu^2=O(\kappa^{-8/3})=o(\kappa^{-2/3})$.} Thus,
\[
1-\rho
\le
\frac{S_\kappa}{2}(1+o(1))
=
2(1+o(1))\kappa^{-2/3}.
\]
Combining the two subcases, every Schur-stable tuning with $\tau=1$ satisfies
\[
        1-\rho\le \bigl(2+o(1)\bigr)\kappa^{-2/3}.
\]

Finally, combining the three delay cases ($\tau=1$, $\tau=2$, $\tau\geq 3$) and then optimizing over Schur-stable tunings
gives \eqref{eq:q-barrier}. In every fixed-delay case,
$
        1-\rhost_{\mathrm{NAG}}=o(\kappa^{-1/2}),
$
so delayed Nesterov does not retain $\sqrt\kappa$-acceleration.
\end{proof}
{The previous result provides an upper bound for the spectral gap 
$1-\rhost_{\mathrm{NAG}}\ \le
\bigl(2+o(1)\bigr)\,\kappa^{-2/3}
$
for $\tau=1$, a natural question is whether this bound is tight or equivalently whether an $\Theta(\kappa^{-2/3})$ spectral gap is achievable by NAG. The following result shows that this is indeed the case. This shows that NAG is still faster than gradient descent for delay $\tau=1$ when $\kappa$ is sufficiently large but it is still not accelerated in the sense that it cannot achieve the spectral gap of $\Theta(\kappa^{-1/2})$.}
\mg{\begin{lemma}[Matching $\Theta(\kappa^{-2/3})$ spectral gap for NAG when $\tau=1$]
\label{lem:q-nag-tau1-achievability}
Let $\tau=1$ and $\kappa\ge 64$. Choose
\begin{equation}
\label{eq:q-nag-achievability-tuning}
        s=\kappa^{-2/3},
        \qquad
        \beta=1-s,
        \qquad
        \Delta=\alpha L=\frac{\sqrt{s}}{2}
        =\frac{1}{2}\kappa^{-1/3}.
\end{equation}
Then $\delta_\mu=\Delta/\kappa=s^2/2$, and for every
$\delta\in[\delta_\mu,\Delta]$, every root $z$ of
$P^{\scriptscriptstyle \mathrm{NAG}}_{1,\delta}$ satisfies
\begin{equation}
\label{eq:q-nag-achievability-radius}
        |z|
        \le
        1-\frac{7}{22}s
        =
        1-\frac{7}{22}\kappa^{-2/3}.
\end{equation}
Consequently, the tuning \eqref{eq:q-nag-achievability-tuning} is Schur stable and
\begin{equation}
\label{eq:q-nag-achievability-rho}
        \rho(\alpha,\beta)
        \le
        1-\frac{7}{22}\kappa^{-2/3},
        \qquad
        1-\rhost_{\mathrm{NAG}}(1,\kappa)
        \ge
        \frac{7}{22}\kappa^{-2/3}.
\end{equation}
In particular, together with Theorem~\ref{thm:q-naglower},
\[
        1-\rhost_{\mathrm{NAG}}(1,\kappa)
        =\Theta(\kappa^{-2/3})
        \qquad\text{as }\kappa\to\infty.
\]
\end{lemma}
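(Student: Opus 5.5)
The plan is to reduce the radius bound to a Schur-stability question and then verify the cubic Jury conditions of Lemma~\ref{lem:q-cubic}. Set $r:=1-\tfrac{7}{22}s$ and fix $\delta\in[\delta_\mu,\Delta]=[s^2/2,\sqrt s/2]$. The claim $|z|\le r$ for every root of $P^{\scriptscriptstyle \mathrm{NAG}}_{1,\delta}$ is equivalent to all roots of the rescaled cubic
\[
Q_\delta(z):=r^{-3}P^{\scriptscriptstyle \mathrm{NAG}}_{1,\delta}(rz)=z^3+\tfrac{a_2}{r}z^2+\tfrac{a_1}{r^2}z+\tfrac{a_0}{r^3}
\]
lying in the closed unit disk. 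Here $a_2=-(1+\beta)$, $a_1=\beta+\delta(1+\beta)$ and $a_0=-\delta\beta$. I will prove the strict Jury conditions for $Q_\delta$, which gives Schur stability and hence $|z|<r$. Conditions (ii) and (iii) are immediate. For (ii), $r^3Q_\delta(-1)=P(-r)$ is a sum of negative terms, since $-r^3-(1+\beta)r^2-\beta r<0$ and $\delta[-(1+\beta)r-\beta]<0$. For (iii), $\delta\beta/r^3\le \tfrac12\kappa^{-1/3}/r^3<1$ because $\kappa\ge64$ gives $s\le1/16$ and $\Delta\le1/8$.

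Condition (i) is where the constant $7/22$ comes from. Using $r-1=-\tfrac7{22}s$ and $r-\beta=\tfrac{15}{22}s$, I will write
\[
P(r)=-\tfrac{105}{484}rs^2+\delta\bigl[1-(1+\beta)\tfrac{7}{22}s\bigr].
\]
Since this is increasing in $\delta$, it suffices to check $\delta=s^2/2$. There it reduces to $\tfrac12\bigl(1-\tfrac{7}{11}s\bigr)>\tfrac{105}{484}$, which holds comfortably for $s\le 1/16$. So the low-curvature endpoint is exactly what dictates the rate coefficient.

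The main obstacle is condition (iv). Multiplied through by $r^6$, it reads
\[
\bigl|a_0a_2r^2-a_1r^4\bigr|<r^6-a_0^2.
\]
The quantity inside the absolute value is negative, so this becomes $F(\delta):=r^6-\delta^2\beta^2-\beta r^4-\delta(1+\beta)r^2(r^2-\beta r^0)\cdot(\text{appropriate factors})>0$. I will expand it carefully as a quadratic polynomial in $\delta$ with negative leading coefficient $-\beta^2$. A concave function is positive on an interval iff it is positive at the endpoints, so it suffices to check $\delta=s^2/2$ and $\delta=\sqrt s/2$.

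At $r=1$ this is the expression $s-\delta(1+\beta)s-\delta^2\beta^2$ from Proposition~\ref{prop:q-fast}(a). At $\delta=\sqrt s/2$ that expression equals $s-\tfrac{\sqrt s}{2}(1+\beta)s-\tfrac{s}{4}\beta^2\ge \tfrac34 s-\sqrt s\,s$. The shrinkage from $r<1$ costs terms of order $s$: roughly $r^6-\beta r^4\approx s-\tfrac{7}{11}s+O(s^2)$. This is the delicate bookkeeping, because the margin is $O(s)$ and both the $\tfrac14 s$ loss from $\delta^2\beta^2$ and the $r$-shrink loss compete with it. I will first expand everything to first order in $s$ and $\sqrt s$ to see that the leading coefficient is positive. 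I will then bound the higher-order remainders explicitly using $s\le1/16$ and $\sqrt s\le1/4$.

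If the margin turns out too thin at $\delta=\sqrt s/2$ with $r=1-\tfrac7{22}s$, I would fall back to the argument principle. That alternative is to locate the two slow roots, via Rouch\'e on a circle of radius $r$ as in Lemma~\ref{lem:q-tinys-slowpair}, and the fast root near $\delta\beta$ separately. With all roots of $P^{\scriptscriptstyle \mathrm{NAG}}_{1,\delta}$ in $\{|z|\le r\}$ for every $\delta$ in the interval, Theorem~\ref{thm-spec-rad-iter-matrix} gives \eqref{eq:q-nag-achievability-rho}. Combining this with Theorem~\ref{thm:q-naglower} yields the $\Theta(\kappa^{-2/3})$ statement.
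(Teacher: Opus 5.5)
Your route differs from the paper's and can be made to work. However, the step that carries all the content is not yet done, and your account of where $7/22$ comes from is wrong.

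\textbf{The paper's route.} The paper does not use the Jury test here. Writing $P$ for $P^{\scriptscriptstyle \mathrm{NAG}}_{1,\delta}$, it gets a real root $x_\delta\in(0,\delta)$ from $P(0)=-\delta\beta<0<\delta^3=P(\delta)$. It then rewrites the root equation as $\delta=x_\delta(1+x_\delta^2/d_\delta)$, with $d_\delta=\beta-(1+\beta)x_\delta\ge\tfrac{11}{16}$. Next it factors out $z-x_\delta$ and shows the residual quadratic has negative discriminant. Vieta then gives
\[
|z_{1,2}|^2=\frac{\delta\beta}{x_\delta}=1-s+\frac{(1-s)x_\delta^2}{d_\delta}<1-s+\tfrac{16}{11}\Delta^2=1-\tfrac{7}{11}s .
\]
This is an exact expression for the slow-pair modulus, uniform in $\delta$. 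It shows that $7/22$ is just the slack from $x_\delta<\Delta$ and $d_\delta\ge\tfrac{11}{16}$ at the high-curvature end. Your rescaled Jury test needs no knowledge of the root configuration and covers the fast root automatically. The price is a polynomial inequality with a thin margin.

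\textbf{Condition (i) never binds.} With $r=1-cs$, at $\delta=s^2/2$ it reads, to leading order, $c(1-c)<\tfrac12$. That holds for every $c$, so neither (i) nor the low-curvature endpoint dictates the constant. The binding constraint is (iv) at $\delta=\Delta$, whose leading-order threshold is $c<\tfrac38$; the true slow-pair modulus there is $\approx 1-\tfrac38 s$.

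\textbf{Condition (iv) as it should read.} Your $F$ is garbled. Multiplying (iv) by $r^6$ and using $r^2>\beta$ gives exactly
\[
F(\delta)=r^2(r^2-\beta)\bigl(r^2-\delta(1+\beta)\bigr)-\delta^2\beta^2>0 .
\]
This is concave in $\delta$, so checking the endpoints suffices, and $\delta=s^2/2$ is trivial.

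\textbf{The thin margin at $\delta=\Delta$.} At $\delta=\Delta=\sqrt s/2$ the leading terms are $\tfrac{5}{44}s-\tfrac{4}{11}s^{3/2}$. At $s=\tfrac1{16}$ this is only $\tfrac{1}{44}s$, so second-order terms decide the sign.
\begin{itemize}
\item Suppose you use the natural bounds $r^2\ge 1-\tfrac7{11}s$, $r^2-\beta\ge\tfrac4{11}s$ and $r^2-\Delta(1+\beta)\ge 1-\tfrac7{11}s-\sqrt s$, together with $\beta^2\le 1$. Then the resulting lower bound is negative at $s=\tfrac1{16}$.
\item Instead keep $\Delta^2\beta^2=\tfrac s4(1-s)^2$ exactly. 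This gives $F(\Delta)\ge s\,G(\sqrt s)$ with
\[
G(t)=\tfrac{4}{11}\bigl(1-\tfrac7{11}t^2\bigr)\bigl(1-\tfrac7{11}t^2-t\bigr)-\tfrac14(1-t^2)^2 .
\]
$G$ is decreasing on $[0,\tfrac14]$, and $G(\tfrac14)\approx 0.028>0$.
\end{itemize}

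Your (ii) and (iii) are fine as stated. With the corrected (iv), all four Jury conditions hold for $r^{-3}P(r\,\cdot)$ on $[\delta_\mu,\Delta]$, and the Rouch\'e fallback is unnecessary.
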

\begin{proof}
Fix $\delta\in[\delta_\mu,\Delta]$. Since $\kappa\ge64$, we have
$s\le1/16$, and therefore
\[
        \beta=1-s\ge\frac{15}{16},
        \qquad
        \Delta=\frac{\sqrt{s}}{2}\le\frac18.
\]
For $\tau=1$, the NAG characteristic polynomial \eqref{eq:q-nag} is
\[
P^{\scriptscriptstyle \mathrm{NAG}}_{1,\delta}(z)
=
 z(z-1)(z-\beta)
 +\delta\bigl[(1+\beta)z-\beta\bigr].
\]
Evaluating it at $0$ and at $\delta$ gives
\[
P^{\scriptscriptstyle \mathrm{NAG}}_{1,\delta}(0)
=-\delta\beta<0,
\qquad
P^{\scriptscriptstyle \mathrm{NAG}}_{1,\delta}(\delta)
=\delta^3>0.
\]
Hence, by the intermediate value theorem, there exists a real root
$x_\delta\in(0,\delta)$. In particular,
$x_\delta<\delta\le\Delta\le1/8$.
Set
\[
        d_\delta
        :=\beta-(1+\beta)x_\delta
        =1-s-(2-s)x_\delta.
\]
Using $s\le1/16$ and $x_\delta<1/8$,
\begin{equation}
\label{eq:q-nag-achievability-d}
        d_\delta
        \ge
        1-\frac1{16}-2\cdot\frac18
        =\frac{11}{16}>\frac12.
\end{equation}
Since $P^{\scriptscriptstyle \mathrm{NAG}}_{1,\delta}(x_\delta)=0$,
\beq
 x_\delta(1-x_\delta)(\beta-x_\delta)
 =\delta\bigl[\beta-(1+\beta)x_\delta\bigr]
 =\delta d_\delta.
 \label{eq-x-delta-root}
\eeq
Moreover, expanding the product
$
        (1-x_\delta)(\beta-x_\delta)
        =\beta - (1+\beta)x_\delta+ x_\delta^2
    =
        d_\delta+x_\delta^2.
$
and inserting this equality back into \eqref{eq-x-delta-root} gives
\begin{equation}
\label{eq:q-nag-achievability-delta}
        \delta
        =x_\delta\left(1+\frac{x_\delta^2}{d_\delta}\right).
\end{equation}
Because $d_\delta>1/2$ and $x_\delta<\delta$, \eqref{eq:q-nag-achievability-delta} yields
$
        \delta
        <x_\delta(1+2\delta^2),
$
and hence, using $\delta\le\Delta=\sqrt{s}/2$ and $\delta\ge\delta_\mu=s^2/2$,
\begin{equation}
\label{eq:q-nag-achievability-xlower}
        x_\delta
        >\frac{\delta}{1+2\delta^2}
        \ge
        \frac{\delta_\mu}{1+2\Delta^2}
        =
        \frac{s^2}{2+s}.
\end{equation}
We now factor out this real root. 
Since
\[
P^{\scriptscriptstyle \mathrm{NAG}}_{1,\delta}(z)
=
z^3-(1+\beta)z^2+\bigl[\beta+\delta(1+\beta)\bigr]z-\delta\beta,
\]
Vieta's formulas give the sum and product of the remaining two roots as \(1+\beta-x_\delta\) and \(\delta\beta/x_\delta\), respectively; hence
we have
\begin{equation}
\label{eq:q-nag-achievability-factor}
P^{\scriptscriptstyle \mathrm{NAG}}_{1,\delta}(z)
= 
(z-x_\delta)
\left[
 z^2-(1+\beta-x_\delta)z
 +\frac{\delta\beta}{x_\delta}
\right].
\end{equation}
We next show that the quadratic factor has a complex-conjugate pair of roots.
By \eqref{eq:q-nag-achievability-delta}, its constant term is
\begin{equation}
\label{eq:q-nag-achievability-product}
        \frac{\delta\beta}{x_\delta}
        =
        1-s+\frac{(1-s)x_\delta^2}{d_\delta}.
\end{equation}
Therefore its discriminant equals
\begin{align}
        &(2-s-x_\delta)^2
        -4\left(1-s+\frac{(1-s)x_\delta^2}{d_\delta}\right)
        =
        (s+x_\delta)^2-4x_\delta
        -\frac{4(1-s)x_\delta^2}{d_\delta}
        <
        (s+x_\delta)^2-4x_\delta.
\label{eq:q-nag-achievability-disc}
\end{align}
The two zeros, as a function of $x$, of $\phi(x):=(s+x)^2-4x$ are
\[
        x_-=(1-\sqrt{1-s})^2,
        \qquad
        x_+=(1+\sqrt{1-s})^2.
\]
Since $\sqrt{1-s}\ge1-s$,
\[
        x_-
        =\frac{s^2}{(1+\sqrt{1-s})^2}
        \le
        \frac{s^2}{(2-s)^2}.
\]
For $0<s\le1/16$, we have
$
        (2-s)^2-(2+s)
        =2-5s+s^2>0,
$
so
$
        x_-<\frac{s^2}{2+s}<x_\delta
$
by \eqref{eq:q-nag-achievability-xlower}. On the other hand,
$x_\delta<1/8<1<x_+$. Therefore, $x_\delta \in (x_- , x_+)$. 
\sa{Moreover, the leading coefficient of the quadratic $\phi(x)$ is positive;
therefore, $\phi(x)<0$ for $x\in(x_-,x_+)$.}
Thus,
$\phi(x_\delta)=(s+x_\delta)^2-4x_\delta<0$, and
\eqref{eq:q-nag-achievability-disc} shows that the quadratic factor in
\eqref{eq:q-nag-achievability-factor} indeed has a complex-conjugate pair.
Let $z_1,z_2$ denote this pair. By \eqref{eq:q-nag-achievability-product},
Vieta's formula applied to this factor and \eqref{eq:q-nag-achievability-d} give
\begin{align*}
        |z_1|^2=|z_2|^2=z_1z_2
        &=1-s+\frac{(1-s)x_\delta^2}{d_\delta}
        <1-s+\frac{\Delta^2}{11/16}
        =1-\frac{7}{11}s,
\end{align*}
where we used $1-s<1$ and $x_\delta<\delta\leq\Delta=\sqrt{s}/2$.
Using $\sqrt{1-u}\le1-u/2$ for $u\in[0,1]$,
\[
        |z_1|=|z_2|
        <\sqrt{1-\frac{7}{11}s}
        \le
        1-\frac{7}{22}s.
\]
The remaining root satisfies
$x_\delta<1/8<1-7s/22$ for $s\le1/16$. \sa{All the preceding estimates are uniform over
$\delta\in[\delta_\mu,\Delta]$, since they use only the bounds
$\delta\le\Delta$, $x_\delta<\delta\le\Delta$, and
$d_\delta\ge11/16$. Hence, every root of
$P^{\scriptscriptstyle \mathrm{NAG}}_{1,\delta}$ satisfies
\eqref{eq:q-nag-achievability-radius}, uniformly over
$\delta\in[\delta_\mu,\Delta]$.}
\sa{Since $[\delta_\mu,\Delta]$ is precisely the range of
$\delta=\alpha\lambda$ for $\lambda\in[\mu,L]$, the definition of
$\rho(\alpha,\beta)$ gives the first inequality in \eqref{eq:q-nag-achievability-rho}. Taking the infimum over all Schur-stable NAG tunings then yields $1-\rho_{\rm NAG}^\star(1,\kappa)
\ge \frac{7}{22}\kappa^{-2/3}$. Combining this bound with Theorem~\ref{thm:q-naglower} proves $1-\rho_{\rm NAG}^\star(1,\kappa)
=\Theta(\kappa^{-2/3})$.}
\end{proof}
} 

\subsubsection{Heavy ball is non-accelerated under delay}\label{sec:quad-hb}

We now treat \sa{the
\emph{heavy-ball}~(HB) method, i.e., GMM with $\nu=0$, which also belongs to
the family with the characteristic polynomial in \eqref{eq:q-charpoly}.}
The conclusion is even simpler than for \sa{the Nesterov's method; indeed, we show that} \emph{for every fixed delay, every Schur-stable heavy-ball tuning has spectral gap at most of order $\kappa^{-1}$.} 
Thus, the heavy-ball method also fails to retain the $\sqrt\kappa$ accelerated scaling. %

\paragraph{The factorization, and the contrast with Polyak.} Recall from \eqref{eq:q-hb} that the characteristic polynomial of the delayed HB admits the factorization
\beq\label{eq:q-hbfactor}
\hbP(z)
=z^{\tau}(z-1)(z-\beta)+\delta z = 
z \hbK(z),\quad \mbox{where} \quad
\hbK(z) = z^{\tau-1}(z-1)(z-\beta)+\delta. 
\eeq
The root $z=0$ is irrelevant for the convergence rate; the nontrivial rate is governed by the roots of $\hbK$ as $\delta$ ranges over $[\delta_\mu,\Delta]$.

\sa{Now consider the undelayed case $\tau=0$, where} $\hbPzero(z)=z^2-(1+\beta-\delta)z+\beta$; \sa{thus,} the product of the two roots is the constant \sa{$\beta>0$}. When the pair is complex, \sa{the modulus of each} root is exactly $\sqrt\beta$, \sa{independent}
of $\delta$; \sa{hence,} the Polyak choice keeps both root moduli equal to $\sqrt{\beta}$ throughout the spectral interval and gives the accelerated quadratic rate. \sa{On the other hand,} under \sa{positive} delay, already at $\tau=1$, the product of the nontrivial pair becomes $\beta+\delta$, which varies across the spectrum as $\lambda$ varies in $[\mu,L]$.

\begin{lemma}[Lower bound on $\rho$ for HB]\label{lem:q-hbslow}
Fix $\tau\ge1$. Suppose $0<\delta_\mu\le \frac{s^2}{8}$ and $\frac{2(\tau-1)\delta_\mu}{s}\le\frac{1}{3}$.
Then $\hbKslow$ %
has a real root in 
$
        [1-\frac{2\delta_\mu}{s},1).
$
Consequently
$
        \rho\ge1-\frac{2\delta_\mu}{s}$, and hence
        $
        1-\rho\le \frac{2\delta_\mu}{s}$.
\end{lemma}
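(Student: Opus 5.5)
I will mirror the proof of Lemma~\ref{lem:q-floorA}, applied to the reduced heavy-ball polynomial
\[
\hbKslow(z)=z^{\tau-1}(z-1)(z-\beta)+\delta_\mu .
\]
Set $z_0:=1-\tfrac{2\delta_\mu}{s}$. The plan is to show $\hbKslow(z_0)\le 0<\hbKslow(1)$ and then apply the intermediate value theorem on $[z_0,1)$.

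\textbf{Location of $z_0$.} From $\delta_\mu\le s^2/8$ we get $\tfrac{2\delta_\mu}{s}\le \tfrac s4$. Hence $z_0\in[1-\tfrac s4,1)$, so $z_0>\beta=1-s$ and $z_0>0$. It also gives $z_0-\beta=s-\tfrac{2\delta_\mu}{s}\ge \tfrac{3s}{4}$.

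\textbf{Sign at $z=1$.} Clearly $\hbKslow(1)=\delta_\mu>0$.

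\textbf{Sign at $z=z_0$.} Since $z_0-1=-\tfrac{2\delta_\mu}{s}<0$ and $z_0^{\tau-1}>0$,
\[
\hbKslow(z_0)=-z_0^{\tau-1}\,\frac{2\delta_\mu}{s}(z_0-\beta)+\delta_\mu
\le \delta_\mu\Bigl(1-\tfrac32 z_0^{\tau-1}\Bigr).
\]
Bernoulli's inequality applies because $0\le \tfrac{2\delta_\mu}{s}\le 1$. It gives
\[
z_0^{\tau-1}\ge 1-\frac{2(\tau-1)\delta_\mu}{s}\ge \frac23
\]
by the second hypothesis. When $\tau=1$ the power equals $1$ and the bound is trivial. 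Therefore $\hbKslow(z_0)\le 0$.

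\textbf{Conclusion.} If $\hbKslow(z_0)=0$, then $z_0$ itself is the root. Otherwise the intermediate value theorem produces a real root in $(z_0,1)$. Either way there is a root in $[1-\tfrac{2\delta_\mu}{s},1)$, which explains the closed left endpoint in the statement. By \eqref{eq:q-hbfactor}, every root of $\hbKslow$ is a root of $\hbPslow$, so this root is an eigenvalue of the delayed heavy-ball iteration matrix. Theorem~\ref{thm-spec-rad-iter-matrix}, with $\delta_\mu=\alpha\mu$ corresponding to $\lambda=\mu$, then gives $\rho\ge z_0$, i.e.\ $1-\rho\le \tfrac{2\delta_\mu}{s}$.

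The only point needing care is the exponent $\tau-1$ in place of $\tau$. This is exactly why the hypothesis reads $2(\tau-1)\delta_\mu/s\le\tfrac13$, and why the case $\tau=1$ needs a separate remark. Otherwise the argument is routine.
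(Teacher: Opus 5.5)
Your proposal is correct and follows the paper's proof essentially step for step. It uses the same test point $z_0=1-2\delta_\mu/s$, the same bound $z_0-\beta\ge 3s/4$, and Bernoulli's inequality for $z_0^{\tau-1}\ge 2/3$ (trivial when $\tau=1$), then applies the intermediate value theorem against $\hbKslow(1)=\delta_\mu>0$ to get a root in $[z_0,1)$ and hence $\rho\ge z_0$.
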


\begin{proof} Set
$
        z_0:=1-\frac{2\delta_\mu}{s}.
$
The condition $\delta_\mu\le s^2/8$ gives
$
        \frac{2\delta_\mu}{s}\le \frac{s}{4};
$
\sa{hence,}
\[
        z_0\in[1-s/4,1)\subset(\beta,1),
        \qquad
        z_0-\beta=s-\frac{2\delta_\mu}{s}\ge\frac{3s}{4}.
\]
Moreover
$
        \hbKslow(1)=\delta_\mu>0.
$
\sa{Therefore, at $z_0$, we have}
\[
\begin{aligned}
\hbKslow(z_0)=z_0^{\tau-1}(z_0-1)(z_0-\beta)+\delta_\mu  \le
-z_0^{\tau-1}\frac{2\delta_\mu}{s}\frac{3s}{4}+
\delta_\mu
=
\delta_\mu\left(1-\frac32 z_0^{\tau-1}\right).
\end{aligned}
\]
\sa{
Note that the first hypothesis implies $\frac{2\delta_\mu}{s}
\le \frac{s}{4}
\mathrel{\wubtext{<}} \frac1{\wubtext{4}}
<1$,
where we have used $0<s\mathrel{\wubtext{<}}\wubtext{1}$. Thus, for $\tau\ge 2$,
Bernoulli's inequality gives 
$z_0^{\tau-1}
=
\left(1-\frac{2\delta_\mu}{s}\right)^{\tau-1}
\ge
1-(\tau-1)\frac{2\delta_\mu}{s}
\ge \frac23$,
where the last inequality follows from the second hypothesis.
For $\tau=1$, the same bound holds trivially since
$z_0^{\tau-1}=1$. Therefore,
$K_{\tau,\delta_\mu}^{\rm HB}(z_0)
\le
\delta_\mu
\left(1-\frac32 z_0^{\tau-1}\right)
\le 0<K_{\tau,\delta_\mu}(1)$.}
By the intermediate value theorem, $K_{\tau,\delta_\mu}$ has a real root in $[z_0,1)$, and the displayed spectral-radius bound follows.
\end{proof}

At $\tau=1$, 
\sa{$\wubtext{\hbKone}(z)$ in \eqref{eq:q-hbfactor} is a quadratic, i.e.,} %
\begin{equation}\label{eq:q-hbquad}
\hbKone(z)=z^2-(1+\beta)z+(\beta+\delta).
\end{equation}
The following proposition gives the explicit bound $1-\rho<2/\kappa$ for every Schur-stable heavy-ball tuning with $\tau=1$.
\begin{proposition}[$\tau=1$ \sa{for} heavy ball]\label{prop:q-hb1}
For every Schur-stable heavy-ball tuning \sa{with $\tau=1$,}
$
        1-\rho<\frac{2}{\kappa}.
$
Consequently,
$
        1-\rhost_{\mathrm{HB}}(1,\kappa)<\frac{2}{\kappa}.
$
\end{proposition}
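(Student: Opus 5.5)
The plan is to combine a single high-curvature stability constraint with an explicit root computation for the quadratic $\hbKone$ in \eqref{eq:q-hbquad}. For $\tau=1$ the relevant factor is a monic quadratic, so no Jury or Cayley machinery is needed.

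\emph{Step 1 (high-curvature cap).} Since the tuning is Schur stable, $\hbKone$ with $\delta=\Delta$ has both roots in the open unit disk. Its constant term equals the product of the roots, so $|\beta+\Delta|<1$. With $\beta=1-s$ this gives $\Delta<s$. Consequently
\[
\delta_\mu=\frac{\Delta}{\kappa}<\frac{s}{\kappa}.
\]
We also keep $s\in(0,1)$ from the positive-momentum regime.

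\emph{Step 2 (low-curvature endpoint).} We use $\rho\ge\rho(\hbKone)$ at $\delta=\delta_\mu$, exactly as in the proof strategy of Section~\ref{sec:quad-setup}. The roots are
\[
z_\pm=\frac{1+\beta\pm\sqrt{s^2-4\delta_\mu}}{2},
\]
since $(1+\beta)^2-4(\beta+\delta_\mu)=s^2-4\delta_\mu$. We split on the sign of the discriminant.

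\emph{Real case, $s^2\ge4\delta_\mu$.} Rationalizing gives
\[
z_+=1-\frac{s-\sqrt{s^2-4\delta_\mu}}{2}=1-\frac{2\delta_\mu}{s+\sqrt{s^2-4\delta_\mu}}\ge 1-\frac{2\delta_\mu}{s}.
\]
Hence $1-\rho\le 2\delta_\mu/s<2/\kappa$ by Step 1.

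\emph{Complex case, $s^2<4\delta_\mu$.} Both roots have modulus $\sqrt{\beta+\delta_\mu}=\sqrt{1-u}$, where $u:=s-\delta_\mu$. Combining $s^2<4\delta_\mu$ with Step 1 gives $s^2<4s/\kappa$, so $s<4/\kappa$. A crude estimate $1-\sqrt{1-u}\le u$ loses a factor of two, so instead we use $\delta_\mu>s^2/4$ in two places:
\begin{itemize}
\item first, $u<s-s^2/4=s(1-s/4)$;
\item second, $1-u>1-s+s^2/4=(1-s/2)^2$, so $\sqrt{1-u}\ge 1-s/2$.
\end{itemize}
Therefore
\[
1-\rho\le 1-\sqrt{1-u}=\frac{u}{1+\sqrt{1-u}}\le\frac{s(1-s/4)}{2-s/2}=\frac{s}{2}<\frac{2}{\kappa}.
\]
If $u\le0$, the root modulus is at least $1$, which contradicts Schur stability; we can also simply note that the bound $1-\rho\le0$ is then trivial.

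\emph{Step 3 (conclusion).} In both cases $1-\rho<2/\kappa$ for every Schur-stable tuning. Taking the infimum of $\rho$ over such tunings, i.e.\ the supremum of $1-\rho$, gives $1-\rhost_{\mathrm{HB}}(1,\kappa)\le 2/\kappa$. To obtain the strict inequality, note that Step 1 holds for every tuning. If strictness is delicate at the supremum, we will state the non-strict version and recover strictness from the factor $s/2<2/\kappa$ together with $2\delta_\mu/s<2/\kappa$.

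The main obstacle is the sharp constant $2$ in the complex-root regime. A naive square-root bound gives only about $4/\kappa$, so the key is to exploit $\delta_\mu>s^2/4$ twice, once in the numerator and once in the lower bound on $\sqrt{1-u}$.
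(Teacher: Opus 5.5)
Your argument for the pointwise claim is correct, and it is leaner than the paper's. The paper combines three bounds:
\begin{itemize}
\item the $\delta$-independent sum of roots, $1-\rho\le s/2$;
\item the high-curvature product of roots, $\rho\ge\sqrt{\beta+\Delta}$, hence $1-\rho\le s-\Delta$;
\item the low-curvature real root, $1-\rho\le 2\delta_\mu/s$ when $\delta_\mu\le s^2/4$.
\end{itemize}
It then splits on $s$ versus $2/\kappa$ and on $\delta_\mu$ versus $s^2/4$. You use the high-curvature endpoint only through the cap $\Delta<s$, and you split on the sign of the low-curvature discriminant. Incidentally, your complex-case estimate is just $|z_\pm|\ge\operatorname{Re} z_\pm=1-s/2$, which is the paper's sum-of-roots bound, so the two-fold use of $\delta_\mu>s^2/4$ is unnecessary.

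The gap is in the strict consequence $1-\rho^\star_{\mathrm{HB}}(1,\kappa)<2/\kappa$. A pointwise strict inequality does not pass to an infimum over tunings unless it holds uniformly, and your bounds are not uniform. In the real case $2\Delta/(\kappa s)\to 2/\kappa$ as $\Delta\uparrow s$. In the complex case $s/2\to 2/\kappa$ as $s\uparrow 4/\kappa$. So your argument proves only $1-\rho^\star_{\mathrm{HB}}(1,\kappa)\le 2/\kappa$. Your proposed recovery ``from $s/2<2/\kappa$ together with $2\delta_\mu/s<2/\kappa$'' fails for exactly this reason.

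The missing ingredient is the high-curvature modulus bound $1-\rho\le 1-\sqrt{1-(s-\Delta)}\le s-\Delta$. It penalizes precisely the regime $\Delta\uparrow s$ where your slow-mode bound degenerates.
\begin{itemize}
\item Real case: $1-\rho\le\min\{2\Delta/(\kappa s),\,s-\Delta\}\le 2s/(2+\kappa s)<2/(2+\kappa)$, using $s<1$.
\item Complex case: here $\Delta=\kappa\delta_\mu>\kappa s^2/4$, so $1-\rho\le\min\{s/2,\,s-\kappa s^2/4\}\le 1/\kappa$.
\end{itemize}
Hence $1-\rho\le\max\{1/\kappa,\,2/(2+\kappa)\}<2/\kappa$ uniformly over Schur-stable tunings, which gives the strict inequality for $\rho^\star_{\mathrm{HB}}$. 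This is essentially the paper's balancing step.
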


\begin{proof}
Let $z_1,z_2$ be the two roots of \eqref{eq:q-hbquad}. We will use three elementary inequalities. 

First, since
$
        z_1+z_2=1+\beta=2-s,
$
the triangle inequality gives
\begin{equation}\label{eq:q-hbsum}
        \rho\ge \sa{\max\{|z_1|,|z_2|\}}
        \ge \frac{|z_1+z_2|}{2}
        =1-\frac{s}{2}.
\end{equation}
Thus,
$
        1-\rho\le \frac{s}{2}.
$
Second, on the fast mode $\delta=\Delta$, the root product is
$
        z_1z_2=\beta+\Delta=1-(s-\Delta).
$
Schur stability \sa{implies that $\beta+\Delta<1$; hence,} $\Delta<s$. Since
$
        \max\{|z_1|,|z_2|\}\ge \sqrt{|z_1z_2|}=\sqrt{\beta+\Delta},
$
we obtain
\begin{equation}\label{eq:q-hbfast}
        1-\rho
        \le
        1-\sqrt{1-(s-\Delta)}
        \le
        s-\Delta.
\end{equation}
Third, on the slow mode $\delta=\delta_\mu$, if $\delta_\mu\le s^2/4$, the two roots are real and the larger one is
\[
        z_+
        =\frac12\left[(2-s)+\sqrt{s^2-4\delta_\mu}\right].
\]
Using $\sqrt{1-u}\ge1-u$ for $u\in[0,1]$ gives
$
        \sqrt{s^2-4\delta_\mu}
        \ge
        s-\frac{4\delta_\mu}{s};
$
hence,
\begin{equation}\label{eq:q-hbslow1}
        1-\rho\le \frac{2\delta_\mu}{s}
        =\frac{2\Delta}{\kappa s}
        \quad \mbox{for} 
    \quad \delta_\mu\le s^2/4.
\end{equation}We now combine these bounds. If $s<2/\kappa$, then \eqref{eq:q-hbsum} gives
$
        1-\rho<\frac{1}{\kappa}<\frac{2}{\kappa},
$ 
\sa{which leads to the desired result.} %
\sa{Now suppose} $s\ge2/\kappa$. If $\delta_\mu>s^2/4$, then $\Delta>\kappa s^2/4$, and \eqref{eq:q-hbfast} gives
$
        1-\rho
        \le
        s-\Delta
        <
        s-\frac{\kappa s^2}{4}.
$
The function $s\mapsto s-\kappa s^2/4$ is decreasing on $[2/\kappa,\infty)$; therefore, %
\sa{for $s=2/\kappa$ it achieves its maximum value on $[2/\kappa,\infty)$,} namely $1/\kappa$.
\sa{Finally,} it remains to consider \sa{the scenario with} $s\ge2/\kappa$ and $\delta_\mu\le s^2/4$. \sa{For this setting,} both \eqref{eq:q-hbfast} and \eqref{eq:q-hbslow1} apply, and we get
\[
        1-\rho
        \le
        \min\left\{\frac{2\Delta}{\kappa s},\,s-\Delta\right\}.
\]
The maximum of the right-hand side over $0\le\Delta\le s$ occurs when the two affine functions balance, i.e. at
$
        \Delta_\star=\kappa s^2/(2+\kappa s)\sa{\in[0,s]},
$
and the corresponding value is
$
        \frac{2s}{2+\kappa s}
        <
        \frac{2}{\kappa}$. Thus, for all cases $1-\rho<2/\kappa$.
\end{proof}

The following result bounds the fast mode $\Delta$ for any $\tau\geq 1$ \mgc{and shows that $\beta<1$ is necessary for Schur stability.}
\begin{proposition}[fast-mode cap, heavy ball]\label{prop:q-hbfast}
Fix $\tau\geq 1$. \mgc{Let $\Delta>0$ and $\beta >0$.} If  $\hbPfast$
is Schur stable, then
$
        \Delta<\frac{2s}{\tau}
$
where $s=1-\beta$. \mgc{In particular, Schur stability necessarily
implies $\beta<1$.}
\end{proposition}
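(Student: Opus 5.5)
The plan is to follow the Cayley-transform coefficient-sign argument of Proposition~\ref{prop:q-fast}(c), applied to the full degree-$(\tau+2)$ heavy-ball polynomial $\hbPfast(z)=z^{\tau}(z-1)(z-\beta)+\Delta z$. Although $\hbPfast$ has the root $z=0$, which is harmless for Schur stability, it is cleaner to work with the factor $\hbK$ at $\delta=\Delta$. This factor has degree $n=\tau+1$, and $\hbPfast$ is Schur stable iff $K:=K^{\mathrm{HB}}_{\tau,\Delta}$ is. We also need $K(-1)\neq0$ to invoke Lemma~\ref{lem:q-cayley}. If $K(-1)=0$, then $K$ has a root on the unit circle, contradicting Schur stability; so the lemma applies.

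Set $\widetilde K(w)=(1-w)^{\tau+1}K\bigl(\tfrac{1+w}{1-w}\bigr)$. Then $\widetilde K(0)=K(1)=\Delta>0$. Since $\widetilde K$ is Hurwitz by Lemma~\ref{lem:q-cayley}, all its coefficients share the sign of the constant term (the footnote argument in Proposition~\ref{prop:q-fast}), so $\widetilde K'(0)>0$. Differentiating at $w=0$, using $\tfrac{d}{dw}\tfrac{1+w}{1-w}\big|_{0}=2$, gives
\[
\widetilde K'(0)=-(\tau+1)K(1)+2K'(1).
\]
From $K(z)=z^{\tau-1}(z-1)(z-\beta)+\Delta$ we get $K'(1)=1\cdot(1-\beta)=s$, because the terms in which $(z-1)$ is not differentiated vanish at $z=1$. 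Hence $\widetilde K'(0)=2s-(\tau+1)\Delta$, and positivity yields $\Delta<2s/(\tau+1)$. This is in fact stronger than the claimed bound $\Delta<2s/\tau$, which therefore follows.

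Alternatively, working with the degree-$(\tau+2)$ polynomial $\hbPfast$ directly and redoing the same computation gives $P(1)=\Delta$ and $P'(1)=s+\Delta$. Then $\widetilde P'(0)=-(\tau+2)\Delta+2(s+\Delta)=2s-\tau\Delta$, which produces exactly $\Delta<2s/\tau$. In that route, $P(-1)\neq0$ holds for the same reason as above. I would present this version, since it matches the statement verbatim. The factor $z$ contributes the root $w=-1$ of $\widetilde P$, which is in the open left half-plane, so Hurwitz stability is unaffected.

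Finally, since $\Delta>0$, the inequality $2s>\tau\Delta>0$ forces $s>0$, i.e.\ $\beta<1$. This gives the second claim without assuming $\beta<1$ a priori, because the computation above never used the sign of $s$. The only delicate step is justifying that a Hurwitz polynomial with positive constant term has all positive coefficients, and that fact is already recorded in the paper. The rest is routine differentiation.
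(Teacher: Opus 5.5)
Your proposal is correct, and the version you say you would present is exactly the paper's proof: Cayley-transform $\hbPfast$, note $\widetilde P(0)=\Delta>0$, and use positivity of the $w$-coefficient $-(\tau+2)\Delta+2(s+\Delta)=2s-\tau\Delta$. Your explicit check that $P(-1)\neq0$ is a small point the paper leaves implicit. Your side route through $K^{\mathrm{HB}}_{\tau,\Delta}$ is also valid, since $\widetilde P(w)=(1+w)\widetilde K(w)$, and it yields the sharper necessary condition $\Delta<2s/(\tau+1)$, which is tight at $\tau=1$ where it reduces to $\beta+\Delta<1$.
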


\begin{proof}
Consider the Cayley transform
\[
        \widetilde \hbPfast(w)
        :=(1-w)^{\tau+2}\hbPfast\!\left(\frac{1+w}{1-w}\right).
\]
Since $\hbPfast$ is Schur stable, Lemma~\ref{lem:q-cayley} implies that $\widetilde \hbPfast$ is Hurwitz. Also
$
        \widetilde \hbPfast(0)=\hbPfast(1)=\Delta>0.
$
A real Hurwitz polynomial has all coefficients of %
\sa{the same} sign, so the coefficient of $w$ in $\widetilde \hbPfast$, which we denote by $\widetilde a_1$,  must be positive. By differentiating $\widetilde \hbPfast$ at $w=0$, we can compute 
\[
        \widetilde a_1 
        =-(\tau+2)\hbPfast(1)+2{\hbPfast}'(1),
\]
where
$\hbPfast(1)=\Delta$ and
        ${\hbPfast}'(1)=s+\Delta;
$
therefore,
$        \widetilde a_1 
        =-(\tau+2)\Delta+2(s+\Delta)
        =2s-\tau\Delta$. %
        Thus, $\widetilde a_1>0$ leads to the desired bound $2s-\tau\Delta>0$. \mgc{Finally, this implies $\beta=1-s<1$ as $\Delta>0$.} %
\end{proof}

\begin{theorem}[Heavy ball is non-accelerated under any fixed positive delay]\label{thm:q-hblower} For the delayed heavy-ball method on $\QmuL$ and any fixed $\tau\ge1$, every Schur-stable tuning satisfies
$
        1-\rho=O_\tau(\kappa^{-1}),
$ 
as $\kappa\to\infty$.
Consequently,
\[
        1-\rhost_{\mathrm{HB}}(\tau,\kappa)=O_\tau(\kappa^{-1})=o(\kappa^{-1/2}),
\]
\sa{i.e.,} the delayed heavy ball never retains the accelerated $\kappa^{-1/2}$ scaling.
\end{theorem}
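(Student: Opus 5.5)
The argument follows the template of the $\tau\ge2$ cases in Theorem~\ref{thm:q-naglower}. It combines the fast-mode cap of Proposition~\ref{prop:q-hbfast} with the slow-mode root bounds of Lemma~\ref{lem:q-hbslow} and Lemma~\ref{lem:q-tinys-slowpair}. Fix $\tau\ge1$ and a Schur-stable heavy-ball tuning, and write $s=1-\beta$, $\Delta=\alpha L$ and $\delta_\mu=\Delta/\kappa$. Proposition~\ref{prop:q-hbfast} gives $\beta<1$, so $s\in(0,1)$, and it also gives
\[
\Delta<\frac{2s}{\tau},\qquad \delta_\mu<\frac{2s}{\tau\kappa}.
\]
For $\tau=1$ the claim is already Proposition~\ref{prop:q-hb1}, but the general argument below covers it as well. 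Every bound we produce will be uniform over tunings, so taking the infimum over tunings then bounds $1-\rhost_{\mathrm{HB}}$.

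\emph{Case $s\ge 16/(\tau\kappa)$.} Here $\delta_\mu<2s/(\tau\kappa)\le s^2/8$, because $16/(\tau\kappa)\le s$. We also have
\[
\frac{2(\tau-1)\delta_\mu}{s}<\frac{4(\tau-1)}{\tau\kappa}<\frac{4}{\kappa},
\]
which is at most $\tfrac13$ once $\kappa\ge12$. Lemma~\ref{lem:q-hbslow} therefore applies and yields
\[
1-\rho\le \frac{2\delta_\mu}{s}=\frac{2\Delta}{\kappa s}<\frac{4}{\tau\kappa}.
\]
The cap on $\Delta$ is what cancels the $s$ in the denominator, so no lower bound on $s$ is needed beyond the case condition.

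\emph{Case $s<16/(\tau\kappa)$.} Now $s\le B/\kappa$ with $B:=16/\tau$. The cap gives $\delta_\mu<2s/(\tau\kappa)<32/(\tau^2\kappa^2)$, so enlarging $B$ to $\max\{16/\tau,32/\tau^2\}$ gives $\delta_\mu\le B/\kappa^2$ as well. The heavy-ball part of Lemma~\ref{lem:q-tinys-slowpair} then applies for $\kappa\ge\kappa_0(\tau,B)$. It produces two roots of $\hbPslow$ within $C/\kappa$ of $1$, where $C=B+\sqrt B+2$, and hence $1-\rho\le C/\kappa$.

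Combining the two cases, every Schur-stable tuning satisfies $1-\rho\le\max\{4/(\tau\kappa),\,C/\kappa\}=O_\tau(\kappa^{-1})$ for all $\kappa\ge\max\{12,\kappa_0\}$. The lower-bound inequality $\rho\ge\rho(\hbPslow)$ is valid because the worst-case radius is a supremum over $\lambda\in[\mu,L]$, which includes $\lambda=\mu$. The only delicate point is the bookkeeping at the case threshold: the split must be chosen so that both hypotheses of Lemma~\ref{lem:q-hbslow} hold uniformly in the first case, while the constants in the second case stay independent of $\kappa$. The choice $16/(\tau\kappa)$ above achieves both. Finally, $O_\tau(\kappa^{-1})=o(\kappa^{-1/2})$ gives the non-acceleration conclusion.
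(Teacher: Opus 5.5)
Your proposal is correct and follows essentially the same route as the paper: the fast-mode cap $\Delta<2s/\tau$ from Proposition~\ref{prop:q-hbfast}, a case split at $s=16/(\tau\kappa)$, Lemma~\ref{lem:q-hbslow} in the large-$s$ regime, and the heavy-ball part of Lemma~\ref{lem:q-tinys-slowpair} in the small-$s$ regime. Your explicit choices $B=\max\{16/\tau,32/\tau^2\}$ and $\kappa\ge 12$ simply make concrete the paper's unspecified constant $B_\tau$ and its ``sufficiently large $\kappa$''.
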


\begin{proof}
Fix a Schur-stable heavy-ball tuning and write $s=1-\beta\in(0,1)$, $\Delta=\alpha L$, and $\delta_\mu=\Delta/\kappa$.

First suppose that
$
        s\ge \frac{16}{\tau\kappa}.  
$
By Proposition~\ref{prop:q-hbfast},
$        
\Delta<\frac{2s}{\tau}
$,
        and
$\delta_\mu=\frac{\Delta}{\kappa}<\frac{2s}{\tau\kappa}.
$
By the lower bound on $s$, we get
$
        \delta_\mu<\frac{2s}{\tau\kappa}\le \frac{s^2}{8};
$
moreover, we also have
\[
        \frac{2(\tau-1)\delta_\mu}{s}
        \mathrel{\le}
        \frac{4(\tau-1)}{\tau\kappa},
\]
which is at most $1/3$ for all sufficiently large $\kappa$ at fixed $\tau$; \sa{therefore, Lemma~\ref{lem:q-hbslow} applies here and it yields} 
\[
        1-\rho
        \le
        \frac{2\delta_\mu}{s}
        =
        \frac{2\Delta}{\kappa s}
        <
        \frac{4}{\tau\kappa}.
\]
\sa{Next, consider the remaining scenario, i.e.,}
$
        s<\frac{16}{\tau\kappa}.
$
Then $s=O_\tau(\kappa^{-1})$, and the same fast-mode cap gives
\[
        \delta_\mu
        =
        \frac{\Delta}{\kappa}
        <
        \frac{2s}{\tau\kappa}
        =
        O_\tau(\kappa^{-2}).
\]
Thus, for a constant $B_\tau$, depending only on the fixed delay, the hypotheses of Lemma~\ref{lem:q-tinys-slowpair} hold for the heavy-ball slow factor, and  %
\sa{it implies that}
$
        1-\rho\le C\kappa^{-1}.
$
Combining the two regimes proves $1-\rho=O_\tau(\kappa^{-1})$ for every Schur-stable heavy-ball tuning. Taking the infimum over Schur-stable tunings gives the stated bound on $1-\rhost_{\mathrm{HB}}$.
\end{proof}
 
\begin{remark}[Heavy ball versus Nesterov]\label{rem:q-hbvsnag}
In the GMM family, when $\tau=1$, Nesterov corresponds to $\nu=\beta$, whereas heavy ball corresponds to $\nu=0$. In this case, Nesterov achieves the \mgb{spectral gap} %
$\cO(\kappa^{-2/3})$, which is closer to the accelerated $\cO(\kappa^{-1/2})$ regime than the $\cO(\kappa^{-1})$ \mgb{spectral gap} of the heavy-ball. This suggests that larger values of $\nu$ may improve performance in the delayed setting, and that the classical endpoints $\nu=0$ and $\nu=\beta<1$ are insufficient when $\tau$ is large. In Theorem~\ref{thm:q-alldelay}, we show that allowing $\nu$ to grow with $\tau$ at an appropriate rate recovers an accelerated regime.
\end{remark}

\subsection[GMM achieves the accelerated rate with sqrt-kappa scaling when tau=1]{GMM achieves the accelerated rate with $\sqrt{\kappa}$ scaling when $\tau=1$}
For a one-step delay ($\tau=1$), the next proposition gives explicit GMM parameters whose worst-case spectral radius is exactly $1-1/(2\sqrt{\kappa})$. Theorem~\ref{thm:q-alldelay}~later gives a different parameter choice that achieves acceleration for every fixed positive delay, provided $\kappa$ is sufficiently large.

Throughout this and the following subsection we write $\delta_L:=\alpha L$ for the fast-mode gain to emphasize its dependence on $L$; this quantity was denoted as $\Delta$ in the lower-bound analysis above.
\begin{proposition}[an accelerated delayed GMM for $\tau=1$]\label{prop:q-gmmaccel}
\sa{For} $\tau=1$, let $q:=\kappa^{-1/2}$ and \sa{$\rho:=1-\tfrac q2$.} 
With the \sa{following} constant parameters,
\[
\alpha=\frac{1}{4L}\Big(1-\tfrac{q^2}{2}\Big),\qquad
\beta=1-q+\tfrac{q^2}{2},\qquad
\nu=\frac{2\big(1-q+\tfrac{q^2}{4}\big)}{1-\tfrac{q^2}{2}},
\]
the delayed GMM is Schur stable on every quadratic with spectrum in $[\mu,L]$, and its worst-case spectral radius \sa{can be computed \emph{exactly} as follows:}
\[
\sup_{\lambda\in[\mu,L]}\max\{|z|:P_{1,\alpha\lambda}(z)=0\}=\rho=1-\frac{1}{2\sqrt\kappa} = 1-\frac{q}{2}.
\]
\end{proposition}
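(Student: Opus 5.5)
The plan is to exploit the fact that, with these parameters, the cubic $P_{1,\delta}(z)=z^3-(1+\beta)z^2+\bigl[\beta+\delta(1+\nu)\bigr]z-\delta\nu$ factors in closed form at both endpoints of the gain interval $[\delta_\mu,\delta_L]$, where $\delta_\mu=\alpha\mu=\tfrac{q^2}{4}(1-\tfrac{q^2}{2})$ and $\delta_L=\alpha L=\tfrac14(1-\tfrac{q^2}{2})$. A direct computation gives $\delta_L\nu=\tfrac12(1-\tfrac q2)^2=\rho^2/2$ and $\delta_\mu\nu=q^2\rho^2/2$. With this in hand I would verify two factorizations by matching coefficients, using $1+\beta=2-q+\tfrac{q^2}{2}$ and $\rho^2=\beta-\tfrac{q^2}{4}$:
\[
P_{1,\delta_\mu}(z)=(z-\rho)^2\bigl(z-\tfrac{q^2}{2}\bigr),\qquad
P_{1,\delta_L}(z)=\bigl(z^2-2\rho c\,z+\rho^2\bigr)\bigl(z-\tfrac12\bigr),
\]
where $2\rho c=\tfrac32-q+\tfrac{q^2}{2}$ and $|c|<1$. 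Each factorization shows that the spectral radius equals $\rho$ at its endpoint (a double root at $\rho$ for $\lambda=\mu$, a complex pair of modulus $\rho$ for $\lambda=L$). Since the supremum is therefore at least $\rho$, the exact equality reduces to proving an upper bound of $\rho$ for all intermediate gains.

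For the upper bound I would rescale. Set $\widehat P_\delta(w):=\rho^{-3}P_{1,\delta}(\rho w)=w^3+\hat a_2w^2+\hat a_1w+\hat a_0$, with $\hat a_2=-(1+\beta)/\rho$, $\hat a_1=\bigl(\beta+\delta(1+\nu)\bigr)/\rho^2$ and $\hat a_0=-\delta\nu/\rho^3$. It then suffices to show that all roots of $\widehat P_\delta$ lie in the closed unit disk for every $\delta\in[\delta_\mu,\delta_L]$. I would apply the Jury conditions of Lemma~\ref{lem:q-cubic} to $\widehat P_\delta$, in strict form for $\delta$ in the open interval, and then pass to closed-disk containment by continuity of the roots.

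Conditions (ii) and (iii) should be routine: $\widehat P_\delta(-1)<0$ because every term has the same sign, and $|\hat a_0|\le \delta_L\nu/\rho^3=1/(2\rho)<1$ for $q\le 1$. Condition (i), $\widehat P_\delta(1)=\rho^{-3}P_{1,\delta}(\rho)\ge 0$, holds because $P_{1,\delta}(\rho)$ is affine in $\delta$ with positive slope $(1+\nu)\rho-\nu=\rho-\nu(1-\rho)>0$ for small $q$, and it vanishes at $\delta_\mu$ thanks to the double root. The larger values of $q\le1$ would be checked separately.

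The main obstacle is condition (iv), $|\hat a_0\hat a_2-\hat a_1|\le 1-\hat a_0^2$. Here $\hat a_0$ and $\hat a_1$ are affine in $\delta$ and $\hat a_2$ is constant. After resolving the sign inside the absolute value (the binding side is the one that already appeared in Lemma~\ref{lem:q-cubic}), the inequality becomes $h(\delta)\ge0$ for an explicit quadratic $h$ whose leading coefficient is $-\nu^2/\rho^6<0$. Since the endpoint factorizations put roots exactly on the circle $|w|=1$, the Schur--Cohn determinant should vanish at both $\delta_\mu$ and $\delta_L$. If so, $h$ is a concave quadratic with roots $\delta_\mu$ and $\delta_L$, hence strictly positive on $(\delta_\mu,\delta_L)$. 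I would confirm the two vanishing claims by direct substitution, which is tedious but mechanical polynomial algebra in $q$.

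One residual point needs care: at the endpoints, and in particular at $\lambda=\mu$ where there is a double root on the circle, the strict Jury conditions fail. I would therefore handle $\delta=\delta_\mu$ and $\delta=\delta_L$ directly from the factorizations, and use the interior analysis only for $\delta\in(\delta_\mu,\delta_L)$, where it gives radius strictly less than $\rho$. Finally, Schur stability follows because $\rho=1-q/2<1$.
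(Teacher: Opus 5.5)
Your proposal is correct and follows essentially the same route as the paper: exact factorizations at the two endpoint gains, then the rescaling $z=\rho y$ and the Jury conditions on the open gain interval. The binding margin $1-\hat a_0^2+(\hat a_0\hat a_2-\hat a_1)$ is precisely the paper's $(t-q^2)(1-t)/(2-q)^2$ with $t=\lambda/L$. It equals $\prod_{i<j}(1-w_iw_j)$ over the rescaled roots, so it does vanish at both endpoints as you anticipate.

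Your hedge about larger $q$ in condition (i) is unnecessary. The slope $(1+\nu)\rho-\nu=(1-q)(1-\tfrac q2)/(1-\tfrac{q^2}{2})$ is positive for every $q\in(0,1)$.
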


\begin{proof}
Write $\delta_L=\alpha L=\tfrac14(1-\tfrac{q^2}{2})$ and $t=\lambda/L\in[q^2,1]$; \sa{hence,
$\delta=t\delta_L$ where we used the definition $\delta:=\alpha\lambda$ for $\lambda\in[\mu,L]$.} After some computations,
\wubtext{\begin{equation}\begingroup\normalcolor
\begin{aligned}
        P_{1,t\delta_L}(z)
        =
        z^3-\Bigl(2-q+\frac{q^2}{2}\Bigr)z^2   +
        \Bigl(1-q+\frac{q^2}{2}
        +t\Bigl(\frac34-\frac q2\Bigr)\Bigr)z
        -t\Bigl(\frac12-\frac q2+\frac{q^2}{8}\Bigr).
\end{aligned}
\endgroup %
\label{eq:q-t-expanded}\end{equation}}If $q=1$, then the interval $t\in[q^2,1]$ degenerates to $t=1$ and
$P_{1,\delta_L}(z)=(z-\tfrac12)^3$; \sa{thus,} the claim holds. \sa{Next,} assume
$0<q<1$. At the endpoints one has the exact factorizations
$P_{1,q^2\delta_L}(z)=(z-\rho)^2(z-\tfrac{q^2}{2})$ and
$P_{1,\delta_L}(z)=(z-\tfrac12)(z^2-2az+\rho^2)$ with $a=\tfrac34-\tfrac q2+\tfrac{q^2}{4}$ \sa{--note that for $t=1$, since
$\rho-a=\tfrac{1-q^2}{4}>0$, the quadratic factor in $P_{1,\delta_L}$ has complex roots of modulus exactly $\rho$,} and
$\tfrac12<\rho$. Hence, all the roots $z$ of  $P_{1,t\delta_L}(z)$ satisfy $|z|\leq \rho$ at the endpoints $t=q^2$ or $t=1$. For interior \sa{$t\in (q^2,1)$}, set $z=\rho y$ and apply the Schur-Cohn conditions for stability to $\widehat p_t(y)=\rho^{-3}P_{1,t\delta_L}(\rho y)$. More specifically, with the cubic normalized as
$y^3+a_2y^2+a_1y+a_0$, Lemma~\ref{lem:q-cubic} requires the positivity of
$\widehat p_t(1)$, $-\widehat p_t(-1)$, $1-a_0^2$, and the two margins
$1-a_0^2\pm(a_0a_2-a_1)$.  After some straightforward computations, we get
{\small
\begin{equation}
\label{eq:hat-p}
        \widehat p_t(1)
        =
        \frac{(1-q)(t-q^2)}{(2-q)^2},
\  
        -\widehat p_t(-1)
        =
        \frac{16(1-q)+7q^2-q^3+(5-3q)t}{(2-q)^2},
\ 
        1-a_0^2
        =
        \frac{(2-q-t)(2-q+t)}{(2-q)^2},
\end{equation}
\begin{equation}
\label{eq:q-gmmaccel-jury-plus}
        1-a_0^2+(a_0a_2-a_1)
        =
        \frac{(t-q^2)(1-t)}{(2-q)^2},
\quad
        1-a_0^2-(a_0a_2-a_1)
        =
        \frac{8(1-q)+3q^2-q^2t-t-t^2}{(2-q)^2}.\end{equation}}All denominators are positive \sa{in \eqref{eq:hat-p} and \eqref{eq:q-gmmaccel-jury-plus}. The %
numerators in \eqref{eq:hat-p} and the first one in \eqref{eq:q-gmmaccel-jury-plus} are all} strictly
positive for \(0<q<1\) and \(q^2<t<1\). \sa{On the other hand, for the second numerator in \eqref{eq:q-gmmaccel-jury-plus},} define \[ N(t):=8(1-q)+3q^2-q^2t-t-t^2 . \] Then \(N'(t)=-q^2-1-2t<0\), so \(N\) is decreasing in \(t\). Since \(t<1\), \[ N(t)>N(1) = 8(1-q)+3q^2-q^2-1-1 = 2(1-q)(3-q)>0 . \] Thus, all Schur--Cohn inequalities hold, %
\sa{which implies that} all roots of \(\widehat p_t\) lie in the open unit disk. Equivalently, all roots of \(P_{1,t\delta_L}\) satisfy \(|z|<\rho\) for \(q^2<t<1\), while at each endpoint at least one root satisfies \(|z|\mathrel{=}\rho\). This completes the proof.
\end{proof}

\mgb{Proposition~\ref{prop:q-gmmaccel} gives $1-\rho=q/2$, where $q=\kappa^{-1/2}$. Thus, its normalized
spectral gap $(1-\rho)/q$ is exactly \(1/2\). The next result shows that this leading constant cannot be improved and identifies the parameter behavior required to attain it.
Before presenting this result, we introduce some notation. Let \mg{$L>0$ be fixed} and \(\tau=1\). For each \(q\), consider GMM parameters
$
\alpha_q>0,
$ 
$\beta_q\geq 0$ and $\nu_q\ge 0,
$
which may depend arbitrarily on \(q\), and define the normalized stepsize
\[
\Delta_q:=\alpha_q L.
\]
For each normalized curvature
\[
t:=\frac{\lambda}{L}\in[q^2,1],
\]
let
\beq\label{def-pqt-optimal-tuning}
P_{q,t}(z)
:=
z(z-1)(z-\beta_q)
+t\Delta_q\bigl((1+\nu_q)z-\nu_q\bigr),
\eeq
denote the corresponding characteristic polynomial. 
define its worst-case spectral radius by
\[
\rho(q;\theta_q)
:=
\sup_{t\in[q^2,1]}
\max\bigl\{
|z|:P_{q,t}(z)=0
\bigr\},
\]
where \(\theta_q\) is held fixed and the supremum is taken only over the curvature parameter \(t\).}
\mgb{\begin{proposition}[Sharp asymptotic optimality for \(\tau=1\)]
\label{prop:tau-one-optimality} 
Let $L>0$ be fixed, \(\tau=1\) and let \(\theta_q=(\Delta_q,\beta_q,\nu_q)\) be any family of parameters with
\(\Delta_q>0\), \(\beta_q\geq0\), and \(\nu_q\geq0\), such that
\(P_{q,t}(\cdot)\) defined in \eqref{def-pqt-optimal-tuning} is Schur stable for every
\(t\in[q^2,1]\). Then
\[
\limsup_{q\downarrow0}G(q;\theta_q)
\leq
\frac12.
\]
Hence the tuning of Proposition~\ref{prop:q-gmmaccel}, for which
$
\rho(q;\theta_q)=1-\frac q2,
$
is asymptotically optimal. Moreover, any asymptotically optimal family satisfying
\[
G(q;\theta_q)\to\frac12
\qquad\text{as }q\downarrow0
\]
must obey
\[
\Delta_q=\frac14+o(1),
\qquad
\beta_q=1-q+o(q),
\qquad
\nu_q=2+o(1).
\]
\end{proposition}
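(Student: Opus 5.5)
Throughout, I read $G(q;\theta_q)$ as the normalized spectral gap $\bigl(1-\rho(q;\theta_q)\bigr)/q$. This is consistent with the remark preceding the proposition, which says Proposition~\ref{prop:q-gmmaccel} has normalized gap exactly $1/2$.

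\textbf{Reduction.} Argue along an arbitrary sequence $q_n\downarrow0$ on which $G(q_n;\theta_{q_n})\to g>0$, and show $g\le\frac12$. By compactness I would pass to further subsequences on which
\[
\Delta_q\to\Delta_0,\qquad \nu_q\to\nu_0\in[0,\infty],\qquad b_q:=\frac{1-\beta_q}{q}\to b\in[-\infty,\infty].
\]
Write $\rho=1-g_qq$ with $g_q\to g$. The assumption says that for every $t\in[q^2,1]$ all roots of $P_{q,t}$ lie in $\{|z|\le\rho\}$. I would use only the two endpoints $t=1$ (fast) and $t=q^2$ (slow), in the spirit of the proof strategy of Section~\ref{sec:quad-setup}.

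\textbf{Fast endpoint.} The coefficient argument in the proof of Theorem~\ref{thm:q-naglower} applies to any monic cubic with roots in the disk. Applied to $P_{q,1}$, it bounds $a_1=\beta_q+\Delta_q(1+\nu_q)<3$ and $|a_0|=\Delta_q\nu_q<1$, so $\Delta_0,\nu_0$ are finite. The bound $a_2=-(1+\beta_q)$ with $|a_2|<3$ makes $\beta_q$ bounded. Next, the slow-endpoint estimate below forces $\beta_q\to1$. Then $P_{q,1}\to z(z-1)^2+\Delta_0\bigl((1+\nu_0)z-\nu_0\bigr)$, whose roots stay in the closed unit disk by continuity of roots. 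Applying the non-strict version of Jury condition (iv) from Lemma~\ref{lem:q-cubic} to this limit cubic, with $a_2=-2$, $a_1=1+\Delta_0(1+\nu_0)$, $a_0=-\Delta_0\nu_0$, gives
\[
\Delta_0\nu_0-1-\Delta_0\ \ge\ -(1-\Delta_0^2\nu_0^2)\iff \Delta_0(\nu_0-1)\ge\Delta_0^2\nu_0^2,
\]
hence $\Delta_0\le(\nu_0-1)/\nu_0^2\le\frac14$, with equality only if $\nu_0=2$.

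\textbf{Slow endpoint.} At $t=q^2$ there is one root near $0$ and two roots near $1$, exactly as in Lemma~\ref{lem:q-floorB}. I would substitute $z=1+q w$ as in Lemma~\ref{lem:q-tinys-slowpair}: $q^{-2}P_{q,q^2}(1+qw)\to w(w+b)+\Delta_0$. By Rouch\'e, $P_{q,q^2}$ has two roots $1+qw_i+o(q)$ with $w_1,w_2$ the roots of $w^2+bw+\Delta_0$ (this also shows $b$ finite and $\ge0$, i.e.\ $\beta_q\to1$ at rate $q$). The requirement $|z_i|\le 1-g_qq$ becomes $\operatorname{Re}w_i\le-g$ for $i=1,2$. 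This yields two constraints:
\[
\text{sum: } -b=w_1+w_2\le-2g,\quad\text{i.e. } b\ge 2g;\qquad
\text{evaluation at }w=-g:\quad g^2-bg+\Delta_0\ \ge0 .
\]
The second holds because the monic quadratic is nonnegative to the right of both real parts, equivalently because the scaled value $P(\rho)\ge0$.

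\textbf{Optimizing the constraints.} Combining $g(b-g)\le\Delta_0\le\frac14$ with $b\ge2g$ gives $g^2\le g(b-g)\le\frac14$, so $g\le\frac12$, which proves the $\limsup$ bound. For the characterization, $g=\frac12$ forces equality everywhere: $b=2g=1$, $\Delta_0=\frac14$, and $\nu_0=2$. Since every subsequence has a further subsequence with these limits, we get $\Delta_q=\frac14+o(1)$, $\beta_q=1-q+o(q)$, and $\nu_q=2+o(1)$.

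\textbf{Main obstacle.} I expect the main difficulty to be the uniformity in the slow-endpoint Rouch\'e step, which must work when $b$ could a priori be unbounded or negative. I would first establish $b\ge 0$ from $P_{q,q^2}(\rho)\ge0$ and $P_{q,q^2}(1)>0$. I would then rule out $b=\infty$ by noting that it would make the slow-root sum force $g$ smaller only if $\Delta_0$ blows up, which the fast bound excludes.
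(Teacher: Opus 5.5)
Your argument is correct in its main line, but it handles the slow endpoint differently from the paper. The paper never tracks the rate at which $\beta_q\to1$. It bounds $(\beta_q,\Delta_q,\omega_q)$, with $\omega_q:=\Delta_q\nu_q$, by Vieta at $t=1$, and gets $\bar\beta\le1$ from root continuity at $t=q^2$. It disposes of $\bar\beta<1$, since a root within $\cO(q^2)$ of $1$ forces $G\to0$. For $\bar\beta=1$ it uses $q^2\Delta_q=P_{q,q^2}(1)=\prod_j(1-r_j)\ge|1-r_0|(1-\rho)^2$ to get $\bar\Delta\ge\gamma^2$. It combines this with the limit of Jury condition (iv), $\bar\Delta\le\bar\omega(1-\bar\omega)\le\tfrac14$. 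The asymptotics $\beta_q=1-q+o(q)$ then need a separate Step~4: the rescaled slow roots $u_{i,q}$ satisfy $u_{1,q}u_{2,q}\to\tfrac14$ and $\Ree u_{i,q}\ge\tfrac12+o(1)$, hence $u_{i,q}\to\tfrac12$, and Vieta's sum finishes.

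You instead carry $b=\lim(1-\beta_q)/q$ as an extra compactness variable and pass to the limiting quadratic $w^2+bw+\Delta_0$. This gives $b\ge2g$ and $g(b-g)\le\Delta_0$. The single chain $g^2\le g(b-g)\le\Delta_0\le\tfrac14$ then yields the bound and, in the equality case, $b=1$, $\Delta_0=\tfrac14$, $\nu_0=2$ all at once. That is a real economy over Step~4. The price is that your Rouch\'e step needs $b\in[0,\infty)$ and $g<\infty$ a priori, which the product identity never requires.

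Those a priori facts are where your write-up needs repair.
\begin{itemize}
\item \emph{Finiteness of $\nu_0$.} The bounds $a_1<3$ and $|a_0|<1$ control $\Delta_q$ and $\omega_q$, not $\nu_q$; if $\Delta_q\to0$, $\nu_q$ may diverge. Write the fast limit cubic as $z(z-1)^2+(\Delta_0+\omega_0)z-\omega_0$ and deduce $\Delta_0\le\omega_0(1-\omega_0)\le\tfrac14$. Recover $\nu_q=\omega_q/\Delta_q\to2$ only in the equality case, where $\Delta_0=\tfrac14>0$.
\item \emph{Range of $b$.} Your edge-case plan uses the tools the wrong way round. $P_{q,q^2}(\rho)\ge0$ and $P_{q,q^2}(1)>0$ cannot give $b\ge0$, because for $b_q<0$ the first inequality is vacuous. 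What $P_{q,q^2}(\rho)\ge0$ gives exactly, for every $q$, is $\rho\bigl[\Delta_q-g_q(b_q-g_q)\bigr]\ge q\,\omega_q g_q\ge0$, i.e.\ $b_q\le g_q+\Delta_q/g_q$. This excludes $b=+\infty$ once $g\in(0,\infty)$, and it is your second constraint obtained without Rouch\'e. A finite negative $b$ is excluded by your own conclusion $b\ge2g>0$. The case $b=-\infty$, i.e.\ $q\ll\beta_q-1$, is excluded by evaluating at $z_m=(1+\beta_q)/2$: there $P_{q,q^2}(z_m)=-z_m(\beta_q-1)^2/4+\cO(q^2)<0<P_{q,q^2}(1)$, which produces a real root in $(1,\beta_q)$.
\item \emph{Finiteness of $g$.} Your reduction must also rule out $g=+\infty$. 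The paper's product identity does this in one line: it gives $\Delta_q\ge(1-o(1))G^2$ with $\Delta_q$ bounded.
\end{itemize}
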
}

\begin{proof}
For each \(n\geq1\), define
\[
s_n
:=
\sup_{0<q<1/n}G(q;\theta_q).
\]
Then
\[
\gamma
:=
\limsup_{q\downarrow0}G(q;\theta_q)
=
\lim_{n\to\infty}s_n
\in[0,+\infty].
\]
For each \(n\), choose \(q_n\in(0,1/n)\) such that
$
G(q_n;\theta_{q_n})
\geq
\min\{s_n,n\}-\frac1n.
$
Such a choice is possible by the definition of the supremum. Since
\(0<q_n<1/n\), we have \(q_n\to0\). Moreover,
\[
G_n
:=
G(q_n;\theta_{q_n})
\longrightarrow\gamma,
\]
where the case $(\gamma=+\infty)$ is understood in the extended-real sense.
For notational convenience, define
\[
\rho_n:=\rho(q_n;\theta_{q_n}),
\qquad
\beta_n:=\beta_{q_n},
\qquad
\Delta_n:=\Delta_{q_n},
\qquad
\nu_n:=\nu_{q_n},
\quad
\mbox{and}
\quad
\omega_n:=\Delta_{q_n}\nu_{q_n}.
\]
By the definition of \(G_n\),
$
1-\rho_n=q_nG_n.
$
\mg{The proof proceeds in several steps.}

\mg{\emph{Step 1: Boundedness of parameters.}}
We first show that the parameter sequences
$
\{\beta_n\}$, $
\{\Delta_n\}$ and
$
\{\omega_n\}
$
are bounded. 
At the \mg{upper} endpoint \(t=\lambda/L=1\) \mg{(when $\lambda=L$)}, define
\[
\mg{\overline{\mathcal{P}_k}(z)}
:=
P_{q_n,1}(z).
\]
Using \(\omega_n=\Delta_n\nu_n\), this polynomial can be written as
\[
\mg{\overline{\mathcal{P}_k}(z)}
=
z^3-(1+\beta_n)z^2
+(\beta_n+\Delta_n+\omega_n)z-\omega_n.
\]
Let its roots be
$
\zeta_{1,n}$, $
\zeta_{2,n}$ and $
\zeta_{3,n}$. Because \(\rho_n\) is the worst-case spectral radius over
\(t\in[q_n^2,1]\), we have
\[
|\zeta_{j,n}|
\leq
\rho_n
<1,
\qquad
j=1,2,3.
\]
By Vieta's formulas,
$
\zeta_{1,n}+\zeta_{2,n}+\zeta_{3,n}
=
1+\beta_n.
$
Since \(\beta_n\geq0\),
\[
1+\beta_n
=
\left|
\zeta_{1,n}+\zeta_{2,n}+\zeta_{3,n}
\right|
\leq
\sum_{j=1}^3|\zeta_{j,n}|
\leq
3\rho_n
<3.
\]
Therefore,
$
0\leq\beta_n<2.
$
On the other hand, the product-of-roots identity gives
$
\zeta_{1,n}\zeta_{2,n}\zeta_{3,n}
=
\omega_n.
$
Since \(\omega_n=\Delta_n\nu_n\geq0\),
\[
0\leq\omega_n
=
\left|
\zeta_{1,n}\zeta_{2,n}\zeta_{3,n}
\right|
\leq
\rho_n^3
<1.
\]
Finally,
$
P_{n,L}(1)=\Delta_n.
$
Factoring the polynomial in terms of its roots therefore gives
\[
\Delta_n
=
\left|
(1-\zeta_{1,n})
(1-\zeta_{2,n})
(1-\zeta_{3,n})
\right|.
\]
Hence
\[
0<\Delta_n
\leq
\prod_{j=1}^3(1+|\zeta_{j,n}|)
\leq
(1+\rho_n)^3
<8.
\]
We have thus proved the uniform bounds
$
0\leq\beta_n<2$,
$
0<\Delta_n<8$ and
$
0\leq\omega_n<1.
$
By the Bolzano--Weierstrass theorem applied to the bounded sequence
$
\bigl\{(\beta_n,\Delta_n,\omega_n)\bigr\}_{n\geq1}
\subset\mathbb{R}^3,
$
there exist a strictly increasing sequence of indices
$
n_1<n_2<\cdots
$
and limits
$
\bar\beta,\bar\Delta,\bar\omega\in\mathbb{R}
$
such that
\[
\beta_{n_k}\longrightarrow\bar\beta,
\qquad
\Delta_{n_k}\longrightarrow\bar\Delta,
\qquad
\omega_{n_k}\longrightarrow\bar\omega
\qquad\text{as }k\to\infty.
\]
Since \(q_n\to0\) and \(G_n\to\gamma\), the corresponding subsequences
also satisfy
$
q_{n_k}\longrightarrow0$ and
$
G_{n_k}\longrightarrow\gamma.
$

\emph{Step 2: Finer analysis of the limit $\bar\beta$.} \mg{We next analyze possible values of the limit $\bar\beta$ and its consequences on the $\gamma$ value.}
\mg{To do this, we consider
\(t=q_{n_k}^2\) and analyze the lower endpoint of the interval for $t$}. Define
\[
\mg{\underline{\mathcal{P}_k}(z)}
:=
P_{q_{n_k},q_{n_k}^2}(z).
\]
Then
\begin{align}
\mg{\underline{\mathcal{P}_k}(z)}
=
z(z-1)(z-\beta_{n_k})
+
q_{n_k}^2
\bigl(
\Delta_{n_k}
+
(\Delta_{n_k}+\omega_{n_k})(z-1)
\bigr).
\label{eq:tau-one-small-polynomial}
\end{align}
Because
$
q_{n_k}\to0$, 
$
\beta_{n_k}\to\bar\beta,
$
and the sequences
\(\{\Delta_{n_k}\}\) and \(\{\omega_{n_k}\}\) are bounded, we have
the coefficient-wise convergence
\[
\mg{\underline{\mathcal{P}_k}(z)}
\longrightarrow
\mg{\underline{\mathcal{P}_\infty}(z)}:= z(z-1)(z-\bar\beta).
\]
Since \(\bar\beta\) is a root of the limiting polynomial
\(z(z-1)(z-\bar\beta)\), continuity of polynomial roots implies that
there exist roots \(z_{n_k}\) of $\mg{\underline{\mathcal{P}_k}(z)}$ such that
$
z_{n_k}\longrightarrow\bar\beta.
$
Because every root of $\mg{\underline{\mathcal{P}_k}(z)}$ satisfies
\[
|z_{n_k}|\leq\rho_{n_k}<1,
\]
taking limits gives
$
|\bar\beta|\leq1.
$
Since \(\bar\beta\geq0\), it follows that
$
0\leq\bar\beta\leq1.
$
There are two cases: 

\emph{Case $(i)$:} Suppose first that
$
\bar\beta<1.
$
Then \(z=1\) is a simple root of the limiting polynomial
$
\mg{\underline{\mathcal{P}_\infty}(z)} = z(z-1)(z-\bar\beta).
$
By continuity of roots, there exists a root \(z_{n_k}\) of
\(\mg{\underline{\mathcal{P}_k}(z)}\) such that
$
z_{n_k}\longrightarrow1.
$
Define
\[
\delta_{n_k}:=1-z_{n_k}.
\]
Since \(\mg{\underline{\mathcal{P}_k}(z_{n_k})=0}\), the identity
\[
\mg{\underline{\mathcal{P}_k}(z)}
=
z(z-1)(z-\beta_{n_k})
+
q_{n_k}^2
\bigl[
\Delta_{n_k}
+
(\Delta_{n_k}+\omega_{n_k})(z-1)
\bigr]
\]
gives
\[
\Bigl[
z_{n_k}(z_{n_k}-\beta_{n_k})
+
q_{n_k}^2(\Delta_{n_k}+\omega_{n_k})
\Bigr]\delta_{n_k}
=
q_{n_k}^2\Delta_{n_k}.
\]
Moreover,
\[
z_{n_k}(z_{n_k}-\beta_{n_k})
+
q_{n_k}^2(\Delta_{n_k}+\omega_{n_k})
\longrightarrow
1-\bar\beta>0.
\]
\sa{Therefore, the bracketed factor is bounded away from zero for all
sufficiently large $k$. Since $\{\Delta_{n_k}\}$ is bounded, it follows that
\[
|\delta_{n_k}|=O(q_{n_k}^2).
\]
Hence,
\[
|z_{n_k}|=|1-\delta_{n_k}|
\ge 1-|\delta_{n_k}|
=1-\cO(q_{n_k}^2).
\]
Consequently,
\[
0<G_{n_k}
=\frac{1-\rho_{n_k}}{q_{n_k}}
\le \frac{1-|z_{n_k}|}{q_{n_k}}
=O(q_{n_k})\to0.
\]}
Since \(G_{n_k}\to\gamma\), this implies
$
\gamma=0.
$
Thus, if \(\gamma=0\), the desired upper bound is immediate.

\emph{Case $(ii)$:} \mg{It remains
to consider the case $\bar\beta=1$}. \mg{We will argue that in this case $\gamma \leq 1/2$.} %
Assuming $\beta=1$, the %
polynomials $\mg{\underline{\mathcal{P}_k}(z)}$ satisfy
\[
\mg{\underline{\mathcal{P}_k}(z)}
\longrightarrow
\mg{\underline{\mathcal{P}_\infty}(z)} = z(z-1)^2.
\]
We may consequently label their roots as
$
r_{0,n_k}$, $r_{1,n_k}$ and $r_{2,n_k}$ so that
\[
r_{0,n_k}\longrightarrow0,
\qquad
r_{1,n_k}\longrightarrow1,
\qquad
r_{2,n_k}\longrightarrow1.
\]
Evaluating \(\mg{\underline{\mathcal{P}_k}(z)}\) at \(z=1\) gives
\[
q_{n_k}^2\Delta_{n_k}
=
\mg{\underline{\mathcal{P}_k}(1)}
=
(1-r_{0,n_k})
(1-r_{1,n_k})
(1-r_{2,n_k}).
\]
Taking absolute values and using
$
|r_{j,n_k}|\leq\rho_{n_k},
$
we obtain
\begin{align*}
q_{n_k}^2\Delta_{n_k}
=
|1-r_{0,n_k}|
|1-r_{1,n_k}|
|1-r_{2,n_k}|
\geq
|1-r_{0,n_k}|(1-\rho_{n_k})^2
=
|1-r_{0,n_k}|q_{n_k}^2G_{n_k}^2.
\end{align*}
After dividing by \(q_{n_k}^2\), we obtain
\beq
\Delta_{n_k}
\geq
|1-r_{0,n_k}|G_{n_k}^2.
\label{ineq-Deltank-lower-bound}
\eeq
Since \(\{\Delta_{n_k}\}\) is bounded and
$
|1-r_{0,n_k}|\longrightarrow1,
$
the sequence \(\{G_{n_k}\}\) is bounded. Hence
$
\gamma<+\infty.
$
We may now pass to the limit in the preceding inequality \mg{\eqref{ineq-Deltank-lower-bound}} to obtain
\beq
\bar\Delta\geq\gamma^2.
\label{ineq-gamma-lower-bound}
\eeq
\mg{We next return to the $t=1$ endpoint associated}. Write
\[
\mg{\overline{\mathcal{P}_k}(z)}=
z^3+a_{2,n_k}z^2+a_{1,n_k}z+a_{0,n_k},
\]
where
\[
a_{2,n_k}=-(1+\beta_{n_k}),
\qquad
a_{1,n_k}
=
\beta_{n_k}+\Delta_{n_k}+\omega_{n_k},
\qquad
a_{0,n_k}=-\omega_{n_k}.
\]
Because \(\mg{\overline{\mathcal{P}_k}}\) is Schur stable, the fourth cubic
Schur--Cohn condition \mg{from Lemma \ref{lem:q-cubic}} gives
\[
|a_{0,n_k}a_{2,n_k}-a_{1,n_k}|
<
1-a_{0,n_k}^2.
\]
With a direct calculation, this is equivalent to%
$
\left|
\beta_{n_k}(\omega_{n_k}-1)-\Delta_{n_k}
\right|
<
1-\omega_{n_k}^2.
$
Passing to the limit as \(k\to\infty\), and using
\(\bar\beta=1\), gives
\beq
|\bar\omega-1-\bar\Delta|
\leq
1-\bar\omega^2.
\label{ineq-jury-cond-four}
\eeq
\mg{This bound and \eqref{ineq-gamma-lower-bound}} imply
\[
0\leq\bar\omega\leq1,
\qquad
\bar\Delta\geq0.
\]
Thus,
$
\bar\omega-1-\bar\Delta\leq0,
$
and hence
$
|\bar\omega-1-\bar\Delta|
=
1+\bar\Delta-\bar\omega.
$
It follows \mg{from \eqref{ineq-jury-cond-four}} that
$
1+\bar\Delta-\bar\omega
\leq
1-\bar\omega^2,
$
or equivalently,
\[
\bar\Delta
\leq
\bar\omega-\bar\omega^2
=
\bar\omega(1-\bar\omega)
\leq
\frac14.
\]
Combining this with the other bound obtained in \eqref{ineq-gamma-lower-bound} gives
$$
\gamma^2
\leq
\bar\Delta
\leq
\frac14 \quad
\mbox{and therefore} \quad
\gamma = \limsup_{q\downarrow0}G(q;\theta_q)\leq\frac12.
$$

\emph{Step 3: Achieving the optimal $\gamma=1/2$.} Proposition~\ref{prop:q-gmmaccel} provides a particular family of
tunings for which
$
\rho(q;\theta_q)=1-\frac q2.
$
For this family,
$
G(q;\theta_q)=\frac12
$
for every \(q\), proving that the constant \(1/2\) is sharp.

\mg{We next argue that achieving $\gamma=1/2$ implies that $\beta_q, \Delta_q, \omega_q$ goes to a particular limit}. Suppose that
$
G(q;\theta_q)\longrightarrow\frac12$
as $q\downarrow0,
$
and define
\[
\omega_q:=\Delta_q\nu_q.
\]
Let \(\{\widehat q_m\}_{m\geq1}\) be an arbitrary sequence satisfying
$
\widehat q_m\longrightarrow0.
$
The large-curvature endpoint estimates established above show that
the sequence
$
\left\{
\bigl(
\beta_{\widehat q_m},
\Delta_{\widehat q_m},
\omega_{\widehat q_m}
\bigr)
\right\}_{m\geq1}
$
is bounded in \(\mathbb{R}^3\). Therefore, there exists a strictly
increasing sequence of indices
$
m_1<m_2<\cdots
$
and limits
$
\widetilde\beta,\widetilde\Delta,\widetilde\omega
$
such that
\[
\beta_{\widehat q_{m_j}}
\longrightarrow
\widetilde\beta,
\qquad
\Delta_{\widehat q_{m_j}}
\longrightarrow
\widetilde\Delta,
\qquad
\omega_{\widehat q_{m_j}}
\longrightarrow
\widetilde\omega.
\]
Since
$
G(\widehat q_{m_j};\theta_{\widehat q_{m_j}})
\longrightarrow\frac12,
$
the endpoint argument proved above applies to this subsequence with
\(\gamma=1/2\). It gives
\[
\widetilde\beta=1,
\] and
\[
\mg{\gamma^2}=\frac14
\leq
\widetilde\Delta
\leq
\widetilde\omega(1-\widetilde\omega)
\leq
\frac14.
\] 
Therefore,
\[
\widetilde\Delta=\frac14,
\qquad
\widetilde\omega=\frac12,
\qquad
\widetilde\beta=1.
\]
\sa{Since the original sequence $\{\widehat q_m\}$ was arbitrary, the preceding
argument shows that every cluster point of
\[
(\beta_q,\Delta_q,\omega_q)
\]
as $q\downarrow0$ equals $(1,\tfrac14,\tfrac12)$. Together with the
boundedness established in Step~1, this implies
\[
\beta_q\to1,\qquad
\Delta_q\to\frac14,\qquad
\omega_q\to\frac12.
\]}
Since
$
\omega_q=\Delta_q\nu_q
$
and
$
\Delta_q\longrightarrow\frac14>0,
$
we obtain
$
\nu_q
=
\frac{\omega_q}{\Delta_q}
\longrightarrow2.
$

\emph{Step 4: \mg{Taylor expansion of $\beta_q$ in $q$}.} It remains to determine the first-order behavior of \(\beta_q\).
At the lower endpoint \mg{associated with $t=q^2$},
\[
P_{q,q^2}(z)=z(z-1)(z-\beta) + q^2\Delta_q ((1+\nu_q)z-\nu_q)
\longrightarrow
z(z-1)^2,
\]
coefficientwise as \(q\downarrow0\). Hence, for all sufficiently
small \(q\), its roots may be labeled as
$
r_{0,q}$, $r_{1,q}$ and $r_{2,q}$
so that 
\[
r_{0,q}\longrightarrow0,
\qquad
r_{1,q}\longrightarrow1,
\qquad
r_{2,q}\longrightarrow1.
\]
For \(i=1,2\), define
\[
u_{i,q}
:=
\frac{1-r_{i,q}}{q},
\]
so that
$
r_{i,q}=1-qu_{i,q}.
$
Evaluating \(P_{q,q^2}\) at \(z=1\) gives
\[
q^2\Delta_q
=
(1-r_{0,q})(1-r_{1,q})(1-r_{2,q}).
\]
Since
$
1-r_{i,q}=qu_{i,q}$
for $i=1,2,
$
we obtain
$
\Delta_q
=
(1-r_{0,q})u_{1,q}u_{2,q}.
$
Using
$
\Delta_q\longrightarrow\frac14$ and
$
r_{0,q}\longrightarrow0,
$
we conclude that
\beq
u_{1,q}u_{2,q}
\longrightarrow
\frac14.
\label{eq-product-roots}
\eeq
Moreover, \mg{by assumption},
$
\rho(q;\theta_q)
=
1-qG(q;\theta_q)
=
1-\frac q2+o(q).
$
Therefore, for \(i=1,2\),
\[
|1-qu_{i,q}|
=
|r_{i,q}|
\leq
\rho(q;\theta_q)
=
1-\frac q2+o(q).
\]
Squaring gives
\[
1-2q\Ree(u_{i,q})+q^2|u_{i,q}|^2
\leq
1-q+o(q).
\]
After subtracting \(1\) and dividing by \(q>0\), we obtain
$
2\Ree(u_{i,q})-q|u_{i,q}|^2
\geq
1+o(1).
$
In particular,
\beq
\Ree(u_{i,q})
\geq
\frac12+o(1),
\qquad i=1,2.
\label{ineq-real-part-roots}
\eeq

 Because \(P_{q,q^2}\) has real coefficients and \(r_{0,q}\) is the
 unique root near zero, the roots \(r_{1,q}\) and \(r_{2,q}\) are
 either both real or form a complex-conjugate pair.
\mg{We now show that in either case
\[
u_{1,q}\longrightarrow \frac12 
\quad
\mbox{and} 
\quad
u_{2,q}\longrightarrow \frac12
\qquad\text{as }q\downarrow0.
\]
Suppose otherwise. Then there exist \(\varepsilon>0\) and a sequence
\(q_n\downarrow0\) such that
\[
\max_{i=1,2}\left|u_{i,q_n}-\frac12\right|\ge\varepsilon
\qquad\text{for all }n.
\]
Passing to a subsequence, we may assume that \(r_{1,q_n},r_{2,q_n}\)
are either both real for every \(n\), or form a complex-conjugate pair
for every \(n\).
In the complex-conjugate case, from \eqref{eq-product-roots}, 
\[
u_{2,q_n}=\overline{u_{1,q_n}},
\qquad
|u_{1,q_n}|^2
=
u_{1,q_n}u_{2,q_n}
\longrightarrow \frac14.
\]
Hence \(|u_{1,q_n}|\to 1/2\). Since
$
\operatorname{Re}(u_{1,q_n})\ge \frac12+o(1)
$ \mg{by \eqref{ineq-real-part-roots}}
and \(\operatorname{Re}(u_{1,q_n})\le |u_{1,q_n}|\), we obtain
\(\operatorname{Re}(u_{1,q_n})\to1/2\), and therefore
$
\operatorname{Im}(u_{1,q_n})\longrightarrow0.
$
Thus \(u_{1,q_n},u_{2,q_n}\to1/2\). 
In the real-root case, from \eqref{ineq-real-part-roots} and \eqref{eq-product-roots}, we have 
\[
u_{i,q_n}\ge \frac12+o(1),
\qquad i=1,2,
\qquad\text{and}\qquad
u_{1,q_n}u_{2,q_n}\longrightarrow\frac14.
\]
These relations force \(u_{1,q_n},u_{2,q_n}\to1/2\). Hence, regardless of roots being complex or real, we have
} 

\begin{equation}
\mg{u_{1,q}, u_{2,q}\to \frac{1}{2}} \quad \mbox{and} \quad u_{1,q}+u_{2,q}\longrightarrow1.
\label{eq-root-sum-limit}
\end{equation}

By Vieta's formula for the sum of the roots \mg{of $P_{q,q^2}$},
$
r_{0,q}+r_{1,q}+r_{2,q}
=
1+\beta_q.
$
Substituting
$
r_{1,q}=1-qu_{1,q},
$ and
$r_{2,q}=1-qu_{2,q},
$
gives
\[
\frac{1-\beta_q}{q}
=
u_{1,q}+u_{2,q}
-
\frac{r_{0,q}}{q}.
\]
On the other hand, the product-of-roots identity \mg{applied to the polynomial $P_{q,q^2}$} gives
$
r_{0,q}r_{1,q}r_{2,q}
=
q^2\omega_q.
$
Since
$
r_{1,q}r_{2,q}\longrightarrow1
$
and \(\omega_q\) is bounded, it follows that
$
r_{0,q}
=
\frac{q^2\omega_q}{r_{1,q}r_{2,q}}
=
O(q^2).
$
Therefore,
$
\frac{r_{0,q}}{q}\longrightarrow0.
$
Consequently, from \eqref{eq-root-sum-limit}, we obtain
$
\frac{1-\beta_q}{q}
\longrightarrow1,
$
which is equivalent to
$
\beta_q=1-q+o(q).
$
Finally, since
$
\Delta_q=\alpha_qL,
$
we conclude that
\[
\alpha_qL=\frac14+o(1),
\qquad
\beta_q=1-q+o(q),
\qquad
\nu_q=2+o(1),
\]
which completes the proof.
\end{proof}

\subsection[Acceleration with GMM for arbitrary delays tau >= 1]{Acceleration with GMM for arbitrary delays $\tau\geq 1$}
Recall that \begin{equation}\label{eq:q-Ptaudelta}
        P_{\tau,\delta}(z)
        =z^\tau(z-1)(z-\beta)+\delta[(1+\nu)z-\nu].
\end{equation}
The %
worst-case spectral radius \sa{for the quadratic setting} is
\begin{equation}\label{eq:q-rhoalg}
        \rho_{\tau,\kappa}(\alpha,\beta,\nu)
        :=\sup_{t\in[q^2,1]}\max\{|z|:P_{\tau,t\delta_L}(z)=0\}.
\end{equation}
Acceleration means that, for fixed \sa{\(\tau\in\mathbb{Z}_+\)},
\[
        \rho_{\tau,\kappa}\le 1-c(\tau)\kappa^{-1/2}
\]
\sa{holds for all sufficiently large $\kappa>0$} with \(c(\tau)>0\) independent of \(\kappa\). For $\delta=0$ and $\beta$ near %
\sa{$1$}, $P_{\tau,\delta}$ has $\tau$ roots near %
\sa{$0$} and two roots near $1$; \sa{therefore,} the worst-case spectral radius is governed by the roots near $1$. As before, we refer to the roots of $P_{\tau,\delta}(z)$ that stay close to $1$ \wubtext{as} the \emph{slow modes}, and those near zero \wubtext{as} the \emph{fast modes}, or \emph{nonbinding roots}.
\subsubsection{Slow-mode asymptotics}
The following result shows that the slow mode admits an expansion as a function of \sa{$q:=\kappa^{-1/2}$,} where the delay \sa{$\tau$} does not affect the first-order terms, but %
\sa{it affects only the} higher order terms.

\begin{lemma}[Slow-mode asymptotics]\label{lem:q-slow}
\sa{Fix \(\tau\ge1\), $L$, \(\delta_L>0\), and \(\nu\ge0\), and let $\{\mu_n\}$ \wubtext{be a sequence} such that $\mu_n\searrow 0$. For $\kappa_n:=L/\mu_n$, let $q_n:=\kappa_n^{-1/2}$, and we set}
\[
        a:=\sqrt{\delta_L},
        \qquad
        \beta_n:=1-2a q_n.
\]
\sa{Let $t_n=q_n^2\ell_n$ for some $\ell_n$ such that $1\le \ell_n\le B$
for some fixed \(B<\infty\).} Consider the characteristic polynomial of GMM with $\delta=\wubtext{t_n}\delta_L$ and $\beta_n=1-2\sqrt{\delta_L}q_n$ for a fixed $\nu$, i.e.
\begin{equation}\label{eq:Pn}
        P_n(z)
        :=P_{\tau, t_n \delta_L}(z)=z^\tau(z-1)(z-\beta_n)
        +t_n\delta_L[(1+\nu)z-\nu].
\end{equation}
Then, for all sufficiently large \(n\), \(P_n\) has exactly two roots
\(z_n^\pm\) converging to \(1\). \sa{Indeed, for $y_n^\pm$ such that}
        $z_n^\pm=1+q_n y_n^\pm$,
one has %
\begin{equation}\label{eq:moving-root-distance}
        \dist\!\left(
            y_n^\pm,
            \left\{
            -a+ia\sqrt{\ell_n-1},\;
            -a-ia\sqrt{\ell_n-1}
            \right\}
        \right)
        =\cO_B(q_n^{1/2}),
\end{equation}
where the $\cO(\cdot)$ constants \sa{are} also allowed to depend on the fixed parameters $\tau, \delta_L$ and $\nu$. Consequently,
\begin{equation}\label{eq:slow-modulus-replacement}
        |z_n^\pm|
        =1-aq_n+\cO_B(q_n^{3/2})
        =1-\sqrt{\delta_L}\,q_n+o(q_n).
\end{equation}
The remaining \(\tau\) roots of \(P_n\) converge to \(0\) and are nonbinding for
the spectral radius.
\end{lemma}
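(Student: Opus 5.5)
The plan is to rescale around $z=1$, exactly as in the proof of Lemma~\ref{lem:q-tinys-slowpair}, and then use Rouché's theorem. Substitute $z=1+q_n y$. Since $z-1=q_n y$, $z-\beta_n=q_n(y+2a)$, and $(1+\nu)z-\nu=1+(1+\nu)q_n y$, while $t_n\delta_L=q_n^2\ell_n a^2$, we obtain
\[
P_n(1+q_ny)=q_n^2\,G_n(y),\qquad
G_n(y):=(1+q_ny)^\tau y(y+2a)+\ell_n a^2\bigl[1+(1+\nu)q_ny\bigr].
\]
The comparison polynomial is $g_n(y):=y^2+2ay+\ell_na^2$. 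Its roots are exactly $y_\pm=-a\pm ia\sqrt{\ell_n-1}$, and they satisfy $|y_\pm|=a\sqrt{\ell_n}\le a\sqrt B$.

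Next, on any bounded set $|y|\le R$ with $R:=a\sqrt B+2$, I would bound the perturbation by $|G_n(y)-g_n(y)|\le c_1 q_n$, with $c_1$ depending only on $\tau,a,\nu,B$. This follows from $|(1+q_ny)^\tau-1|\le(2^\tau-1)q_nR$, as in Lemma~\ref{lem:q-tinys-slowpair}. The localization step is delicate because the two roots of $g_n$ can coalesce when $\ell_n=1$, which is why the statement only claims $\cO(q_n^{1/2})$. I would therefore not use the implicit function theorem. Instead, take the circles $\Gamma_\pm:=\{|y-y_\pm|=r_n\}$ with $r_n:=Kq_n^{1/2}$, and when the two roots are within $2r_n$ of each other, use a single circle of radius $3r_n$ around their midpoint.

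On such a circle $|g_n(y)|=|y-y_+||y-y_-|\ge r_n\cdot\min\{r_n,\ \text{dist to other root}-r_n\}$. In the separated case, $|y_+-y_-|\ge 2r_n$, but this alone only gives $|g_n|\ge r_n\cdot r_n=K^2q_n$ if I choose the circle radius carefully, for example $r_n$ with the other root at distance $\ge 2r_n$, so that $|g_n|\ge r_n^2$. In the merged case, both factors are at least $r_n$ on the circle of radius $3r_n$ about the midpoint, since the roots are within $r_n$ of the midpoint. In both cases $|g_n|\ge K^2q_n>c_1q_n$ once $K^2>c_1$. Rouché's theorem then places the appropriate number of zeros of $G_n$ inside each disc. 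Hence two roots $y_n^\pm$ of $G_n$ lie within $3Kq_n^{1/2}$ of $\{y_+,y_-\}$, which is \eqref{eq:moving-root-distance}. Also, $G_n$ has exactly two zeros in $|y|<R$, because $|g_n|\ge R(R-2a)-Ba^2>c_1q_n$ on $|y|=R$ for large $n$.

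The remaining $\tau$ roots are handled directly on $P_n$. It converges coefficientwise to $z^\tau(z-1)^2$, so by continuity of roots exactly two roots tend to $1$ (these are the $z_n^\pm$ above) and $\tau$ roots tend to $0$. The latter are eventually of modulus $<1/2$, far below $1-aq_n$, and are therefore nonbinding.

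For the modulus, write $y_n^\pm=-a+i\eta_n+e_n$ with $|e_n|=\cO_B(q_n^{1/2})$ and $\eta_n$ real and bounded. Then
\[
|z_n^\pm|^2=1+2q_n\Rea y_n^\pm+q_n^2|y_n^\pm|^2=1-2aq_n+\cO_B(q_n^{3/2}).
\]
Taking square roots gives \eqref{eq:slow-modulus-replacement}. I expect the main obstacle to be the uniform Rouché lower bound near coalescence ($\ell_n\to1$), which forces the $q_n^{1/2}$ radius; the case split above is how I would handle it.
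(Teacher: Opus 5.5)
Your proof is correct and follows the paper's skeleton: the rescaling $z=1+q_ny$, comparison of $G_n$ with $y^2+2ay+\ell_na^2$, an $\cO(q_n)$ perturbation bound on a fixed disc, Rouch\'e on a large circle to get exactly two zeros, continuity of roots for the $\tau$ roots near $0$, and the same modulus expansion.

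Where you differ is the localization step. The paper needs no local Rouch\'e argument and no case split. Once a zero $y_n$ of $F_n$ (your $G_n$) is known to lie in $|y|<R$, it evaluates the comparison quadratic there: $|Q_n(y_n)|=|Q_n(y_n)-F_n(y_n)|\le c_1q_n$. Combined with $\mathrm{dist}(y_n,\{r_n^+,r_n^-\})^2\le|y_n-r_n^+|\,|y_n-r_n^-|$, this gives the $\cO(q_n^{1/2})$ bound in one line. That argument is completely insensitive to the roots $r_n^\pm$ coalescing.

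Your separated/merged discs do work. In the separated case $|g_n|\ge r_n^2$ on each circle, and in the merged case $|g_n|\ge 4r_n^2$ on the circle of radius $3r_n$ about the midpoint $-a$. In the separated regime they also give slightly more than the statement asks, namely that each of $r_n^\pm$ is shadowed by exactly one zero. But the statement only concerns distance to the set, so the paper's pointwise trick is the more economical route.

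Two small slips to fix:
\begin{itemize}
\item On $|y|=R$ with $R=a\sqrt B+2$, your bound $R(R-2a)-Ba^2$ can be negative (e.g.\ $B=1$, $a>\sqrt2$). Use the factored estimate $|g_n(y)|\ge(R-a\sqrt B)^2=4$ instead.
\item In the merged case, a zero within $3r_n$ of the midpoint is only guaranteed to be within $4r_n$ (not $3r_n$) of the nearest root of $g_n$. This does not affect the $\cO(q_n^{1/2})$ conclusion.
\end{itemize}
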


\begin{proof}
Set \(z=1+q_nw\). \sa{Then, we have}
\[
        z-1=q_nw,
        \qquad
        z-\beta_n=q_n(w+2a).
\]
\sa{Since} \(t_n=q_n^2\ell_n\) and \(\delta_L=a^2\), \sa{we also have}
\[
        t_n\delta_L[(1+\nu)z-\nu]
        =q_n^2\ell_n a^2[1+(1+\nu)q_nw].
\]
After division by \(q_n^2\), the rescaled polynomial is
\begin{equation}\label{eq:Fn}
        F_n(w)
        :=q_n^{-2}P_n(1+q_nw)
        =(1+q_nw)^\tau w(w+2a)
        +\ell_n a^2[1+(1+\nu)q_nw].
\end{equation}
Compare \(F_n\) with the moving quadratic
\begin{equation}\label{eq:Qn}
        Q_n(w)
        :=w(w+2a)+\ell_n a^2
        =(w+a)^2+a^2(\ell_n-1).
\end{equation}
The roots of \(Q_n\) are
\begin{equation}\label{eq:rn}
        r_n^\pm=-a\pm ia\sqrt{\ell_n-1}.
\end{equation}
Since \(1\le \ell_n\le B\), the roots \(r_n^\pm\) remain in a fixed bounded
subset of the \(w\)-plane; \sa{indeed,} %
$|r_n^\pm| = a \sqrt{\ell_n} \leq a \sqrt{B}$. Choose \(R=3a \sqrt{B}\) so that \(|r_n^\pm|<R/2\) for every \(n\).  On the
circle \sa{\(\{w:|w|=R\}\)}, the quadratics \(Q_n\) are bounded away from zero uniformly
in \(n\), \sa{i.e.,} \(|Q_n(w)|=\sa{|w-r_n^+||w-r_n^-|}\ge m_R >0 \) for \(m_R:=(R-a\sqrt{B})^2\). \sa{Moreover,} for \(|w|\le R\), uniformly in \(w\), we have
\begin{align*}
        F_n(w)-Q_n(w)
        =\bigl((1+q_nw)^\tau-1\bigr)w(w+2a)
          +\ell_n a^2(1+\nu)q_nw=\cO_{B}(q_n),
\end{align*}
in the last equality we use the fact that  $R=3a\sqrt{B}$. Note that here the constants hidden by $\cO(\cdot)$ are allowed to depend on the fixed parameters $\tau, \delta_L$ and $\nu$. Therefore, \(|F_n-Q_n|<m_R\) on \(|w|=R\) for all large
\(n\) \sa{as $q_n\searrow 0$ as $n\to\infty$.} By Rouche's theorem, \(F_n\) has exactly two roots in \sa{\(\{w:|w|<R\}\)}.
These two roots correspond exactly to the two roots of \(P_n\) in the disk
\(|z-1|<q_nR\); in particular, they converge to \(1\) in the original
\(z\)-variable.  Denote them by \(z_n^\pm=1+q_ny_n^\pm\).

For each such bounded root \(y_n^\pm\), we have \(F_n(y_n^\pm)=0\).  Therefore,
using the uniform estimate above, \sa{we get}
\[
        |Q_n(y_n^\pm)|
        =|Q_n(y_n^\pm)-F_n(y_n^\pm)|
        =\cO_B(q_n).
\]
Since $Q_n(w)=(w-r_n^+)(w-r_n^-)$, we get
\[
         |Q_n(y_n^\pm)|= |y_n^\pm-r_n^+|\,|y_n^\pm-r_n^-|=\cO_B(q_n).
\]
Moreover, since
 $\dist\bigl(y_n^\pm,\{r_n^+,r_n^-\}\bigr)^2 \leq |y_n^\pm - r_n^+| |y_n^\pm - r_n^-|$, we have
\[
        \dist\bigl(y_n^\pm,\{r_n^+,r_n^-\}\bigr)=\cO_B(q_n^{1/2}),
\]
which proves \eqref{eq:moving-root-distance}.  %

Note that both roots \(r_n^\pm\) have real part \(-a\); hence,
\[
        \Ree y_n^\pm=-a+\cO_B(q_n^{1/2}),
        \qquad
        y_n^\pm=\cO_B(1).
\]
Using \(z_n^\pm=1+q_ny_n^\pm\), we obtain
\begin{align*}
        |z_n^\pm|^2 =|1+q_ny_n^\pm|^2 =1+2q_n\Ree y_n^\pm+q_n^2|y_n^\pm|^2 =1-2aq_n+\cO_B(q_n^{3/2}).
\end{align*}
Taking square roots and \sa{using $\sqrt{1+u}=1+\frac12u+O(u^2)$ as $u\to 0$} gives
\[
        |z_n^\pm|=1-aq_n+\cO_B(q_n^{3/2}),
\]
which is \eqref{eq:slow-modulus-replacement}.

It remains to identify the nonbinding roots.  Since \(q_n\to0\),
\(\beta_n\to1\), and \(t_n=q_n^2\ell_n\to0\), the coefficientwise limit of
\(P_n\) is
$
        z^\tau(z-1)^2.
$
By continuity of roots as a multiset, exactly \(\tau\) roots of \(P_n\)
converge to \(0\); thus, \wubtext{they} are non-binding for the spectral radius, while the remaining two converge to \(1\).  This accounts for all \(\tau+2\) roots and completes the proof.
\end{proof}

\sa{If we choose
$
        \delta_L={\widetilde c}/{(1+\tau)^2},
$
where \(\widetilde c\mathrel{\wubtext{>}}\wubtext{0}\) \wubtext{is fixed} as \(q=1/\sqrt{\kappa}\to0\), then} the previous lemma (Lemma \ref{lem:q-slow}) suggests that at $t=q^2$ (for $\ell_n = 1, \forall n$) the near 1-roots (slow modes) %
demonstrate an \emph{accelerated} behavior, i.e., we have
\[
        \rho(P_{\tau, q^2 \delta_L})
       =
        1-\sqrt{\delta_L}\,q + o (q) 
        =
        1-\wubtext{\Theta}\!\left(\frac{1}{(1+\tau)\sqrt{\kappa}}\right) \quad \mbox{as} \quad q\to 0 \quad \mbox{\wubtext{for fixed $\tau$}}.
\]
This is consistent with the accelerated GMM construction for \(\tau=1\) in Proposition \ref{prop:q-gmmaccel}. There, however, the choice is \(\nu= 2+\cO(q)\geq 1\) for $\tau=1$, which is larger
than the Nesterov-type choice \(\nu=\beta<1\). This suggests that larger
values of \(\nu\) \wubtext{can be beneficial}, %
at least for $\tau=1$. Motivated by this observation, we consider the \sa{following choice of parameters:}
\begin{equation}\label{eq:q-ansatz}
        \alpha=\frac{\delta_L}{L},
        \qquad
        \beta=1-2\sqrt{\delta_L}\,q,
        \qquad
        \nu=\nu_\tau,
\end{equation}
where \(\delta_L\) is independent of \(\kappa\) and $\wubtext{\nu}_\tau$ is chosen as a function of $\tau$. %
\wubtext{The closed-form choice in Theorem~\ref{thm:q-alldelay} is $\nu_\tau=2\tau+\tfrac12$}, \mg{where we show that} under this ansatz, the slow modes yield a rate gap of order \(\sqrt{\delta_L}\,q\) \sa{for fixed $\tau\geq 1$.} The remaining task is
to stabilize the fast and intermediate modes introduced by the delay. 

\mg{The choices of \(\alpha\) and \(\beta\) in \eqref{eq:q-ansatz} are also motivated by \sa{the parameter choice for the} classical Nesterov acceleration. In the delay-free case \(\tau=0\), NAG with stepsize \(\alpha=1/L\) has the standard choice of parameters for strongly convex functions
$
\beta=\frac{1-q}{1+q}
=
1-2q+\mathcal{O}(q^2),
$
see, e.g., \cite[(2.2.22)]{NesterovBook}. In addition, for a stepsize
$$
\alpha=\frac{\delta_L}{L},
\qquad
0<\delta_L\le1,
$$
the corresponding optimal momentum parameter (that leads to the fastest linear rate) for delay-free NAG on quadratic objectives is
known to be $$
\beta
=
\frac{1-\sqrt{\alpha\mu}}{1+\sqrt{\alpha\mu}}
=
\frac{1-\sqrt{\delta_L}\,q}{1+\sqrt{\delta_L}\,q}
=
1-2\sqrt{\delta_L}\,q+\mathcal{O}(q^2),
$$
\cite[Lemma 2.1]{aybat2019universally}, \cite{gurbuzbalaban2023robustly}. Thus, the choices of \(\alpha\) and \(\beta\) in \eqref{eq:q-ansatz} agree with the delay-free NAG scaling to leading order in \(q\). The extrapolation parameter \(\nu_\tau\) is then chosen separately to stabilize the effects of the delay.
}
\subsubsection{Asymptotics of spectral radius}\label{sec:quad-gate}
Letting \(q\downarrow0\) sends \(\beta\to1\).  The limiting characteristic polynomial is
\begin{equation}\label{eq:q-Pinf}
        \;
        P_\infty(z;\delta,\nu)
        :=z^\tau(z-1)^2+\delta[(1+\nu)z-\nu].
        \;
\end{equation}
The roots of the characteristic polynomial converge to those of \(P_\infty\) as \(q \to 0\). The next result shows that if \(P_\infty\) is \wubtext{strictly} Schur stable for every \(0<\delta\le\delta_L\), then the spectral radius of GMM exhibits accelerated behavior: specifically, it is bounded above by \(1 - c/\sqrt{\kappa}\) for some constant \(c>0\), provided
\sa{\(\kappa = 1/q^2\)} is sufficiently large and the parameters $\alpha$ and $\beta$ are chosen according to \eqref{eq:q-ansatz}.\looseness=-1
\begin{theorem}[Asymptotics of spectral radius]\label{thm:q-gate}
Fix \sa{\(\tau\geq 1\)}.  Suppose \(\delta_L>0\) and \(\nu\ge0\) \sa{ensure that}
\begin{equation}\label{eq:q-Pinfstable}
        P_\infty(\cdot;\delta,\nu)
        \quad\hbox{is strictly Schur stable for every}\quad
        0<\delta\le\delta_L.
\end{equation}
Choose \(\alpha=\delta_L/L\), \(\beta=1-2\sqrt{\delta_L}\,q\), and %
\sa{\(\nu\) as given}.  Then, we \sa{necessarily have} $\nu>\tau$, and for every \(c<\sqrt{\delta_L}\), there \sa{exists} \(q_0>0\) such that
\begin{equation}\label{eq:q-gaterate}
        \rho_{\tau,\kappa}(\alpha,\beta,\nu)\le1-cq,
\end{equation}
whenever \(0<q\le q_0\), where $\rho_{\tau,\kappa}(\alpha,\beta,\nu)$ is \sa{defined} in \eqref{eq:q-rhoalg}.
\end{theorem}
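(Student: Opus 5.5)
The plan has two parts: first the necessity of $\nu>\tau$, from a small-gain expansion of $P_\infty$, and then the rate bound \eqref{eq:q-gaterate}, by splitting the curvature interval $t\in[q^2,1]$ into three regimes and bounding the roots in each uniformly in $q$.

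\emph{Necessity of $\nu>\tau$.} I will locate the two roots of $P_\infty(\cdot;\delta,\nu)$ near $z=1$ for small $\delta>0$. Put $z=1+w$, so that
\[
(1+w)^\tau w^2+\delta\bigl[1+(1+\nu)w\bigr]=0 .
\]
To leading order $w=\pm i\sqrt\delta$. Using the ansatz $w=\pm i\sqrt\delta+c\delta+O(\delta^{3/2})$ and matching the $\delta^{3/2}$ terms gives $c=(\tau-1-\nu)/2$. Since $|z|^2=1+2\operatorname{Re}w+|w|^2$, this yields
\[
|z|^2=1+\delta(\tau-\nu)+O(\delta^2).
\]
Hence strict Schur stability for all small $\delta$ forces $\nu\ge\tau$. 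The borderline case $\nu=\tau$ has to be excluded separately. My first attempt is to carry the expansion one order further, to the $\delta^2$ coefficient of $|z|^2$, and show it is nonnegative when $\nu=\tau$. If that is messy, I will instead look for a unimodular root $P_\infty(e^{i\theta};\delta,\tau)=0$ with small $\theta,\delta>0$ by solving the real and imaginary parts, which would contradict strict stability. The cheaper Cayley-coefficient test as in Proposition~\ref{prop:q-fast}(c) only gives $2\nu>\tau$, so it is not enough here.

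\emph{Rate bound.} Write $a=\sqrt{\delta_L}$, $\delta=t\delta_L$, and fix $c<a$. I argue by contradiction: suppose there are $q_n\downarrow0$, $t_n\in[q_n^2,1]$ and roots $z_n$ of $P_{\tau,t_n\delta_L}$ with $|z_n|>1-cq_n$. Passing to subsequences, one of the following three regimes occurs.
\begin{enumerate}
\item[(i)] $t_n\to t_\ast>0$. The polynomials converge coefficientwise to $P_\infty(\cdot;t_\ast\delta_L,\nu)$, which by \eqref{eq:q-Pinfstable} has spectral radius $r<1$. By continuity of roots, $\limsup|z_n|\le r<1$, which contradicts $|z_n|>1-cq_n\to1$.
\item[(ii)] $t_n/q_n^2$ stays bounded. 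This is exactly the setting of Lemma~\ref{lem:q-slow} with $\ell_n=t_n/q_n^2\in[1,B]$. The two slow roots satisfy $|z|=1-aq_n+O(q_n^{3/2})<1-cq_n$, and the remaining $\tau$ roots tend to $0$. This is again a contradiction.
\item[(iii)] $t_n\to0$ and $t_n/q_n^2\to\infty$. This is the intermediate regime and the main obstacle.
\end{enumerate}

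\emph{Intermediate regime.} I rescale with $\epsilon_n=\sqrt{t_n\delta_L}$, writing $z=1+\epsilon_n w$. The rescaled polynomial converges to $w^2+1$, so there are two slow roots near $w=\pm i$, and by Rouch\'e the other $\tau$ roots tend to $0$. Repeating the expansion of the first step, now with the extra factor $z-\beta=\epsilon_n w+2aq_n$, should give
\[
|z|^2=1-2aq_n+\delta(\tau-\nu)+O\bigl(\delta^{3/2}+q_n\sqrt\delta+q_n^2\bigr).
\]
Since $\nu>\tau$, the $\delta$ term only helps. The key difficulty is making the error terms genuinely small compared with $aq_n+\delta(\nu-\tau)$ uniformly along the sequence, in particular the cross term $q_n\sqrt\delta$. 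The cleanest route is to treat $\sigma_n=q_n/\epsilon_n\to0$ and $\epsilon_n\to0$ as two independent small parameters. I would then use the implicit function theorem around the simple roots $w=\pm i$ of the limit, giving $w=\pm i-a\sigma_n+\frac{\tau-1-\nu}{2}\epsilon_n+o(\sigma_n+\epsilon_n)$. This yields
\[
|z|\le 1-aq_n-\tfrac12(\nu-\tau)\delta+o(q_n+\delta)<1-cq_n ,
\]
a contradiction. Since all three regimes lead to contradictions, \eqref{eq:q-gaterate} holds for all sufficiently small $q$.
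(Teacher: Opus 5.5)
Your proposal is correct and follows the paper's proof essentially step for step. For necessity, you use the same $\sqrt{\delta}$-expansion of the slow roots, giving $|z_\pm|^2=1+\delta(\tau-\nu)+\cO(\delta^{3/2})$, and you settle $\nu=\tau$ with the next-order coefficient; the paper computes $|z_\pm|^2=1+\tfrac{\tau(\tau+1)(2\tau+1)}{6}\delta^2+\cO(\delta^{5/2})$, so what you need (and get) is strict positivity of that coefficient, not mere nonnegativity. For the rate bound, you use the same contradiction argument split into $t_n\to t_\ast>0$, bounded $t_n/q_n^2$ via Lemma~\ref{lem:q-slow}, and $t_n/q_n^2\to\infty$. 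In the last regime your two-parameter implicit-function expansion in $(\sigma_n,\epsilon_n)$ is a tidier packaging of the paper's perturbation of $\eta_n=\xi_n/\sqrt{t_n}$, and it yields the same $|z_n|^2=1-2aq_n-(\nu-\tau)t_n\delta_L+o(q_n+t_n)$.

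Drop your fallback for $\nu=\tau$, though. For $\nu=\tau$ the slow roots leave $z=1$ strictly outward, and there is no unimodular root with small $\theta,\delta>0$ to find: by Proposition~\ref{prop:q-boundary} and Lemma~\ref{lem:q-firstlobe}(c), a crossing at small $\theta=2u$ forces $\nu=N_\tau(u)>\tau$.
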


\begin{proof}
We first show that the hypothesis in \eqref{eq:q-Pinfstable} %
\sa{implies that} \(\nu>\tau\). The only point requiring justification is the expansion of the
roots that bifurcate from the double root \(z=1\) of \(P_\infty(\cdot;0,\nu)\).
Let $\varepsilon=\sqrt{\delta}$ and $z=1+\varepsilon w$, then
\begin{equation}\label{eq:G-def}
        P_\infty(1+\varepsilon w;\varepsilon^2,\nu)
        =\varepsilon^2 G(w,\varepsilon),
\end{equation}
where
\begin{equation}\label{eq:G-function}
        G(w,\varepsilon)
        :=(1+\varepsilon w)^\tau w^2+1+(1+\nu)\varepsilon w .
\end{equation}
The function \(G\) is analytic in a neighborhood of each point \((\pm i,0)\),
and
\[
        G(w,0)=w^2+1 .
\]
Since \sa{$G(\pm i,0)=0$ and} \(\partial_wG(\pm i,0)=\pm2i\ne0\), the analytic implicit function
theorem \cite{krantz2002implicit} gives two analytic branches
\begin{equation}\label{eq:w-branches}
        w_\pm(\varepsilon)
        =\pm i+a_\pm\varepsilon+
        b_\pm\varepsilon^2+
        c_\pm\varepsilon^3+
        \cO(\varepsilon^4),
\end{equation}
\sa{which satisfies} \(G(w_\pm(\varepsilon),\varepsilon)=0\); thus, the corresponding roots of \(P_\infty\) admit convergent expansions\footnote{The existence of such expansions also follow from the classical Puiseux expansion theorem for roots of analytic polynomial families;
see Kato~\cite[Chapter II, \S1.7]{Kato1995}; here, we gave a direct argument to keep the proof self-contained.} in powers of
\(\varepsilon=\sqrt{\delta}\). Substituting \eqref{eq:w-branches} into \(G(w_\pm(\varepsilon),\varepsilon)=0\),
and %
\sa{using the fact that} the coefficient of \(\varepsilon\) is $0$ \sa{leads to}
\[
        2(\pm i)a_\pm+\tau(\pm i)^3+(1+\nu)(\pm i)=0 .
\]
Since \((\pm i)^2=-1\), this yields
\begin{equation}\label{eq:a-pm}
        a_\pm=\frac{\tau-1-\nu}{2} .
\end{equation}
Consequently, \sa{$z_\pm=1+\epsilon\left(
\pm i+\frac{\tau-1-\nu}{2}\epsilon+O(\epsilon^2)
\right)=1\pm i\epsilon
+\frac{\tau-1-\nu}{2}\epsilon^2
+O(\epsilon^3)$; hence,}
\begin{equation}\label{eq:z-pm-first}
        z_\pm(\delta)
        =1
        +\frac{\tau-1-\nu}{2}\,\delta\sa{\pm i\sqrt{\delta}}
        +\cO(\delta^{3/2}),
\end{equation}
\sa{which implies that}
\begin{equation}\label{eq:z-pm-mod-first}
        |z_\pm(\delta)|^2
        =1+\delta(\tau-\nu)+\cO(\delta^{3/2}).
\end{equation}
Thus, \(\nu<\tau\) is impossible under \eqref{eq:q-Pinfstable}.  The borderline
case \(\nu=\tau\) is also impossible.  \hamzatext{We can see this by} continuing the same analytic
expansion two orders further, which gives
\begin{equation}\label{eq:z-pm-border}
        |z_\pm(\delta)|^2
        =1+\frac{\tau(2\tau^2+3\tau+1)}{6}\,\delta^2
        +\cO(\delta^{5/2})
\end{equation}
\sa{when \(\nu=\tau\)}. For \(\tau>0\), the coefficient of \(\delta^2\) in \eqref{eq:z-pm-border} is
positive, contradicting strict Schur stability for all sufficiently small
positive \(\delta\).  If \(\tau=0\) and \(\nu=0\), then
\(P_\infty(z;\delta,0)=z^2+(\delta-2)z+1\), whose two roots have product one;
for \(0<\delta<4\), they are non-real conjugates; \sa{hence, they} lie on the unit circle \sa{-- this} again contradicts strict Schur stability. \sa{Thus, we can conclude that}
\begin{equation}\label{eq:nu-gt-tau}
        \nu>\tau .
\end{equation}

We now prove the asserted uniform bound. 
\sa{For the sake of contradiction, suppose} that there
exist \(c<\sqrt{\delta_L}\), a sequence \(q_n\downarrow0\), numbers
\(t_n\in[q_n^2,1]\), and roots \(z_n\) of \(P_{\tau,t_n\delta_L}\) such that
\begin{equation}\label{eq:q-contra}
        |z_n|>1-cq_n .
\end{equation}
with $t_n = q_n^2 \ell_n$ \sa{for some $1\le \ell_n\le q_n^{-2}$}, where
$$ P_{\tau, t_n\delta_L}(z) = z^{\tau}(z-1)(z-1+2\sqrt{\delta_L}q_n) + t_n \delta_L ((1+\nu)z-\nu) $$
with our choice of $\alpha$ and $\beta$. To simplify the notation, we denote these polynomials by \(P_{t_n,q_n}\) in the rest of the proof. Passing to a subsequence, we may assume that
\[
        t_n\to t_0\in[0,1],
        \qquad
        z_n\to z_0,
\]
\sa{which is possible since the roots are uniformly bounded} %
as the polynomials \(P_{t_n,q_n}\) are monic of
fixed degree and have uniformly bounded coefficients.  From
\eqref{eq:q-contra}, we have \(|z_0|\ge1\).

\sa{Since $q_n\to0$ and $t_n\to t_0\in[0,1]$, $P_{t_n,q_n}\longrightarrow P_\infty(\cdot;t_0\delta_L,\nu)$ coefficientwise. Hence, \(z_0\) is a root of
\(P_\infty(\cdot;t_0\delta_L,\nu)\). If $t_0>0$, then $0<t_0\delta_L\le\delta_L$,} 
and the hypothesis \eqref{eq:q-Pinfstable} implies that
\(|z_0|<1\), which is a contradiction. Therefore, \sa{we must have}
\begin{equation}\label{eq:t-to-zero}
        t_n\to0 .
\end{equation}
\sa{From \eqref{eq:q-Pinf},} the coefficientwise limit is now \(z^\tau(z-1)^2\).  Since \(|z_0|\ge1\), it
follows that \(z_0=1\).  Write
\begin{equation}\label{eq:xi-sn-def}
        \xi_n:=z_n-1,
        \qquad
        s_n:=2\sqrt{\delta_L}\,q_n .
\end{equation}
The root equation \sa{$P_{t_n,q_n}(z_n)=0$} is exactly
\begin{equation}\label{eq:q-exactxi}
        (1+\xi_n)^\tau\xi_n(\xi_n+s_n)
        +t_n\delta_L\bigl(1+(1+\nu)\xi_n\bigr)=0 .
\end{equation}
Since \(\xi_n\to0\), \sa{\eqref{eq:q-exactxi} implies that there exists a constant $C>0$, independent of $n$, such that,} for all sufficiently large
\(n\), $|\xi_n|\,|\xi_n+s_n|\le C t_n$; \sa{hence, we also have} $|\xi_n|^2\le C t_n+s_n|\xi_n|$. Consequently,
\begin{equation}\label{eq:q-xibound}
        |\xi_n|=
        \cO(s_n+\sqrt{t_n}).
\end{equation}
Expanding
\((1+\xi_n)^\tau=1+\tau\xi_n+\cO(\sa{|\xi_n|^2})\) in
\eqref{eq:q-exactxi}, we obtain the local quadratic relation
\begin{equation}\label{eq:q-uniformquad}
        \xi_n^2+\bigl[s_n+t_n\delta_L(1+\nu)\bigr]\xi_n+t_n\delta_L
        =\cO\bigl(|\xi_n|^3+s_n|\xi_n|^2\bigr).
\end{equation}

The remaining analysis depends only on the size of the ratio
\(t_n/q_n^2\).  Since \(t_n\ge q_n^2\), this ratio lies in \([1,\infty)\).
After passing to a subsequence, there are exactly two possibilities: either
\(t_n/q_n^2\to\infty\), or the ratio is bounded, in which case it has a further
subsequential limit \(\ell\in[1,\infty)\).  We treat the unbounded alternative
first; the bounded alternative is the final case below.

\sa{Suppose that} \(t_n/q_n^2\to\infty\).  Then
\[
        \frac{s_n}{\sqrt{t_n}}
        =2\sqrt{\delta_L}\frac{q_n}{\sqrt{t_n}}
        =2\sqrt{\delta_L}\left(\frac{q_n^2}{t_n}\right)^{1/2}
        \longrightarrow0 .
\]
Thus \(s_n=o(\sqrt{t_n})\), and \eqref{eq:q-xibound} gives
\(|\xi_n|=\cO(\sqrt{t_n})\). \sa{Define}
\[
        \eta_n:=\frac{\xi_n}{\sqrt{t_n}} .
\]
The preceding estimate shows that \sa{\(\{\eta_n\}\subset\mathbb{C}\) sequence is bounded; hence,}  
after passing to a further subsequence we may assume that
\(\eta_n\to\eta_0\).  To identify \(\eta_0\), substitute
\(\xi_n=\sqrt{t_n}\eta_n\) into the exact equation \eqref{eq:q-exactxi} and
divide by \(t_n\).  This gives the exact scaled equation
\begin{equation}\label{eq:q-etaexact}
        (1+\sqrt{t_n}\eta_n)^\tau
        \eta_n\left(\eta_n+\frac{s_n}{\sqrt{t_n}}\right)
        +\delta_L\bigl(1+(1+\nu)\sqrt{t_n}\eta_n\bigr)=0 .
\end{equation}
Since \(t_n\to0\), \(s_n/\sqrt{t_n}\to0\), and \sa{\(\{\eta_n\}\)} is bounded converging to $\eta_0$,
letting \(n\to\infty\) in \eqref{eq:q-etaexact} yields $\eta_0^2+\delta_L=0$; hence,
\begin{equation}\label{eq:eta0-roots}
        \eta_0=\pm i\sqrt{\delta_L} .
\end{equation}
We next expand \eqref{eq:q-etaexact} one order further.  Choose \(R>0\) such
that \(|\eta_n|\le R\) for all sufficiently large \(n\). 
\sa{Since $\eta_n\to\eta_0$ and $|\eta_0|=\sqrt{\delta_L}$, we may take, for example, $R=2\sqrt{\delta_L}$ for all sufficiently large $n$.}
Taylor's formula on
the fixed disk \(|\eta|\le R\) gives
\[
        (1+\sqrt{t_n}\eta)^\tau
        =1+\tau\sqrt{t_n}\eta+\cO_R(t_n),
\]
where the constant in \(\cO_R(t_n)\) is independent of \(n\) and of
\(|\eta|\le R\).  Applying this with \(\eta=\eta_n\) in
\eqref{eq:q-etaexact}, and using \(s_n/\sqrt{t_n}\to0\), yields
\begin{equation}\label{eq:q-etaeq}
        0=
        \eta_n^2+\delta_L
        +\frac{s_n}{\sqrt{t_n}}\eta_n
        +\sqrt{t_n}\bigl(\tau\eta_n^3
            +\delta_L(1+\nu)\eta_n\bigr)
        +o\!\left(\frac{s_n}{\sqrt{t_n}}+\sqrt{t_n}\right).
\end{equation}

\sa{Here and below, the little-$o$ estimates are understood along the present
subsequence, for which $\eta_n \to \eta_0$. The point of the preceding
$R$-argument is that the Taylor remainder is controlled uniformly on a
fixed disk containing all sufficiently large $\eta_n$; hence, the implicit
constant does not depend on $n$. Write $\eta_n := \eta_0 + h_n$,
where $\eta_0$ is as in \eqref{eq:eta0-roots}, and set
\[
    m_n := \frac{s_n}{\sqrt{t_n}} + \sqrt{t_n}.
\]
Since $s_n/\sqrt{t_n} \to 0$ and $t_n \to 0$, we have $m_n \to 0$.
From \eqref{eq:q-etaeq}, using that $\{\eta_n\}$ is bounded, we obtain
$\eta_n^2 + \delta_L = \mathcal{O}(m_n)$.
Since $\eta_0^2=-\delta_L$, it follows that
\[
    \eta_n^2+\delta_L
    =(\eta_n-\eta_0)(\eta_n+\eta_0)
    =h_n(2\eta_0+h_n).
\]
Moreover, $h_n\to0$ and $\eta_0=\pm i\sqrt{\delta_L}\neq0$, so
$|2\eta_0+h_n|$ is bounded away from zero for all sufficiently large $n$.
Consequently, $h_n=\mathcal{O}(m_n)$; hence $h_n^2=\mathcal{O}(m_n^2)=o(m_n)$.}

\sa{We can now expand \eqref{eq:q-etaeq} around the simple root $\eta_0$.
Using $\eta_n=\eta_0+h_n$, we have
\[
    \eta_n^2+\delta_L
    =2\eta_0h_n+o(m_n),
\]
and, since $h_n=\mathcal{O}(m_n)$, we also have $\frac{s_n}{\sqrt{t_n}}\eta_n
    =
    \frac{s_n}{\sqrt{t_n}}\eta_0+o(m_n)$.
Similarly,
\[
    \sqrt{t_n}
    \left(
        \tau\eta_n^3+\delta_L(1+\nu)\eta_n
    \right)
    =
    \sqrt{t_n}
    \left(
        \tau\eta_0^3+\delta_L(1+\nu)\eta_0
    \right)
    +o(m_n).
\]
Since $\eta_0^2=-\delta_L$, we have
\[
    \tau\eta_0^3+\delta_L(1+\nu)\eta_0
    =
    \delta_L(1+\nu-\tau)\eta_0.
\]
Therefore, \eqref{eq:q-etaeq} gives
\[
    2\eta_0h_n
    +\frac{s_n}{\sqrt{t_n}}\eta_0
    +\sqrt{t_n}\,\delta_L(1+\nu-\tau)\eta_0
    =o(m_n).
\]
Dividing by $2\eta_0\neq0$, we conclude that
\begin{equation}
\label{eq:h-expansion}
    h_n
    =
    -\frac{1}{2}
    \left(
        \frac{s_n}{\sqrt{t_n}}
        +\sqrt{t_n}\,\delta_L(1+\nu-\tau)
    \right)
    +
    o\left(
        \frac{s_n}{\sqrt{t_n}}+\sqrt{t_n}
    \right).
\end{equation}}

Based on the expansion \eqref{eq:h-expansion} and using $\xi_n = \eta_n \sqrt{t_n}$, every near-one root in this regime
satisfies
\begin{equation}\label{eq:q-xiexp-corrected}
        \xi_n
        =\pm i\sqrt{t_n\delta_L}
        -\frac{s_n}{2}
        -\frac{t_n\delta_L(1+\nu-\tau)}{2}
        +o(q_n+t_n),
\end{equation}
with the sign determined by the branch.  It follows that
\[
        2\operatorname{Re}\xi_n
        =-s_n-t_n\delta_L(1+\nu-\tau)+o(q_n+t_n),
\]
and
\[
        |\xi_n|^2=t_n\delta_L+o(q_n+t_n).
\]
Thus
\begin{align}\label{eq:q-modbranch}
        |1+\xi_n|^2
        &=1+2\operatorname{Re}\xi_n+|\xi_n|^2
          \notag\\
        &=1-s_n+t_n\delta_L(\tau-\nu)+o(q_n+t_n).
\end{align}
By \eqref{eq:nu-gt-tau}, the coefficient \(\delta_L(\tau-\nu)\) is negative.
If \(t_n=\cO(q_n)\), then \eqref{eq:q-modbranch} yields
\[
        |1+\xi_n|
        \le 1-\sqrt{\delta_L}\,q_n+o(q_n),
\]
contradicting \eqref{eq:q-contra} because \(c<\sqrt{\delta_L}\).  If
\(t_n/q_n\to\infty\), then the negative term
\(t_n\delta_L(\tau-\nu)\) dominates the \(q_n\)-scale in
\eqref{eq:q-modbranch}, and \eqref{eq:q-contra} is again impossible.

It remains only to consider the bounded alternative for \(t_n/q_n^2\).  This
is the only remaining possibility because, as observed above, every subsequence
of the ratios either has a further subsequence tending to \(\infty\) or a
bounded further subsequence.  Passing to a subsequence in the bounded case, we
may assume that
\[
        \frac{t_n}{q_n^2}\to\ell\in[1,\infty).
\]
The estimate \eqref{eq:q-xibound} gives \(\xi_n=\cO(q_n)\).  Thus
\(z_n=1+\xi_n\) is one of the two near-one roots covered by
Lemma~\ref{lem:q-slow}.  Hence
\[
        |z_n|=1-\sqrt{\delta_L}\,q_n+o(q_n),
\]
which contradicts \eqref{eq:q-contra}, again because
\(c<\sqrt{\delta_L}\).

The contradiction proves that, for every \(c<\sqrt{\delta_L}\), there exists
\(q_0>0\) such that every root of \(P_{t,q}\), uniformly for
\(t\in[q^2,1]\), lies in \(|z|\le1-cq\) whenever \(0<q\le q_0\).  By the
definition of \(\rho_{\tau,\kappa}\), this is precisely
\eqref{eq:q-gaterate}.
\end{proof}

\subsubsection{Stability analysis via the Cayley transform}

We reuse the Cayley transform of Lemma~\ref{lem:q-cayley},
\begin{equation}\label{eq:q-cayley}
        z=\frac{1+w}{1-w},
\end{equation}
which sends \(\Ree w<0\) to \(|z|<1\).  Multiplying by \((1-w)^{\tau+2}\), the transformed polynomial becomes
\begin{align}
        \mathcal H_{\tau,\delta,\nu}(w)
        &:=(1-w)^{\tau+2}P_\infty\!\left(\frac{1+w}{1-w};\delta,\nu\right)=4w^2(1+w)^\tau+
          \delta[1+(1+2\nu)w](1-w)^{\tau+1}.\label{eq:q-Hpoly}
\end{align}
Based on Lemma \ref{lem:q-cayley}, \(P_\infty\) is Schur stable if and only if \(\mathcal H_{\tau,\delta,\nu}\) is Hurwitz stable provided that $P_\infty(\wubtext{-1};\delta,\wubtext{\nu})\neq 0$. Then, as a corollary to the previous theorem, we get the following result. %
\begin{corollary}\label{coro:q-smallgain}
If \(\nu>\tau\), then \(\mathcal H_{\tau,\delta,\nu}\) is Hurwitz stable for all sufficiently small \(\delta>0\).
\end{corollary}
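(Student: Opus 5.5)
The plan is to analyze the roots of $P_\infty(\cdot;\delta,\nu)$ as $\delta\downarrow0$ and then transfer the conclusion to $\mathcal H_{\tau,\delta,\nu}$ through Lemma~\ref{lem:q-cayley}. At $\delta=0$ we have $P_\infty(z;0,\nu)=z^\tau(z-1)^2$. This has a $\tau$-fold root at $0$ and a double root at $1$, and no root on $z=-1$.

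\textbf{Step 1: the roots near $0$.} By continuity of roots as a multiset, for $\delta$ small, $\tau$ roots of $P_\infty(\cdot;\delta,\nu)$ lie in $|z|<1/2$. To make this concrete, use Rouch\'e's theorem on $|z|=1/2$: there $|z^\tau(z-1)^2|\ge 2^{-\tau-2}$, while $|\delta[(1+\nu)z-\nu]|\le\delta(1+2\nu)$. So the count is preserved once $\delta(1+2\nu)<2^{-\tau-2}$.

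\textbf{Step 2: the two roots near $1$.} Reuse the expansion from the proof of Theorem~\ref{thm:q-gate}. Set $\varepsilon=\sqrt\delta$ and $z=1+\varepsilon w$. The analytic implicit function theorem applied to $G(w,\varepsilon)$ in \eqref{eq:G-function} gives exactly two analytic branches $w_\pm(\varepsilon)=\pm i+\tfrac{\tau-1-\nu}{2}\varepsilon+\cO(\varepsilon^2)$. A Rouch\'e argument on a fixed circle in the $w$-plane confirms that these are the only two roots near $1$, so all $\tau+2$ roots are accounted for. The expansion \eqref{eq:z-pm-mod-first} then gives
\[
|z_\pm(\delta)|^2=1-\delta(\nu-\tau)+\cO(\delta^{3/2}).
\]
Since $\nu>\tau$, the right-hand side is $<1$ for all sufficiently small $\delta>0$. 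Hence $P_\infty(\cdot;\delta,\nu)$ is Schur stable for small $\delta>0$.

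\textbf{Step 3: transfer to $\mathcal H$.} We have $P_\infty(-1;\delta,\nu)=4(-1)^\tau-\delta(1+2\nu)$, which is nonzero for $\delta<4/(1+2\nu)$. Lemma~\ref{lem:q-cayley} therefore applies, and $\mathcal H_{\tau,\delta,\nu}$ in \eqref{eq:q-Hpoly} is Hurwitz stable.

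The main obstacle is controlling the $\cO(\varepsilon^2)$ remainder in Step 2 uniformly, so that the negative $\delta(\tau-\nu)$ term dominates. This is routine because the branches are analytic in $\varepsilon$, hence the error is genuinely $\cO(\delta^{3/2})$ with a constant depending only on $\tau$ and $\nu$.
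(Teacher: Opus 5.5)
Your proposal is correct and follows essentially the same route as the paper. The paper also takes the two near-one roots from the expansion \eqref{eq:z-pm-mod-first} in the proof of Theorem~\ref{thm:q-gate}, keeps the remaining $\tau$ roots inside the disk by continuity near $z=0$, and concludes from $P_\infty(-1;\delta,\nu)\neq0$ and Lemma~\ref{lem:q-cayley}; your Rouch\'e bound on $|z|=1/2$ and explicit root count just make the paper's ``by continuity'' step quantitative.
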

\begin{proof}
    The proof of Theorem \ref{thm:q-gate} shows that for $\wubtext{\nu}>\tau$, the polynomial $P_\infty(z;\delta,\wubtext{\nu})$ is Schur stable for $\delta$ small, more specifically this follows from the expansion \eqref{eq:z-pm-mod-first}. \wubtext{The remaining $\tau$ roots stay near $0$; \sa{hence, they are} inside the unit disk for sufficiently small $\delta>0$, by continuity.} Note that \(P_\infty(-1;0,\wubtext{\nu})=4(-1)^\tau \neq 0\) so that $P_\infty(-1;\delta,\wubtext{\nu})\neq 0$ for $\delta$ around 0. Then, \(P_\infty\) is Schur stable if and only if \(\mathcal H_{\tau,\delta,\nu}\) is Hurwitz stable by Lemma \ref{lem:q-cayley}; and we conclude.
\end{proof}

\begin{lemma}[explicit no-imaginary-axis interval]\label{lem:q-nocross}
Consider the following parameter choice
$$\nu=M-\tfrac12 \quad \mbox{with} \quad M:=2\tau+1.$$ %
Then \(\mathcal H_{\tau,\delta,\nu}\) has no root on the imaginary axis for every
\begin{equation}\label{eq:q-deltasmall}
        0<\delta\le M^{-2}.
\end{equation}
\end{lemma}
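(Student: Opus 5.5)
The plan is to substitute $w=iy$ with $y\in\mathbb R$ into \eqref{eq:q-Hpoly} and show directly that $\mathcal H_{\tau,\delta,\nu}(iy)\neq0$. First, note that $1+2\nu=2M$, so
\[
\mathcal H_{\tau,\delta,\nu}(iy)=-4y^2(1+iy)^\tau+\delta(1+2Miy)(1-iy)^{\tau+1}.
\]
At $y=0$ this equals $\delta>0$. Since $\mathcal H$ has real coefficients, $\mathcal H(-iy)=\overline{\mathcal H(iy)}$, so it suffices to treat $y>0$. For such $y$, a root means
\[
4y^2(1+iy)^\tau=\delta(1+2Miy)(1-iy)^{\tau+1}.
\]
I will split this into a modulus condition and an argument condition, and show that they cannot hold simultaneously.

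\emph{Modulus step.} Taking absolute values and cancelling $(1+y^2)^{\tau/2}$ gives
\[
4y^2=\delta\sqrt{(1+4M^2y^2)(1+y^2)}.
\]
With $\delta\le M^{-2}$ and the rescaling $x:=My$, this yields
\[
16x^4\le(1+4x^2)\bigl(1+x^2/M^2\bigr)\le(1+4x^2)\bigl(1+x^2/9\bigr),
\]
using $M\ge3$. At $x=1$ the left side is $16$ and the right side is $5\cdot\tfrac{10}{9}<6$. The quartic comparison is monotone past its crossing, so the inequality forces $x<1$, that is, $0<My<1$.

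\emph{Argument step.} Set $\theta=\arctan y\in(0,\pi/2)$. The argument of the left side is $\tau\theta$. The argument of the right side is $\arctan(2My)-(\tau+1)\theta$. Equality modulo $2\pi$ therefore requires
\[
\arctan(2My)-M\arctan y\in2\pi\mathbb Z,
\]
since $2\tau+1=M$. Now use the modulus bound. On one side, $0<M\arctan y\le My=x<1$. On the other, $\arctan(2x)>x$ for $0<x<1$: the function $g(x)=\arctan(2x)-x$ vanishes at $0$, is increasing while $2/(1+4x^2)>1$, and still satisfies $g(1)=\arctan2-1>0$, so by concavity $g>0$ on $(0,1]$. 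Hence
\[
0<\arctan(2My)-M\arctan y<\pi/2,
\]
which is not a multiple of $2\pi$. This is a contradiction, so there is no imaginary-axis root.

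\emph{Main obstacle.} The step requiring care is the quantitative modulus bound $My<1$. One must confirm that the quartic inequality really confines $x$ below $1$, and not merely near it, uniformly in $M\ge3$. Equally, one must confirm the elementary inequality $\arctan(2x)>x$ on that whole range. If the crude bound $x<1$ turned out to fail, I would fall back to the sharper threshold $x^\star\approx1.165$, the positive solution of $\arctan(2x^\star)=x^\star$, which leaves ample room.
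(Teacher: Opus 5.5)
Your proof is correct, and it runs the paper's argument in the opposite order.

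The paper starts from the phase equation. It sets $u=\arctan\omega$ and deduces that $Mu$ must lie in some band $(2j\pi,2j\pi+\pi/2)$ with $\tan(Mu)=2M\tan u$. It then uses the modulus equation to write the crossing gain as $\delta=4\sin^2u\,\cos(Mu)/\cos u$. Finally it shows $\delta>M^{-2}$ separately in two cases:
\begin{itemize}
\item for $j=0$, by bracketing $Mu\in(11/10,13/10)$ with tangent estimates;
\item for $j\ge1$, via $\sin u\ge 4/M$ and monotonicity of the gain in $\sin u$.
\end{itemize}

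You instead feed the hypothesis $\delta\le M^{-2}$ into the modulus equation first. This confines any candidate root to $My<1$. Then $0<M\arctan y<1$ and $0<\arctan(2My)<\pi/2$, so every branch other than $j=0$ is excluded at once. The remaining branch reduces to the single concavity inequality $\arctan(2x)>x$ on $(0,1]$, with no case split and no numerical tangent bracketing.

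What the paper's ordering buys is the explicit crossing-gain formula as a function of frequency. That is exactly the boundary parametrization $D_\tau$ reused in the appendix to solve the design problem exactly. Your ordering only certifies that $M^{-2}$ lies below the first crossing gain, but that is all the lemma requires, and your route is shorter and more self-contained.

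One small tightening: replace the phrase ``monotone past its crossing'' by the one-line bound
\[
(1+4x^2)\bigl(1+x^2/9\bigr)\le 5x^2\cdot\tfrac{10}{9}x^2=\tfrac{50}{9}x^4<16x^4\qquad (x\ge1).
\]
With that, the fallback to $x^\star$ in your last paragraph is unnecessary.
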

\begin{proof} 
    Fix an arbitrary $\delta\in (0,M^{-2}]$. Suppose, for contradiction, that $\wubtext{\mathcal H}_{\tau, \delta, \nu}$ has a root on the imaginary axis. \sa{Since we have}
    \[
    \wubtext{\mathcal H}_{\tau,\delta,\nu}(0) = \delta >0,
    \]
    this root is non-zero. Moreover, $\wubtext{\mathcal H}_{\tau,\delta,\nu}$ has real coefficients, so its non-real roots occur in conjugate pairs. Hence, without the loss of generality we can assume \sa{that}
\(w=i\omega\), \(\omega>0\), is a root.
Put \(u=\arctan\omega\in(0,\pi/2)\) and $a:=1+2\nu=2M$. From \eqref{eq:q-Hpoly},
\begin{equation}\label{eq:q-imagroot2}
        \delta(1+ia\omega)(1-i\omega)^{\tau+1}
        =4\omega^2(1+i\omega)^\tau.
\end{equation}
Taking arguments in \eqref{eq:q-imagroot2} requires keeping track of the branch.  Since
\(u=\arctan\omega\), we have
\[
        \arg(1-i\omega)=-u,
        \qquad
        \arg(1+i\omega)=u.
\]
Hence, equating the arguments of both sides in \eqref{eq:q-imagroot2},
\[
        \arg(1+ia\omega)-(\tau+1)u
        \equiv \tau u \pmod {2\pi},
\]
or equivalently
\[
        \arg(1+ia\omega)
        \equiv (2\tau+1)u=Mu \pmod {2\pi}.
\]
The principal argument on the left is
\[
        \arg(1+ia\omega)
        =\arctan(a\omega)
        =\arctan(2M\tan u)
        \in(0,\pi/2).
\]
If we put \(y:=Mu\), then \(y\) must be
congruent modulo \(2\pi\) to an angle in \((0,\pi/2)\).  Therefore there is an integer
\(j\ge0\) such that
\begin{equation}\label{eq:q-phasebranch}
        y\in(2j\pi,2j\pi+\pi/2),
        \qquad
        \tan y=\tan\bigl(\arg(1+ia\omega)\bigr)
        =a\omega=2M\tan u=2M\tan(y/M).
\end{equation}
Taking moduli of both sides in \eqref{eq:q-imagroot2}, noting %
\hamzatext{$|1\pm i\wubtext{\omega}| = (1+\omega^2)^{1/2}$}, we have also

\begin{hamzablock}
    \[
    \delta(1+a^2\omega^2)^{1/2}(1+\omega^2)^{(\tau+1)/2} = 4\omega^2(1+\omega^2)^{\tau/2}.
    \]
\end{hamzablock}
Reorganizing this and using $a=2M$, \sa{we get}
\begin{equation}\label{eq:q-delta-omega2}
\delta
=
\frac{4\omega^2}
{\sqrt{1+4M^2\omega^2}\sqrt{1+\omega^2}}.
\end{equation}
Now substitute \(\omega=\tan u\). Since \(u\in(0,\pi/2)\),
\[
\sqrt{1+\omega^2}
=
\sqrt{1+\tan^2 u}
=
\sec u
=
\frac1{\cos u}.
\]
Thus,
\[
\delta
=
\frac{4\tan^2 u \cdot \cos(u)}
{\sqrt{1+4M^2\tan^2 u}} = \frac{4\sin^2 u}
{\sqrt{\cos^2 u+4M^2\sin^2 u}}.\label{eq:q-delta-square-root}
\]\
From the argument calculation, with \(y=Mu\),
\[
\tan y=2M\tan u,
\qquad
y\in(2j\pi,2j\pi+\pi/2).
\]
Since \(y\) is 
congruent modulo \(2\pi\) to an angle in \((0,\pi/2)\), 
$
\cos y>0.
$
Therefore,
\[
\cos y
=
\frac{1}{\sqrt{1+\tan^2 y}}
=
\frac{1}{\sqrt{1+4M^2\tan^2 u}}.
\]
Using this in the previous expression for $\delta$ gives
\[
\delta
=
4\tan^2 u\cdot \cos u\cdot \cos y= 4\sin^2u\,\frac{\cos y}{\cos u}.
\]
Finally, \(y=Mu\), \sa{we get}
\begin{equation}\label{eq:q-deltau}
        \delta=4\sin^2u\,\frac{\cos(Mu)}{\cos u}
        =4\sin^2(y/M)\,\frac{\cos y}{\cos(y/M)}.
\end{equation}
\hamzatext{We next show that any $y$ \mg{satisfying} \eqref{eq:q-phasebranch}, with $\delta$ defined by \eqref{eq:q-deltau}, %
\mg{necessarily satisfies} $\delta > M^{-2}$.} There are two cases:

\smallskip
\noindent\textbf{Case 1: \(j=0\).}  Then \(0<y<\pi/2\), and \(0<y/M\le\pi/6\).  Since \(\tan x\ge x\), \eqref{eq:q-phasebranch} gives \(\tan y\ge2y\).  The function \(\tan y/y\) is increasing, and \(\tan(11/10)<2(11/10)\), so
\[
        y>\frac{11}{10}.
\]
On \([0,\pi/6]\), \(\tan x\le(4/3)x\); hence \(\tan y\le(8/3)y\).  Since \(\tan(13/10)>(8/3)(13/10)\),
\[
        y<\frac{13}{10}.
\]
Thus \(\cos y>\cos(13/10)>1/4\).  By concavity of sine on \([0,\pi/2]\),
\[
        \sin x\ge\frac{3}{\pi}x,
        \qquad 0\le x\le\pi/6.
\]
Therefore,
\[
        \delta
        \ge4\left(\frac{3}{\pi}\frac{y}{M}\right)^2\cos y
        >4\left(\frac{3}{\pi}\frac{11}{10M}\right)^2\frac14
        =\frac{1089}{100\pi^2}\frac1{M^2}
        >\frac1{M^2}.
\]

\smallskip
\noindent\textbf{Case 2: \(j\ge1\).}  This case is vacuous for \(\tau=1\), because then \(M=3\) and \(u<\pi/2\) imply \(y=Mu<3\pi/2<2\pi\).  Thus, \sa{\(j\ge1\) implies} \(M>4\).  Since \(y\ge2\pi\), we have \(u=y/M\ge2\pi/M\).  \sa{Using the concavity of $\sin(\cdot)$ in this region} gives
\begin{equation}\label{eq:q-singe4}
        \sin u\ge\frac{2}{\pi}u\ge\frac4M.
\end{equation}
Using \(\tan y=2M\tan u\) in \eqref{eq:q-deltau} is equivalent to the expression
\begin{equation}\label{eq:q-deltaalt}
        \delta
        =\frac{4\sin^2u}{\sqrt{\cos^2u+4M^2\sin^2u}}.
\end{equation}
Indeed, \(\cos y=[1+4M^2\tan^2u]^{-1/2}\) and \(\cos y>0\).  The right side of \eqref{eq:q-deltaalt} is increasing in \(s=\sin u\in(0,1)\).  Combining this monotonicity with \eqref{eq:q-singe4},
\[
        \delta
        \ge \frac{4(4/M)^2}{\sqrt{1+(4M^2-1)(4/M)^2}}
        =\frac{64}{M^2\sqrt{65-16/M^2}}
        >\frac1{M^2}.
\]
Both cases contradict \(\delta\le M^{-2}\), so no imaginary-axis root exists when $\wubtext{0}\mathrel{\wubtext{<}}\delta\leq M^{-2}$.
\end{proof}

\begin{theorem}[closed-form accelerated family for every fixed delay]\label{thm:q-alldelay}
Fix any integer \(\tau\ge1\), with \(M=2\tau+1\), set
\begin{equation}\label{eq:q-alldelayparams}
       \;
        \alpha=\frac{1}{M^2L},
        \qquad
        \beta=1-\frac{2}{M}q,
        \qquad
        \nu=M-\frac12=2\tau+\frac12.
        \;
\end{equation}
Then delayed GMM on quadratics satisfies, for every fixed \(\tau\) and every \(\wubtext{0}\mathrel{\wubtext{<}}c<1/M\), the accelerated bound
\begin{equation}\label{eq:q-alldelayrate}
        \rho_{\tau,\kappa}(\alpha,\beta,\nu)\le 1-c\,q=1-c\,\kappa^{-1/2}
        \qquad\text{for all }\kappa\ge\kappa_0^{\mathrm{all}}(\tau,c),
\end{equation}
where \(\kappa_0^{\mathrm{all}}(\tau,c)<\infty\) is a finite threshold depending only on \(\tau\) and
\(c\).  In particular, taking \(c=\tfrac1{2M}\) gives \(\rho_{\tau,\kappa}\le 1-\tfrac{1}{2(2\tau+1)}\kappa^{-1/2}\).
\end{theorem}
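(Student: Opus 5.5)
The plan is to deduce Theorem~\ref{thm:q-alldelay} from Theorem~\ref{thm:q-gate} with $\delta_L=M^{-2}$ and $\nu=M-\tfrac12$. With this choice, $\alpha=\delta_L/L=1/(M^2L)$ and $\beta=1-2\sqrt{\delta_L}\,q=1-\tfrac{2}{M}q$, so \eqref{eq:q-alldelayparams} is exactly the ansatz \eqref{eq:q-ansatz}. Since $\sqrt{\delta_L}=1/M$, Theorem~\ref{thm:q-gate} gives \eqref{eq:q-alldelayrate} for every $0<c<1/M$, with $\kappa_0^{\mathrm{all}}(\tau,c):=q_0^{-2}$. Taking $c=1/(2M)$ gives the last claim. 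The whole task therefore reduces to verifying hypothesis \eqref{eq:q-Pinfstable}: $P_\infty(\cdot;\delta,\nu)$ must be strictly Schur stable for every $0<\delta\le M^{-2}$.

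I would prove this by a continuity (root-crossing) argument in $\delta$, working with the Cayley transform $\mathcal H_{\tau,\delta,\nu}$ of \eqref{eq:q-Hpoly}.

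\textbf{Small $\delta$.} Because $\nu=2\tau+\tfrac12>\tau$, Corollary~\ref{coro:q-smallgain} shows that $\mathcal H_{\tau,\delta,\nu}$ is Hurwitz for all sufficiently small $\delta>0$.

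\textbf{Degree and the point $z=-1$.} I need $\mathcal H$ to keep degree $\tau+2$ along the whole path, which by Lemma~\ref{lem:q-cayley} amounts to $P_\infty(-1;\delta,\nu)\neq0$. Directly,
\[
P_\infty(-1;\delta,\nu)=4(-1)^\tau-\delta(1+2\nu)=4(-1)^\tau-2M\delta .
\]
Its absolute value is at least $4-2M\delta\ge 4-2/M>0$ for $\delta\le M^{-2}$. Hence the leading coefficient of $\mathcal H$ is nonzero on $(0,M^{-2}]$, and no root escapes through $w=\infty$.

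\textbf{No crossings.} The roots of $\mathcal H$ depend continuously on $\delta\in(0,M^{-2}]$. A root could pass from the open left half-plane to the closed right half-plane only by touching the imaginary axis. Lemma~\ref{lem:q-nocross} rules this out for every $\delta$ in the interval. Starting from the Hurwitz configuration near $\delta=0$, all $\tau+2$ roots therefore stay in $\Ree w<0$ throughout $(0,M^{-2}]$.

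\textbf{Back to $P_\infty$.} Lemma~\ref{lem:q-cayley} translates this into Schur stability of $P_\infty(\cdot;\delta,\nu)$ for every $0<\delta\le M^{-2}$. This is strict Schur stability in the sense of \eqref{eq:q-Pinfstable}, and the theorem follows from Theorem~\ref{thm:q-gate}.

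The step that needs care is the continuity argument itself: the interval $(0,M^{-2}]$ is open at $0$, so the argument must be anchored at some small $\delta_1>0$ supplied by Corollary~\ref{coro:q-smallgain}, not at $\delta=0$. From $\delta_1$ one sweeps across $[\delta_1,M^{-2}]$, where the degree is constant and no root lies on the imaginary axis. The hard analytic content, excluding imaginary-axis roots, is already contained in Lemma~\ref{lem:q-nocross}.
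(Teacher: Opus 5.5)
Your proposal is correct and follows the paper's proof essentially step for step. Like the paper, you anchor Hurwitz stability of $\mathcal H_{\tau,\delta,\nu}$ at a small $\delta_\ast>0$ via Corollary~\ref{coro:q-smallgain}, keep the degree constant because the leading coefficient $4+(-1)^{\tau+1}2M\delta$ does not vanish for $\delta\le M^{-2}$, exclude imaginary-axis crossings by Lemma~\ref{lem:q-nocross}, and then apply Theorem~\ref{thm:q-gate} with $\delta_L=M^{-2}$ and $\kappa_0^{\mathrm{all}}=q_0^{-2}$.
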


\begin{proof}
By Corollary~\ref{coro:q-smallgain}, since \(\nu=M-1/2>\tau\), the Hurwitz image \(\mathcal H_{\tau,\delta,\nu}\) is
stable for all sufficiently small \(\delta>0\); fix such \sa{$\delta_\ast$ satisfying} \(\delta_\ast\in(0,M^{-2}]\).  The leading
coefficient of \(\mathcal H_{\tau,\delta,\nu}\) is
\[
        4+(-1)^{\tau+1}\delta(1+2\nu)=4+(-1)^{\tau+1}2M\delta,
\]
and for \(0\le\delta\le M^{-2}\) we have \(2M\delta\le 2/M\le 2/3<4\), so it never vanishes: the degree is
constant and no root escapes to infinity on \([\delta_\ast,M^{-2}]\).  By Lemma~\ref{lem:q-nocross} no root lies
on the imaginary axis for any \(\delta\in(0,M^{-2}]\).  Hence the number of right-half-plane roots of
\(\mathcal H_{\tau,\delta,\nu}\) is constant as \(\delta\) runs over \((0,M^{-2}]\) (a root count can change
only by a root crossing the axis or escaping to infinity, both excluded), and equals its value at
\(\delta_\ast\), namely \(0\).  Thus, \(\mathcal H_{\tau,\delta,\nu}\) is Hurwitz, i.e., \
\(P_\infty(\cdot;\delta,\nu)\) is Schur stable, for every \(0<\delta\le M^{-2}\).  \sa{Therefore, for every \(\wubtext{0}\mathrel{\wubtext{<}}c<1/M\),} Theorem~\ref{thm:q-gate}
with \(\delta_L=M^{-2}\) (so \(\sqrt{\delta_L}=1/M\)) then yields the bound \sa{in}
\eqref{eq:q-alldelayrate} for all \(q\le q_0(\tau,c)\), i.e., for all \(\kappa\ge\kappa_0^{\mathrm{all}}(\tau,c):=q_0(\tau,c)^{-2}\).
\end{proof}
The closed-form \sa{parameter choice in} \eqref{eq:q-alldelayparams} is deliberately conservative:
it is designed so that Lemma~\ref{lem:q-nocross} can exclude every unit-circle
crossing of the roots by hand, at the cost of a rate constant $c<1/M$ that need not
be optimal. For $\nu>\tau$, let $\delta_L(\nu)$ denote the first gain at which $P_\infty(\cdot;\delta,\nu)$ has a unit-circle root, as defined in appendix~\eqref{eq:q-deltaLdef}. We write $\bar\delta:=\alpha L$ for the chosen gain, denoted by $\delta_L$ in Theorem~\ref{thm:q-gate}, and take $0<\bar\delta<\delta_L(\nu)$. 
\sa{After fixing $\alpha=\bar\delta/L$ and $\beta=1-2\sqrt{\bar\delta}\,q$ within GMM,}
it is natural to ask which extrapolation (momentum) parameter $\nu$ is best, and
what \sa{the corresponding best certifiable rate} $\rho\mathrel{\le}1-c/\sqrt\kappa$ is. Since
Theorem~\ref{thm:q-gate} delivers every $0\mathrel{<}c<\sqrt{\delta_L(\nu)}$ whenever $\nu>\tau$, by choosing $c^2<\bar\delta<\delta_L(\nu)$,
this is the design problem $\max_{\nu>\tau}\delta_L(\nu)$, which Supplementary
Section~\ref{sec:design} solves exactly (Corollary~\ref{cor:q-designoptimal}). The maximum first-crossing gain is attained by an
explicit matched extrapolation $\nu_\tau^{\mathrm{crit}}$ of Definition~\ref{def:q-crit} in the appendix, with supremum of certified rate constants
$c_\tau^{\mathrm{crit}}=\sqrt{\delta_\tau^{\mathrm{crit}}}$ obeying the large-delay asymptotics in Proposition~\ref{prop:q-scaling} in the appendix
\[
        c_\tau^{\mathrm{crit}}=\frac{a_\star\sqrt{\cos a_\star}}{\tau}\,(1+o(1))
        \approx\frac{0.7415}{\tau},
        \qquad
        \nu_\tau^{\mathrm{crit}}=\frac{2}{a_\star^{2}}\,\tau\,(1+o(1))
        \approx1.725\,\tau,
\]
where $a_\star \approx 1.0768$ is the unique solution of the equation $a\tan a=2$ on the interval $(0,\pi/2)$. Two features of this optimum
inform the parameters used throughout the paper: acceleration survives every fixed
delay but its constant is asymptotically proportional to $1/\tau$, and the optimal
extrapolation grows \emph{linearly} in the delay --- a look-ahead the classical
one-parameter families ($\nu=0$, $\nu\approx1$) cannot express, which is precisely
why they forfeit $\sqrt\kappa$-acceleration (Theorems~\ref{thm:q-naglower}
and~\ref{thm:q-hblower}).

The conservative choice $\nu=M-\tfrac12\sim2\tau$ of Theorem~\ref{thm:q-alldelay}
carries this same scaling and differs from the optimum only in constants. Indeed,
it yields $\rho\mathrel{\le}1-c/\sqrt\kappa$ with $c=\tfrac{1}{2(2\tau+1)}\approx 1/(4\tau)$ for
large $\tau$, so $c_\tau^{\mathrm{crit}}/c\to a_\star\sqrt{\cos a_\star}/(1/4)<3$ as $\tau\to\infty$;
the certified gap constant is thus asymptotically within a factor of three of this supremum. We adopt this
choice throughout because it extends to the
strongly convex case without requiring modification to the parameter choice, furnishing a single parameter setting valid for both
quadratic and smooth strongly convex objectives.

\section{Strongly convex smooth case: Acceleration with delays}\label{sec-strongly-convex}
We start with presenting a theorem that shows that the same parameter choice (studied in the quadratic case) also achieves acceleration in the strongly convex case, i.e., it achieves a convergence rate $\rho = 1  - \Theta(\frac{1}{(2\tau+1)\sqrt{\kappa}})$. 
\begin{theorem}[arbitrary fixed-delay acceleration]\label{thm:main-nonexplicit-constants}
Fix any integer $\tau\ge1$, write $M=2\tau+1$, and fix any constant $c\in(0,2/3)$. There is a threshold $\kappa_0=\kappa_0(\tau,c)$ such that for every $\kappa\ge\kappa_0$, every $f\in\Sml$ and every finite initial history $x_{-\tau-1},\ldots,x_0$, delayed GMM \eqref{eq:gmm} with parameters
\begin{equation*}
        \alpha=\frac{1}{(2\tau+1)^2L} \hamzatext{= \frac{1}{M^2L}},\qquad
        \beta=1-\frac{2}{(2\tau+1)\sqrt\kappa}\hamzatext{= 1-\frac{2}{M\sqrt{\kappa}}},\qquad
        \nu=2\tau+\frac12 \hamzatext{= M-\frac{1}{2}}
\end{equation*}
satisfies
\begin{equation}\label{eq:rate}
        \norm{x_k-x_\star}\le
        C\left(1-\frac{c}{(2\tau+1)\sqrt\kappa}\right)^k,
        \qquad k\ge0,
\end{equation}
for some finite constant $C>0$ that may depend on the fixed parameters and the initial history relative to $x_\star$, but not on \sa{the iteration counter $k\geq 0$}.%
\end{theorem}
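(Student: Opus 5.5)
The approach is the IQC/Lur'e route announced in the introduction. Write the error dynamics as a feedback interconnection. Set $e_k=x_k-x_\star$, $\tilde y_k=y_k-x_\star=(1+\nu)e_k-\nu e_{k-1}$ and $u_k=\nabla f(y_{k-\tau})$. Then \eqref{eq:gmm} reads $e_{k+1}=(1+\beta)e_k-\beta e_{k-1}-\alpha u_k$. This is the linear system $G(z)=\alpha z^{-\tau}\bigl((1+\nu)z-\nu\bigr)/\bigl((z-1)(z-\beta)\bigr)$ (up to sign conventions), in feedback with the static map $\phi=\nabla f(\cdot+x_\star)$, which is sector bounded and slope restricted in $[\mu,L]$. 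I would do a loop transformation $\phi=m I+\Delta$ with $m=(\mu+L)/2$, so that $\Delta$ has slope in $[-(L-\mu)/2,(L-\mu)/2]$. The loop-transformed linear part then has characteristic polynomial $P_{\tau,\alpha m}$. Finally I apply the rate weighting $e_k\mapsto\rho^{-k}e_k$ with $\rho=1-c/(M\sqrt\kappa)$, which replaces $z$ by $\rho z$ everywhere.

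\emph{Step 1 (nominal stability).} Show that $P_{\tau,\alpha m}(\rho z)$ is Schur stable, so that the weighted linear part is stable. Since $\alpha m=(1+\kappa^{-1})/(2M^2)$ lies in $(0,M^{-2}]$, this follows from Theorem~\ref{thm:q-alldelay} and Theorem~\ref{thm:q-gate}, because $c/M<1/M$ once $\kappa$ is large.

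\emph{Step 2 (IQC).} Use a rate-weighted Zames--Falb multiplier for slope-restricted gradients, as in \cite{BoczarLessardRecht2015,Freeman2018}. The simplest candidate is the off-by-$\tau$ or off-by-one type, $\Pi(z)=1-h\rho^{2}z^{-1}$ with $0\le h<1$ (or a multiplier with lag matched to $\tau+1$). It yields a valid finite-horizon inequality for the sequence $\rho^{-k}(\tilde y_k,u_k)$, and its validity for vector gradients follows from cyclic monotonicity.

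\emph{Step 3 (KYP).} By the KYP lemma (\cite{Rantzer1996}), a strict frequency-domain inequality on $|z|=1$ gives a quadratic Lyapunov function $V(\xi)$ with $V(\xi_{k+1})\le\rho^2V(\xi_k)$ modulo the IQC terms. Summing this gives $\|\xi_k\|\le C\rho^k$, and the constant $C$ depends on the initial history through $V(\xi_0)$ and on the finite-horizon IQC boundary terms. Everything therefore reduces to a scalar inequality of the form
\[
\operatorname{Re}\bigl\{\Pi(e^{i\theta})\bigl(1+\tfrac{L-\mu}{2}\,\widehat G_\rho(e^{i\theta})\bigr)\bigr\}>0\quad\forall\theta,
\]
where $\widehat G_\rho$ is the rate-weighted loop-transformed transfer function.

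\emph{Step 4 (main obstacle: verifying the inequality for large $\kappa$).} Plug in $\alpha=1/(M^2L)$, $\beta=1-2q/M$ and $\nu=M-\tfrac12$, and split the frequencies.

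For $|\theta|\ge\theta_0$ fixed, let $q\to0$. The expression converges uniformly to a limit built from $P_\infty(e^{i\theta};\tfrac{1}{2M^2},\nu)$ with gain slope $\kappa\mu/2\cdot\alpha=1/(2M^2)$. Positivity of this limit becomes positivity of an explicit trigonometric polynomial in $\theta$. I would attack it with the same substitution $u=\theta/2$, $\cos(Mu)/\cos u$ identities that appeared in Lemma~\ref{lem:q-nocross}, choosing $h$ (say $h$ near $1$) to make the cross terms nonnegative.

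Near $\theta=0$ the double pole at $z=1$ makes the limit degenerate. There I would rescale $\theta=q\vartheta$, which is the same rescaling as in Lemma~\ref{lem:q-slow}. The leading-order inequality then becomes a rational inequality in $\vartheta$ governed by the quadratic $w(w+2/M)+\tfrac{1}{M^2}\ell$ with the rate shift $w\mapsto w-c/M$. Positivity of that quadratic picture needs $c<2/3$, which is where that threshold should come from.

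Finally, I would patch the two regimes with uniform error estimates to obtain $\kappa_0(\tau,c)$. I expect this matching of the near-zero scaling with the fixed-frequency limit, and establishing the trigonometric positivity for every $\tau$ simultaneously, to be the hardest part. Getting the constant $2/3$ exactly in the rescaled window will require choosing $h=h(q)\to1$ at an appropriate rate.
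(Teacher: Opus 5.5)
\paragraph{Overall.} Your architecture matches the paper's: Lur'e split, midpoint loop shift, rate weighting, a hard rate-weighted Zames--Falb IQC, a KYP storage function, then a $q\downarrow0$ limit plus a separate low-frequency analysis. Step~1 is also fine: the midpoint gain is $t\delta_L$ with $t=(1+q^2)/2$ in Theorem~\ref{thm:q-gate}.

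\paragraph{The gap: the multiplier.} The real gap is the choice of multiplier, which is where the difficulty lies, and the candidates you name fail for some $\tau$. At fixed $\theta>0$, as $q\downarrow0$, one has $g\to0$ and $Lg\to A_{\tau,0}(\theta)$ with $\operatorname{Re}A_{\tau,0}(\theta)\sim 1/(M^2\theta^2)$. The limiting FDI $2\operatorname{Re}[(1-H(\theta))(A_{\tau,0}(\theta)-1)]<0$ therefore fails near $\theta=0$ unless the total lag weight tends to $1$. So $h\to1$ is forced, and the relevant limit is the $h=1$ case.

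For the off-by-one multiplier $1-z^{-1}$, a direct computation with $\theta=2u$ gives
\[
\tfrac12\Phi_0(\theta)=-\frac{\cos((M+1)u)}{M}+\frac{\sin((M+1)u)\cot u}{2M^2}-2\sin^2u .
\]
At $u=\pi/(M+1)$ this equals $\frac1M-2\sin^2\frac{\pi}{M+1}$, which is positive once $M\ge19$, i.e.\ $\tau\ge9$. For example, at $\tau=9$ it is about $0.0037$.

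A single lag $j\ge2$ is worse: $1-z^{-j}$ vanishes at $\theta=2\pi/j$, and the limiting form generically changes sign across that frequency. For $j=\tau+1$ it is, to first order near $u=\pi/(\tau+1)$, a nonzero multiple of $\sin((\tau+1)u)$, so the FDI is violated for every $\tau\ge2$. Hence no choice in your family gives a strict FDI uniformly in $\tau$, and taking ``$h$ near $1$'' cannot repair this.

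\paragraph{The missing idea.} The paper uses the averaged multiplier $h_{\rho,\tau}(z)=\frac1\tau\sum_{j=1}^{\tau}\rho^jz^{-j}$ of Lemma~\ref{lem:rhohard}. It has three properties that matter:
\begin{itemize}
\item its total weight tends to $1$ automatically as $\rho\to1$;
\item $1-H_\tau(\theta)$ vanishes only at $\theta=0$;
\item it telescopes as $1-h=(1-\bar z)\widetilde h(\bar z)$ (Lemma~\ref{lem:telescoped}), which cancels one order of the double pole.
\end{itemize}
This turns the limiting FDI into $\Phi_{\tau,0}=-r_\tau(\cos\theta)/(4M^2)$ with explicit cosine coefficients (Lemma~\ref{lem:rpoly}). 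The substantive step is then the all-$\tau$ positivity of $r_\tau$ (Proposition~\ref{prop:rpositive}). That proof uses a Dirichlet-kernel bound for $\theta\ge3/(2\tau)$ and Taylor envelopes with Faulhaber power sums below that. The $\cos(Mu)/\cos u$ identities of Lemma~\ref{lem:q-nocross} only locate root crossings of $P_\infty$; they do not control the sign of this multiplier-weighted form.

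\paragraph{Smaller points.}
\begin{itemize}
\item The rescaled window $\theta=q\vartheta$ leaves the band $q\ll\theta\ll1$ uncovered. The paper instead expands $N_{\tau,q}=a_0+a_2\theta^2+a_4\theta^4+O(\theta^6)$ uniformly on $[0,\theta_\star]$, with $a_0\approx-Bq^3$, $a_2\approx-\lambda q$ and $a_4\approx-A_0$.
\item The threshold $c<2/3$ is exactly the condition $\lambda=(\tau+1)(2-3c)/M>0$. It is not a property of the slow-mode quadratic of Lemma~\ref{lem:q-slow}.
\item Your displayed scalar inequality is not the correct Zames--Falb condition. In the paper's sign convention the correct one is the ratio form $\operatorname{Re}[(1-h)(Lg-1)/(1-\mu g)]<0$.
\item The zero-padded hard IQC leaves no boundary terms to track.
\end{itemize}
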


Before we establish the proof of Theorem~\ref{thm:main-nonexplicit-constants} in Section~\ref{sec:master}, in the following, we provide two building blocks that the proof rests on. 
\subsection{
{Building Blocks for the Proof of Theorem~\ref{thm:main-nonexplicit-constants}}}\label{sec:certificate}\label{sec:proofs}

{The \textit{first} building block is the transfer function associated with the linear part of delayed GMM (Section~\ref{subsec:plant}). The key structural observation is that GMM can be represented as the interconnection of a linear plant \sa{(linear dynamical system)} and a nonlinear gradient mapping. The linear plant captures the momentum averaging and the extrapolation from \(x_k\) to \(y_k\), while the nonlinear component maps \(y_k\) to the gradient input \(\nabla f(y_k)\). Thus, all nonlinearity in the GMM dynamics enters through the gradient mapping.} 
{The \textit{second} building block is a hard integral quadratic constraint (IQC), valid on every finite horizon, satisfied by the gradient mapping of every \(f\in\Sml\) (Section~\ref{subsec:hardiqc}). The role of the IQC is to encode this nonlinear relation through a quadratic inequality implied by smoothness and strong convexity. For the purposes of the present analysis, the hard IQC may therefore be viewed as a finite-horizon quadratic consequence of these assumptions, one that remains valid after summation over any finite number of iterations.}

\paragraph{Standing normalization and parameters.}
The whole certificate is carried out under a fixed normalization and a fixed choice of parameters, which we record once here and use throughout the remainder of the paper.  After scaling $f$ by $1/\mu$ we work with
\begin{equation}\label{eq:norm}
        \mu=1,\qquad L=q^{-2},\qquad q=\kappa^{-1/2}\in(0,1],
\end{equation}
and, for a fixed $c\in(0,2/3)$,
\begin{equation}\label{eq:params}
        \alpha=\frac{q^2}{M^2},\qquad
        \beta=1-\frac{2q}{M},\qquad
        \nu=M-\frac12,
        \qquad
        \rho=1-\frac{cq}{M}.
\end{equation}
Here $\rho$ is the contraction rate to be certified, and \mg{we work in the large condition number regime which corresponds to the parameter $q=\kappa^{-1/2}$ being small and around zero.} %

\subsubsection{The transfer function of the linear \mg{plant} %
}\label{subsec:plant}
We analyze delayed GMM by viewing the method as a feedback interconnection between a linear time-invariant subsystem\footnote{A discrete-time system is said to be linear time-invariant (LTI) if its input-output map is linear and commutes with time shifts, see \cite{oppenheim1997signals}} and the nonlinear gradient mapping. To expose this structure, we break the feedback relation \(u_k=\nabla f(y_k)\) and temporarily regard \(u=(u_k)\) as an exogenous input. The resulting state recursion is
\begin{equation}\label{eq:g-derivation-xrec}
x_{k+1}=(1+\beta)x_k-\beta x_{k-1}-\alpha u_{k-\tau}.
\end{equation}
The linear subsystem therefore consists of the delay, the momentum recursion, and the extrapolation from \(x_k\) to the evaluation point \(y_k\), while the gradient map \(\nabla f\) is the %
\sa{only} nonlinear component \sa{in this dynamical system}. This feedback (Lur'e) decomposition is illustrated in Figure~\ref{fig:lure-separation}. In particular, the factor \(z^{-\tau}\) represents the delay that maps the current gradient sequence \(u_k\) to the stale signal \(u_{k-\tau}\) entering the momentum recursion.

\begin{figure}[t]
\centering

\definecolor{linearblue}{RGB}{0,170,220}
\definecolor{feedbackorange}{RGB}{245,90,10}

\begin{tikzpicture}[
    >=Latex,
    font=\small,
    innerblock/.style={
        draw=linearblue!80!black,
        rounded corners,
        minimum height=10mm,
        align=center,
        thick,
        fill=white
    },
    nl/.style={
        draw=feedbackorange!85!black,
        rounded corners,
        minimum width=32mm,
        minimum height=13mm,
        align=center,
        thick,
        fill=feedbackorange!10
    },
    wire/.style={->, thick},
]

\node[
    innerblock,
    minimum width=18mm
] (D) {
    \textbf{Delay}\\[-1pt]
    $\displaystyle z^{-\tau}$
};

\node[
    innerblock,
    minimum width=38mm,
    right=14mm of D
] (G) {
    \textbf{Momentum recursion}\\[-2pt]
    {\footnotesize $+$ \textbf{extrapolation} \sa{based on} \eqref{eq:g-derivation-xrec}}
};

\draw[
    wire,
    draw=linearblue
]
    (D.east)
    -- node[
        above,
        pos=0.5,
        text=linearblue!70!black
    ] {$u_{k-\tau}$}
    (G.west);

\begin{scope}[on background layer]
\node[
    draw=linearblue!80!black,
    rounded corners,
    thick,
    fill=linearblue!6,
    inner xsep=5mm,
    inner ysep=9mm,
    fit=(D)(G)
] (Linear) {};
\end{scope}

\node[
    anchor=north,
    font=\bfseries,
    text=linearblue!70!black
] at ($(Linear.north)+(0,-1.5mm)$) {
    Linear time-invariant plant $\mgb{G_{\tau,q}(z)}:=g_{\tau,q}(z)\mgb{I_d}$
};

\node[
    nl,
    below=23mm of Linear
] (F) {
    \textbf{Gradient map}\\[2pt]
    {\footnotesize $u_k=\nabla f(y_k)$}
};

\coordinate (LinRight) at ($(Linear.east)+(25mm,0)$);
\coordinate (FRight)   at (LinRight |- F.east);

\coordinate (LinLeft)  at ($(Linear.west)+(-24mm,0)$);
\coordinate (FLeft)    at (LinLeft |- F.west);

\draw[
    wire,
    draw=linearblue
]
    (Linear.west) -- (D.west);

\draw[
    wire,
    draw=linearblue
]
    (G.east)
    -- (Linear.east)
    -- node[
        above,
        pos=0.45,
        text=linearblue!70!black
    ] {$y_k$}
    (LinRight)
    -- (FRight)
    -- (F.east);

\draw[
    wire,
    draw=feedbackorange
]
    (F.west)
    -- (FLeft)
    -- (LinLeft)
    -- node[
        above,
        pos=0.65,
        text=feedbackorange!85!black
    ] {$u_k$}
    (Linear.west);

\node[
    font=\bfseries,
    text=feedbackorange!85!black,
    fill=white,
    inner sep=1pt
] at ($(F.east)!0.55!(FRight)+(0,3mm)$) {
    Nonlinear (connection)
};

\node[
    align=center,
    anchor=west
] at ($(LinRight)!0.5!(FRight)+(2mm,0)$) {
    \footnotesize evaluation\\[-2pt]
    \footnotesize point
};

\node[
    align=center,
    anchor=east
] at ($(LinLeft)!0.5!(FLeft)+(-2mm,0)$) {
    \footnotesize gradient\\[-2pt]
    \footnotesize sequence
};

\end{tikzpicture}

\caption{Feedback (Lur'e) decomposition of delayed GMM. Breaking the
feedback relation $u_k=\nabla f(y_k)$ separates the method into the interconnection of (i) the linear plant $g_{\tau,q}(z)$, which incorporates the delay, momentum recursion,
and extrapolation, and (ii) the static nonlinear gradient map $\nabla f$.}
\label{fig:lure-separation}

\end{figure}

Taking the bilateral $z$-transform of \eqref{eq:g-derivation-xrec}, with zero initial conditions ($x_k=0$ for all $k\le 0$ \mgb{and $u_k=0$ for $k<0$}) for the input-output calculation, gives
\begin{equation}\label{eq:g-derivation-zx}
        \bigl(z-(1+\beta)+\beta z^{-1}\bigr)X(z)
        =-\alpha z^{-\tau}U(z).
\end{equation}
where $X(z)$ and $U(z)$ are the z-transforms of $\{x_k\}$ and $\{u_k\}$.
Multiplying by $z$ and factoring the quadratic denominator yields
\begin{equation}\label{eq:g-derivation-xoveru}
        (z-1)(z-\beta)X(z)=-\alpha z^{1-\tau}U(z),
        \qquad
        {X(z)}=
        -\alpha\frac{z^{1-\tau}}{(z-1)(z-\beta)}\mgb{U(z)}.
\end{equation}
The evaluation point is the extrapolated point
\begin{equation}\label{eq:g-derivation-y}
        y_k=x_k+\nu(x_k-x_{k-1})=(1+\nu)x_k-\nu x_{k-1},
\end{equation}
so in the $z$-domain
\begin{equation}\label{eq:g-derivation-yz}
        Y(z)=\bigl(1+\nu-\nu z^{-1}\bigr)X(z)
        =\frac{(1+\nu)z-\nu}{z}X(z).
\end{equation}
Combining \eqref{eq:g-derivation-xoveru} and \eqref{eq:g-derivation-yz} gives
\begin{equation}\label{eq:g-derivation-general}
        {Y(z)}=
        -\alpha\frac{z^{-\tau}\bigl((1+\nu)z-\nu\bigr)}{(z-1)(z-\beta)}\mgb{U(z)}.
\end{equation}
Finally, substituting the normalized parameters $\alpha=q^2/M^2$ and $\nu=M-\frac12$ (so that $1+\nu=M+\frac12$) gives the \mgb{scalar} transfer function 
from the gradient signal $u_k=\nabla f(y_k)$ to the evaluation point $y_k$:
\begin{equation}\label{eq:g}
       \mgb{Y(z)=g_{\tau,q}(z)U(z)
       \quad
       \mbox{with}} \quad
       g_{\tau,q}(z):=
        -\frac{q^2}{M^2}\,
        \frac{z^{-\tau}\bigl((M+\frac12)z-(M-\frac12)\bigr)}{(z-1)(z-\beta)},
\end{equation}
\sa{where} each factor in $g_{\tau,q}(z)$ has a direct interpretation: $z^{-\tau}$ represents $\tau$-step delay of $u_k$, the factor $\bigl((M+\frac12)z-(M-\frac12)\bigr)$ is the extrapolation from $x_k$ to $y_k$, and $(z-1)(z-\beta)$ is the \mg{denominator} induced due to momentum recursion.
\mgb{We also introduce the transfer matrix}
\beq 
\mgb{G_{\tau,q}(z):=g_{\tau,q}(z)I_d \in \mathbb{C}^{d\times d}\quad
 \mbox{with} \quad
Y(z)=g_{\tau,q}(z)U(z)=G_{\tau,q}(z)U(z).}
\label{eq-transfer-function-g}
\eeq

\subsubsection{Integral quadratic constraints (IQCs) associated to the gradient map $\nabla f$}\label{subsec:hardiqc}
  In Section \ref{subsec:plant}, we isolate and study the linear part of delayed GMM by treating the gradient values $u_k$ as an external input, and derive the corresponding transfer function in Eq \eqref{eq:g}. However, in the GMM algorithm, $u_k$ is not an arbitrary input but is generated by the non-linear gradient map $u_k=\nabla f(y_k)$. In this subsection, we restore this feedback relation and characterize the constraints that the gradient map imposes on the input-output pairs $(y_k,u_k)$. %

Two features will be important in what follows. First, the inequalities hold on \emph{every} finite horizon; this will later allow us to sum them directly within a finite-horizon Lyapunov estimate. Second, in the rest of the paper, without loss of generality, 
we translate the minimizer to the origin, so $\nabla f(0)=0$.  This can clearly be obtained by simply replacing $x_k$ by $x_k-x_\star$ and $y_k$ by $y_k-x_\star$ in the subsequent analysis. This normalization will simplify our analysis.

\sa{For any $k\in\mathbb{Z}_+$,} given a graph pair $u_k=\nabla f(y_k)$, define
\begin{equation}\label{eq:rs}
        r_k:=u_k-\mu y_k,
        \qquad
        s_k:=L y_k-u_k.
\end{equation}
By \eqref{eq:interp}, for all indices \sa{$i,j\in\mathbb{Z}_+$},
\begin{equation}\label{eq:monotone-rs}
        \langle r_i-r_j,s_i-s_j\rangle\ge0,
        \qquad
        \langle r_i,s_i\rangle\ge0.
\end{equation}
The second inequality is obtained from the first by \sa{setting one $(r,s)$ pair to $(0,0)$ corresponding to the minimizer, i.e., $x_\star=0$ with $\nabla f(x_\star)=0$.}
We shall use the following equivalent convex-potential form: Put $\gamma\sa{:=}L-\mu\sa{\geq 0}$.  %
\mgb{Throughout this section we assume $\mu<L$, so that $\gamma:=L-\mu>0$. If $L=\mu$, then $r_k=s_k=0$ for all $k$ and every inequality below holds trivially. We call $(y,u)\in\mathbb R^d\times\mathbb R^d$ a \emph{point--gradient pair} of $f$ if $u=\nabla f(y)$.}
If $\gamma>0$, define
\[
        F(y):=f(y)-\frac{\mu}{2}\|y\|^2.
\]
Then $F$ is convex and $\gamma$-smooth; hence, its Fenchel conjugate $F^*$ is $1/\gamma$-strongly convex, \sa{which implies that $\psi(\cdot)$ is merely convex, where}
\begin{equation}\label{eq:psi-def}
        \psi(r):=\gamma F^*(r)-\frac12\|r\|^2 - \gamma F^*(0).
\end{equation}
Note that
$
F^*(0)=-\inf_y F(y)=-F(0).
$
Indeed, \(F\) is  convex and \(\nabla F(0)=0\), so zero is a minimizer of \(F\), \sa{where we used the fact that $\nabla f(0)=0$}.
\mg{As we discuss next, this construction allows us to express $s_k$ as a subgradient of $\psi$ at $r_k$}.
\sa{Indeed,} since $r=\nabla F(y)$ implies $y\in\partial F^*(r)$, we have
\begin{equation}\label{eq:cyclic-potential}
        s=\gamma y-r\in\partial\psi(r).
\end{equation}
Moreover $0\in\partial\psi(0)$ and $\psi(0)=0$; \sa{hence,} convexity of $\psi$ gives \beq 
\psi(r)\ge0 \qquad \mbox{for all } r.
\label{eq:psi-nonnegative}
\eeq 
\mg{The next lemma shows $s_k$ is a subgradient of $\psi$ evaluated at $r_k$. Given a finite trajectory \(\{(y_k,u_k)\}_{k=0}^N\), this lemma also extends the sequence by padding it with \((0,0)\) for all ($k<0$) and ($k>N$). This convention simplifies the notation in the subsequent analysis by allowing us to avoid treating the boundary terms for \(k<0\) and \(k>N\) separately. }
\mgb{\begin{lemma}[padding a finite trajectory with the minimizer]\label{lem:zero-extension}
Let $f\in\Sml$ with $\nabla f(0)=0$, let $(y_k,u_k)=(y_k,\nabla f(y_k))$ for $0\le k\le N$, and set $(y_k,u_k):=(0,0)$ for $k\in\mathbb Z\setminus\{0,\dots,N\}$. Define $r_k:=u_k-\mu y_k$ and $s_k:=Ly_k-u_k$ for $k\in\mathbb Z$. Then:
\begin{enumerate}[label=\textup{(\alph*)},nosep,leftmargin=2.2em]
\item $s_k\in\partial\psi(r_k)$ and $\psi(r_k)\ge0$ for every $k\in\mathbb Z$; consequently,
\[
\langle r_i-r_j,\,s_i-s_j\rangle\ge0
\quad\text{and}\quad
\langle r_i,\,s_i\rangle\ge0
\qquad\text{for all } i,j\in\mathbb Z .
\]
\item $r_k=s_k=0$ for $k\notin\{0,\dots,N\}$; hence every sum $\sum_{k\in\mathbb Z}\langle r_k,s_{k-j}\rangle$ has at most $N+1$ nonzero terms and, for every $j\in\mathbb Z$,
\[
\sum_{k\in\mathbb Z}\langle r_k,\,s_{k-j}\rangle=\sum_{k\in\mathbb Z}\langle r_{k+j},\,s_k\rangle .
\]
\end{enumerate}
\end{lemma}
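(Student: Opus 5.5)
We split the indices into the genuine range $0\le k\le N$ and the padded range, and handle part (a) one index at a time. For $0\le k\le N$ we have $r_k=\nabla F(y_k)$, where $F(y)=f(y)-\tfrac{\mu}{2}\|y\|^2$ is convex and $\gamma$-smooth. Fenchel duality then gives $y_k\in\partial F^*(r_k)$, hence $\gamma y_k\in\partial(\gamma F^*)(r_k)$. Subtracting the gradient $r_k$ of $\tfrac12\|r\|^2$ (the sum rule applies because the quadratic is differentiable) yields
\[
s_k=\gamma y_k-r_k=Ly_k-u_k\in\partial\psi(r_k).
\]
This is exactly \eqref{eq:cyclic-potential}.

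For a padded index $k$ we have $(y_k,u_k)=(0,0)$, so $r_k=s_k=0$. Since $\nabla f(0)=0$, the pair $(0,0)$ is itself a point--gradient pair. The first case therefore gives $0\in\partial\psi(0)$, so $s_k\in\partial\psi(r_k)$ holds for every $k\in\mathbb Z$.

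Nonnegativity of $\psi$ follows from convexity: $\psi(0)=0$ by \eqref{eq:psi-def}, and $0\in\partial\psi(0)$, so $\psi(r)\ge\psi(0)=0$ for all $r$, which is \eqref{eq:psi-nonnegative}. For the pairwise inequality, we add the two subgradient inequalities $\psi(r_j)\ge\psi(r_i)+\langle s_i,r_j-r_i\rangle$ and $\psi(r_i)\ge\psi(r_j)+\langle s_j,r_i-r_j\rangle$. This gives monotonicity of $\partial\psi$, namely $\langle r_i-r_j,s_i-s_j\rangle\ge0$. Taking $j$ to be a padded index, so $r_j=s_j=0$, gives $\langle r_i,s_i\rangle\ge0$. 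Alternatively, for in-range indices both inequalities follow from \eqref{eq:interp} after expanding $\langle u_i-u_j-\mu(y_i-y_j),\,L(y_i-y_j)-(u_i-u_j)\rangle$, which is a useful check.

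Part (b) is bookkeeping. By construction $r_k=s_k=0$ whenever $k\notin\{0,\dots,N\}$, so $\langle r_k,s_{k-j}\rangle\neq0$ forces $k\in\{0,\dots,N\}$, and each sum has at most $N+1$ nonzero terms. Since the sums are finite, the reindexing $k\mapsto k+j$ is a bijection of $\mathbb Z$ and gives the shift identity.

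The only delicate point is the subdifferential calculus for $\psi$. We must make sure the Fenchel inversion $r=\nabla F(y)\Rightarrow y\in\partial F^*(r)$ is justified for a closed proper convex $F$, and that $F^*$ is finite on the relevant points; it is, since these $r$ lie in the range of $\nabla F$. We must also check that subtracting the quadratic preserves the subgradient relation. Both are standard, so we do not expect a genuine obstacle.
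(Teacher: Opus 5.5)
Your proposal is correct and follows the paper's proof essentially step for step. Like the paper, you observe that $(0,0)$ is a point--gradient pair because $\nabla f(0)=0$, apply \eqref{eq:cyclic-potential} and \eqref{eq:psi-nonnegative} at every index, obtain the pairwise inequality from monotonicity of $\partial\psi$ and $\langle r_i,s_i\rangle\ge0$ by pairing with a padded index, and handle (b) by finite support and reindexing. The one difference is that you re-derive \eqref{eq:cyclic-potential} via Fenchel inversion and the subdifferential sum rule, where the paper cites it from the preceding construction. That derivation is valid, since $\psi$ is convex and $\tfrac12\|r\|^2$ is finite and differentiable.
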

\begin{proof}
Since $\nabla f(0)=0$, the pair $(0,0)$ is a point--gradient pair of $f$, so every pair of the extended sequence is of the form $(y,\nabla f(y))$ for some $y\in\mathbb R^d$ ($y=y_k$ for $0\le k\le N$ and $y=0$ otherwise). Hence \eqref{eq:cyclic-potential} and \eqref{eq:psi-nonnegative} give $s_k\in\partial\psi(r_k)$ and $\psi(r_k)\ge0$ for every $k\in\mathbb Z$. Monotonicity of the subdifferential of the convex function $\psi$ yields $\langle r_i-r_j,s_i-s_j\rangle\ge0$, and choosing $j\notin\{0,\dots,N\}$, for which $(r_j,s_j)=(0,0)$, yields $\langle r_i,s_i\rangle\ge0$; this proves (a). 
\sa{
For (b), $y_k=u_k=0$ gives $r_k=s_k=0$ for
$k\notin\{0,\ldots,N\}$. Hence, for every $j\in\mathbb Z$, the
sum $\sum_{k\in\mathbb Z}\langle r_k,s_{k-j}\rangle$
has at most $N+1$ nonzero terms and is therefore finite. The change
of variables $\ell=k-j$ then gives
$\sum_{k\in\mathbb Z}\langle r_k,s_{k-j}\rangle
=
\sum_{\ell\in\mathbb Z}\langle r_{\ell+j},s_\ell\rangle$,
which proves (b).}
\end{proof}}

\mg{Consider any finite cycle of indices $k_0,\dots,k_n,k_{n+1}$ where $k_{n+1}=k_0$. Based on the previous lemma, if we apply the subgradient inequality for the convex function $\psi$, \sa{we get}
\[
\psi(r_{k_{i+1}})
-
\psi(r_{k_i})
\ge
\left\langle
s_{k_i},r_{k_{i+1}}-r_{k_i}
\right\rangle,\quad \sa{\forall~i=0,\ldots,n,}
\]
\sa{i.e., it holds successively around the cycle. Summing over %
$i\in\{0,\ldots,n\}$, we obtain that} %
for every finite cycle of indices $k_0,\dots,k_n,k_{n+1}=k_0$, we have
$$\sa{\sum_{i=0}^{n}}\langle s_{k_i},r_{k_{i+1}}-r_{k_i}\rangle\le0.$$
This is referred to as the \emph{cyclic monotonicity}
property in the literature. By
Rockafellar~\cite{Rockafellar1966}, pairs $(r_k, s_k)$ that satisfy such a property arise from the subdifferential of a convex function, here $\psi$
of~\eqref{eq:psi-def}}.\looseness=-1 

\mg{The inequalities below involve finite sums of centered iterate--gradient pairs along a finite trajectory. Quadratic constraints of this form, which remain valid on every finite horizon, are referred to as \emph{hard integral quadratic constraints} (hard IQCs). This finite-horizon validity distinguishes hard IQCs from \emph{soft} IQCs, which are required to hold only over the entire infinite-horizon signal; see, e.g.,~\cite{MegretskiRantzer1997,LessardRechtPackard2016}.}

\mgb{We use Zames--Falb IQCs to capture relations between
gradients evaluated at different iterates. The multiplier
family originates with Zames and Falb~\cite{ZamesFalb1968},
while the general IQC framework was developed by
Megretski and Rantzer~\cite{MegretskiRantzer1997}.
For smooth strongly convex gradient maps, Lessard,
Recht, and Packard~\cite[Lemma~9]{LessardRechtPackard2016}
establish a causal finite-horizon (hard) Zames--Falb IQC.
We recall this constraint in our notation and provide a
self-contained proof.}
    In this setting, the IQC is used as a tool for analyzing the stability of the feedback interconnection between the linear GMM dynamics and the nonlinear gradient map. Rather than modeling the nonlinear block exactly, the IQC specifies a quadratic constraint that every admissible gradient map must satisfy, so stability can be established uniformly over the entire class of nonlinearities consistent with the assumption on $f$. The frequency-domain formulation is useful because the linear dynamics were already represented in Section \ref{subsec:plant} by the transfer function $g_{\tau,q}(z)$. Using Parseval's identity, the time-domain IQC can be converted into an equivalent frequency-domain quadratic inequality, allowing the linear transfer function and the non-linear constraint to be combined directly in a single stability analysis. \mg{We now present a Zames-Falb IQC for the nonlinearity $\nabla f$.}%

\begin{lemma}[hard Zames--Falb IQC]\label{lem:hard}
Let $h_1,\dots,h_J$ be a nonnegative sequence with $\sum_{j=1}^{J}h_j\le1$, and set \sa{$h:\mathbb{C}\to\mathbb{C}$ such that
$h(z):=\sum_{j=1}^{J}h_jz^{-j}$.} \mg{Assume the minimizer has been translated to the origin, so that $\nabla f(0)=0$.} Fix any horizon $N\ge0$ and any finite trajectory of
point--gradient pairs $\{(y_k,u_k)\}_{k=0}^{N}$ with $u_k=\nabla f(y_k)$, and
zero-extend it by $(y_k,u_k)=(0,0)$ for $k\notin\{0,\dots,N\}$, so that the
slope-shifted sequences $r_k=u_k-\mu y_k$ and $s_k=Ly_k-u_k$ have finite support and
all sums below are finite. Then, for this horizon,
\begin{equation}\label{eq:causal-hard-time}
        2\sum_{k}\langle r_k,s_k\rangle
        -2\sum_{j=1}^{J}h_j\sum_{k}\langle r_k,s_{k-j}\rangle
        \ge0 ,
\end{equation}
where each $\sum_k$ runs over $\mathbb Z$ but has at most $N+1$ nonzero terms.
Equivalently, by Parseval,
\begin{equation}\label{eq:causal-hard-frequency}
        \frac1{2\pi}\int_0^{2\pi}
        2\Ree\!\left[\bigl(1-h(e^{i\theta})\bigr)\,R(e^{i\theta})^*S(e^{i\theta})\right]d\theta\ge0 ,
\end{equation}
where $R,S$ are the discrete-time Fourier transforms of the zero-extended sequences
$r,s$, \sa{i.e., $R(e^{i\theta}):=\sum_{k\in\mathbb Z}r_k e^{-ik\theta}$ and
$S(e^{i\theta}):=\sum_{k\in\mathbb Z}s_k e^{-ik\theta}$.} Since \eqref{eq:causal-hard-time} holds for \emph{every} horizon $N$
separately, with no requirement on the trajectory beyond $N$, the constraint is
\emph{hard}: it may be applied to any finite truncation without a tail term.

Finally, substituting $r_k=u_k-\mu y_k$ and $s_k=Ly_k-u_k$ expresses the integrand
of \eqref{eq:causal-hard-frequency}, at each frequency $\theta\in[0,2\pi)$, as a
Hermitian form in the original signals:
\[
        2\Ree\!\left[\bigl(1-h(e^{i\theta})\bigr)R(e^{i\theta})^*S(e^{i\theta})\right]
        =\begin{bmatrix} Y(e^{i\theta})\\ U(e^{i\theta})\end{bmatrix}^{\!*}
        \mgb{\Pi_{\mu,L,d}}\!\bigl(h(e^{i\theta})\bigr)
        \begin{bmatrix} Y(e^{i\theta})\\ U(e^{i\theta})\end{bmatrix},
\]
where $Y,U\mgb{\in \mathbb{C}^d}$ are the Fourier transforms of $y,u$, \sa{and} writing \mgb{the scalar} $h=h(e^{i\theta})$ for
brevity,
\begin{equation}\label{eq:Pi}
        \mgb{
        \sa{\Pi_{\mu,L,d}(h):=\Pi_{\mu,L}(h)\otimes I_d},\quad \mbox{where}} \quad \Pi_{\mu,L}(h):=
        \begin{bmatrix}
            -\mu L\,(2-h-\bar h) & L(1-\bar h)+\mu(1-h)\\[2pt]
            L(1-h)+\mu(1-\bar h) & -(2-h-\bar h)
        \end{bmatrix}.
\end{equation}
\end{lemma}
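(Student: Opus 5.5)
The plan has three steps: prove the time-domain inequality \eqref{eq:causal-hard-time} from the subgradient structure in Lemma~\ref{lem:zero-extension}, convert it to the frequency domain by Parseval, and then expand the integrand in terms of $Y$ and $U$.

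\emph{Time domain.} By Lemma~\ref{lem:zero-extension}(a), each $s_k\in\partial\psi(r_k)$ and $\psi\ge0$, with only finitely many nonzero terms. For each $j\ge1$ the subgradient inequality at $r_{k-j}$ gives
\[
\psi(r_k)-\psi(r_{k-j})\ge\langle s_{k-j},r_k-r_{k-j}\rangle .
\]
Summing over $k\in\mathbb Z$ (a finite sum), the left side telescopes to zero by the shift invariance in Lemma~\ref{lem:zero-extension}(b). This yields
\[
\sum_k\langle r_k,s_{k-j}\rangle\le\sum_k\langle r_k,s_k\rangle .
\]
Multiplying by $h_j\ge0$ and summing over $j$ gives $\sum_j h_j\sum_k\langle r_k,s_{k-j}\rangle\le(\sum_j h_j)\sum_k\langle r_k,s_k\rangle$. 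Since $\sum_k\langle r_k,s_k\rangle\ge0$ by Lemma~\ref{lem:zero-extension}(a) and $\sum_j h_j\le1$, the right side is at most $\sum_k\langle r_k,s_k\rangle$. This is \eqref{eq:causal-hard-time} after multiplying by $2$. Hardness is automatic, because the argument uses only the padded truncation on $\{0,\dots,N\}$.

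\emph{Frequency domain.} The sequences have finite support, so $R$ and $S$ are trigonometric polynomials and Parseval applies exactly: $\sum_k\langle r_k,s_{k-j}\rangle=\frac1{2\pi}\int_0^{2\pi}R^*e^{-ij\theta}S\,d\theta$, since the transform of $s_{\cdot-j}$ is $e^{-ij\theta}S$. This turns the left side of \eqref{eq:causal-hard-time} into $\frac1{2\pi}\int 2(1-h(e^{i\theta}))R^*S\,d\theta$. The left side is real, so it equals its real part, which gives \eqref{eq:causal-hard-frequency}.

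\emph{Hermitian form.} Substitute $R=U-\mu Y$ and $S=LY-U$. Then
\[
2\Ree[(1-h)R^*S]=(1-h)R^*S+(1-\bar h)S^*R .
\]
Expanding, the coefficient of $Y^*Y$ is $-\mu L\bigl((1-h)+(1-\bar h)\bigr)=-\mu L(2-h-\bar h)$, and the coefficient of $U^*U$ is $-(2-h-\bar h)$. For $Y^*U$, the term $(1-h)R^*S$ contributes $\mu(1-h)$ (from $-\mu Y^*\cdot(-U)$) and the term $(1-\bar h)S^*R$ contributes $L(1-\bar h)$ (from $LY^*U$). The $U^*Y$ coefficient is the conjugate of this. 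These entries match $\Pi_{\mu,L}(h)$ in \eqref{eq:Pi}, and the Kronecker factor $I_d$ accounts for the vector inner products.

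The only delicate point is the first step. Using cyclic monotonicity through the potential $\psi$, rather than pairwise monotonicity, is what makes arbitrary lags $j$ admissible. Nonnegativity of $\sum_k\langle r_k,s_k\rangle$ is then needed to absorb the slack when $\sum_j h_j<1$. The remaining steps are bookkeeping.
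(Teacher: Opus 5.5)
Your proof is correct and follows the paper's argument essentially step for step. The paper applies the subgradient inequality at $r_k$ with the forward shift $r_{k+j}$ and writes $A-\sum_j h_jB_j=(1-H)A+\sum_j h_j(A-B_j)$, which is just a reindexing of your version. One small precision: the telescoping of $\sum_k[\psi(r_k)-\psi(r_{k-j})]$ rests on $\psi(0)=0$ together with $r_k=0$ off $\{0,\dots,N\}$, not on the bilinear shift identity in Lemma~\ref{lem:zero-extension}(b).
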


\begin{proof}
All sums are finite by the zero extension (Lemma~\ref{lem:zero-extension}).  For a fixed lag $j\ge1$, the subgradient inequality for $s_k\in\partial\psi(r_k)$ gives
\[
        \psi(r_{k+j})\ge \psi(r_k)+\langle s_k,r_{k+j}-r_k\rangle,
\]
\sa{which implies that}
\[
        \langle s_k,r_k-r_{k+j}\rangle\ge \psi(r_k)-\psi(r_{k+j}).
\]
Summing this inequality over all $k\in\mathbb Z$, the right-hand side telescopes to $\sum_k\psi(r_k)-\sum_k\psi(r_{k+j})=0$ since $k\mapsto k+j$ is a bijection of $\mathbb Z$ and \sa{$\{r_k\}$} has finite support; \sa{thus, we get}
\begin{equation}\label{eq:causal-lag-unweighted}
        \sum_k\langle s_k,r_k-r_{k+j}\rangle\ge0.
\end{equation}
Since
\[
        \sum_k\langle s_k,r_{k+j}\rangle
        =\sum_k\langle r_k,s_{k-j}\rangle,
\]
by Lemma \ref{lem:zero-extension}, the inequality \eqref{eq:causal-lag-unweighted} %
\sa{implies that}
\[
        A-B_j\ge0,
        \qquad
        A:=\sum_k\langle r_k,s_k\rangle,
        \quad
        B_j:=\sum_k\langle r_k,s_{k-j}\rangle .
\]
\sa{Moreover, by \eqref{eq:monotone-rs}, we also have $A\ge0$.} %
Therefore, with $H:=\sa{\sum_{j=1}^J} h_j\le1$, \sa{it follows that}
\[
        A-\sum_{j=1}^{J}h_jB_j
        =(1-H)A+\sum_{j=1}^{J}h_j(A-B_j)\ge0,
\]
\sa{which yields \eqref{eq:causal-hard-time} after multiplying it by $2$.}

Parseval's identity and the time-shift property of the discrete-time Fourier transform give
\[
        \sum_k\langle r_k,s_k\rangle
        =\frac1{2\pi}\int_0^{2\pi}R(e^{i\theta})^*S(e^{i\theta})\,d\theta,
\quad
\mbox{and}
\quad
        \sum_k\langle r_k,s_{k-j}\rangle
        =\frac1{2\pi}\int_0^{2\pi}e^{-ij\theta}
        R(e^{i\theta})^*S(e^{i\theta})\,d\theta .
\]
The time-domain expression is real, so after taking the real part of the integral we obtain \eqref{eq:causal-hard-frequency}. Because the zero-extended sequences are supported on $\{0,\dots,N\}$, their
discrete-time Fourier transforms are the finite trigonometric polynomials:
\[
        Y(e^{i\theta})=\sum_{k=0}^{N}y_k\,e^{-ik\theta},\qquad
        U(e^{i\theta})=\sum_{k=0}^{N}u_k\,e^{-ik\theta}, %
\]
\[
        R(e^{i\theta})=\sum_{k=0}^{N}r_k\,e^{-ik\theta},\qquad
        S(e^{i\theta})=\sum_{k=0}^{N}s_k\,e^{-ik\theta},
\]
with $r_k=u_k-\mu y_k$ and $s_k=Ly_k-u_k$; by linearity, $R=U-\mu Y$ and $S=LY-U$.
Then, we can express the integrand
of \eqref{eq:causal-hard-frequency}, at
each frequency $\theta\in[0,2\pi)$:
\begin{eqnarray}
        2\Ree\!\left[\bigl(1-h(e^{i\theta})\bigr)R(e^{i\theta})^*S(e^{i\theta})\right]
        =\begin{bmatrix} Y(e^{i\theta})\\ U(e^{i\theta})\end{bmatrix}^{\!*}
        \mgb{\Pi_{\mu,L,d}}\!\bigl(h(e^{i\theta})\bigr)
        \begin{bmatrix} Y(e^{i\theta})\\ U(e^{i\theta})\end{bmatrix},
\end{eqnarray}
which is the claimed Hermitian form. 
This completes the proof.
\end{proof}

The \mgb{Hermitian} matrix-valued \mgb{function} $\mgb{\Pi_{\mu,L,d}}(h(e^{i\theta}))$ is called a \emph{multiplier}: a
Hermitian weight assigned to each frequency $\theta\in[0,2\pi)$, in the sense of the
integral-quadratic-constraint framework~\cite{MegretskiRantzer1997}. The specific
multiplier used here, generated by the nonnegative averaging weight $h$, is a
\emph{Zames--Falb multiplier} \cite{ZamesFalb1968,discrete-time-zames-falb-multiplier}. \mgb{Its scalar matrix representation \eqref{eq:Pi} coincides with
the standard Zames--Falb form; see
\cite[Proposition~14, Eq.~(6)]{BoczarLessardRecht2015}.}

\mgb{
To certify an exponential convergence rate, we use
rate-weighted Zames--Falb IQCs. Boczar, Lessard, and
Recht~\cite[Theorem~16]{BoczarLessardRecht2015}
develop exponential-rate Zames--Falb multipliers,
while Lessard, Recht, and
Packard~\cite[Lemma~10]{LessardRechtPackard2016}
establish a hard rate-weighted IQC for the one-step delay case. The following lemma establishes a rate-weighted IQC over a finite time horizon for nonnegative weights assigned to finitely many past iterations. We subsequently specialize these weights to the preceding $\tau$ iterations.
}
\mg{The argument of the preceding lemma requires $\sa{\sum_{j=1}^J} h_j \leq 1$. Such a property will still hold} after %
replacing the lag-$j$ coefficient $h_j$ by the effective coefficient $\hat h_j:=\rho^j h_j$ for \sa{$j=1,\ldots,J$}. Indeed, if $h_j\ge 0$ for all $j$, $0<\rho\le 1$, and $\sa{\sum_{j=1}^J} h_j\le 1$, then $\hat h_j\ge 0$ for all $j$ and
$\sum_{j=1}^J \hat h_j=\sum_{j=1}^J \rho^j h_j\le \sum_{j=1}^J h_j\le 1$.

Thus the weighted coefficients satisfy the same admissibility conditions as the original coefficients. \mg{The following argument shows that a similar IQC also holds for the rate-weighted sequences.}

\begin{lemma}[rate-weighted hard Zames--Falb IQC]\label{lem:rhohard}
Let $0<\rho\le1$. Given the finite trajectory of point--gradient pairs
$(y_k,u_k)=(y_k,\nabla f(y_k))$, $0\le k\le N$, extend it to all integers by setting
$(y_k,u_k)=(0,0)$ for $k\notin\{0,\dots,N\}$, and put $\widehat y_k=\rho^{-k}y_k$,
$\widehat u_k=\rho^{-k}u_k$. Let
\begin{equation}\label{eq:hrate-general}
        h_\rho(z)=\sum_{j=1}^{J}\rho^{j}h_jz^{-j},
        \qquad h_j\ge0,\qquad \sum_{j=1}^{J}h_j\le1 .
\end{equation}
Then the weighted, zero-extended pair $(\widehat y,\widehat u)$ satisfies the hard
IQC generated by $\mgb{\Pi_{\mu,L,d}}(h_\rho)$, with $\mgb{\Pi_{\mu,L,d}}(\cdot)$ as in
\eqref{eq:Pi}: writing $\widehat Y,\widehat U$ for the Fourier transforms of
$\widehat y,\widehat u$,
\[
        \frac1{2\pi}\int_0^{2\pi}
        \begin{bmatrix}\widehat Y(e^{i\theta})\\ \widehat U(e^{i\theta})\end{bmatrix}^{\!*}
\mgb{\Pi_{\mu,L,d}}\!\bigl(h_\rho(e^{i\theta})\bigr)
        \begin{bmatrix}\widehat Y(e^{i\theta})\\ \widehat U(e^{i\theta})\end{bmatrix}
        d\theta\ge0 .
\]
In particular, taking $J=\tau$ and $h_j=1/\tau$ gives $h_\rho=h_{\rho,\tau}$, where
\[
        h_{\rho,\tau}(z)=\frac1\tau\sum_{j=1}^{\tau}\rho^{j}z^{-j}.
\]
\end{lemma}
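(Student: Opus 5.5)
The plan is to reduce the claim to a weighted time-domain inequality and then reuse the potential-function argument from the proof of Lemma~\ref{lem:hard}. Exact telescoping will no longer be available, so it will be replaced by a monotone-weight argument that uses $\psi\ge0$.

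\emph{Step 1: move to the time domain.} Set $\widehat r_k:=\widehat u_k-\mu\widehat y_k=\rho^{-k}r_k$ and $\widehat s_k:=L\widehat y_k-\widehat u_k=\rho^{-k}s_k$. These have the same finite support $\{0,\dots,N\}$ as $r,s$. The Fourier transforms satisfy $\widehat R=\widehat U-\mu\widehat Y$ and $\widehat S=L\widehat Y-\widehat U$ by linearity. The Hermitian-form identity at the end of Lemma~\ref{lem:hard} is pure algebra in $h$ and in the signals, so it holds verbatim with $h$ replaced by $h_\rho$. Parseval and the shift rule, as in that proof, then reduce the claim to
\[
\sum_k\langle \widehat r_k,\widehat s_k\rangle-\sum_{j=1}^J\rho^jh_j\sum_k\langle\widehat r_k,\widehat s_{k-j}\rangle\ \ge 0 .
\]
Substituting $\widehat r_k=\rho^{-k}r_k$ and $\widehat s_{k-j}=\rho^{-(k-j)}s_{k-j}$ gives $\rho^j\langle\widehat r_k,\widehat s_{k-j}\rangle=\rho^{-2(k-j)}\langle r_k,s_{k-j}\rangle$. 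Reindexing with $m=k-j$ and writing $w_m:=\rho^{-2m}$, the target becomes
\[
(1-H)A_w+\sum_{j=1}^Jh_j\Bigl(A_w-B_{j,w}\Bigr)\ge0,
\]
where $A_w:=\sum_m w_m\langle r_m,s_m\rangle$, $B_{j,w}:=\sum_m w_m\langle s_m,r_{m+j}\rangle$, and $H:=\sum_j h_j\le1$.

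\emph{Step 2: prove $A_w\ge0$ and $A_w-B_{j,w}\ge0$.} The first holds termwise because $w_m>0$ and $\langle r_m,s_m\rangle\ge0$ by Lemma~\ref{lem:zero-extension}(a). For the second, I will use $s_m\in\partial\psi(r_m)$, which gives $\langle s_m,r_m-r_{m+j}\rangle\ge\psi(r_m)-\psi(r_{m+j})$. Multiplying by $w_m$ and summing over $m$ (all sums are finite by the zero extension) yields
\[
A_w-B_{j,w}\ \ge\ \sum_m w_m\psi(r_m)-\sum_m w_m\psi(r_{m+j})=\sum_m\bigl(w_m-w_{m-j}\bigr)\psi(r_m).
\]
Since $0<\rho\le1$, the weights $w_m=\rho^{-2m}$ are nondecreasing in $m$, so $w_m-w_{m-j}\ge0$ for $j\ge1$. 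Together with $\psi\ge0$ from \eqref{eq:psi-nonnegative}, each term of the last sum is nonnegative. Combining with the nonnegativity of $1-H$ and of $h_j$ proves the displayed target.

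\emph{Main obstacle and specialization.} The one genuinely new point relative to Lemma~\ref{lem:hard} is Step 2: the telescoping identity is lost under the geometric weights, and it is recovered only as an inequality. That inequality works precisely because the weights $\rho^{-2m}$ increase with $m$ (the rate-weighting must be in the causal direction, $\rho\le1$) and because $\psi$ was normalized so that $\psi\ge0$ with $\psi(0)=0$. The remaining care is bookkeeping: the powers of $\rho$ carried by $h_\rho$ must combine with the weighting of $\widehat y,\widehat u$ into the single weight $w_{k-j}$. Finally, choosing $J=\tau$ and $h_j=1/\tau$ satisfies $h_j\ge0$ and $\sum_jh_j=1$, which gives $h_{\rho,\tau}$.
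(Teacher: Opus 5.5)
Your proof is correct and follows essentially the same route as the paper's. You reduce via Parseval to the weighted time-domain inequality, apply $\langle s_m,r_m-r_{m+j}\rangle\ge\psi(r_m)-\psi(r_{m+j})$ with weights $\rho^{-2m}$, replace telescoping by monotonicity of the weights together with $\psi\ge0$, and conclude with the decomposition $(1-H)A_w+\sum_j h_j(A_w-B_{j,w})$. The only cosmetic difference is that the paper truncates the weights to $\lambda_k=0$ for $k<0$, whereas you keep $w_m=\rho^{-2m}$ on all of $\mathbb Z$; this is harmless because the zero-extended terms vanish, using $\psi(0)=0$.
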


\begin{proof}
Set $\widehat r_k=\rho^{-k}r_k$ and $\widehat s_k=\rho^{-k}s_k$. By Parseval, it suffices to prove the time-domain inequality
\begin{equation}\label{eq:rho-causal-time}
        2\sum_k\langle\widehat r_k,\widehat s_k\rangle
        -2\sum_{j=1}^{J}\rho^j h_j\sum_k
        \langle\widehat r_k,\widehat s_{k-j}\rangle
        \ge0.
\end{equation}
For a fixed lag $j$ define $\lambda_k=\rho^{-2k}$ for $k\ge0$ and $\lambda_k=0$ for $k<0$.  The zero extension makes all sums finite.  As in the proof of Lemma~\ref{lem:hard}, the subgradient inequality gives
\[
        \langle s_k,r_k-r_{k+j}\rangle\ge\psi(r_k)-\psi(r_{k+j}).
\]
Multiplying by $\lambda_k$ and summing over $k\ge0$ yields
\begin{align*}
        \sum_{k\ge0}\lambda_k\langle s_k,r_k-r_{k+j}\rangle
        &\ge
        \sum_{\ell\ge0}\bigl(\lambda_\ell-\lambda_{\ell-j}\bigr)\psi(r_\ell)\ge0,
\end{align*}
where $\lambda_{\ell-j}=0$ for $\ell<j$ and, for $\ell\ge j$,
$\lambda_\ell-\lambda_{\ell-j}=\rho^{-2\ell}(1-\rho^{2j})\ge0$.  Hence
\begin{equation}\label{eq:weighted-causal-lag}
        \sum_{k\ge0}\lambda_k\langle s_k,r_k\rangle
        \ge
        \sum_{k\ge0}\lambda_k\langle s_k,r_{k+j}\rangle .
\end{equation}
On the other hand, \sa{similar to $\sum_{k\ge0}\langle r_{k},s_{k-j}\rangle =\sum_{k\ge0}\langle r_{k+j},s_k\rangle$, we also have}
\[
        \rho^j\sum_k\langle\widehat r_k,\widehat s_{k-j}\rangle
        =\sum_{k\ge0}\lambda_k\langle r_{k+j},s_k\rangle.
\]
In addition, 
\[
        \sum_k\langle\widehat r_k,\widehat s_k\rangle
        =\sum_{k\ge0}\lambda_k\langle r_k,s_k\rangle .
\]
Then, by \eqref{eq:weighted-causal-lag}, each \sa{lag $j$} satisfies
\[
        \sum_k\langle\widehat r_k,\widehat s_k\rangle
        -\rho^j\sum_k\langle\widehat r_k,\widehat s_{k-j}\rangle\ge0.
\]
Multiplying this inequality by the nonnegative weight $2h_j$ and summing over
$j=1,\dots,J$, then using $\sum_{j=1}^{J}h_j\le1$ together with
$\sum_k\langle\widehat r_k,\widehat s_k\rangle\ge0$, yields
\[
        2\sum_k\langle\widehat r_k,\widehat s_k\rangle
        -2\sum_{j=1}^{J}\rho^{j}h_j \sum_k\langle\widehat r_k,\widehat s_{k-j}\rangle\ge0 ,
\]
which \sa{yields} \eqref{eq:rho-causal-time}. The Fourier conversion is the same as in Lemma~\ref{lem:hard}, now applied to the weighted sequences.  If $\widehat Y$ and $ \widehat U$ are the Fourier transforms of $\widehat y$ and $\widehat u$, then \eqref{eq:rho-causal-time} is equivalent to
\[
        \frac1{2\pi}\int_0^{2\pi}
        \begin{bmatrix}\widehat Y\\ \widehat U\end{bmatrix}^{\!*}
        \mgb{\Pi_{\mu,L,d}}(h_\rho(e^{i\theta}))
        \begin{bmatrix} \widehat Y\\ \widehat U\end{bmatrix}
        d\theta\ge0.
\]
This is the desired hard IQC for the rate-weighted sequences. Finally, $h_\rho$ becomes $h_{\rho,\tau}$ for $J=\tau$ and $h_j = 1/\tau$.
\end{proof}

\section{Proving Theorem \ref{thm:main-nonexplicit-constants}: Accelerated Rates for \DREAM}\label{sec:master}%

In this section, we provide a  linear-rate analysis of \DREAM{} and provide a proof of Theorem~\ref{thm:main-nonexplicit-constants} where we will build on the IQC techniques and the transfer function of the linear plant developed in Section \ref{sec:certificate}. %
Our analysis begins by decomposing the gradient as
    \[
    \nabla f(x) =mx+\sa{v(x)},\quad \mbox{i.e.,}\quad \sa{v(x):=\nabla f(x)-mx,}
    \]
    where $m = \frac{1}{2}(1+q^{-2})$ is the midpoint of the slope interval $[1,q^{-2}]$, and $v$ is the residual deviation from this constant slope. Recall that without loss of generality we take \mgb{$\mu=1$} %
    and $\nabla f(0)=0$
    in our analysis.

    Absorbing the linear term $mx$ into the GMM dynamics yields a linear recursion, which we call the \textit{midpoint plant}. This recursion is obtained by setting $v \equiv 0$. To certify contraction at a target rate $\rho$, we pass to rate-weighted coordinates, so that rate-$\rho$ convergence of the original iteration becomes stability of the weighted system. \mg{Our objectives are twofold: (i) establish that the rate-weighted midpoint plant is Schur stable; and (ii) control the residual nonlinearity $v$ via a Zames--Falb IQC and show that, whenever the corresponding frequency-domain inequality holds, the full rate-weighted feedback system is stable.} %
Then, stability of the rate-weighted system  implies linear convergence of the original GMM iteration at rate $\rho$.

\begin{definition}[Midpoint plant]\label{def:midpoint-plant}
Let $m=\tfrac12(1+q^{-2})$ denote the midpoint of the slope interval $[\mu,L]=[1,q^{-2}]$.
The \emph{midpoint plant} is the linear time-invariant recursion obtained from delayed
GMM~\eqref{eq:gmm} by replacing the gradient with the constant slope $m$, that is, by
setting $u_k=m\,y_k$:
\begin{equation}\label{eq:midpoint-plant}
        x_{k+1}=(1+\beta)x_k-\beta x_{k-1}-\alpha m\,y_{k-\tau},
        \qquad
        y_k=(1+\nu)x_k-\nu x_{k-1}.
\end{equation}
This is exactly delayed GMM run on a quadratic whose curvature equals the
midpoint $m$. As we discussed in Section \ref{sec:quadratic}, its stability is governed by the characteristic polynomial
$P_{\tau,\delta_m}$ with gain $\delta_m:=\alpha m$, \sa{i.e.,}
\begin{equation}\label{eq:midpoint-charpoly}
        P_{\tau,\delta_m}(z)=z^{\tau}(z-1)(z-\beta)+\delta_m\bigl[(1+\nu)z-\nu\bigr].
\end{equation}
\end{definition}
Before presenting the proof of Theorem \ref{thm:main-nonexplicit-constants}, we first discuss sufficient conditions that would imply a given target convergence rate $\rho$. %
\subsection{\mg{A Convergence Rate Lemma: Sufficient Conditions for Linear Convergence}}\label{subsec-rate-lemma}
\hamzatext{Recall from Figure \ref{fig:lure-separation} that before closing the gradient feedback loop,} the linear part of~\eqref{eq:gmm} maps the gradient sequence $(u_k)$ to the query-point
sequence $(y_k)$. Under the $z$-transform this map can be expressed as a single rational function, the \emph{transfer function}
$g_{\tau,q}$. \sa{Indeed,} writing $Y(z),U(z)$ for the transforms of $(y_k),(u_k)$, under \sa{\textit{zero initial conditions}} \mg{($x_k=0$ for $k\leq0$ and $u_k=0$ for $k<0$)}, one has
$Y=g_{\tau,q}\,U$.

To certify contraction \emph{at rate $\rho$} we pass to the rate-weighted coordinates
$\widehat{(\cdot)}_k=\rho^{-k}(\cdot)_k$, in which a sequence decaying like $\rho^k$
becomes bounded. Multiplying a sequence by $\rho^{-k}$ evaluates its transform at
$\rho z$, so in these coordinates the linear part has transfer function
\mgb{$g_\rho(z):=g_{\tau,q}(\rho z)$},
i.e.\ under the same zero initial conditions, \mgb{$\widehat Y=g_\rho\,\widehat U$.} %
The transfer-function form of the midpoint plant itself---the closed loop of $G$ with the constant gain $m$---appears in Lemma~\ref{lem:midpoint-schur} below, where its poles are identified with the roots of $P_{\tau,\delta_m}$.
The following lemma formalizes this reduction: if the rate-weighted midpoint plant is Schur and the associated strict frequency-domain inequality holds, then delayed GMM converges linearly at rate $\rho$. In the next subsection, we will provide a proof of this lemma. %

\begin{lemma}[Convergence Rate Lemma]\label{lemm:ratecert}
Consider \DREAM, namely delayed GMM~\eqref{eq:gmm} with the parameters of Theorem~\ref{thm:main-nonexplicit-constants}. \mgb{Let $\rho=1-cq/M \in (0,1)$ with $c\in(0,\tfrac23)$ as in \eqref{eq:params}} be the target rate  and let $m=\tfrac12(1+q^{-2})$ be the midpoint of
the slope interval $[\mu,L]=[1,q^{-2}]$. Assume \mg{$0<q<M/(2M+1)$}. Suppose the following two conditions hold.
\begin{itemize}[leftmargin=2.6em]
\item[\textbf{\textup{(H1)}}] \emph{Midpoint-pole hypothesis (stability of the linear surrogate).}
Every pole of the midpoint plant of Definition~\ref{def:midpoint-plant}---equivalently,
every root of the midpoint
characteristic polynomial $P_{\tau,\delta_m}$ with $\delta_m=\alpha m$---lies in the
open disk $\{|z|<\rho\}$. Equivalently, after the rate weighting
$\widehat{(\cdot)}_k=\rho^{-k}(\cdot)_k$, the midpoint plant is Schur.
\item[\textbf{\textup{(H2)}}] \emph{Strict frequency-domain inequality (FDI).} With $z=\rho e^{i\theta}$,
the Hermitian form
\begin{equation}\label{eq:fdi}
\Phi_{\tau,q}(\theta):=
\begin{bmatrix}g_{\tau,q}(z)\\[2pt]1\end{bmatrix}^{\!*}
\Pi_{1,q^{-2}}\bigl(h_{\rho,\tau}(e^{i\theta})\bigr)
\begin{bmatrix}g_{\tau,q}(z)\\[2pt]1\end{bmatrix} < 0 \quad \mbox{for every} \quad \theta\in[0,2\pi).
\end{equation}
Here $g_{\tau,q}$ is the transfer function~\eqref{eq:g} of the linear part
of~\eqref{eq:gmm}, and $\Pi_{1,q^{-2}}(h_{\rho,\tau})$ is the Zames--Falb
multiplier~\eqref{eq:Pi} evaluated at the rate-weighted weight $h_{\rho,\tau}$ of
Lemma~\ref{lem:rhohard}.
\end{itemize}
\mg{Then the iterates of delayed GMM converge linearly with rate \(\rho\), \sa{i.e.,} there exists a finite constant \(C>0\) such that
\[
    \norm{x_k-x_\star}\le C\,\rho^k,
    \qquad k\ge 0.
\]
Moreover, the rate is uniform over the function class \(\mathcal S_{1,q^{-2}}\). Specifically, for fixed algorithmic parameters, the same value of \(\rho\) applies to every objective \(f\in\mathcal S_{1,q^{-2}}\) with minimizer \(x_\star\) and every finite initial history of the recursion, with \(C\) depending only on the fixed algorithmic parameters and on the initial history relative to \(x_\star\).}
\end{lemma}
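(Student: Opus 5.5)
Our plan is the standard IQC argument with a loop transformation, carried out on finite horizons so that the hard IQC of Lemma~\ref{lem:rhohard} can be used without tail terms.

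\emph{Step 1: loop transformation.} Write $u_k=m y_k+v_k$ and regard $v$ as the new input. Absorbing $m y_k$ into the plant gives the midpoint plant of Definition~\ref{def:midpoint-plant}, now driven by $v$. Its transfer function from $v$ to $y$ is $\tilde g=g_{\tau,q}/(1-m g_{\tau,q})$, and its poles are the roots of $P_{\tau,\delta_m}$.

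\emph{Step 2: stability of the weighted plant.} After the rate weighting $\widehat{(\cdot)}_k=\rho^{-k}(\cdot)_k$, the transfer function becomes $\tilde g(\rho z)$. By (H1) its state-space realization $(\widehat A,\widehat B,\widehat C,\widehat D)$ is Schur. The initial history enters only as an extra term $\widehat C\widehat A^k\widehat\xi_0$, which is square-summable.

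\emph{Step 3: rewriting (H2) in the new coordinates.} Substitute $u=my+v$ into the multiplier $\Pi_{1,q^{-2}}(h_{\rho,\tau})$. The form $[g;1]^*\Pi[g;1]$ equals $|1-mg|^2\,[\tilde g;1]^*\widetilde\Pi[\tilde g;1]$, where $\widetilde\Pi$ is the multiplier written in the $(y,v)$ variables. Hence (H2), together with compactness of $[0,2\pi]$, gives
\[
[\tilde g(\rho e^{i\theta});1]^*\,\widetilde\Pi\,[\tilde g(\rho e^{i\theta});1]\le-\varepsilon
\]
uniformly in $\theta$. This needs $1-mg\neq0$ on the circle $|z|=\rho$, which follows from (H1), since the zeros of $1-mg$ are the roots of $P_{\tau,\delta_m}$. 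We also need $\rho$ large enough that the poles $1,\beta$ of $g$ lie inside $|z|<\rho$ and don't sit on the circle; the assumption $q<M/(2M+1)$ and $c<2/3$ should ensure $\beta<\rho<1$.

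\emph{Step 4: KYP and the dissipation inequality.} Apply the discrete KYP lemma, in Rantzer's frequency-domain form, to the Schur realization of Step 2 augmented with the state of the filter $h_{\rho,\tau}$ (the $\tau$ past values of $\widehat r,\widehat s$). This yields $P=P^*$ and $\varepsilon'>0$ such that, for every trajectory,
\[
V(\zeta_{k+1})-V(\zeta_k)+\text{(IQC integrand)}_k\le-\varepsilon'\bigl(\|\widehat y_k\|^2+\|\widehat v_k\|^2\bigr).
\]

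\emph{Step 5: finite-horizon closing argument.} Sum over $k=0,\dots,N$ and invoke the hard IQC of Lemma~\ref{lem:rhohard}, which holds for every $N$, to drop the nonnegative IQC sum. The initial-condition contribution is handled by superposition: split the state into its zero-input response, which decays because the plant is Schur, plus the zero-state response, to which the frequency bound applies via Parseval. The result is $\sum_{k\le N}\|\widehat y_k\|^2\le C_0$ with $C_0$ independent of $N$. Therefore $\|y_k\|\le\sqrt{C_0}\,\rho^k$. Recover $x_k$ from $y$ through the stable recursion $x_k=\bigl(y_k+\nu x_{k-1}\bigr)/(1+\nu)$, which has pole $\nu/(1+\nu)<\rho$ for small $q$, or alternatively through the weighted state bound. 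This gives $\|x_k-x_\star\|\le C\rho^k$. The constant depends only on the parameters and the initial history, because (H1) and (H2) involve $f$ only through $\mu$ and $L$.

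\emph{Main obstacle.} The delicate step is Step 5: handling nonzero initial conditions with a KYP certificate whose $P$ is indefinite, and justifying that the frequency inequality for the input/output pair transfers to the closed loop. The natural argument bounds the forced part via Parseval, using $\widehat y=\tilde g\,\widehat v+(\text{free response})$, and absorbs the cross terms by Cauchy--Schwarz with the margin $\varepsilon$. I would try that before resorting to the Lyapunov form of KYP.
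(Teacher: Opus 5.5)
Your Steps 1--3 match the paper: the midpoint loop transformation, the Schur realization obtained from (H1), and the congruence that moves (H2) to $\widetilde g_\rho$ with a uniform margin. Steps 4--5, however, have a genuine gap.

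The generic (Rantzer) KYP lemma gives a Hermitian $P$ of unknown sign, and with a \emph{hard} IQC this is fatal. Summing the dissipation inequality and dropping $\sum_{k=0}^N\sigma_k\ge0$ leaves $\varepsilon'\sum_{k\le N}(\|\widehat y_k\|^2+\|\widehat v_k\|^2)\le V(\zeta_0)-V(\zeta_{N+1})$. If $P$ has a negative eigenvalue, $-V(\zeta_{N+1})$ is of the order of the (geometrically weighted) recent input energy, with a constant that need not be below $\varepsilon'$. So no bound uniform in $N$ follows.

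Your fallback does not repair this. Parseval cannot be applied to the untruncated closed-loop signals, since $\widehat v\in\ell_2$ is exactly what you are trying to prove. So you must truncate, and then the two facts you want to combine concern different signals:
\begin{itemize}
\item Lemma~\ref{lem:rhohard} controls the truncated, zero-padded pair, i.e.\ it gives $\sum_{k=0}^N\sigma_k\ge0$.
\item Parseval plus the FDI controls $\sum_{k\ge0}\sigma_k$ for the pair (zero-state response to the truncated input, truncated input), whose output does not vanish after $N$.
\end{itemize}
The difference is the tail $\sum_{k>N}\sigma_k$ of the free continuation from the state at time $N+1$. It is unsigned, because the memory terms $\langle\widehat r_k,\widehat s_{k-j}\rangle$ with $k>N\ge k-j$ have no sign. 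In general it is only bounded below by $-C\|\zeta_{N+1}\|^2$, which reintroduces the same uncontrolled end-of-horizon energy. Cauchy--Schwarz disposes of the cross terms with the free response of the initial history, but not of this truncation tail.

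A Megretski--Rantzer homotopy argument could be used instead, since $(1-t)m\,\mathrm{id}+t\nabla f$ is still the gradient of a function in $\mathcal S_{1,q^{-2}}$ minimized at the origin for $t\in[0,1]$. That, however, is a different proof you would have to supply.

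The idea the paper uses is to prove that the storage is positive definite, which is what makes a hard IQC usable on every finite horizon. Lemma~\ref{lem:positive-storage-kyp} does this in four moves:
\begin{itemize}
\item It writes $\widetilde\Pi=\tfrac12\Psi^{\sim}J_0\Psi$, so that $\sigma_k=\tfrac12(\|e_{+,k}\|^2-\|e_{-,k}\|^2)$ holds pointwise in time, with $e_\pm$ generated causally from $(\widehat y,\widehat v)$.
\item It shows that $H_-=ah\mathcal G+(2-h)I_d$ has no zeros in $|z|\ge1$, via the homotopy $t\mathcal G$, $t\in[0,1]$, and the argument principle. This uses that the diagonal blocks of $\widetilde\Pi$ are sign-definite because $\Rea(1-h)\ge1-s_h>0$. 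Hence the input change $\widehat v\mapsto e_-$ is stably invertible and the transformed state matrix $\bar A$ is Schur.
\item It rewrites the FDI as $\|H_+H_-^{-1}\|_{\mathcal H_\infty}<1$.
\item The strict bounded-real lemma then gives $X_0-\bar A^*X_0\bar A\succ0$, hence $X_0\succ0$ because $\bar A$ is Schur.
\end{itemize}
With $V(\Xi_{N+1})\ge0$ the finite-horizon sum closes in one line. The initial history enters only through $V(\Xi_0)$, with the multiplier memory started at zero, consistent with the zero padding. No superposition or Parseval step is needed.

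A minor slip in Step 3: the pole of $g_{\tau,q}$ at $z=1$ lies outside $|z|\le\rho$, which is precisely why the weighted open loop is unstable and the loop transformation is needed. What you actually need is only that $g_{\tau,q}$ be finite on $|z|=\rho$, i.e.\ $\beta<\rho<1$, and this holds since $c<2$.
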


\subsection{Proof of the Convergence Rate Lemma}\label{subsec:master-proof}
This subsection proves the convergence rate lemma (Lemma~\ref{lemm:ratecert}). The proof is an energy argument, assembled from a small number of supporting lemmas, which we state here at the point of use; their proofs are routine computations and are collected in Appendix~\ref{subsec:lemma-proofs} so as not to interrupt the argument. The proof will assume the hypotheses \textbf{(H1)} and \textbf{(H2)} hold; these hypotheses themselves are verified afterwards, in Sections~\ref{sec:stability}--\ref{sec:limit}.

\sa{Treating $u_k$ as an exogenous input to the linear dynamics in \eqref{eq:gmm}, the corresponding input-output map from $u_k$ to $y_k$ has transfer function $g_{\tau,q}$; that is, $Y(z)=g_{\tau,q}(z)U(z)$ under zero initial conditions; see \eqref{eq:g}.} If we introduce the rate-weighted sequences $\widehat y_k=\rho^{-k}y_k$ and
$\widehat u_k=\rho^{-k}u_k$, the following lemma, \sa{using the} basic properties of $z$-transforms, shows that the z-transforms $\widehat Y(z)$ and $\widehat U(z)$ of the weighted sequences are related through the rate-normalized transfer function $g_{\tau,q}(\rho z)$.
\begin{lemma}[rate-weighted plant]\label{lem:rate-weighted-plant}
Introduce the rate-weighted signals $\widehat y_k=\rho^{-k}y_k$ and
$\widehat u_k=\rho^{-k}u_k$, and let $\widehat Y,\widehat U$ denote their
$z$-transforms. We have %
\mgb{$\widehat Y(z)=g_\rho(z)\,\widehat U(z)$}
under zero initial conditions, %
where
\beq
 \mgb{g_\rho(z)=g_{\tau,q}(\rho z)}.
        \label{def-G-open-loop}
\eeq
\mgb{The corresponding transfer matrix is $G_\rho(z)=g_\rho(z)I_d$;
thus $\widehat Y=G_\rho\widehat U$. The scalar \mgb{transfer} function $g_\rho(z)$
equals \mgb{$g_{\tau,q}(\rho z)$}}
evaluated at $\rho z$, \sa{i.e.,} %
rate weighting divides every pole of $g_{\tau,q}$ by $\rho$.\looseness=-1
\end{lemma}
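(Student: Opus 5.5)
The plan is to verify the identity directly from the definition of the bilateral $z$-transform, with a short check that the time-domain recursion is compatible with the weighting. For any two-sided sequence $(a_k)$ with transform $A(z)=\sum_k a_k z^{-k}$, the weighted sequence $\widehat a_k=\rho^{-k}a_k$ has transform
\[
\widehat A(z)=\sum_{k}\rho^{-k}a_k z^{-k}=\sum_k a_k(\rho z)^{-k}=A(\rho z),
\]
wherever the series converges. Under zero initial conditions ($x_k=0$ for $k\le0$, $u_k=0$ for $k<0$), the sequences $y,u$ vanish for negative indices. So every series involved is a one-sided power series in $z^{-1}$, and the region of convergence is an exterior $\{|z|>r\}$. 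In this region the substitution $z\mapsto\rho z$ is legitimate, and it maps the exterior region to $\{|z|>r/\rho\}$.

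First apply this scaling identity to $y$ and $u$, which gives $\widehat Y(z)=Y(\rho z)$ and $\widehat U(z)=U(\rho z)$. Next invoke the input–output relation $Y(w)=g_{\tau,q}(w)U(w)$ from \eqref{eq:g}, valid for $|w|$ large. Taking $w=\rho z$ yields $\widehat Y(z)=g_{\tau,q}(\rho z)\widehat U(z)=g_\rho(z)\widehat U(z)$ for $|z|$ large. Both sides are transforms of sequences supported on $k\ge0$, so by uniqueness of the transform the identity holds as an identity of sequences. Equivalently, one can check it in the time domain: multiplying \eqref{eq:g-derivation-xrec} and \eqref{eq:g-derivation-y} by $\rho^{-(k+1)}$ and $\rho^{-k}$ gives
\[
\widehat x_{k+1}=\rho^{-1}(1+\beta)\widehat x_k-\rho^{-2}\beta\widehat x_{k-1}-\alpha\rho^{-\tau-1}\widehat u_{k-\tau},
\]
together with $\widehat y_k=(1+\nu)\widehat x_k-\nu\rho^{-1}\widehat x_{k-1}$. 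Transforming these reproduces $g_{\tau,q}(\rho z)$, which provides a consistency check.

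The matrix statement follows at once because $G_{\tau,q}(z)=g_{\tau,q}(z)I_d$ acts coordinatewise, so $G_\rho(z)=g_\rho(z)I_d$ and $\widehat Y=G_\rho\widehat U$. For the pole statement, write $g_{\tau,q}=N/D$ in coprime form. Then $g_\rho(z)=N(\rho z)/D(\rho z)$, still coprime, and its poles are exactly the $z$ with $D(\rho z)=0$, namely $p/\rho$ for each pole $p$ of $g_{\tau,q}$ (these are $1$, $\beta$, and $0$ with multiplicity $\tau$).

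The only delicate point is keeping track of regions of convergence, so that the substitution $z\mapsto\rho z$ and the appeal to uniqueness are justified. For finite-horizon, zero-extended signals of the kind used later, this is trivial, since the transforms are Laurent polynomials.
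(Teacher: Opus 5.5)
Your proposal is correct and follows the paper's proof. Both arguments rest on the scaling identity $\widehat Y(z)=Y(\rho z)$, $\widehat U(z)=U(\rho z)$, substituted into the zero-state relation $Y=g_{\tau,q}U$, and both read the pole statement off the substitution $z\mapsto\rho z$. Your region-of-convergence bookkeeping, coprimality remark, and time-domain consistency check are extra rigor the paper omits. One small point: your pole list $\{0,1,\beta\}$ is exact only when the zero $\nu/(1+\nu)$ differs from $\beta$, i.e.\ $q\neq M/(2M+1)$; the paper assumes $0<q<M/(2M+1)$ later, in Lemma~\ref{lem:midpoint-schur}.
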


\mgb{As in Figure \ref{fig:lure-separation}, we think of GMM as an interconnection of a linear system and a gradient nonlinearity; and we call the linear part \mgb{the open-loop system}, whereas the whole GMM dynamics is referred to as the close-loop system with the gradients added.} Because rate weighting divides every pole of $g_{\tau,q}$ by $\rho$, the poles of \mgb{$g_{\rho}$}
may lie outside the unit circle; equivalently, the rate-weighted characteristic
polynomial need not be Schur. \mg{This is an obstacle, as we need stability of the open-loop plant for our arguments based on IQC techniques to work.} %
To overcome \sa{this issue}, we split the gradient into its midpoint slope and
a remainder, $\nabla f(y_k)=m\,y_k+v_k$, \sa{i.e., $v_k=v(y_k)$,} and pass to the rate-weighted sequences
$\widehat y_k=\rho^{-k}y_k$ and $\widehat v_k=\rho^{-k}v_k$. We now describe the
resulting weighted midpoint-transformed dynamics and show that they are Schur precisely
when hypothesis \textbf{(H1)} holds. In plain terms: after the change of variables, the delayed recursion is an ordinary linear system with finitely many states, and we will show that its stability %
\sa{can be characterized \emph{exactly} as in} hypothesis \textbf{(H1)}.
 
\begin{lemma}[weighted midpoint dynamics]\label{lem:weighted-realization}
Let \(\rho>0\), and define the weighted variables
\[
        \widehat x_k=\rho^{-k}x_k,
        \qquad
        \widehat y_k=\rho^{-k}y_k,
        \qquad
        \widehat v_k=\rho^{-k}v_k
\]
\sa{under zero initial conditions}. Suppose the unweighted linear dynamics are
\[
        x_{k+1}=(1+\beta)x_k-\beta x_{k-1}-\alpha u_{k-\tau},
        \qquad
        y_k=(1+\nu)x_k-\nu x_{k-1},
\]
\sa{where $ u_k=my_k+v_k$.}
Then the weighted midpoint-transformed dynamics satisfy
\begin{align}\label{eq:weighted-realization-x-app}
\widehat x_{k+1}
={}&\frac{1+\beta}{\rho}\widehat x_k-
    \frac{\beta}{\rho^2}\widehat x_{k-1}
    -\frac{\alpha m(1+\nu)}{\rho^{\tau+1}}\widehat x_{k-\tau}
    +\frac{\alpha m\nu}{\rho^{\tau+2}}\widehat x_{k-\tau-1}
    -\frac{\alpha}{\rho^{\tau+1}}\widehat v_{k-\tau},\\
\widehat y_k
={}&(1+\nu)\widehat x_k-\frac{\nu}{\rho}\widehat x_{k-1}.\label{eq:weighted-realization-y-app}
\end{align}
\end{lemma}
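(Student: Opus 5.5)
The plan is a direct substitution argument carried out in the time domain; no transform machinery is needed beyond bookkeeping. Start from the unweighted recursion with $u_k=my_k+v_k$:
\[
x_{k+1}=(1+\beta)x_k-\beta x_{k-1}-\alpha m\,y_{k-\tau}-\alpha v_{k-\tau}.
\]
Then eliminate $y_{k-\tau}$ using the extrapolation relation $y_{k-\tau}=(1+\nu)x_{k-\tau}-\nu x_{k-\tau-1}$. This gives a purely state recursion in $x$ with four lagged terms plus the forcing $-\alpha v_{k-\tau}$:
\[
x_{k+1}=(1+\beta)x_k-\beta x_{k-1}-\alpha m(1+\nu)x_{k-\tau}+\alpha m\nu\,x_{k-\tau-1}-\alpha v_{k-\tau}.
\]

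Next, multiply both sides by $\rho^{-(k+1)}$. Each term is rewritten in weighted variables with the identity $x_{k-j}=\rho^{k-j}\widehat x_{k-j}$, and similarly for $v$. This gives $\rho^{-(k+1)}x_{k-j}=\rho^{-(j+1)}\widehat x_{k-j}$. Applying it with $j=0,1,\tau,\tau+1$ produces the coefficients $(1+\beta)/\rho$, $-\beta/\rho^2$, $-\alpha m(1+\nu)/\rho^{\tau+1}$ and $\alpha m\nu/\rho^{\tau+2}$. Applying it to $v_{k-\tau}$ produces $-\alpha/\rho^{\tau+1}$. Together these yield exactly \eqref{eq:weighted-realization-x-app}.

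For \eqref{eq:weighted-realization-y-app}, multiply $y_k=(1+\nu)x_k-\nu x_{k-1}$ by $\rho^{-k}$. The second term then becomes $\nu\rho^{-1}\widehat x_{k-1}$.

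The only point needing care is consistency with the zero initial conditions, that is, $x_k=0$ for $k\le0$ and $u_k=v_k=0$ for $k<0$. Since weighting by $\rho^{-k}$ with $\rho>0$ preserves vanishing, the weighted sequences also vanish on the same index sets. Hence the weighted recursion holds for every $k$ with the same zero history, and no boundary terms appear.

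I expect no real obstacle. The main risk is an off-by-one error in the exponents of $\rho$, which I would check against the $j$-lag rule above. As a sanity check, the $z$-transform of the weighted recursion should give characteristic polynomial $\rho^{-(\tau+2)}P_{\tau,\delta_m}(\rho z)$, consistent with Lemma~\ref{lem:rate-weighted-plant}.
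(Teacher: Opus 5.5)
Your proposal is correct and follows the paper's proof: substitute $u_{k-\tau}=my_{k-\tau}+v_{k-\tau}$, expand $y_{k-\tau}$, multiply by $\rho^{-(k+1)}$ and apply the rescaling $\rho^{-(k+1)}x_{k-j}=\rho^{-(j+1)}\widehat x_{k-j}$, then handle the output equation with $\rho^{-k}$. Your zero-initial-condition remark is harmless but unnecessary, since the identity is purely algebraic and holds for every $k$ at which the unweighted recursion holds.
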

\mg{The following lemma says that stability of the weighted midpoint-transformed dynamics provided in the previous lemma is equivalent to the hypothesis \textbf{(H1)}}.
\begin{lemma}[stability of the weighted-midpoint transformed dynamics]
\label{lem:midpoint-schur}
Recall from Lemma~\ref{lem:rate-weighted-plant} that \mgb{$g_\rho$} %
is the transfer function from
the rate-weighted gradient input to the rate-weighted query point, \sa{i.e.,}
$\widehat Y=\mgb{g_\rho} \widehat U$.
Write \mgb{$g_\rho=N/D$} %
as a ratio of polynomials, where \mgb{
$N(z)=-\alpha [(1+\nu)\rho z-\nu]$ and
$D(z)=(\rho z)^\tau(\rho z-1)(\rho z-\beta)$}. %
We next consider the midpoint change of variables $\widehat u=m\widehat y+\widehat v$, with
residual input $\widehat v_k=\widehat u_k-m\widehat y_k$. %
Eliminating
$\widehat u$ gives the \emph{transformed plant} \mgb{$\widetilde g_\rho$}, i.e., the transfer function
from the residual input $\widehat v$ to the query point $\widehat y$ \mgb{is}
\mgb{\beq \widehat Y=\widetilde g_\rho\,\widehat V,
        \qquad
        \widetilde g_\rho=\frac{g_\rho}{1-mg_\rho}
        =\frac{N}{D-mN}.
        \label{eq-midpoint-tf}
\eeq}%
The denominator is the rate-scaled midpoint characteristic polynomial:
$D-mN=P_{\tau,\delta_m}(\rho\,\cdot)$ with $\delta_m=\alpha m$, so its roots are the
points $r/\rho$ with $P_{\tau,\delta_m}(r)=0$. Assume $0<q<M/(2M+1)$ (with
$M=2\tau+1$, $\beta=1-2q/M$). Then the numerator $N$ and denominator $D-mN$ are
coprime, so \mgb{$\widetilde g_\rho$} has no pole--zero cancellation and its poles are \emph{exactly}
the roots of $D-mN$, i.e., the points $r/\rho$ with $P_{\tau,\delta_m}(r)=0$. Moreover, letting $A$ be the state matrix of the
finite-dimensional realization
\eqref{eq:weighted-realization-x-app}--\eqref{eq:weighted-realization-y-app} with state
\[
        \xi_k=
        \begin{bmatrix}
        \widehat x_k&
        \widehat x_{k-1}&
        \cdots&
        \widehat x_{k-\tau-1}&
        \widehat v_{k-1}&
        \cdots&
        \widehat v_{k-\tau}
        \end{bmatrix}^{\mathsf T}\mgb{\in\mathbb R^{(2\tau+2)d}},
\]\mgb{the matrix $A$} has the spectrum $\sigma(A)=\mathcal P(\widetilde g_\rho)\cup\{0\}$ , where
$\mathcal P(\widetilde g_\rho)$ denotes the poles of \mgb{${\widetilde g}_\rho$}. %
Consequently
\mgb{${\widetilde g}_\rho$} is Schur---and, equivalently, $A$ is Schur---if and only if every root of
$P_{\tau,\delta_m}$ lies in $\{|z|<\rho\}$, that is, if and only if the midpoint-pole
hypothesis \textbf{(H1)} holds.
\end{lemma}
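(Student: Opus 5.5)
The proof has three parts: an algebraic identity for the denominator, a coprimality check, and a realization argument for the spectrum.

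\emph{Denominator identity.} We compute directly:
\[
D(z)-mN(z)=(\rho z)^\tau(\rho z-1)(\rho z-\beta)+\alpha m\bigl[(1+\nu)\rho z-\nu\bigr]=P_{\tau,\delta_m}(\rho z).
\]
The transformed plant $\widetilde g_\rho=g_\rho/(1-mg_\rho)$ follows by substituting $\widehat U=m\widehat Y+\widehat V$ into $\widehat Y=g_\rho\widehat U$ and solving for $\widehat Y$. Multiplying numerator and denominator by $D$ gives $N/(D-mN)$. Consequently the candidate poles are exactly the points $r/\rho$ with $P_{\tau,\delta_m}(r)=0$.

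\emph{Coprimality.} The numerator $N$ is linear in $z$, so it has the single root $z_0=\nu/((1+\nu)\rho)$. Coprimality is equivalent to $(D-mN)(z_0)\neq0$. At $z_0$ we have $N(z_0)=0$, so $(D-mN)(z_0)=D(z_0)=r_0^\tau(r_0-1)(r_0-\beta)$ with $r_0=\rho z_0=\nu/(1+\nu)=(M-\tfrac12)/(M+\tfrac12)=1-\tfrac{2}{2M+1}$.

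This value is nonzero provided $r_0\notin\{0,1,\beta\}$. Clearly $r_0\in(0,1)$. The remaining case is $r_0=\beta=1-2q/M$, which would require $q=M/(2M+1)$. This is excluded by the assumption $0<q<M/(2M+1)$, which is precisely what the hypothesis is designed for. Hence there is no cancellation, and the poles of $\widetilde g_\rho$ are exactly the roots of $D-mN$.

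\emph{Spectrum of $A$.} We write out the companion-type matrix $A$ of \eqref{eq:weighted-realization-x-app} on the state $\xi_k$. The $\widehat x$-block is a companion matrix whose first row is
\[
\Bigl(\tfrac{1+\beta}{\rho},\,-\tfrac{\beta}{\rho^2},\,0,\dots,-\tfrac{\alpha m(1+\nu)}{\rho^{\tau+1}},\,\tfrac{\alpha m\nu}{\rho^{\tau+2}}\Bigr).
\]
The $\widehat v$-block is a nilpotent shift register whose only coupling is into the first $\widehat x$ row. With the shift register ordered as shown, $A$ is block upper triangular: the $\widehat v$ states feed the $\widehat x$ states, but not conversely.

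Hence $\det(\lambda I-A)=\det(\lambda I-A_{xx})\cdot\lambda^{\tau d}$. Expanding the companion determinant gives $\lambda^{\tau+2}\rho^{-(\tau+2)}\cdot(\ldots)$, which equals $\rho^{-(\tau+2)}P_{\tau,\delta_m}(\rho\lambda)$ (raised to the $d$-th power via the Kronecker structure). Verifying this companion determinant is the one computation to check carefully, so that the $\rho$-powers match exactly.

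Therefore $\sigma(A)=\{r/\rho: P_{\tau,\delta_m}(r)=0\}\cup\{0\}=\mathcal P(\widetilde g_\rho)\cup\{0\}$, using the coprimality step. Since $0$ lies strictly inside the unit disk, $A$ is Schur iff $\widetilde g_\rho$ is Schur. Both are equivalent to all roots of $P_{\tau,\delta_m}$ having modulus less than $\rho$, which is (H1).

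I expect the main obstacle to be the bookkeeping at $\tau=1$ and in the ordering of the delayed $\widehat v$ entries, where the triangular structure must be shown explicitly. If needed, I would instead compute $\det(\lambda I-A)$ by a Schur complement on the nilpotent block.
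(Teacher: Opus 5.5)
Your proposal is correct and follows the paper's proof essentially step by step: the identity $D-mN=P_{\tau,\delta_m}(\rho\,\cdot)$, coprimality by comparing the single zero $\nu/(1+\nu)$ of the unscaled numerator with the roots $0,1,\beta$, and the block upper-triangular companion/nilpotent-shift structure, which gives $\det(zI-A)=z^{\tau d}\bigl[\rho^{-(\tau+2)}P_{\tau,\delta_m}(\rho z)\bigr]^d$. The only cosmetic difference is that you evaluate $D-mN$ directly at the zero of $N$, whereas the paper first shows $\gcd(N,D)=1$ and then deduces $\gcd(N,D-mN)=1$.
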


In the next lemma, we transport the strict FDI \textbf{(H2)} from the original plant to the
Schur midpoint plant; this transport is a \emph{congruence} of the underlying
quadratic form, an operation we first recall. A congruence is the way a Hermitian quadratic form changes under a linear change of
variables: if a form is written $x^{*}\Pi x$ for a Hermitian matrix $\Pi$ and one
substitutes $x=Ty$ with $T$ invertible, then
\[
        x^{*}\Pi x=(Ty)^{*}\Pi(Ty)=y^{*}\bigl(T^{*}\Pi T\bigr)y ,
\]
so in the new variable the form has the congruent matrix $T^{*}\Pi T$, and the value of
the form---hence its sign---is unchanged. %
Here the change of variables is the midpoint
substitution $\widehat u=m\widehat y+\widehat v$, which relates the old IQC variables
$(\widehat y,\widehat u)$ and the new variables $(\widehat y,\widehat v)$ by
\[
        \begin{bmatrix}\widehat y\\ \widehat u\end{bmatrix}
        =\mgb{T_{m,d}}\begin{bmatrix}\widehat y\\ \widehat v\end{bmatrix},
        \qquad
\mgb{T_{m,d}:=T_m\otimes I_d} \quad \mbox{where} \quad T_m=\begin{bmatrix}1&0\\ m&1\end{bmatrix},
\]
and the same relation holds for the Fourier transforms. 

\mgb{Let $\Pi(z):=\Pi_{1,q^{-2}}(h_{\rho,\tau}(z))$ denote the
$2\times2$ multiplier from \eqref{eq:Pi} and define}
\mgb{\beq\label{eq:midpoint-multiplier-lifts}
 \begin{aligned}
 \widetilde\Pi(z)&:=T_m^*\Pi(z)T_m,\qquad
 \Pi_d(z):=\Pi(z)\otimes I_d,\\
 \widetilde\Pi_d(z)&:=\widetilde\Pi(z)\otimes I_d
 =T_{m,d}^*\Pi_d(z)T_{m,d}.
 \end{aligned}
 \eeq
 }
\mgb{On the unit circle, the vector-signal quadratic forms obey the exact identity
\[
\begin{bmatrix}\widehat Y\\\widehat U\end{bmatrix}^{\!*}
 \Pi_d
 \begin{bmatrix}\widehat Y\\\widehat U\end{bmatrix}
 =
 \begin{bmatrix}\widehat Y\\\widehat V\end{bmatrix}^{\!*}
 \widetilde\Pi_d
 \begin{bmatrix}\widehat Y\\\widehat V\end{bmatrix}.
\]
}
\mgb{We have the scalar transfer functions
$g=g_\rho(e^{i\theta})$ and
$\widetilde g=\widetilde g_\rho(e^{i\theta})=g/(1-mg)$.
Whenever $g$ is finite and $1-mg\ne0$,
\[
T_m\begin{bmatrix}\widetilde g\\1\end{bmatrix}
 =\frac{1}{1-mg}\begin{bmatrix}g\\1\end{bmatrix}.
\]
Consequently, with both multipliers evaluated at $e^{i\theta}$,
\begin{equation}\label{eq:fdi-equivalence}
 \begin{bmatrix}\widetilde g\\1\end{bmatrix}^{\!*}
 \widetilde\Pi
 \begin{bmatrix}\widetilde g\\1\end{bmatrix}
 =\frac{1}{|1-mg|^2}
 \begin{bmatrix}g\\1\end{bmatrix}^{\!*}
 \Pi
 \begin{bmatrix}g\\1\end{bmatrix}.
\end{equation}
Here, $1-mg$ is a scalar.
For the actual $d$-dimensional plant, the Kronecker product gives
\begin{equation}\label{eq:fdi-scalar-lift}
 \begin{bmatrix}\widetilde G_\rho\\I_d\end{bmatrix}^{\!*}
 \widetilde\Pi_d
 \begin{bmatrix}\widetilde G_\rho\\I_d\end{bmatrix}
 =
 \left(
 \begin{bmatrix}\widetilde g_\rho\\1\end{bmatrix}^{\!*}
 \widetilde\Pi
 \begin{bmatrix}\widetilde g_\rho\\1\end{bmatrix}
 \right)I_d,
\end{equation}
where $\widetilde G_\rho:=\widetilde g_\rho I_d$. Thus a scalar margin $-\eta$ becomes the matrix margin $-\eta I_d$,
without a dimension-dependent factor in the following sense: If the right hand side of \eqref{eq:fdi-equivalence} is less than a scalar $-\eta$; this leads to an upper bound $-\eta I_d$ for the right-hand side (in the sense of matrix inequalities in the Loewner ordering) in \eqref{eq:fdi-scalar-lift}.
}

\begin{lemma}[FDI equivalence]\label{lem:fdi-congruence} \mg{Consider \DREAM, namely delayed GMM~\eqref{eq:gmm} with the parameters of Theorem~\ref{thm:main-nonexplicit-constants}} \mgb{where $0<\rho<1$ and $\rho\neq \beta$}.
\sa{Under} the midpoint-pole hypothesis \textbf{(H1)}, %
\mgb{$g_\rho$ is finite on the unit circle} and
$1-m \mgb{g_\rho}(e^{i\theta})\ne0$ for every $\theta\in[0,2\pi)$. In addition, the scalar transformed plant
\mgb{$\widetilde g_\rho=g_\rho/(1-m g_\rho)$} is well defined on the unit circle and the scalar factor \mgb{$|1-m g|^2$} defined
in~\eqref{eq:fdi-equivalence} \mgb{with $g=g_\rho$} is finite and strictly positive. \mgb{Consequently the strict
FDI \sa{in}~\eqref{eq:fdi} is equivalent to a strict FDI for $\widetilde g_\rho$ with multiplier $\widetilde \Pi$, and hence, by \eqref{eq:fdi-scalar-lift} to the lifted matrix FDI for
$\widetilde G_\rho$ with multiplier $\widetilde\Pi_d$.}\looseness=-1
\end{lemma}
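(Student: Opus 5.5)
The plan is to verify each claim directly from the explicit form of $g_\rho$ and the factorization $D-mN=P_{\tau,\delta_m}(\rho\,\cdot)$ of Lemma~\ref{lem:midpoint-schur}. First, finiteness of $g_\rho$ on the unit circle: since $g_\rho(z)=g_{\tau,q}(\rho z)=N(z)/D(z)$ with $D(z)=(\rho z)^\tau(\rho z-1)(\rho z-\beta)$, the only possible poles are $z=0$, $z=1/\rho$, and $z=\beta/\rho$. For $|z|=1$, $\rho z=0$ is impossible. Next, $\rho z=1$ would force $\rho=1$, which is excluded by $0<\rho<1$. Finally, $\rho z=\beta$ requires $|\beta|=\rho$. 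With $\beta\in(0,1)$ (true for $q<M/2$), this forces $\beta=\rho$, which is excluded by hypothesis. So $D$ has no zeros on the unit circle, and $g_\rho(e^{i\theta})$ is finite for every $\theta$.

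Second, I show $1-mg_\rho(e^{i\theta})\neq0$. On the unit circle $D(e^{i\theta})\neq0$, so
\[
1-mg_\rho(e^{i\theta})=\frac{D(e^{i\theta})-mN(e^{i\theta})}{D(e^{i\theta})}=\frac{P_{\tau,\delta_m}(\rho e^{i\theta})}{D(e^{i\theta})}.
\]
By \textbf{(H1)}, every root $r$ of $P_{\tau,\delta_m}$ satisfies $|r|<\rho$, while $|\rho e^{i\theta}|=\rho$. Hence the numerator does not vanish, and therefore $1-mg_\rho\neq0$. It follows that $\widetilde g_\rho=g_\rho/(1-mg_\rho)$ is well defined on the circle, and equals $N/(D-mN)$ there. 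Moreover, $|1-mg_\rho|^2$ is a squared modulus of a finite nonzero complex number, so it is finite and strictly positive. By continuity on the compact circle it is also bounded above and below by positive constants; this is worth recording because strict FDIs with margins transfer better under uniform bounds.

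Third, the equivalence of the FDIs follows from the congruence identity \eqref{eq:fdi-equivalence}. I would verify it by noting $T_m[\widetilde g;1]=[\widetilde g;\, m\widetilde g+1]$. Using $m\widetilde g+1=1/(1-mg)$ and $\widetilde g=g/(1-mg)$, this vector equals $(1-mg)^{-1}[g;1]$, and sandwiching $\Pi$ gives the factor $|1-mg|^{-2}$. Since this factor is strictly positive and finite, the sign of the left side at each $\theta$ equals the sign of $\Phi_{\tau,q}(\theta)$ in \eqref{eq:fdi}, after identifying $g_{\tau,q}(\rho e^{i\theta})=g_\rho(e^{i\theta})$ and $h_{\rho,\tau}(e^{i\theta})$ as the multiplier argument. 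Thus strict negativity for all $\theta$ is equivalent in both directions. If a uniform margin is wanted, the two-sided bounds on $|1-mg_\rho|^2$ convert $-\eta$ margins between the two forms. Finally, \eqref{eq:fdi-scalar-lift} lifts the scalar statement to the $d$-dimensional one, since $\widetilde G_\rho=\widetilde g_\rho I_d$ and $\widetilde\Pi_d=\widetilde\Pi\otimes I_d$; a scalar negative value times $I_d$ is negative definite.

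No step is a genuine obstacle. The only care needed is in excluding the poles of $g_\rho$ on the circle, which is where the hypotheses $\rho<1$ and $\rho\neq\beta$ are used, and in the bookkeeping of the evaluation point $\rho e^{i\theta}$ versus $e^{i\theta}$.
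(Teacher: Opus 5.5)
Your proposal is correct and follows essentially the same route as the paper. You locate the poles of $g_\rho$ at $0$, $1/\rho$ and $\beta/\rho$, none of which lies on the unit circle because $0<\rho<1$, $\beta>0$ and $\rho\neq\beta$; you write $1-mg_\rho=P_{\tau,\delta_m}(\rho\,\cdot)/D$ and use \textbf{(H1)} to exclude zeros on the circle; and you conclude via the congruence identity \eqref{eq:fdi-equivalence} and the Kronecker lift \eqref{eq:fdi-scalar-lift}. Your explicit check that $T_m[\widetilde g;1]=(1-mg)^{-1}[g;1]$ and your remark on transferring uniform margins only spell out what the paper takes from the preceding text.
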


\mg{\mg{We next show that a strict FDI yields a positive-definite quadratic function $V$ of a suitably augmented state that satisfies the strict \emph{dissipation inequality}~\eqref{eq:positive-storage-kyp} below and thereby controls the iterates. This is a standard step in the IQC analysis of optimization algorithms~\cite{LessardRechtPackard2016} in its dissipativity form~\cite{Seiler2015,HuLacerdaSeiler2017,hu2017dissipativity}; the lemma below specializes it to our multipliers and systems and identifies the one-step quadratic quantity $\sigma_k$ explicitly. In the language of dissipativity theory~\cite{Willems1972}, $V$ is a \emph{storage function}, the energy stored in the system, and $-\sigma_k$ the \emph{supply rate}, a prescribed quadratic function of the input $\widehat v_k$ and output $\widehat y_k$ (and of their recent past, through the multiplier memory) representing the energy delivered to the system at step $k$; the inequality says that the stored energy can grow in one step by at most the energy delivered, minus a strict margin. Thus, unlike a Lyapunov function, $V$ need not decrease at every step; what keeps it bounded is the hard IQC of Lemma~\ref{lem:rhohard}, by which the gradient nonlinearity delivers no net energy over any finite horizon, $\sum_{k=0}^{N}\sigma_k\ge0$, so that summing \eqref{eq:positive-storage-kyp} bounds $V$ and hence the iterates.}}
\begin{lemma}[Existence of quadratic storage functions]
\label{lem:positive-storage-kyp}
Let  \mgb{$\mathcal G(z)\in\mathbb C^{d\times d}$} be a proper rational
transfer matrix %
from the
rate-weighted residual input $\widehat v_k$ to the rate-weighted query point
$\widehat y_k$, both with $d$ coordinates, with realization
\begin{equation}\label{eq:plant-realization}
    \xi_{k+1}=A\xi_k+B\widehat v_k,
    \qquad
    \widehat y_k=C\xi_k+D\widehat v_k,
\end{equation}
where \mgb{$\xi_k\in\mathbb C^n$, $A\in\mathbb C^{n\times n}$ is Schur,
$B\in\mathbb C^{n\times d}$, $C\in\mathbb C^{d\times n}$, and
$D\in\mathbb C^{d\times d}$. Thus
$\mathcal G(z)=C(zI_n-A)^{-1}B+D$}. Let
\[
    h(z)=\sum_{j=1}^{J} c_j z^{-j},\quad\mbox{\sa{for some given}}
    \quad
    c_j\ge 0,
    \quad
    \mbox{\sa{such that}}
    \quad
    s_h\mathrel{:=}\sum_{j=1}^{J} c_j<1,
\]
be a finite-memory linear filter defined through its transfer function, so that
when it acts on the sequence $\widehat{s}_k$, it produces the output sequence %
$    (h\widehat s)_k:=\sum_{j=1}^{J} c_j\widehat s_{k-j}$.
Fix $a>0$ and define
\[
    \widehat r_k:=a\widehat y_k+\widehat v_k,
    \qquad
    \widehat s_k:=a\widehat y_k-\widehat v_k,
\]
together with 
\begin{equation}\label{eq:kyp-midpoint-supply}
    \sigma_k
    :=
    2\Rea\langle \widehat r_k,\widehat s_k\rangle
    -
    2\sum_{j=1}^{J} c_j
    \Rea\langle \widehat r_k,\widehat s_{k-j}\rangle .
\end{equation}

Define
\[
    \mgb{\Psi(z):=
    \begin{bmatrix}
        a(2-h(z)) & h(z)\\
        a h(z) & (2-h(z))
    \end{bmatrix},
    \qquad
    J_0:=
    \begin{bmatrix}
        1&0\\
        0&-1
    \end{bmatrix}},
\]
and
\mgb{\[
\Psi_d(z):=\Psi(z)\otimes I_d,\qquad J_{0,d}:=J_0\otimes I_d,
\]
\[   \widetilde\Pi(z):=\frac{1}{2}\Psi^{\sim}(z)J_0\Psi(z),\qquad
\widetilde\Pi_d(z):=\widetilde\Pi(z)\otimes I_d
    =\frac{1}{2}\Psi_d^{\sim}(z)J_{0,d}\Psi_d(z).
\]}
Assume that, for some $\eta>0$,
\mgb{\begin{equation}\label{eq:kyp-strict-fdi-input-margin}
    \begin{bmatrix}
        \mathcal G(e^{i\theta})\\
        I_d
    \end{bmatrix}^{\!*}
\widetilde\Pi_d(e^{i\theta})
    \begin{bmatrix}
        \mathcal G(e^{i\theta})\\
        I_d
    \end{bmatrix}
    \preceq -\eta I_d,
    \qquad
    0\le\theta<2\pi .
\end{equation}}
Augment the plant state with the finite memory of the filter:
\[
    \Xi_k:=
    \begin{bmatrix}
        \xi_k\\
        q_k
    \end{bmatrix},
    \qquad
    q_k:=
    \begin{bmatrix}
        \widehat s_{k-1}\\
        \widehat s_{k-2}\\
        \vdots\\
        \widehat s_{k-J}
    \end{bmatrix}.
\]
\mgb{Here $q_k\in\mathbb C^{Jd}$ and $\Xi_k\in\mathbb C^{n_a}$,
where $n_a:=n+Jd$}.
Thus $\xi_k$ is the state of \mgb{$\mathcal{G}$}, while $q_k$ stores the past
values $\widehat s_{k-1}, \widehat s_{k-2}, \dots, \widehat s_{k-J}$ needed to evaluate $(h\widehat s)_k$ at time $k$. Then, there exist a Hermitian matrix \mgb{$X\in\mathbb{C}^{n_a\times n_a}$}, $X\succ0$ and $\varepsilon>0$ such that,
with
$    V(\Xi):=\Xi^*X\Xi,
$
every trajectory of the augmented system satisfies
\begin{equation}\label{eq:positive-storage-kyp}
    V(\Xi_{k+1})-V(\Xi_k)+\sigma_k
    \le
    -\varepsilon
    \bigl(\|\Xi_k\|^2+\|\widehat v_k\|^2\bigr).
\end{equation}
\end{lemma}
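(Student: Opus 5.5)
The plan is to apply the discrete-time Kalman--Yakubovich--Popov (KYP) lemma to an augmented system that stacks the plant $\mathcal G$ and the finite-memory filter generating $h\widehat s$. Then we show that the frequency-domain inequality \eqref{eq:kyp-strict-fdi-input-margin} becomes a strict linear matrix inequality (LMI) whose solution $X$ is the desired storage matrix. Positivity of $X$ will be extracted from stability of the augmented state matrix together with a sign property of the supply $\sigma_k$.

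First, we write the augmented dynamics explicitly. Since $\widehat s_k=a\widehat y_k-\widehat v_k=aC\xi_k+(aD-I_d)\widehat v_k$, the shift register evolves as $q_{k+1}=S q_k+E\widehat s_k$. Here $S$ is the nilpotent block-shift matrix and $E=[I_d;0;\dots;0]$. Hence $\Xi_{k+1}=\mathcal A\Xi_k+\mathcal B\widehat v_k$, with $\mathcal A$ block lower triangular with diagonal blocks $A$ and $S$. Because $A$ is Schur and $S$ is nilpotent, $\mathcal A$ is Schur.

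Next, the supply $\sigma_k$ is a Hermitian quadratic form $\sigma_k=[\Xi_k;\widehat v_k]^*\mathcal M[\Xi_k;\widehat v_k]$. This holds because $\widehat r_k,\widehat s_k$ are affine in $(\xi_k,\widehat v_k)$, and $(h\widehat s)_k=\sum_j c_j\widehat s_{k-j}$ is a linear readout of $q_k$. We then verify, by direct Fourier bookkeeping as in the Parseval step of Lemma~\ref{lem:hard}, that evaluating this form on $[(e^{i\theta}I-\mathcal A)^{-1}\mathcal B;I_d]$ reproduces $-[\mathcal G;I_d]^*\widetilde\Pi_d[\mathcal G;I_d]$. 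Concretely, $2\Rea[\bar R(S-hS)]$ with $R=(a\mathcal G+I)$ and $S=(a\mathcal G-I)$ matches $\tfrac12\Psi^\sim J_0\Psi$ up to sign, after expanding $\Psi$. The sign convention must be tracked carefully; we expect the supply to equal minus the multiplier form, so that \eqref{eq:kyp-strict-fdi-input-margin} says the transfer-domain supply is $\succeq \eta I$.

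With this identification, the strict FDI and $\mathcal A$ Schur (hence no uncontrollable modes on the unit circle) give, via the strict KYP lemma \cite{Rantzer1996}, a Hermitian $X$. Absorbing the margin $\eta$ yields some $\varepsilon>0$ such that
\[
\begin{bmatrix}\mathcal A^*X\mathcal A-X&\mathcal A^*X\mathcal B\\ \mathcal B^*X\mathcal A&\mathcal B^*X\mathcal B\end{bmatrix}+\mathcal M\preceq-\varepsilon I .
\]
This is exactly \eqref{eq:positive-storage-kyp}. The strictness on the $\Xi$-block uses that the strict KYP LMI is open, so we can perturb.

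The main obstacle is proving $X\succ0$, since KYP only yields a Hermitian $X$. We take $\widehat v_k\equiv0$ and run the autonomous system from an arbitrary $\Xi_0$. Summing the inequality gives $X$-weighted bounds: $V(\Xi_0)\ge\sum_k\sigma_k+\varepsilon\sum_k\|\Xi_k\|^2$, using that $\Xi_k\to0$. It then remains to show $\sum_{k\ge0}\sigma_k\ge0$ for such zero-input trajectories with arbitrary initial memory. With $\widehat v=0$ we have $\widehat r_k=\widehat s_k=a\widehat y_k$ for $k\ge0$, while $q_0$ holds arbitrary past $\widehat s$ values. Using $2\Rea\langle s_k,s_{k-j}\rangle\le\|s_k\|^2+\|s_{k-j}\|^2$ and $s_h<1$, we get
\[
\sum_k\sigma_k\ge 2(1-s_h)\sum_{k\ge0}\|\widehat s_k\|^2-\sum_j c_j\sum_{i=1}^{j}\|\widehat s_{-i}\|^2 ,
\]
which is not obviously nonnegative. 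If this fails, we will instead modify the argument: replace $X$ by $X+\epsilon' P_q$, with $P_q$ a shift-register weight $\sum_j w_j\|\widehat s_{k-j}\|^2$ whose increment telescopes the cross terms. The slack $1-s_h>0$ and the margin $\varepsilon$ pay for this, and since $\mathcal A$ is Schur the autonomous energy bound then forces $V(\Xi_0)\ge\varepsilon\|\Xi_0\|^2$-type positivity.
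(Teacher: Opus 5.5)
There is a genuine gap: your proof that $X\succ0$ does not work, and the fallback you propose cannot work either.

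First, a sign correction on the KYP step, which is otherwise sound in outline. Your $(\mathcal A,\mathcal B)$ is the paper's augmented pair $(A_a,B_a)$, and the strict KYP lemma does give a Hermitian $X$ satisfying your strict LMI. Set $e_{\pm,k}:=\widehat r_k\pm\bigl(\widehat s_k-(h\widehat s)_k\bigr)$. Then exactly $\sigma_k=\tfrac12\bigl(\|e_{+,k}\|^2-\|e_{-,k}\|^2\bigr)$ and $\operatorname{col}(e_+,e_-)=\Psi_d\operatorname{col}(\widehat y,\widehat v)$. So the frequency-domain supply is $+\operatorname{col}(\mathcal G,I_d)^*\widetilde\Pi_d\operatorname{col}(\mathcal G,I_d)\preceq-\eta I_d$, not minus it. This is precisely what KYP needs for the LMI you wrote.

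Now the gap. Evaluate your LMI along the feedback $\widehat v_k=-\tfrac12(h\widehat s)_k=-\tfrac12[\,0\ \ L_h\,]\Xi_k$, where $L_h=[c_1I_d\ \cdots\ c_JI_d]$; this is exactly the input with $e_{-,k}=0$. Along it $\sigma_k=\tfrac12\|e_{+,k}\|^2\ge0$. Hence every Hermitian solution satisfies $\bar A^*X\bar A-X\prec0$, with $\bar A:=\mathcal A-\tfrac12\mathcal B[\,0\ \ L_h\,]$ independent of $X$.
\begin{itemize}
\item If $\bar Aw=\lambda w$ with $|\lambda|\ge1$, then $(|\lambda|^2-1)w^*Xw<0$ forces $w^*Xw<0$.
\item If $\bar A$ is Schur, then $X=\sum_{k\ge0}(\bar A^*)^k(X-\bar A^*X\bar A)\bar A^k\succ0$.
\end{itemize}
So either every solution of the strict LMI is positive definite or none is, according to whether the closed-loop matrix $\bar A$ (not the open-loop $\mathcal A$) is Schur. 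Adding $\epsilon'P_q$ keeps you inside the solution set and therefore cannot change this. Your zero-input test also cannot close the argument: the cross terms between $\widehat s_0$ and the stored $\widehat s_{-j}$ are controlled only by the possibly tiny $\varepsilon$.

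What is missing is a proof that $\bar A$ is Schur. Since $\det(zI-\bar A)=\det(zI-\mathcal A)\det\bigl(\tfrac12H_-(z)\bigr)$, where $H_-:=ah\mathcal G+(2-h)I_d$ is the transfer from $\widehat v$ to $e_-$, this amounts to $\det H_-(z)\neq0$ for all $|z|\ge1$, including $z=\infty$ where $H_-=2I_d$.

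The paper gets this from the homotopy $\mathcal G\mapsto t\mathcal G$, $t\in[0,1]$:
\begin{itemize}
\item On the circle, $(\widetilde\Pi_d)_{11}=2a^2\Rea(1-h)I_d\succ0$ and $(\widetilde\Pi_d)_{22}=-2\Rea(1-h)I_d\prec0$; this is where $s_h<1$ is used.
\item Hence $\operatorname{col}(t\mathcal G,I_d)^*\widetilde\Pi_d\operatorname{col}(t\mathcal G,I_d)=\tfrac12\bigl(H_+^{(t)*}H_+^{(t)}-H_-^{(t)*}H_-^{(t)}\bigr)$ is convex in $t$ and negative definite at both endpoints.
\item So $H_-^{(t)}$ is nonsingular on the unit circle for every $t$.
\item The argument principle in $\lambda=z^{-1}$ then carries the zero-free property of $H_-^{(0)}=(2-h)I_d$ on $|z|\ge1$ over to $t=1$.
\end{itemize}
With this in place, the test input $e_-\equiv0$ gives $V(\Xi_0)\ge\varepsilon\|\Xi_0\|^2$, as you intended. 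Equivalently, the paper switches the input to $e_-$, applies the strict bounded-real lemma to $K=H_+H_-^{-1}$, and reads off $X\succ0$ from the top-left block.
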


\sa{We are now ready to prove Lemma~\ref{lemm:ratecert}
by assembling these technical results.}
\subsubsection{\sa{Proof of Lemma~\ref{lemm:ratecert} (Convergence Rate Lemma)}}
\begin{proof}%
As in Section~\ref{subsec:hardiqc}, translate the minimizer to the origin, so that
$x_\star=0$ and $\nabla f(0)=0$. Without loss of generality, we take $\mu=1$, write $L=q^{-2}$ and set
\[
        m=\frac{\mu+L}{2}= \frac{1+\sa{q^{-2}}}{2},
        \qquad
        a=\frac{L-\mu}{2} =\frac{\sa{q^{-2}}-1}{2}.
\]
We show that the rate-weighted state is uniformly bounded.  This is sufficient because
$x_k=\rho^k\widehat x_k$. The proof consists of three main steps: $1)$ rate-weighting and analysis of the resulting weighted midpoint dynamics; $2)$ construction of a quadratic storage function; and $3)$ application of hard IQCs.

\emph{Step $1:$ Rate weighting and midpoint transformation.}
Define
\[
        \widehat x_k=\rho^{-k}x_k,\qquad
        \widehat y_k=\rho^{-k}y_k,\qquad
        \widehat u_k=\rho^{-k}u_k,
        \qquad
        \widehat v_k:=\widehat u_k-m\widehat y_k.
\]
By Lemma~\ref{lem:rate-weighted-plant}, the transfer function from $\widehat u$ to
$\widehat y$ is \mgb{$g_{\rho}(z)=g_{\tau,q}(\rho z)$, i.e., $\widehat Y= g_\rho \widehat U$}. \sa{Moreover, according to Lemma~\ref{lem:midpoint-schur}, using the midpoint change of variables
$\widehat u=m\widehat y+\widehat v$, \eqref{eq-midpoint-tf}} gives %
\mgb{\[
\sa{\widehat Y} = \widetilde G_\rho(z) \hat V, \quad \sa{\widetilde G_\rho}(z)=\sa{\tilde g_\rho}(z) I_d \quad \mbox{with} \quad
\widetilde g_\rho(z)= 
\frac{g_\rho(z)}{1-m g_\rho(z)}.
\]}%
\mgb{Hypothesis \textbf{(H1)} and Lemma~\ref{lem:midpoint-schur} imply that \mgb{the dynamics \eqref{eq:weighted-realization-x-app}--\eqref{eq:weighted-realization-y-app}} admit the transfer matrix
$\widetilde G_\rho$} %
\mgb{with the state vector $\xi_k$} 
described in Lemma~\ref{lem:midpoint-schur}, \mgb{and by Lemma \ref{lem:midpoint-schur}, the state matrix $A$ is Schur-stable}.
Moreover, by
Lemma~\ref{lem:fdi-congruence}, hypothesis \textbf{(H2)} gives a strict scalar FDI
for $\widetilde g_\rho$ with the $2\times 2$ multiplier
\[
        \widetilde\Pi
        =T_m^*\Pi_{1,\sa{q^{-2}}}(h_{\rho,\tau})T_m,
        \qquad
        T_m=\begin{bmatrix}1&0\\m&1\end{bmatrix}.
\]
The scalar form is continuous and strictly negative on the unit circle; compactness gives a uniform margin $\eta>0.$
\mgb{By
\eqref{eq:fdi-scalar-lift}}, 
\mgb{\[
\begin{bmatrix}\widetilde G_\rho(e^{i\theta})\\I_d\end{bmatrix}^{\!*}
\widetilde\Pi_d(e^{i\theta})
\begin{bmatrix}\widetilde G_\rho(e^{i\theta})\\I_d\end{bmatrix}
 \preceq-\eta I_d.
\]}
\mgb{This is the matrix FDI required by Lemma~\ref{lem:positive-storage-kyp} with $\mathcal{G}=\widetilde{G}_\rho$.}

\emph{Step 2: Dissipation inequality.} For
\[
        h_{\rho,\tau}(z)=\frac1\tau\sum_{j=1}^{\tau}\rho^jz^{-j},
        \qquad c_j:=\frac{\rho^j}{\tau},
\]
we have $c_j\ge0$ and $\sum_{j=1}^{\tau}c_j<1$.  Introduce the slope-shifted weighted
signals
\[
        \widehat r_k:=\widehat u_k-\widehat y_k,
        \qquad
        \widehat s_k:=L\widehat y_k-\widehat u_k.
\]
Because $\widehat u=m\widehat y+\widehat v$ and
$m-1=\sa{q^{-2}}-m=a$ \sa{(since $m$ is the midpoint)}, these satisfy
\[
        \widehat r_k=a\widehat y_k+\widehat v_k,
        \qquad
        \widehat s_k=a\widehat y_k-\widehat v_k.
\]
The state of the Schur plant \mgb{$\widetilde G_\rho$} is the delay state from
Lemma~\ref{lem:midpoint-schur},
\[
        \xi_k=
        \begin{bmatrix}
        \widehat x_k&
        \widehat x_{k-1}&
        \cdots&
        \widehat x_{k-\tau-1}&
        \widehat v_{k-1}&
        \cdots&
        \widehat v_{k-\tau}
        \end{bmatrix}^{\mathsf T}\in
\mgb{\mathbb{R}^{(2\tau+2)d}}.
\]
To realize the filter $h_{\rho,\tau}$ in state space, adjoin the multiplier
memory
\[
        q_k:=
        \begin{bmatrix}
        \widehat s_{k-1}&
        \widehat s_{k-2}&
        \cdots&
        \widehat s_{k-\tau}
        \end{bmatrix}^{\mathsf T} \mgb{\in \mathbb{R}^{\tau d}},
\]
and define the full plant-and-multiplier state
\begin{equation}\label{eq:Xi-state-rate-proof}
        \Xi_k:=
        \begin{bmatrix}\xi_k\\q_k\end{bmatrix}
        =
        \begin{bmatrix}
        \widehat x_k&\cdots&\widehat x_{k-\tau-1}&
        \widehat v_{k-1}&\cdots&\widehat v_{k-\tau}&
        \widehat s_{k-1}&\cdots&\widehat s_{k-\tau}
        \end{bmatrix}^{\mathsf T}\mgb{\in\mathbb{R}^{(3\tau+2)d}}.
\end{equation}
Thus $\xi_k$ is the state of the midpoint-transformed plant itself, while $\Xi_k$ is
obtained from $\xi_k$ by adjoining exactly the $\tau$ vector-valued memory slots required by the FIR
multiplier.  In particular, $\widehat x_k$ is a $d$-dimensional block of both $\xi_k$ and
$\Xi_k$.  The two histories should not be conflated: $\xi_0$ contains the prescribed
algorithmic prehistory, including the actual delayed residual inputs
$\widehat v_{-1},\ldots,\widehat v_{-\tau}$, whereas $q_k$ is an auxiliary filter state.
We initialize this multiplier memory at zero, $q_0=0$.  Accordingly, whenever a
negative-indexed $\widehat s_{k-j}$ appears through the multiplier realization below,
it is understood that it is set to zero; this does not
alter the algorithmic prehistory stored in $\xi_0$.

For this augmented realization, define
\begin{equation}\label{eq:pointwise-supply}
\sigma_k
:=2\langle\widehat r_k,\widehat s_k\rangle
-\frac{2}{\tau}\sum_{j=1}^{\tau}\rho^j
 \langle\widehat r_k,\widehat s_{k-j}\rangle,
\end{equation}
where $\widehat s_{k-j}=0$ whenever $k-j<0$, consistently with $q_0=0$.
With $J=\tau$ and $c_j=\rho^j/\tau$, the preceding identities put the transformed FDI
exactly in the notation of Lemma~\ref{lem:positive-storage-kyp}. Indeed, using $m= (\sa{q^{-2}}+1)/2, a=(q^{-2}-1)/2$, it is straightforward to verify that
\mgb{ \[
           \widetilde\Pi_d=T_{m,d}^*\Pi_{1,\sa{q^{-2}},d}(h)T_{m,d}
=\frac12\Psi_d^\sim *J_{0,d}\Psi_d,
 \]}
 where \mgb{$\Psi_d$ and $J_{0,d}$} is as in Lemma~\ref{lem:positive-storage-kyp}. Therefore, Lemma~\ref{lem:positive-storage-kyp} is directly applicable and 
there exist a
Hermitian matrix $X\succ0$ and a constant $\varepsilon>0$, depending only on the fixed
algorithmic and certificate parameters, such that, with $V(\Xi):=\Xi^*X\Xi$,
\begin{equation}\label{eq:dissipation}
        V(\Xi_{k+1})-V(\Xi_k)+\sigma_k
        \le
        -\varepsilon\bigl(\|\Xi_k\|^2+\|\widehat v_k\|^2\bigr)
        \qquad(k\ge0).
\end{equation}

\emph{Step 3: Finite-horizon use of the hard IQC.}
Fix $N\ge0$ and sum \eqref{eq:dissipation} from $k=0$ to $N$:
\begin{equation}\label{eq:kyp-finite}
 V(\Xi_{N+1})-V(\Xi_0)+\sum_{k=0}^{N}\sigma_k
 \le
 -\varepsilon\sum_{k=0}^{N}
 \bigl(\|\Xi_k\|^2+\|\widehat v_k\|^2\bigr).
\end{equation}
Next, we verify that the sums $\sum_{k=0}^N \sigma_k$ on the left are non-negative. Take the actual graph
points $(y_k,u_k)=(y_k,\nabla f(y_k))$ for $0\le k\le N$ and pad this finite segment
with the minimizer $(0,0)$ outside $\{0,\ldots,N\}$, as in
Lemma~\ref{lem:zero-extension}.  After rate weighting, let
$\widehat r_k^{(N)}$ and $\widehat s_k^{(N)}$ denote the corresponding slope-shifted
sequences.  Lemma~\ref{lem:rhohard}, with $J=\tau$ and $h_j=1/\tau$, gives
\[
2\sum_{k\in\mathbb Z}
 \langle\widehat r_k^{(N)},\widehat s_k^{(N)}\rangle
-\frac{2}{\tau}\sum_{j=1}^{\tau}\rho^j
 \sum_{k\in\mathbb Z}
 \langle\widehat r_k^{(N)},\widehat s_{k-j}^{(N)}\rangle
\ge0.
\]
For $0\le k\le N$, the padded signals agree with the actual weighted trajectory;
for $k<0$ or $k>N$, $\widehat r_k^{(N)}=0$.  Consequently the all-integer quadratic
form above is exactly the horizon sum in \eqref{eq:pointwise-supply} (the terms
with $k-j<0$ vanish by the zero padding, exactly as imposed by the zero multiplier
memory). \sa{Thus, we have}
\begin{equation}\label{eq:hard-finite-use}
        \sum_{k=0}^{N}\sigma_k\ge0.
\end{equation}
Since $X\succ0$, we \sa{also} have $V(\Xi_{N+1})\ge0$.  Combining
\eqref{eq:kyp-finite} and \eqref{eq:hard-finite-use} therefore yields
\begin{equation}\label{eq:weighted-energy-bound}
        \sum_{k=0}^{N}
        \bigl(\|\Xi_k\|^2+\|\widehat v_k\|^2\bigr)
        \le \varepsilon^{-1}V(\Xi_0),
        \qquad N\ge0.
\end{equation}
In particular, for every $k\ge0$,
$\|\Xi_k\|\le \sqrt{V(\Xi_0)/\varepsilon}$.  The vector $\widehat x_k$ is a
\mgb{$d$-dimensional block} of the plant state; \sa{hence,}
\[
        \|\widehat x_k\|
        \le \sqrt{V(\Xi_0)/\varepsilon}
        \qquad(k\ge0).
\]
Therefore, in the translated coordinates, \sa{we get}
\[
        \|x_k\|
        \le C\rho^k,
        \qquad
        C:=\sqrt{V(\Xi_0)/\varepsilon}.
\]
The certificate $(X,\varepsilon)$ is independent of the particular
$f\in\mathcal S_{1,\sa{q^{-2}}}$.  It remains only to note that the initial augmented state is
uniformly controlled by the prescribed finite history.  Indeed, $\nabla f(0)=0$ and
\sa{$q^{-2}$}-smoothness imply, for every initial-history index $j$,
\[
        \|v_j\|=\|\nabla f(y_j)-my_j\|
        \le (\sa{q^{-2}}+m)\|y_j\|,
\]
while $y_j=(1+\nu)x_j-\nu x_{j-1}$.  Moreover, for $j\le0$ one has
$\rho^{-j}\le1$, so the corresponding weighted history variables are no larger in norm
than their unweighted counterparts; the multiplier memory is initialized at zero.
Consequently there is a constant $K$, depending only on the fixed algorithmic and certificate
parameters, such that
\[
        V(\Xi_0)\le \lambda_{\max}(X)\|\Xi_0\|^2
        \le K\sum_{j=-\tau-1}^{0}\|x_j\|^2.
\]
\sa{Thus, $C>0$} may be chosen as a finite function of the initial history, uniformly over
$f\in\mathcal S_{1,\sa{q^{-2}}}$.  Undoing the translation of the minimizer gives
$\|x_k-x_\star\|\le C\rho^k$, as claimed.
\end{proof}

\subsection{Proof of Theorem \ref{thm:main-nonexplicit-constants}}
The convergence rate lemma presented in Sections \ref{subsec-rate-lemma}--\ref{subsec:master-proof} assumes \textbf{(H1)} and \textbf{(H2)} and establishes Theorem~\ref{thm:main-nonexplicit-constants} for a constant \sa{$C>0$} and convergence rate \sa{$\rho\in(0,1)$}. It therefore suffices to verify that \textbf{(H1)} and \textbf{(H2)} hold for our choice of GMM parameters and for the rate
\[
\rho = 1-\frac{cq}{M}, \qquad c\in(0,2/3).
\]
The verification of \textbf{(H1)} is carried out in Section~\ref{sec:stability}, while that of \textbf{(H2)} is carried out in Sections~\ref{sec:factor}--\ref{sec:limit}, thereby completing the proof of Theorem~\ref{thm:main-nonexplicit-constants}.

\subsubsection{Verification of hypothesis (H1)}\label{sec:stability}

We now verify hypothesis \textbf{(H1)} of the Convergence Rate Lemma (Lemma~\ref{lemm:ratecert}), namely that the midpoint plant obtained after replacing the gradient by the constant slope $m$ is Schur, with all poles contained in $\{|z|<\rho\}$.

Replacing the gradient with the midpoint slope
$
m=\frac{1+q^{-2}}{2}
$
turns the delayed recursion into the quadratic iteration corresponding to Hessian eigenvalue $\lambda=m$. Hence, the midpoint characteristic polynomial in~\eqref{eq:midpoint-charpoly} is exactly the quadratic characteristic polynomial in~\eqref{eq:q-charpoly}, evaluated at the single gain
\[
\delta=\alpha m=\frac{1+q^2}{2M^2},
\]
namely
\[
z^\tau (z-1)(z-\beta)
+\alpha m\bigl[(1+\nu)z-\nu\bigr]
=
P_{\tau,\alpha m}(z).
\]

As $q\downarrow 0$, we have $\delta \downarrow \frac{1}{2M^2}$ and this polynomial converges to
$
P_\infty\!\left(\,\cdot\,;\frac{1}{2M^2},\,M-\frac12\right),
$
whose Schur stability follows from the quadratic analysis of Section~\ref{sec:quadratic}. Under the Cayley transform (Lemma~\ref{lem:q-cayley}), Schur stability of $P_\infty$ is equivalent to Hurwitz stability of its transformed polynomial. Corollary~\ref{coro:q-smallgain} establishes this for sufficiently small positive gain, while Lemma~\ref{lem:q-nocross} excludes crossings through the imaginary axis on
$
0<\delta\le M^{-2}.
$
Since
\[
\frac{1}{2M^2}\in(0,M^{-2}],
\]
the limiting midpoint polynomial is strictly Schur. By continuity of the roots, there is a fixed $r_*<1$ that bounds their moduli for all sufficiently small $q>0$. Since $\rho=1-cq/M\to1$, we also have $r_*<\rho$ for sufficiently small $q$. Thus, the qualitative stability required in hypothesis \textbf{(H1)} is inherited directly from the quadratic analysis when $q$ is sufficiently small (or equivalently when $\kappa$ is sufficiently large).

\subsubsection{Verification of hypothesis (H2), part I: exact factorization of the FDI}\label{sec:factor}
By Lemma~\ref{lemm:ratecert} and the verification of H1 in Section \ref{sec:stability}, it remains only to verify \textbf{(H2)} which says that the FDI \sa{in} \eqref{eq:fdi} is strictly satisfied at every frequency. 
The obstacle is now purely one of form. As written, \(\Phi_{\tau,q}\) that appears in \eqref{eq:fdi} is a \(2\times2\) Hermitian form involving the transfer function \(g_{\tau,q}\) and the multiplier \(h_{\rho,\tau}\), and its sign cannot be read off directly. This section removes that obstacle by reducing the form to a single scalar trigonometric polynomial. The reduction proceeds in three steps: first, a scalar factorization of the Hermitian form; second, a telescoping identity that reveals the endpoint factor $(z-1)$ hidden in the multiplier; and third, the combination of these two identities into a rational representation whose numerator, \(N_{\tau,q}\), is the quantity whose sign will be analyzed throughout the remainder of the paper.
\begin{lemma}[scalar factorization]\label{lem:scalar-factor}
With
$
h=h_{\rho,\tau}(e^{i\theta})$,
$g=g_{\tau,q}(\rho e^{i\theta})$,
$L=q^{-2}$, and with \(\bar g\) denoting the complex conjugate of \(g\), we have
\begin{equation}\label{eq:scalar-factor}
        \Phi_{\tau,q}(\theta)=
        2\Ree\bigl[(1-h)(1-\bar g)(Lg-1)\bigr].
\end{equation}
\end{lemma}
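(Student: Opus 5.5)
This is a direct algebraic identity, and I will prove it by expanding the $2\times2$ Hermitian form in \eqref{eq:fdi} with $\mu=1$, $L=q^{-2}$ and the multiplier $\Pi_{1,L}(h)$ from \eqref{eq:Pi}. The cleanest route avoids multiplying out the matrix entries. Lemma~\ref{lem:hard} already records that the integrand $2\Ree[(1-h)R^*S]$ equals the Hermitian form with matrix $\Pi_{\mu,L}(h)$ in the variables $(Y,U)$, where $R=U-\mu Y$ and $S=LY-U$. This is a pointwise algebraic identity valid for arbitrary complex scalars $(Y,U)$ and arbitrary $h$, since its derivation used only linearity and the definition of $\Pi$.

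I will therefore substitute the scalar pair $(Y,U)=(g,1)$ with $\mu=1$. This gives $R=1-g$ and $S=Lg-1$. Hence
\[
\Phi_{\tau,q}(\theta)=2\Ree\bigl[(1-h)\,\overline{(1-g)}\,(Lg-1)\bigr]=2\Ree\bigl[(1-h)(1-\bar g)(Lg-1)\bigr],
\]
because $R^*$ for a scalar is just the complex conjugate.

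To be safe, I will also verify the identity independently by direct expansion. Write $2\Ree[(1-h)\bar R S]=(1-h)\bar R S+(1-\bar h)R\bar S$. With $R=U-Y$ and $S=LY-U$, I collect the coefficient of each monomial.
\begin{itemize}
\item The $|Y|^2$ coefficient is $-L(1-h)-L(1-\bar h)=-L(2-h-\bar h)$.
\item The $|U|^2$ coefficient is $-(2-h-\bar h)$.
\item The $\bar Y U$ coefficient is $(1-h)+L(1-\bar h)$.
\end{itemize}
These match the entries of $\Pi_{1,L}(h)$.

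The only point needing care is the convention in the quadratic form. The coefficient of $\bar Y U$ must be the $(1,2)$ entry, namely $L(1-\bar h)+\mu(1-h)$, and it does match. No step here is a genuine obstacle; the sole risk is a conjugation or sign slip, which the second check guards against.
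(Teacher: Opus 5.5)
Your proof is correct, and it reaches the identity from the opposite direction to the paper.

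The paper sets $W=1-h$ and rewrites $\Pi_{1,L}(h)$ in terms of $W$. It then expands the form in \eqref{eq:fdi} into $-(W+\bar W)(L|g|^2+1)+2\Ree[(LW+\bar W)g]$. Finally it regroups the $L$-dependent and $L$-independent terms into $2\Ree[W(1-\bar g)(Lg-1)]$, without appealing to Lemma~\ref{lem:hard}.

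You instead use the fact that $\Pi_{\mu,L}(h)$ is, by construction, the matrix of the Hermitian form $(Y,U)\mapsto 2\Ree[(1-h)\overline{(U-\mu Y)}(LY-U)]$. The lemma is then just this pointwise identity evaluated at $(Y,U)=(g,1)$ with $\mu=1$.

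Your route is shorter and explains where the factorization comes from, since the multiplier was built from $R$ and $S$. Your coefficient check also supplies the expansion that the proof of Lemma~\ref{lem:hard} only asserts, so your argument does not rest on an unverified step.

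The one coefficient you did not list, that of $\bar U Y$, follows by Hermitian symmetry as the conjugate of the $\bar Y U$ coefficient. It equals $L(1-h)+(1-\bar h)$, which is indeed the $(2,1)$ entry of $\Pi_{1,L}(h)$.

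The paper's version, by contrast, is self-contained at the level of the specialized scalar form and needs no earlier identity.
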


\begin{proof}
Define
\[
        W:=1-h .
\]
Then $1-\bar h=\bar W$ and
\[
        2-h-\bar h=(1-h)+(1-\bar h)=W+\bar W .
\]
Using $\mu=1$ and $L=q^{-2}$ in \eqref{eq:Pi}, the multiplier can therefore be
written as
\[
\Pi_{1,L}(h)=
\begin{bmatrix}
        -L(W+\bar W) & L\bar W+W\\
        LW+\bar W & -(W+\bar W)
\end{bmatrix}.
\]
Substituting this matrix into the scalar quadratic form in \eqref{eq:fdi}
gives
\begin{align*}
\Phi_{\tau,q}(\theta)
&=
\begin{bmatrix}\bar g&1\end{bmatrix}
\begin{bmatrix}
        -L(W+\bar W) & L\bar W+W\\
        LW+\bar W & -(W+\bar W)
\end{bmatrix}
\begin{bmatrix}g\\1\end{bmatrix}  \\
&= -L(W+\bar W)\abs g^2-(W+\bar W)
   +\bar g(L\bar W+W)+(LW+\bar W)g .
\end{align*}
The two off-diagonal terms are conjugates of each other, because $L$ is real:
\[
        \bar g(L\bar W+W)=\overline{(LW+\bar W)g}.
\]
Hence,
\begin{equation}\label{eq:Phi-expanded-W}
\Phi_{\tau,q}(\theta)
=-(W+\bar W)(L\abs g^2+1)
  +2\Ree\bigl[(LW+\bar W)g\bigr].
\end{equation}
We now collect the terms proportional to $L$, and from \eqref{eq:Phi-expanded-W}, \sa{these terms are}
\[
        -L(W+\bar W)\abs g^2+2L\Ree(Wg).
\]
Since
\[
        \Ree(W\abs g^2)=\abs g^2\Ree(W)
        =\frac12(W+\bar W)\abs g^2,
\]
we have
\begin{align*}
        -L(W+\bar W)\abs g^2+2L\Ree(Wg)
        &=2L\Ree(Wg)-2L\Ree(W\abs g^2)\\
        &=2L\Ree\bigl[Wg-Wg\bar g\bigr]\\
        &=2L\Ree\bigl[Wg(1-\bar g)\bigr].
\end{align*}
It remains to collect the terms independent of $L$.  These are
\[
        -(W+\bar W)+2\Ree(\bar W g).
\]
\sa{Since $\Ree(\bar W g)=\Ree(W\bar g)$,}
\begin{align*}
        -(W+\bar W)+2\Ree(\bar W g)
        &=-2\Ree(W)+2\Ree(W\bar g)\\
        &=-2\Ree\bigl[W(1-\bar g)\bigr].
\end{align*}
Combining the $L$-dependent and $L$-independent parts gives
\begin{align*}
\Phi_{\tau,q}(\theta)
&=2L\Ree\bigl[Wg(1-\bar g)\bigr]
  -2\Ree\bigl[W(1-\bar g)\bigr]\\
&=2\Ree\bigl[W(1-\bar g)(Lg-1)\bigr].
\end{align*}
Finally, substituting $W=1-h$ yields
\[
        \Phi_{\tau,q}(\theta)
        =2\Ree\bigl[(1-h)(1-\bar g)(Lg-1)\bigr],
\]
which is \eqref{eq:scalar-factor}.
\end{proof}

The factorization is the point at which all matrix notation collapses to a scalar expression.  For the remainder of this section we fix notation for the recurring factors.
Write, throughout this section,
\begin{equation}\label{eq:Ddefs}
        z=\rho e^{i\theta},\qquad
        D_1=z-1,
        \qquad
        D_2=z-\beta,
        \qquad
        P=(M+\tfrac12)z-(M-\tfrac12),
        \qquad
        \alpha=\frac{q^2}{M^2}.
\end{equation}
\sa{We also set}
\begin{equation}\label{eq:etah}
        \widetilde h(w)=\frac1\tau\sum_{k=0}^{\tau-1}(\tau-k)w^k \quad \mbox{for} \quad \tau\geq 1.
\end{equation}
\mg{The next result shows how $ W= 1 - h$ can be factored in terms of $\bar D_1$ and $\widetilde h(\bar z)$, for $h= h_{\rho,\tau}$.}
\begin{lemma}[Factorization of $W=1-h$]\label{lem:telescoped}
For $h= h_{\rho,\tau}$ and $z=\rho e^{i\theta}$, it holds that
\begin{equation}\label{eq:Wtel}
        W=1-h_{\rho,\tau}(e^{i\theta})=-\bar D_1\,\widetilde h(\bar z).
\end{equation}
\end{lemma}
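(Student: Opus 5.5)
The plan is a direct algebraic verification via a telescoping (summation-by-parts) identity for a finite geometric-type sum. Throughout, write $w:=\bar z=\rho e^{-i\theta}$ and observe that $\rho^j e^{-ij\theta}=w^j$. Hence
\[
h_{\rho,\tau}(e^{i\theta})=\frac1\tau\sum_{j=1}^{\tau}\rho^j e^{-ij\theta}=\frac1\tau\sum_{j=1}^{\tau}w^j .
\]
Then $W=1-h=\frac1\tau\sum_{j=1}^{\tau}(1-w^j)$, and $\bar D_1=\bar z-1=w-1$. The claim therefore reduces to the polynomial identity
\[
\frac1\tau\sum_{j=1}^{\tau}(1-w^j)=(1-w)\,\widetilde h(w)
\]
in the variable $w$, to be checked for all complex $w$.

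The key step is to factor each summand as $1-w^j=(1-w)\sum_{k=0}^{j-1}w^k$. Summing over $j$ gives
\[
\sum_{j=1}^{\tau}(1-w^j)=(1-w)\sum_{j=1}^{\tau}\sum_{k=0}^{j-1}w^k .
\]
Next, exchange the order of summation. For a fixed $k\in\{0,\dots,\tau-1\}$, the power $w^k$ appears for exactly those $j$ with $k+1\le j\le\tau$, which is $\tau-k$ values of $j$. This yields
\[
\sum_{j=1}^{\tau}\sum_{k=0}^{j-1}w^k=\sum_{k=0}^{\tau-1}(\tau-k)w^k=\tau\,\widetilde h(w)
\]
by \eqref{eq:etah}. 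Dividing by $\tau$ gives $W=(1-w)\widetilde h(w)=-(w-1)\widetilde h(\bar z)=-\bar D_1\widetilde h(\bar z)$, which is \eqref{eq:Wtel}.

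I do not expect a genuine obstacle, since the identity holds as polynomials in $w$ for every complex $w$. The only point needing care is the conjugation bookkeeping. The multiplier $h_{\rho,\tau}$ is evaluated at $e^{i\theta}$ with negative powers $z^{-j}$, so $\rho^j e^{-ij\theta}$ must be identified with $\bar z^{\,j}$ rather than $z^{-j}$, which is exactly why $\bar D_1$ and $\bar z$ appear in \eqref{eq:Wtel} instead of $D_1$ and $z$. A quick sanity check is the case $\tau=1$, where $W=1-\bar z=-\bar D_1$ and $\widetilde h\equiv1$.
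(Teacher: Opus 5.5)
Your proof is correct and follows the paper's argument essentially step for step. The paper uses the same ingredients: the substitution $w=\bar z$ so that $h_{\rho,\tau}(e^{i\theta})=\frac1\tau\sum_{j=1}^{\tau}w^j$, the factorization $1-w^j=(1-w)\sum_{k=0}^{j-1}w^k$, the interchange of summation giving $\sum_{k=0}^{\tau-1}(\tau-k)w^k=\tau\,\widetilde h(w)$, and the identification $1-w=-\bar D_1$.
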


\begin{proof}
Recall from \eqref{eq:Ddefs} that $z=\rho e^{i\theta}$. Hence,
\begin{equation*}
        w:=\bar z=\rho e^{-i\theta},
        \qquad
        w^j=\rho^j e^{-ij\theta}.
\end{equation*}
Therefore, the term $h_{\rho,\tau}(e^{i\theta})$ can be written as a polynomial in the variable $w$:
\begin{equation*}
        h_{\rho,\tau}(e^{i\theta})
        =\frac1\tau\sum_{j=1}^{\tau}\rho^j e^{-ij\theta}
        =\frac1\tau\sum_{j=1}^{\tau}w^j.
\end{equation*}
It follows that
\begin{equation*}
        1-h_{\rho,\tau}(e^{i\theta})
        =1-\frac1\tau\sum_{j=1}^{\tau}w^j
        =\frac1\tau\sum_{j=1}^{\tau}(1-w^j).
\end{equation*}
For each $j\ge1$ we use the finite geometric
identity
\begin{equation*}
        1-w^j=(1-w)(1+w+\cdots+w^{j-1})
        =(1-w)\sum_{k=0}^{j-1}w^k .
\end{equation*}
Substituting this identity into the previous display gives
\begin{align*}
        1-h_{\rho,\tau}(e^{i\theta})
        &=\frac{1-w}{\tau}
          \sum_{j=1}^{\tau}\sum_{k=0}^{j-1}w^k .
\end{align*}
We now interchange the order of summation.  For a fixed power $w^k$ with
$0\le k\le \tau-1$, the index $j$ must satisfy $j\ge k+1$ and
$j\le\tau$; \sa{hence,} this power appears exactly $\tau-k$ times. Therefore,
\begin{equation*}
        \sum_{j=1}^{\tau}\sum_{k=0}^{j-1}w^k
        =\sum_{k=0}^{\tau-1}(\tau-k)w^k .
\end{equation*}
Using the definition \eqref{eq:etah} of $\widetilde h$, we obtain
\begin{equation*}
        1-h_{\rho,\tau}(e^{i\theta})
        =(1-w)\frac1\tau\sum_{k=0}^{\tau-1}(\tau-k)w^k
        =(1-w)\widetilde h(w).
\end{equation*}
Finally, since $D_1=z-1$, we have
\begin{equation*}
        \bar D_1=\bar z-1=w-1,
        \qquad
        1-w=-\bar D_1 .
\end{equation*}
Evaluating the preceding factorization at $w=\bar z$ gives
\begin{equation*}
        W:=1-h_{\rho,\tau}(e^{i\theta})
        =(1-\bar z)\widetilde h(\bar z)
        =-\bar D_1\,\widetilde h(\bar z),
\end{equation*}
which proves \eqref{eq:Wtel}.
\end{proof}
\mg{Building on the previous two lemmas, the next lemma shows that $\Phi_{\tau,q}(\theta)$ can be expressed as the ratio of a real even trigonometric polynomial $N_{\tau,q}(\theta)$ and a strictly positive scalar term for every angle $\theta$. As such, this lemma implies that the question about the negativity of $\Phi_{\tau,q}(\theta)$ reduces to the negativity of the polynomial
$N_{\tau,q}(\theta)$.} 
\begin{proposition}[Representation of $\Phi_{\tau,q}$ as a ratio]\label{prop:rational}
For every $q\in(0,1]$ and $\theta\in[0,2\pi)$,
\begin{equation}\label{eq:Ndef}
        \Phi_{\tau,q}(\theta)=\frac{N_{\tau,q}(\theta)}
        {M^2\abs{D_1}^2\abs{D_2}^2},
        \qquad
        N_{\tau,q}(\theta):=2\Ree(\bar D_1 S),
\end{equation}
where, \mg{with $z=\rho e^{i\theta}$ and} $\widetilde h=\widetilde h(\bar z)$,
\begin{align}\label{eq:S-correct}
S={}&
\bar D_1\widetilde h\, z^{-\tau}P\bar D_2
+M^2\abs{D_1}^2\widetilde h\abs{D_2}^2
+\alpha\widetilde h\abs P^2\rho^{-2\tau}
+\alpha M^2{\widetilde h} \bar z^{-\tau}\bar P D_1D_2,
\end{align}
\mg{and $\alpha=q^2/M^2$}. %
The numerator $N_{\tau,q}$ is an even real
trigonometric polynomial of degree at most $2\tau+2$, and the denominator in \eqref{eq:Ndef} is strictly
positive for all $\theta$ when $q>0$ and $0<c<2$.
\end{proposition}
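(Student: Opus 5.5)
We start from the scalar factorization of Lemma~\ref{lem:scalar-factor}, $\Phi_{\tau,q}(\theta)=2\Ree[(1-h)(1-\bar g)(Lg-1)]$, and substitute the telescoped form $W=1-h=-\bar D_1\widetilde h(\bar z)$ from Lemma~\ref{lem:telescoped}. From \eqref{eq:g} with $z=\rho e^{i\theta}$ we write $g=-\alpha z^{-\tau}P/(D_1D_2)$, where $\alpha=q^2/M^2$ and $L\alpha=1/M^2$. Then
\[
Lg-1=-\frac{z^{-\tau}P+M^2D_1D_2}{M^2D_1D_2},\qquad
1-\bar g=\frac{\bar D_1\bar D_2+\alpha\bar z^{-\tau}\bar P}{\bar D_1\bar D_2}.
\]
Multiplying these, the common denominator is $M^2|D_1|^2|D_2|^2$, which is real and positive. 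This real factor can therefore be pulled out of the real part, leaving
\[
\Phi_{\tau,q}=\frac{2\Ree\bigl[\bar D_1\widetilde h\,(z^{-\tau}P+M^2D_1D_2)(\bar D_1\bar D_2+\alpha\bar z^{-\tau}\bar P)\bigr]}{M^2|D_1|^2|D_2|^2}.
\]
The two signs cancel.

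Next we expand the product of the two binomials into four terms and use $|z|^{-2\tau}=\rho^{-2\tau}$:
\[
z^{-\tau}P\bar D_1\bar D_2,\qquad \alpha\rho^{-2\tau}|P|^2,\qquad M^2|D_1|^2|D_2|^2,\qquad \alpha M^2\bar z^{-\tau}\bar PD_1D_2.
\]
Multiplying by $\widetilde h$ gives exactly $\bar D_1 S$ with $S$ as in \eqref{eq:S-correct}: one factor $\bar D_1$ is kept outside, and the first term of $S$ keeps the remaining $\bar D_1$. This establishes \eqref{eq:Ndef} with $N_{\tau,q}=2\Ree(\bar D_1S)$.

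For the structural claims, we note that $\widetilde h(\bar z)$ is a polynomial of degree $\tau-1$ in $e^{-i\theta}$. The factors $D_1,D_2,P$ are affine in $e^{\pm i\theta}$, and $z^{-\tau}$, $\bar z^{-\tau}$ contribute $e^{\mp i\tau\theta}$. Counting the largest positive and negative exponents term by term should show that $\bar D_1S$ is a Laurent polynomial in $e^{i\theta}$ with exponents in $[-(2\tau+2),\,2\tau+2]$. Its real part is therefore a real trigonometric polynomial of that degree. Evenness holds because all coefficients ($\rho,\beta,M,\alpha,\tau$) are real, so replacing $\theta$ by $-\theta$ conjugates every factor, and the real part is unchanged. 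The degree bookkeeping is the step most likely to need care, in particular the cross terms pairing $z^{-\tau}$ with $\widetilde h(\bar z)$. If one of the four pieces reaches a larger exponent, we would check that it pairs with a conjugate piece so the bound still holds after taking the real part.

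Finally, for positivity of the denominator we need $D_1\ne0$ and $D_2\ne0$ on $|z|=\rho$. Since $\rho=1-cq/M<1$, we have $|D_1|\ge 1-\rho>0$. Also $D_2=0$ would require $\rho=\beta$, i.e.\ $c q/M=2q/M$, i.e.\ $c=2$, which is excluded by $0<c<2$. Hence the denominator is strictly positive for all $\theta$. The only genuine obstacle is the exponent count; the identity itself is a direct algebraic substitution.
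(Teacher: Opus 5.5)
Your proposal is correct and follows the paper's proof essentially step for step: scalar factorization, the telescoped form of $W$, substitution of $g$, expansion into the four terms of $S$, and then degree, evenness, and positivity of the denominator. The differences are minor: you write $Lg-1$ and $1-\bar g$ as single fractions and so get the real denominator $M^2|D_1|^2|D_2|^2$ at once, whereas the paper expands $WLg-W-W\bar gLg+W\bar g$ over the complex denominator $M^2D_1|D_2|^2$ and then multiplies by $\bar D_1/\bar D_1$; the exponent count you flagged needs no pairing argument (the first term of $\bar D_1S$ bottoms out at exponent $-2\tau-2$ and the fourth tops out at $\tau+2$), and in the $D_2\neq0$ step you are tacitly using $\beta=1-2q/M\ge 1/3>0$, so that $\rho e^{i\theta}=\beta$ forces $\rho=\beta$.
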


\begin{proof}
We start from the scalar factorization in Lemma~\ref{lem:scalar-factor}.  With
\(W=1-h_{\rho,\tau}(e^{i\theta})\), it reads
\begin{equation*}
        \Phi_{\tau,q}(\theta)
        =2\Ree\bigl[W(1-\bar g)(Lg-1)\bigr].
\end{equation*}
Expanding the product gives four terms:
\begin{equation}\label{eq:rational-four-terms}
        \Phi_{\tau,q}(\theta)
        =2\Ree\{WLg-W-W\bar g Lg+W\bar g\}.
\end{equation}
We now rewrite each of these four terms using the notation in
\eqref{eq:Ddefs}.  Since \(L=q^{-2}\) and \(\alpha=q^2/M^2\), \mg{using the definition of $g$ given in \eqref{eq:g}}, we obtain
\begin{equation}\label{eq:rational-lg-gbar}
        Lg=-\frac{z^{-\tau}P}{M^2D_1D_2},
        \qquad
        \bar g=-\alpha\frac{(\bar z)^{-\tau}\bar P}{\bar D_1\bar D_2}.
\end{equation}
Moreover, Lemma~\ref{lem:telescoped} gives
\begin{equation}\label{eq:rational-W-use}
        W=-\bar D_1\widetilde h,
        \qquad
        \widetilde h=\widetilde h(\bar z).
\end{equation}
Substituting \eqref{eq:rational-lg-gbar}--\eqref{eq:rational-W-use} into
\eqref{eq:rational-four-terms}, we place the four summands over the common
complex denominator
\begin{equation*}
        M^2D_1\abs{D_2}^2=M^2D_1D_2\bar D_2 .
\end{equation*}
The four contributions are as follows: First,
\begin{align*}
        WLg
        &=(-\bar D_1\widetilde h)
          \left(-\frac{z^{-\tau}P}{M^2D_1D_2}\right) \sa{=\frac{\bar D_1\widetilde h\,z^{-\tau}P\bar D_2}
                {M^2D_1\abs{D_2}^2}.}
\end{align*}
Second,
\begin{align*}
        -W
        =\bar D_1\widetilde h =\frac{M^2D_1\bar D_1\widetilde h\abs{D_2}^2}
                {M^2D_1\abs{D_2}^2}
          =\frac{M^2\abs{D_1}^2\widetilde h\abs{D_2}^2}
                {M^2D_1\abs{D_2}^2}.
\end{align*}
Third, using \(z^{-\tau}(\bar z)^{-\tau}=\abs z^{-2\tau}=\rho^{-2\tau}\),
\begin{align*}
        -W\bar g Lg
        =-(-\bar D_1\widetilde h)
          \left(-\alpha\frac{(\bar z)^{-\tau}\bar P}{\bar D_1\bar D_2}\right)
          \left(-\frac{z^{-\tau}P}{M^2D_1D_2}\right) 
        =\frac{\alpha\widetilde h\abs P^2\rho^{-2\tau}}
                {M^2D_1\abs{D_2}^2}.
\end{align*}
Finally,
\begin{align*}
        W\bar g
        =(-\bar D_1\widetilde h)
          \left(-\alpha\frac{(\bar z)^{-\tau}\bar P}{\bar D_1\bar D_2}\right) 
        =\alpha\frac{\widetilde h\bar z^{-\tau}\bar P}{\bar D_2} 
        =\frac{\alpha M^2\widetilde h\bar z^{-\tau}\bar P D_1D_2}
                {M^2D_1\abs{D_2}^2}.
\end{align*}
This calculation also explains the sign of the last term: both factors in
\(W\bar g\) carry a minus sign; \sa{hence,} the two minus signs cancel.
Adding the four displayed fractions yields
\begin{equation}\label{eq:rational-S-denom}
        WLg-W-W\bar gLg+W\bar g
        =\frac{S}{M^2D_1\abs{D_2}^2},
\end{equation}
where \(S\) is exactly the expression in \eqref{eq:S-correct}.

It remains only to convert the denominator in \eqref{eq:rational-S-denom}
into a real positive denominator.  Since
\begin{equation*}
        \frac{1}{D_1}=\frac{\bar D_1}{\abs{D_1}^2},
\end{equation*}
we have
\begin{equation*}
        \frac{S}{M^2D_1\abs{D_2}^2}
        =\frac{\bar D_1S}{M^2\abs{D_1}^2\abs{D_2}^2}.
\end{equation*}
Taking twice the real part gives
\begin{equation*}
        \Phi_{\tau,q}(\theta)
        =\frac{2\Ree(\bar D_1S)}{M^2\abs{D_1}^2\abs{D_2}^2},
\end{equation*}
which proves \eqref{eq:Ndef} with
\(N_{\tau,q}(\theta)=2\Ree(\bar D_1S)\).

We next record why this numerator is a finite trigonometric polynomial of the
claimed degree.  The quantities \(D_1,D_2,P\) are affine polynomials in
\(z=\rho e^{i\theta}\), while their conjugates are affine polynomials in
\(\bar z=\rho e^{-i\theta}\).  The factor \(\widetilde h(\bar z)\) is a
polynomial of degree \(\tau-1\) in \(\bar z\). \sa{Hence,} every term in
\(\bar D_1S\) is a finite linear combination of monomials
\(z^a\bar z^b\), possibly also multiplied by \(z^{-\tau}\) or
\(\bar z^{-\tau}\).  Therefore, after writing
\(z=\rho e^{i\theta}\) and \(\bar z=\rho e^{-i\theta}\), each monomial is a
constant times \(e^{i\ell\theta}\) for some integer \(\ell\).  Inspecting the
largest possible powers in the four terms of \(\bar D_1S\) shows that
\(\abs{\ell}\le 2\tau+2\).  The worst negative exponent can occur in the
first term, where the factors \(z^{-\tau}\), \(\widetilde h(\bar z)\),
and the three affine conjugate factors contribute down to
\(-2\tau-2\); the largest positive exponent is smaller, at most
\(\tau+2\), and the remaining two terms lie within these bounds. Thus,
\(N_{\tau,q}\) is a real trigonometric polynomial of degree at most
\(2\tau+2\).

The numerator is real by definition, because it is twice a real part.  It is
even because the underlying transfer functions have real coefficients.  More
explicitly,
\begin{equation*}
        h_{\rho,\tau}(e^{-i\theta})=\bar h_{\rho,\tau}(e^{i\theta}),
        \qquad
        g_{\tau,q}(\rho e^{-i\theta})=\bar g_{\tau,q}(\rho e^{i\theta}).
\end{equation*}
Substituting these identities into the scalar factorization shows that
\(\Phi_{\tau,q}(-\theta)=\Phi_{\tau,q}(\theta)\).  The denominator
\(M^2\abs{D_1}^2\abs{D_2}^2\) is also even in \(\theta\); \sa{therefore,
\(N_{\tau,q}\) is even.}

Finally we prove strict positivity of the denominator.  In the parameter range
used here, \(q>0\) and \(0<c<2\), \sa{which implies that}
\begin{equation*}
        \rho=1-\frac{cq}{M}<1.
\end{equation*}
Thus, \(D_1=z-1\) cannot vanish, since \(\abs z=\rho<1\).  For \(D_2\), note
that
\begin{equation*}
        D_2=0
        \quad\Longrightarrow\quad
        z=\mg{\rho e^{i\theta}} = \beta=1-\frac{2q}{M}.
\end{equation*}
Since \(\tau\ge1\), we have \(M=2\tau+1\ge3\); together with
\(q\in(0,1]\), this gives
\[
        \beta=1-\frac{2q}{M}\ge 1-\frac{2}{3}>0 .
\]
Thus, \(\beta\) is real and positive, \sa{and \(D_2=0\) would imply that}
\(\theta=0\) and \(\rho=\beta\). \sa{On the other hand, \(\rho=\beta\) is} equivalent to
\begin{equation*}
        1-\frac{cq}{M}=1-\frac{2q}{M},
\end{equation*}
which gives \(c=2\), contradicting \(c<2\). Hence, \(D_2\ne0\) as well, and
therefore
\begin{equation*}
        M^2\abs{D_1}^2\abs{D_2}^2>0,\quad \sa{\forall~\theta\in[0,2\pi).}
\end{equation*}
\end{proof}
\subsubsection{Verification of hypothesis \textbf{(H2)}, part II: the all-delay positivity certificate}\label{sec:limit}
Given the factorization \eqref{eq:Ndef}, to verify %
\textbf{(H2)}, it suffices to show that the numerator $N_{\tau,q} < 0$ for \emph{all} $\tau$ as long as $q$ is sufficiently small (when $\kappa \geq \kappa_0(\tau,c)$ or equivalently when $q < q_0:=\kappa_0(\tau,c)^{-1/2}$). One complication is that the numerator $N_{\tau,q}$ depends on both the delay $\tau$ and the condition parameter $q$. 

\mg{We first remove the \(q\)-dependence by passing to the limit \(q\downarrow 0\), in which case the FDI ($N_{\tau,q} < 0$) is equivalent to requiring that a single explicit polynomial \(r_\tau(\cos\theta)\) be positive for all \(\theta\). We then exploit the structure of this polynomial to establish this positivity and use a continuity argument in \(q\) to conclude that \(N_{\tau,q}<0\) for all sufficiently small \(q\). Although this argument is asymptotic in \(q\) and does not identify the threshold \(q_0\) below which the conclusion holds, \mgb{in 
Section \ref{subsec:main-proof},} specifically Theorem \ref{thm:main},  %
we provide a more refined argument that yields an explicit admissible bound for \(q\) and characterizes its dependence on \(\tau\) and \(c\).}

For fixed $\theta\in(0,\pi]$ and $q\downarrow0$, \mg{we have $\rho = 1 - cq/M \to 1$, $\beta\to 1$} as well as
\begin{equation}\label{eq:limit-objects}
        h_{\rho,\tau}(e^{i\theta})\to H_\tau(\theta):=\frac1\tau\sum_{j=1}^{\tau}e^{-ij\theta}.
\end{equation}
\sa{Moreover, by the definition in \eqref{eq:g},} we have  $g_{\tau,q}(\rho e^{i\theta}) \to 0$ and $q^{-2}g_{\tau,q}(\rho e^{i\theta})\to A_{\tau,0}(\theta)$, where
\begin{equation}\label{eq:A0}
        A_{\tau,0}(\theta)=
        -\frac{e^{-i\tau\theta}\bigl((M+\frac12)e^{i\theta}-(M-\frac12)\bigr)}
        {M^2(e^{i\theta}-1)^2}.
\end{equation}
Thus, by Lemma \ref{lem:scalar-factor}, \sa{we have}
\begin{equation}\label{eq:Phi0}
        \Phi_{\tau,0}(\theta):=\lim_{q\downarrow0}\Phi_{\tau,q}(\theta)
        =2\Ree\bigl[(1-H_\tau(\theta))(A_{\tau,0}(\theta)-1)\bigr].
\end{equation}
The endpoint $\theta=0$ is defined by continuity. In particular, the following calculation makes this continuous endpoint value explicit. Although
$A_{\tau,0}(\theta)$ itself has a second-order pole at the origin, the pole is
cancelled in the real part of the product in \eqref{eq:Phi0}; the resulting
endpoint magnitude is $\Theta(1)$ as $\tau\to\infty$. %
\begin{lemma}[endpoint value of the limiting FDI]\label{lem:Phi0-endpoint}
The continuous extension of $\Phi_{\tau,0}$ to $\theta=0$ satisfies
\begin{equation}\label{eq:Phi0-endpoint-value}
        \Phi_{\tau,0}(0)=\lim_{\theta\to 0}\Phi_{\tau,0}(\theta)
        =-\frac{\tau+1}{3M}.
\end{equation}
\end{lemma}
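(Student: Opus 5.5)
The plan is to compute the limit by Laurent expansion in $\theta$. Write $\zeta=e^{i\theta}$ and use Lemma~\ref{lem:telescoped} in the limit $\rho=1$. This gives $1-H_\tau(\theta)=(1-\bar\zeta)\,\widetilde h(\bar\zeta)$, with $\widetilde h(w)=\frac1\tau\sum_{k=0}^{\tau-1}(\tau-k)w^k$.

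Substituting this together with \eqref{eq:A0} into \eqref{eq:Phi0}, one factor $(\zeta-1)$ of the double pole cancels. We get
\[
\Phi_{\tau,0}(\theta)=2\Ree\Bigl[\frac{\bar\zeta\,\widetilde h(\bar\zeta)\,\zeta^{-\tau}\bigl((M+\tfrac12)\zeta-(M-\tfrac12)\bigr)}{M^2(\zeta-1)}\Bigr]-2\Ree\bigl[(1-\bar\zeta)\widetilde h(\bar\zeta)\bigr],
\]
using $1-\bar\zeta=-\bar\zeta(\zeta-1)$. The second term tends to $0$ as $\theta\to0$. The first term has a simple pole, so its real part has a finite limit. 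The task reduces to computing $\lim_{\theta\to0}2\Ree[F(\zeta)/(\zeta-1)]$, where
\[
F(\zeta):=\bar\zeta\,\widetilde h(\bar\zeta)\,\zeta^{-\tau}\bigl((M+\tfrac12)\zeta-(M-\tfrac12)\bigr)/M^2 .
\]
On the circle $\bar\zeta=\zeta^{-1}$, so $F$ is a Laurent polynomial in $\zeta$ with real coefficients.

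Expand $\zeta-1=i\theta-\theta^2/2+O(\theta^3)$ and $F(e^{i\theta})=F(1)+i\theta\,F'(1)\cdot 1+O(\theta^2)$, where $F'$ is the $\zeta$-derivative. Then
\[
\frac{F}{\zeta-1}=\frac{F(1)+i\theta F'(1)+O(\theta^2)}{i\theta\,(1+i\theta/2+O(\theta^2))}
=\frac{F(1)}{i\theta}\bigl(1-\tfrac{i\theta}{2}\bigr)+F'(1)+O(\theta).
\]
Since $F(1)$ and $F'(1)$ are real, the $1/(i\theta)$ term is purely imaginary. Hence
\[
\lim 2\Ree=2\Bigl(F'(1)-\tfrac12F(1)\Bigr)=2F'(1)-F(1).
\]

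It remains to compute $F(1)$ and $F'(1)$ explicitly. We have $\widetilde h(1)=\frac1\tau\sum_{k=0}^{\tau-1}(\tau-k)=\frac{\tau+1}{2}$ and $P(1)=1$, so
\[
F(1)=\frac{\tau+1}{2M^2}.
\]
For $F'(1)$, take the logarithmic derivative. The factor $\zeta^{-1-\tau}$ contributes $-(1+\tau)$. The factor $P$ contributes $M+\tfrac12$. The factor $\widetilde h(\zeta^{-1})$ contributes $-\widetilde h'(1)/\widetilde h(1)$, where
\[
\widetilde h'(1)=\frac1\tau\sum_{k}k(\tau-k)=\frac{(\tau^2-1)}{6}.
\]
This gives the ratio $\frac{\tau-1}{3}$. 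Using $M=2\tau+1$, the log-derivative is $-(\tau+1)+2\tau+\tfrac32-\tfrac{\tau-1}{3}=\tfrac{2\tau+5}{6}+\ldots$. The bookkeeping must be redone carefully at this point; this is the main obstacle, namely getting signs and constants exactly right. The target value follows from $2F'(1)-F(1)=-\frac{\tau+1}{3M}$. As a check, I would verify the formula at $\tau=1$ ($M=3$) by direct expansion before trusting the general computation.
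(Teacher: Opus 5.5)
Your strategy is workable and differs genuinely from the paper's. The paper multiplies the Laurent expansions $1-H_\tau(\theta)=i\tfrac{\tau+1}{2}\theta+\tfrac{(\tau+1)M}{12}\theta^2+\mathcal{O}(\theta^3)$ and $A_{\tau,0}(\theta)=\tfrac{1}{M^2\theta^2}+\tfrac{i}{2M\theta}+\mathcal{O}(1)$ and reads off the real constant term. You instead cancel one factor of $\zeta-1$ via the telescoping factorization, then use the correct formula $\lim_{\theta\to0}2\Ree[F(\zeta)/(\zeta-1)]=2F'(1)-F(1)$, so you need only $F(1)$ and the logarithmic derivative of $F$ at $\zeta=1$. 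As written, though, the proposal has a sign error that makes its stated target false, and the decisive computation is left unfinished. The identity $1-\bar\zeta=-\bar\zeta(\zeta-1)$ is wrong: on the unit circle $\bar\zeta(\zeta-1)=1-\bar\zeta$. Hence $(1-H_\tau)A_{\tau,0}=-F(\zeta)/(\zeta-1)$, and the first term of your display must carry a minus sign (compare the formula for $WA_{\tau,0}$ in the proof of Lemma~\ref{lem:rpoly}).

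This sign is not harmless, because $2F'(1)-F(1)$ is positive. Your values $F(1)=\tfrac{\tau+1}{2M^2}$ and $\widetilde h'(1)/\widetilde h(1)=\tfrac{\tau-1}{3}$ are right. The logarithmic derivative, however, is $-(\tau+1)+(M+\tfrac12)-\tfrac{\tau-1}{3}=\tfrac{4\tau+5}{6}$, not $\tfrac{2\tau+5}{6}$. Therefore $2F'(1)-F(1)=F(1)\bigl(\tfrac{4\tau+5}{3}-1\bigr)=\tfrac{\tau+1}{2M^2}\cdot\tfrac{2M}{3}=+\tfrac{\tau+1}{3M}$. Your assertion that the lemma ``follows from $2F'(1)-F(1)=-\tfrac{\tau+1}{3M}$'' is false, and carrying out your display literally would give $\Phi_{\tau,0}(0)=+\tfrac{\tau+1}{3M}$, contradicting the statement.

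The $\tau=1$ check you propose would have caught this. There $F(\zeta)=\zeta^{-2}(\tfrac72\zeta-\tfrac52)/9$, so $F(1)=\tfrac19$, $F'(1)=\tfrac16$, and $2F'(1)-F(1)=\tfrac29>0$. With the identity corrected, the limit is $-(2F'(1)-F(1))=-\tfrac{\tau+1}{3M}$, since the term $-2\Ree[(1-\bar\zeta)\widetilde h(\bar\zeta)]$ vanishes as you note. Your argument then gives a complete proof with less bookkeeping than the paper's, because it needs only first-order data of a real Laurent polynomial.
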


\begin{proof}
We expand the two factors in \eqref{eq:Phi0}.  First,
recall that 
$        H_\tau(\theta)=\frac1\tau\sum_{j=1}^{\tau}e^{-ij\theta}$.
Using $e^{-ij\theta}=1-ij\theta-j^2\theta^2/2+\mathcal{O}(\theta^3)$ and the sums
\begin{equation*}
        \frac1\tau\sum_{j=1}^{\tau}j=\frac{\tau+1}{2},
        \qquad
        \frac1\tau\sum_{j=1}^{\tau}j^2=\frac{(\tau+1)(2\tau+1)}{6}
        =\frac{(\tau+1)M}{6},
\end{equation*}
we obtain
\begin{equation}\label{eq:one-minus-H-expansion}
        1-H_\tau(\theta)
        = i\frac{\tau+1}{2}\theta
        +\frac{(\tau+1)M}{12}\theta^2
        +\mathcal{O}(\theta^3).
\end{equation}
Next consider $A_{\tau,0}$.  The numerator in \eqref{eq:A0} has the expansion
\begin{align*}
&e^{-i\tau\theta}\bigl((M+\tfrac12)e^{i\theta}-(M-\tfrac12)\bigr) \\
&\qquad
=1+i\Bigl(M+\frac12-\tau\Bigr)\theta+\cO(\theta^2)
=1+i\Bigl(\tau+\frac32\Bigr)\theta+\cO(\theta^2),
\end{align*} where $M=2\tau+1$ was used in the last equality.  Also,
\begin{equation*}
        (e^{i\theta}-1)^2
        =-\theta^2-i\theta^3+\cO(\theta^4),
        \qquad
        \frac{1}{(e^{i\theta}-1)^2}
        =-\frac{1}{\theta^2}\bigl(1-i\theta+\cO(\theta^2)\bigr).
\end{equation*}
Substituting these expansions into \eqref{eq:A0} gives
\begin{align}\label{eq:A0-origin-expansion}
        A_{\tau,0}(\theta)
        &=\frac{1}{M^2\theta^2}
          \biggl(1+i\Bigl(\tau+\frac32\Bigr)\theta+\cO(\theta^2)\biggr)
          \bigl(1-i\theta+\cO(\theta^2)\bigr) \notag\\
        &=\frac{1}{M^2\theta^2}+\frac{i}{2M\theta}+\cO(1).
\end{align}
The subtraction of $1$ in $A_{\tau,0}(\theta)-1$ only changes the bounded
$\cO(1)$ term. Combining \eqref{eq:one-minus-H-expansion} and
\eqref{eq:A0-origin-expansion}, we find
\begin{align*}
&(1-H_\tau(\theta))(A_{\tau,0}(\theta)-1) \\
&\quad =
\biggl(i\frac{\tau+1}{2}\theta
        +\frac{(\tau+1)M}{12}\theta^2
        +\cO(\theta^3)\biggr)
\biggl(\frac{1}{M^2\theta^2}+\frac{i}{2M\theta}+\cO(1)\biggr) \\
&\quad =
        \frac{i(\tau+1)}{2M^2\theta}
        -\frac{\tau+1}{4M}
        +\frac{\tau+1}{12M}
        +\cO(\theta) \\
&\quad =
        \frac{i(\tau+1)}{2M^2\theta}
        -\frac{\tau+1}{6M}
        +\cO(\theta).
\end{align*}
\sa{Although $(1-H_\tau(\theta))(A_{\tau,0}(\theta)-1)$ has a purely imaginary $\mathcal{O}(\theta^{-1})$ singularity, its real part has the finite limit
$-\frac{\tau+1}{6M}$. Therefore,
$$\Phi_{\tau,0}(0):=\lim_{\theta\to0}\Phi_{\tau,0}(\theta)=-\frac{\tau+1}{3M}.$$}
This is the value assigned to $\Phi_{\tau,0}(0)$ in the continuous extension.
\end{proof}

We now express the limiting function $\Phi_{\tau,0}(\theta)$ in terms of a polynomial $r_\tau$. 
\begin{lemma}[explicit limiting polynomial]\label{lem:rpoly}
For every $\tau\ge1$ there is a real polynomial $r_\tau$ of degree $2\tau$ such that
\begin{equation}\label{eq:Phi-r}
        \Phi_{\tau,0}(\theta)=-\frac{r_\tau(\cos\theta)}{4M^2},
        \qquad 0\le\theta\le
         \mg{2}\pi.
\end{equation}
It is given explicitly by
\begin{equation}\label{eq:rcheb}
\begin{aligned}
r_\tau(\cos\theta)={}&c_0+2\sum_{m=1}^{\tau}c_m\cos(m\theta)
      +2\sum_{m=\tau+1}^{2\tau}c_m\cos(m\theta),\\
 c_0={}&8M^2-2(\tau+1),\\
 c_m={}&-\frac{4M^2}{\tau}-2(\tau+1),\qquad 1\le m\le\tau,\\
 c_m={}&\frac{2}{\tau}(M^2-m^2),\qquad \tau+1\le m\le2\tau.
\end{aligned}
\end{equation}
Moreover we have $r_\tau(1)>0$ and $r_\tau(-1)>0$ where
\begin{equation}\label{eq:endpoints}
        r_\tau(1)=\frac{2M(M+1)}{3},
        \qquad
        r_\tau(-1)=\begin{cases}
         4M(2M-1), & \mbox{if} \quad \tau \mbox{ is even}, \\
          4M (2M+1)(1+1/\tau),  & \mbox{if} \quad \tau \mbox{ is odd.}
        \end{cases}
\end{equation}
\end{lemma}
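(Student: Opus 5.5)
The plan is to compute \(\Phi_{\tau,0}\) in \eqref{eq:Phi0} as an explicit cosine series, clear the double pole at \(\theta=0\) by an exact division, and then read off the coefficients \(c_m\). The two factors to handle are \(W_0:=1-H_\tau(\theta)\) and \(A_{\tau,0}\).

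\textbf{Step 1: rewrite \(A_{\tau,0}\).} Use
\[
(e^{i\theta}-1)^2=-4\sin^2(\theta/2)\,e^{i\theta},
\]
and write \(P_0(\theta):=(M+\tfrac12)e^{i\theta}-(M-\tfrac12)\). Then \eqref{eq:A0} becomes
\[
A_{\tau,0}(\theta)=\frac{e^{-i(\tau+1)\theta}P_0(\theta)}{4M^2\sin^2(\theta/2)}=\frac{e^{-i(\tau+1)\theta}P_0(\theta)}{2M^2(1-\cos\theta)}.
\]
The denominator is real and positive for \(\theta\neq0\).

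\textbf{Step 2: write the numerator as a cosine series.} Consequently
\[
4M^2\Phi_{\tau,0}=\frac{4\,\Ree\!\bigl[W_0\,e^{-i(\tau+1)\theta}P_0\bigr]}{1-\cos\theta}-8M^2\Ree W_0 .
\]
I will expand \(\tau W_0=\sum_{j=1}^\tau(1-e^{-ij\theta})\) and multiply by \(e^{-i(\tau+1)\theta}P_0\). The resulting product is a finite sum of exponentials \(e^{-ik\theta}\) with \(0\le k\le 2\tau+1\). Taking real parts gives a cosine polynomial \(\sum_k a_k\cos k\theta\) with explicit coefficients, which are linear in \(M\pm\tfrac12\) and built from indicator counts of \(j\).

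\textbf{Step 3: divide out \(1-\cos\theta\) (the main obstacle).} The division must be exact. Exactness follows from Lemma~\ref{lem:Phi0-endpoint}: the real part is finite at \(\theta=0\), so \(\sum_k a_k=0\). Evenness then gives the double zero needed to divide by \(1-\cos\theta\).

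To carry out the division I will use the identity
\[
\cos k\theta-1=-2(1-\cos\theta)\Bigl(\sum_{l=1}^{k-1}(k-l)\cos l\theta+\tfrac{k}{2}\Bigr),
\]
which is the Fejér-kernel identity. This turns the quotient into a cosine sum whose coefficients are cumulative weighted sums of the \(a_k\).

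I then add \(-8M^2\Ree W_0\), which equals \(-8M^2\bigl(1-\frac1\tau\sum_{j\le\tau}\cos j\theta\bigr)\). It contributes the \(-4M^2/\tau\) part to \(c_m\) for \(1\le m\le\tau\), and the \(8M^2\) part of \(c_0\) after the sign flip. I expect the bookkeeping to yield the constant \(-2(\tau+1)\) for \(m\le\tau\) and \(\frac2\tau(M^2-m^2)\) for \(m>\tau\). The quadratic-in-\(m\) pattern arises because the cumulative sums of linear coefficients are quadratic. This is the step most likely to hide arithmetic slips, and I would check it against small cases \(\tau=1,2\) by direct expansion.

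\textbf{Step 4: degree and endpoints.} Finally, \(\cos m\theta=T_m(\cos\theta)\) gives a polynomial \(r_\tau\) in \(\cos\theta\). Its degree is exactly \(2\tau\), because \(c_{2\tau}=\frac2\tau(M^2-4\tau^2)=\frac2\tau(4\tau+1)\neq0\).

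For the endpoints:
\begin{itemize}
\item \(r_\tau(1)=c_0+2\sum_m c_m\), computed with \(\sum m^2\) formulas. It must agree with Lemma~\ref{lem:Phi0-endpoint}, since \(-r_\tau(1)/(4M^2)=-(\tau+1)/(3M)\) gives \(r_\tau(1)=4M(\tau+1)/3=2M(M+1)/3\). This serves as a consistency check.
\item \(r_\tau(-1)=c_0+2\sum_m(-1)^m c_m\). I will evaluate the alternating sums \(\sum_{m\le\tau}(-1)^m\) and \(\sum_{m=\tau+1}^{2\tau}(-1)^m(M^2-m^2)\) by pairing consecutive terms. Their values depend on the parity of \(\tau\), which yields the two cases in \eqref{eq:endpoints}, both visibly positive.
\end{itemize}
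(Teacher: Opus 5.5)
Your proposal is correct, and the bookkeeping you defer does close. It differs from the paper mainly in how the double pole of $A_{\tau,0}$ is removed and how $r_\tau(-1)$ is obtained.

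The paper first applies Lemma~\ref{lem:telescoped} at $\rho=1$ to write $W=(1-z^{-1})E(z^{-1})$, which cancels one factor of $z-1$. It then divides the remaining symmetric expression by $z-1$ term by term using the Dirichlet-sum identities \eqref{eq:dirichlet-div}, and reads the coefficients off \eqref{eq:WA-dirichlet} and \eqref{eq:W-plus}. You skip the telescoping, make the denominator the real factor $1-\cos\theta$, and divide once with the Fej\'er identity.

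Your route makes the shape of the $c_m$ transparent. Let $a_k$ be the coefficients of $\tau\operatorname{Re}[W_0e^{-i(\tau+1)\theta}P_0]$. They are supported on $\tau\le k\le 2\tau+1$, with $a_k=-1$ for $\tau+2\le k\le 2\tau$ and $a_{2\tau+1}=M-\tfrac12$. The quotient then contributes $\frac4\tau\sum_{k>l}a_k(k-l)$ to $c_l$ for every $l\ge0$. Since $\sum_k a_k=0$ and $\sum_k k\,a_k=-\tau(\tau+1)/2$, this contribution is the constant $-2(\tau+1)$ for $0\le l\le\tau$. For $l\ge\tau+1$ only the tail survives, and it gives exactly $\frac2\tau(M^2-l^2)$.

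Two smaller remarks. The identity $\sum_k a_k=0$ is immediate from $W_0(0)=0$, so you do not need Lemma~\ref{lem:Phi0-endpoint} for exactness. Your check $c_{2\tau}=\frac2\tau(4\tau+1)\ne0$ supplies the degree claim, which the paper leaves implicit.

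For the endpoints, the paper never sums coefficients. It takes $r_\tau(1)$ from Lemma~\ref{lem:Phi0-endpoint}. It obtains $r_\tau(-1)$ by evaluating \eqref{eq:Phi0} directly at $\theta=\pi$, using $A_{\tau,0}(\pi)=(-1)^\tau/(2M)$ and $H_\tau(\pi)\in\{0,-1/\tau\}$, which takes two lines. Your alternating-sum evaluation is more laborious, but it doubles as an independent check of the coefficient formula.
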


\begin{proof}
\sa{Since $H_\tau(-\theta)=\overline{H_\tau(\theta)}$ and $A_{\tau,0}(-\theta)=\overline{A_{\tau,0}(\theta)}$, the real-valued function $\Phi_{\tau,0}$ is even and $2\pi$-periodic. Moreover, $r_\tau(\cos\theta)$ is even and $2\pi$-periodic. Hence, it suffices to establish \eqref{eq:Phi-r} for $0\le\theta\le\pi$.}

Let $z=e^{i\theta}$ with $0\mathrel{<} \theta \leq \pi$.
Taking $\rho=1$ within Lemma \ref{lem:telescoped}, leads to \begin{equation}\label{eq:W0tel}
        W=1-H_\tau=(1-z^{-1})E(z^{-1}),
        \qquad
E(z^{-1})=\frac1\tau\sum_{k=0}^{\tau-1}(\tau-k)z^{-k}.
\end{equation}
where $H_\tau$ is as in \eqref{eq:limit-objects}. Put $a=M+\frac12$ and $b=M-\frac12$.  Then
\begin{equation*}
A_{\tau,0}=-\frac{z^{-\tau}(az-b)}{M^2(z-1)^2},
\end{equation*}
and therefore
\begin{equation}\label{eq:WA}
WA_{\tau,0}=-\frac{E(z^{-1})z^{-\tau-1}(az-b)}{M^2(z-1)}.
\end{equation}
Adding the conjugate expression, i.e. substituting $z^{-1}$ for $z$, gives
\begin{align}\label{eq:WA-plus}
WA_{\tau,0}+\overline{WA_{\tau,0}}
={}&\frac1{\tau M^2}\sum_{k=0}^{\tau-1}(\tau-k)
\frac{a(z^n-z^{-(n-1)})-b(z^{n+1}-z^{-n})}{z-1},
\end{align}
where $n=\tau+1+k$.  Since
\begin{equation}\label{eq:dirichlet-div}
\frac{z^n-z^{-(n-1)}}{z-1}=\sum_{r=-(n-1)}^{n-1}z^r,
\qquad
\frac{z^{n+1}-z^{-n}}{z-1}=\sum_{r=-n}^{n}z^r,
\end{equation}
we obtain
\begin{equation}\label{eq:WA-dirichlet}
WA_{\tau,0}+\overline{WA_{\tau,0}}
=\frac1{\tau M^2}\sum_{k=0}^{\tau-1}(\tau-k)
\left(\sum_{|r|\le n-1}z^r-b(z^n+z^{-n})\right).
\end{equation}
On the other hand,
\begin{equation}\label{eq:W-plus}
W+\bar W=2-\frac1\tau\sum_{j=1}^{\tau}(z^j+z^{-j}).
\end{equation}
    We observe from \eqref{eq:WA-dirichlet} and \eqref{eq:W-plus} that both $W+\bar{W}$ and $WA_{\tau,0}+\overline{W A_{\tau,0}}$ are symmetric Laurent polynomials in $z$. Hence, 
    \begin{equation*}
        -4M^2\Phi_{\tau,0}=4M^2\bigl[(W+\bar W)-(WA_{\tau,0}+\overline{WA_{\tau,0}})\bigr],
    \end{equation*}
    is also a symmetric Laurent polynomial. We now collect the coefficients of each pair $z^j$ and $z^{-j}$ together with the constant term. The symmetry ensures that the coefficients of $z^j$ and $z^{-j}$ coincide, so the expression can be written in the cosine basis $z^j + z^{-j}=2\cos(j\theta)$. Substituting the coefficients obtained from \eqref{eq:W-plus} and \eqref{eq:WA-dirichlet} then yields the representation given in \eqref{eq:rcheb}. 
    
    \mg{\sa{It remains to establish the endpoint values in \eqref{eq:endpoints}, corresponding to $\theta=0$ and $\theta=\pi$.}
    At $\theta=0$,  Lemma~\ref{lem:Phi0-endpoint} gives
$\Phi_{\tau,0}(0)=-(\tau+1)/(3M)$ so that we get $r_\tau(1) = - 4 M^2 \Phi_{\tau,0}(0) = \frac{2M(M+1)}{3}$ where we used $M = 2\tau+1$. 
\sa{It remains to establish the formula for $r_\tau(-1)$.}
Recall from \eqref{eq:Phi0} that \begin{equation}\label{eq-recall-Phi}
        \Phi_{\tau,0}(\theta)
        =2\Ree\bigl[(1-H_\tau(\theta))(A_{\tau,0}(\theta)-1)\bigr],
\end{equation}
where $H_\tau(\theta):=\frac1\tau\sum_{j=1}^{\tau}e^{-ij\theta}$ and $A_{\tau,0}(\theta)$ is as in \eqref{eq:A0}. At $\theta=\pi$, one has
$A_{\tau,0}(\pi)=(-1)^\tau/(2M)$. There are two cases depending on whether $\tau$ is even or odd: If $\tau$ is even, then
$H_\tau(\pi)=\tau^{-1}\sum_{j=1}^{\tau}(-1)^j=0$, so
$1-H_\tau(\pi)=1>0$ and $A_{\tau,0}(\pi)-1=1/(2M)-1<0$. Thus, by \eqref{eq-recall-Phi},  $\Phi_{\tau,0}(\pi)=1/M - 2 <0$, equivalently $r_\tau(-1)=4M(2M-1)>0$. 
On the other hand, if
$\tau$ is odd, then $H_\tau(\pi)=-1/\tau$, so
$1-H_\tau(\pi)=1+1/\tau>0$ and $A_{\tau,0}(\pi)-1=-1/(2M)-1<0$.
Thus, again $\Phi_{\tau,0}(\pi)=-2(1+1/\tau)(1/(2M)+1)<0$, equivalently $r_\tau(-1)=(1+1/\tau) 4M (2M+1)>0$. Using $M=2\tau+1$, this proves the formula for $r_\tau(-1)$ and we conclude.}

\end{proof}
\begin{figure}[t!]
    \centering

    \begin{subfigure}[t]{0.48\textwidth}
        \centering
        \includegraphics[width=\linewidth]
        {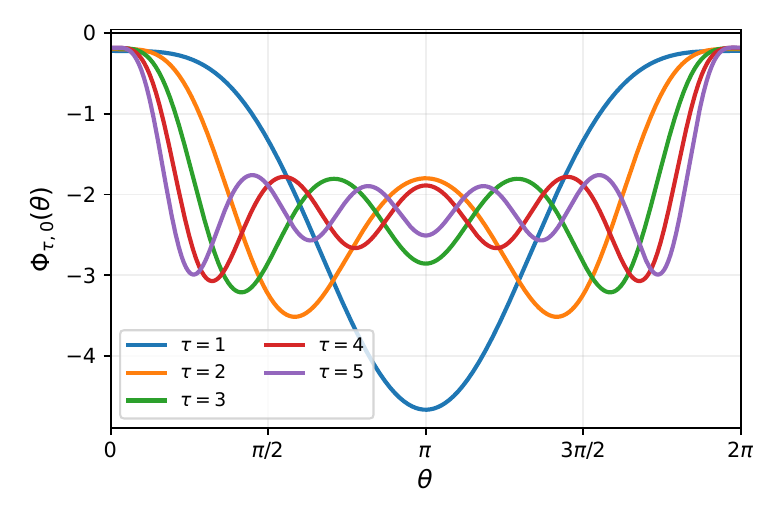}
        \caption{Small delays, $\tau=1,\ldots,5$.}
        \label{fig:phi-small-tau}
    \end{subfigure}
    \hfill
    \begin{subfigure}[t]{0.48\textwidth}
        \centering
        \includegraphics[width=\linewidth]
        {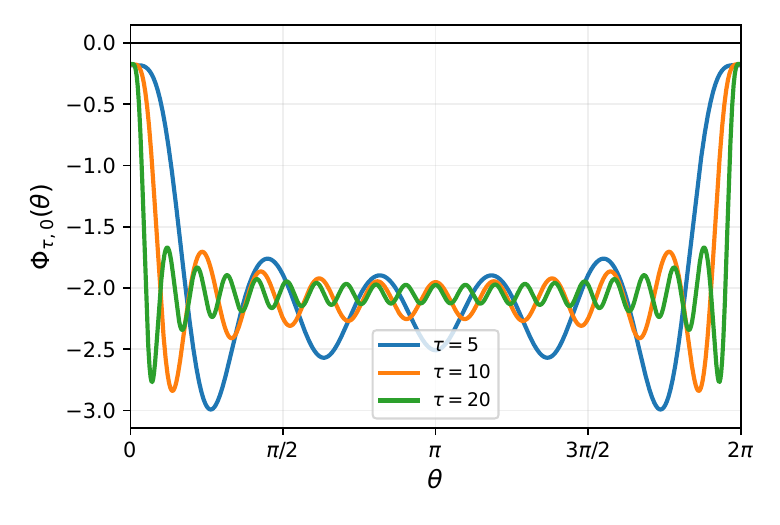}
        \caption{Increasing oscillations for larger $\tau$.}
        \label{fig:phi-large-tau}
    \end{subfigure}

    \caption{
    The function $\Phi_{\tau,0}(\theta)$ becomes increasingly
    oscillatory as $\tau$ grows while remaining strictly negative.
    }
    \label{fig:phi-oscillations}
\end{figure}

\mg{In Figure \ref{fig:phi-oscillations}, we plot the function 
$\Phi_{\tau,0}$ which arises in the $q\downarrow 0$ limit for various values of $\tau$. We observe that this function stays strictly negative, and that it becomes more oscillatory as $\tau$ increases. The fact that the oscillations increase is expected, because $r_\tau$, that represents $\Phi_{\tau,0}$ up to a constant by Lemma \ref{lem:rpoly}, is a degree-$2\tau$ polynomial and its degree increases when $\tau$ increases.} 

\mg{In the next proposition, we also prove rigorously that $\Phi_{\tau,0}<0$ for every $\theta$ as suggested by Figure \ref{fig:phi-oscillations}. The main idea is to show that the even trigonometric polynomial \(r_\tau(\cos\theta)\) is strictly positive for every \(\theta\), and then invoke Lemma~\ref{lem:rpoly}. By evenness and periodicity, it suffices to verify positivity for \(\theta\in[0,\pi]\). Because these polynomials exhibit highly oscillatory behavior, we control their sign uniformly over all delays by splitting \([0,\pi]\) into two regions: a region away from the origin, where a Dirichlet-kernel bound applies, and a region near the origin, where we use Taylor series approximations and exploit the cosine-series representation
\eqref{eq:rcheb}. In particular, differentiation with respect to $\theta$ introduces powers of the frequencies,
so the Taylor coefficients are combinations of the moments
$\sum_m c_m m^{2k}$. Since $c_m$ is constant in $m$ for
$1\le m\le\tau$ and quadratic in $m$ for $\tau+1\le m\le2\tau$,
these moments reduce to power sums $\sum_{m=1}^{\tau} m^p$, with
$p\le 8$ in the estimates below. Faulhaber's formulas then provide
explicit expressions for these sums in terms of $\tau$ which in return allows us to control the sign of $r_\tau$.}

\begin{proposition}[all-delay positivity of the limiting polynomial]\label{prop:rpositive}
For every integer $\tau\ge1$,
\begin{equation}\label{eq:rpositive}
        r_\tau(x)>0\quad \mbox{for} \quad -1\le x\le1.
\end{equation}
Consequently, $\Phi_{\tau,0}(\theta)<0$ for all $0\le\theta\le \mg{2}\pi$.
\end{proposition}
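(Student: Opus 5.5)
The plan is to work directly with the cosine series \eqref{eq:rcheb} as a function of $\theta\in[0,\pi]$, writing $x=\cos\theta$; evenness and $2\pi$-periodicity reduce the claim to this interval. Since $c_m$ is constant for $1\le m\le\tau$, the first block of the series is a Dirichlet kernel. We therefore rewrite
\[
r_\tau(\cos\theta)=c_0+\Bigl(-\tfrac{4M^2}{\tau}-2(\tau+1)\Bigr)\bigl(D_\tau(\theta)-1\bigr)+\tfrac{4}{\tau}\sum_{m=\tau+1}^{2\tau}(M^2-m^2)\cos(m\theta),
\]
where $D_\tau(\theta)=1+2\sum_{m=1}^\tau\cos(m\theta)=\sin((\tau+\tfrac12)\theta)/\sin(\theta/2)$. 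The second sum has positive weights $M^2-m^2\ge M^2-4\tau^2=4\tau+1$. Summation by parts, with the quadratic weights, writes it as a combination of Dirichlet- and Fejér-type kernels times second differences of $m^2$. This bounds it by $\cO(\tau M^2/\sin(\theta/2))$ with an explicit constant, and likewise bounds $D_\tau$ by $1/\sin(\theta/2)$.

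\emph{Region away from the origin}, $\theta\ge\theta_0:=C_0/M$ for a suitable absolute constant $C_0$. Here $c_0=8M^2-2(\tau+1)\approx 8M^2$ is the dominant term. The Dirichlet contributions are at most $\frac{4M^2}{\tau}\cdot\frac{1}{\sin(\theta/2)}\lesssim \frac{4M^2}{\tau}\cdot\frac{\pi}{\theta}\le \frac{4\pi M^3}{C_0\tau}$, which is a fixed fraction of $8M^2$ once $C_0$ is large. The quadratic-weight block is treated the same way: its summation-by-parts bound is $\lesssim \frac{4}{\tau}\cdot\frac{M^2}{\sin(\theta/2)}$ up to constant factors, plus lower-order terms. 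Choosing $C_0$ large enough, these bounds sum to less than $8M^2-2(\tau+1)$, so $r_\tau>0$. Small $\tau$ (say $\tau\le\tau_1$) can be finished separately by inspecting the finitely many explicit polynomials, using the endpoint values \eqref{eq:endpoints}.

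\emph{Region near the origin}, $0\le\theta\le\theta_0$. We Taylor-expand $r_\tau(\cos\theta)=\sum_k (-1)^k\frac{\theta^{2k}}{(2k)!}\mu_k$ with moments $\mu_k=c_0\mathbf 1_{k=0}+2\sum_m c_m m^{2k}$. Each $\mu_k$ is a polynomial in $\tau$ computable by Faulhaber's formulas. By Lemma~\ref{lem:rpoly} we already know $\mu_0=r_\tau(1)=\frac{2M(M+1)}{3}>0$, which is of order $M^2$ and not $M^3$; this reflects the massive cancellation producing the $\Phi_{\tau,0}(0)$ value of Lemma~\ref{lem:Phi0-endpoint}. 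In the scaled variable $t=M\theta\in[0,C_0]$ we compute the leading behavior of $\mu_1\theta^2,\mu_2\theta^4,\ldots$. Each term should be $M^2\,p_k(t)$ plus lower-order corrections, because $\mu_k\sim M^{2k+2}$ after cancellations. Truncating at $k\le 4$ (hence power sums up to $m^8$) and bounding the remainder by $\sum_m |c_m| m^{10}\theta^{10}/10!$, we aim to show $r_\tau(\cos\theta)\ge M^2\bigl(\phi(t)-\cO(1/\tau)\bigr)$ for an explicit limit function $\phi$ that is bounded below on $[0,C_0]$. Essentially $\phi$ is the $\tau\to\infty$ scaling limit of $-4M^2\Phi_{\tau,0}(t/M)/M^2$.

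\emph{Main obstacle.} The hardest step is the near-origin region: the individual moments are huge ($\sim M^{2k+3}/\tau$), and only after exact cancellation do they reduce to order $M^{2k+2}$. The truncation must therefore be performed on the exact Faulhaber polynomials, and the constant $C_0$ needed for the far region must be small enough that the Taylor remainder at $t=C_0$ is still controlled. If the truncated series does not reach far enough, the first thing to try is an intermediate band $C_0'/M\le\theta\le C_0/M$ in which the Dirichlet kernels are replaced by their exact closed forms $\sin(Mt/2M\cdots)$, so that the limit function $\phi$ is handled directly. The conclusion $\Phi_{\tau,0}<0$ then follows from \eqref{eq:Phi-r}.
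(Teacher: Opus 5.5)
Your plan is essentially the paper's proof. It splits $[0,\pi]$ at $\theta\asymp 1/\tau$, uses the Dirichlet-kernel bound on the constant-coefficient block away from the origin, and uses Taylor expansion with Faulhaber power sums (up to degree eight) near the origin.

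The paper's execution is simpler than what you anticipate. It bounds the positive quadratic-weight block crudely via $d_\ell>0$ and $\cos\ge -1$, with no summation by parts, splits at $\theta=3/(2\tau)$, and needs only one-sided Taylor envelopes to order six. This gives exact polynomial inequalities valid for every $\tau\ge1$, so neither the $\cO(1/\tau)$ scaling-limit bookkeeping nor a separate small-$\tau$ check is needed. As you phrase that check, it would in any case require a genuine positivity verification on $[-1,1]$, not just the endpoint values.
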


\begin{proof}
The idea is to split $[0,\pi]$ into an outer and an inner region around the origin and bound $r_\tau$ below on each.  In the outer region $\theta\ge3/(2\tau)$ we control the cosine sum $\sum_{k=1}^n\cos(k\theta)$ by the Dirichlet kernel and compare it to an explicit threshold $T_n$; in the inner region we use sharp Taylor envelopes for $1-\cos(\theta)$ together with Faulhaber's power-sum formulas.  Put $n=\tau$.  It is enough to prove positivity for $x=\cos\theta$, $0\le\theta\le\pi$.  From
\eqref{eq:rcheb}, write
\begin{equation}\label{eq:r-for-proof}
 r_n(\cos\theta)=c_0-2a\sum_{k=1}^{n}\cos(k\theta)
        +2\sum_{\ell=1}^{n}d_\ell\cos((n+\ell)\theta),
\end{equation}
where
\begin{equation}\label{eq:adcoeffs}
        c_0=32n^2+30n+6,
        \qquad
        a=\frac{18n^2+18n+4}{n},
        \qquad
        d_\ell=\frac{6n^2+8n+2-4n\ell-2\ell^2}{n}.
\end{equation}
The sequence $d_\ell$ is positive: it is decreasing in $\ell$, and
$d_n=8+2/n>0$.  Also
\begin{equation}\label{eq:DsumT}
        D_n:=\sum_{\ell=1}^{n}d_\ell=\frac{5(n+1)(2n+1)}{3},
        \qquad
        T_n:=\frac{c_0-2D_n}{2a}
        =\frac{n(19n^2+15n+2)}{3(3n+1)(3n+2)}.
\end{equation}
We split the interval into two regions.

\smallskip
\noindent\emph{Outer region: $\theta\ge 3/(2n)$.}
Let $C_n(\theta)=\sum_{k=1}^{n}\cos(k\theta)$.  The Dirichlet formula gives
\begin{equation}\label{eq:Cn-bound}
        C_n(\theta)
        =\frac{\sin((n+\frac12)\theta)}{2\sin(\theta/2)}-\frac12
        \le \frac{1}{2\sin(\theta/2)}-\frac12.
\end{equation}
\sa{Since
\[
\frac{3}{4n}\le \frac{\theta}{2}\le \frac{\pi}{2},
\]
and $\sin t$ is increasing on $[0,\pi/2]$, we have
\[
\sin\left(\frac{\theta}{2}\right)
\ge
\sin\left(\frac{3}{4n}\right)
\ge
\frac{3}{4n}
-\frac{1}{6}\left(\frac{3}{4n}\right)^3
=
\frac{3(32n^2-3)}{128n^3},
\]
where we used $\sin t\ge t-t^3/6$ for $t\ge0$. Therefore,}
\begin{equation}\label{eq:Cn-outer}
        C_n(\theta)\le \overline{C}_n(\theta):=
        \frac{64n^3}{3(32n^2-3)}-\frac12.
\end{equation}
This upper bound is strictly smaller than $T_n$.  Indeed,
\begin{align}\label{eq:Tminus}
\overline{\Delta}_n:=T_n-\overline{C}_n(\theta)
={}&
\frac{64n^5+672n^4+622n^3+21n^2-93n-18}
{6(3n+1)(3n+2)(32n^2-3)}  \\
={}&
\frac{64m^5+992m^4+3950m^3+6559m^2+4823m+1268}
{6(3n+1)(3n+2)(32n^2-3)}>0,
\end{align}
where $m=n-1\ge0$.  Therefore, using $d_\ell>0$ and $\cos((n+\ell)\theta)\ge-1$,
\begin{equation}\label{eq:outer-positive}
        r_n(\cos\theta) 
        \ge c_0-2aC_n(\theta)-2D_n \geq \mg{2a \overline{\Delta}_n}
        >c_0-2aT_n-2D_n=0.
\end{equation}

\smallskip
\noindent\emph{Inner region: $0\le\theta\le 3/(2n)$.}
We use two elementary Taylor inequalities, valid for all $u\ge0$:
\begin{equation}\label{eq:taylor-bounds-cos}
        1-\cos u\ge \frac{u^2}{2}-\frac{u^4}{24},
        \qquad
        \cos u-1\ge -\frac{u^2}{2}+\frac{u^4}{24}-\frac{u^6}{720}.
\end{equation}
For completeness we recall the proof. The difference of both sides in the first inequality $e_1(u):= 1-\cos u - (\frac{u^2}{2}-\frac{u^4}{24})$ has fourth derivative $1-\cos u\ge0$ and 
\sa{its derivatives of orders $0,\ldots,3$ vanish at zero.}
Similarly, $e_2(u):=  \cos u-1 - (-\frac{u^2}{2}+\frac{u^4}{24}-\frac{u^6}{720})$ has sixth derivative $1-\cos u\ge0$ and 
\sa{its derivatives of orders $0,\ldots,5$ vanish at zero.}
Repeated integration gives \eqref{eq:taylor-bounds-cos}.

Using \eqref{eq:taylor-bounds-cos} in
\begin{equation*}
 r_n(\cos\theta)=r_n(1)
 +2a\sum_{k=1}^{n}(1-\cos(k\theta))
 +2\sum_{\ell=1}^{n}d_\ell\bigl(\cos((n+\ell)\theta)-1\bigr)
\end{equation*}
gives
\begin{equation}\label{eq:inner-poly-bound}
        r_n(\cos\theta)
        \ge r_n(1)+R_2\theta^2+R_4\theta^4-R_6\theta^6,
\end{equation}
where the coefficients before simplification are
\begin{align}
R_2&=a\sum_{k=1}^{n}k^2-\sum_{\ell=1}^{n}d_\ell(n+\ell)^2,\label{eq:R2-sum}\\
R_4&=-\frac{a}{12}\sum_{k=1}^{n}k^4+
\frac{1}{12}\sum_{\ell=1}^{n}d_\ell(n+\ell)^4,\label{eq:R4-sum}\\
R_6&=\frac{1}{360}\sum_{\ell=1}^{n}d_\ell(n+\ell)^6.\label{eq:R6-sum}
\end{align}
Since $d_\ell$ is quadratic in $\ell$, the last identity uses power sums up to degree eight.  Applying \textit{Faulhaber's formula}:
\[
\sum_{j=1}^{n} j^p
=
\frac{1}{p+1}\sum_{r=0}^{p}
\binom{p+1}{r}B_r^+\,n^{p+1-r},
\qquad 0\le p\le 8,
\]
in the Bernoulli convention determined by
\[
\frac{t}{1-e^{-t}}
=
\sum_{r=0}^{\infty}B_r^+\frac{t^r}{r!},
\]
so that $B_1^+=1/2$ \cite[Sec.~6.5, Eq.~(6.78)]{GrahamKnuthPatashnik}, gives
\begin{align}
 r_n(1)&=\frac{4(n+1)(2n+1)}{3}, \label{eq:rn1}\\
 R_2&=-\frac{2(n-1)(n+1)(n+2)(2n+1)}{15}, \label{eq:R2}\\
 R_4&=\frac{(n+1)(2n+1)(510n^4+1020n^3+539n^2+29n+2)}{1260}, \label{eq:R4}\\
 R_6&=\frac{(n+1)(2n+1)(4975n^6+14925n^5+13390n^4+1905n^3-1583n^2-48n+36)}{113400}. \label{eq:R6}
\end{align}

These identities are ordinary polynomial identities in $n$: substituting
\eqref{eq:adcoeffs} into \eqref{eq:R2-sum}--\eqref{eq:R6-sum}, replacing
$\sum_{\ell=1}^n \ell^j$ by Faulhaber's formula for $j\le8$, and clearing the
positive common denominator reproduces the four displayed numerators.  Equivalently,
a purely finite check is obtained by clearing denominators and comparing
coefficients of the resulting polynomials in $n$.  Thus, no
inequality is being used in \eqref{eq:rn1}--\eqref{eq:R6}; the inequalities enter
only in the sign checks that follow.
The coefficient $R_2$ is nonpositive.  The quartic and sixth-order terms satisfy, for all $n\ge1$,
\begin{align}\label{eq:R4R6}
R_4-\frac{9}{4n^2}R_6
={}&\frac{(n+1)(2n+1)}{50400n^2}
\bigl(15425n^6+25875n^5+8170n^4  \\
&\hspace{3.0cm}-745n^3+1663n^2+48n-36\bigr)>0.
\end{align}
The positivity is immediate after substituting $n=m+1$; the polynomial in parentheses becomes
\begin{equation*}
15425m^6+118425m^5+368920m^4+599185m^3+538573m^2+255744m+50400.
\end{equation*}
\sa{Moreover, $R_6>0$ by \eqref{eq:R6-sum}, since $d_\ell>0$ for
$1\le \ell\le n$. Since $\theta^2\le 9/(4n^2)$ in the inner
region, \eqref{eq:R4R6} yields
\[
R_4-R_6\theta^2
\ge
R_4-\frac{9}{4n^2}R_6
>0.
\]
Consequently,
\[
R_4\theta^4-R_6\theta^6
=
\theta^4(R_4-R_6\theta^2)
\ge0.
\]
Furthermore, since $R_2\le0$ and $\theta^2\le9/(4n^2)$,
\[
R_2\theta^2\ge \frac{9}{4n^2}R_2.
\]
Hence,}
\begin{align}\label{eq:inner-positive}
        r_n(\cos\theta)
        &\ge r_n(1)+R_2\theta^2
        \ge r_n(1)+\frac{9}{4n^2}R_2  \\
        &=\frac{(n+1)(2n+1)(31n^2-9n+18)}{30n^2}>0.
\end{align}
The inner and outer regions cover $[0,\pi]$.  Thus $r_\tau>0$ on $[-1,1]$.  The final assertion follows from
\eqref{eq:Phi-r}.
\end{proof}

\mg{The previous result establishes the limiting FDI, \(\Phi_{\tau,0}<0\), obtained in the limit \(q\downarrow0\).}
{In the following corollary, we first show that this strict negativity persists on each fixed frequency interval bounded away from zero when $q>0$ is sufficiently small. We then treat the neighborhood of zero separately using a joint expansion in $q$ and $\theta$.}
\mg{\begin{corollary}[outer limiting negativity]\label{cor:outer}
For every $\theta_\star\in(0,\pi]$, there exist constants $m_1>0$ and
$q_1>0$, depending on $(\tau,c,\theta_\star)$, such that
\begin{equation}\label{eq:outerN}
        N_{\tau,q}(\theta)\le -m_1<0,
        \qquad
\theta\in[\theta_\star,\pi],\quad 0<q\le q_1.
\end{equation}
\end{corollary}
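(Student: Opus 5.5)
The plan is a compactness-plus-continuity argument. I would connect $N_{\tau,q}$ to $\Phi_{\tau,q}$ through Proposition~\ref{prop:rational}, use the limit $q\downarrow0$ from Proposition~\ref{prop:rpositive}, and then make the convergence uniform on the compact set $[\theta_\star,\pi]$.

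\emph{Step 1: extend $N_{\tau,q}$ to $q=0$.} By Proposition~\ref{prop:rational}, $N_{\tau,q}(\theta)=2\Ree(\bar D_1S)$, where $S$ is built from $z=\rho e^{i\theta}$ with $\rho=1-cq/M$, $\beta=1-2q/M$ and $\alpha=q^2/M^2$. Every ingredient ($D_1,D_2,P,\widetilde h(\bar z),z^{-\tau},\bar z^{-\tau},\rho^{-2\tau}$) is a polynomial in $(q,e^{\pm i\theta})$, apart from factors $\rho^{-\tau}$, which are smooth for $q$ near $0$. Hence $(q,\theta)\mapsto N_{\tau,q}(\theta)$ extends continuously (indeed smoothly) to $q\in[0,q_2]\times[0,2\pi]$ for some small $q_2>0$. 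The limit is
\[
N_{\tau,0}(\theta)=2\Ree\bigl(\bar D_1^0S^0\bigr),
\]
with $D_1^0=e^{i\theta}-1$, $D_2^0=e^{i\theta}-1$, and the $\alpha$-terms dropped.

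\emph{Step 2: identify the limit and check its sign on $[\theta_\star,\pi]$.} For $\theta\in[\theta_\star,\pi]$ we have $|D_1|,|D_2|\to|e^{i\theta}-1|\ge 2\sin(\theta_\star/2)>0$. Therefore the denominator $M^2|D_1|^2|D_2|^2$ in \eqref{eq:Ndef} converges to $M^2|e^{i\theta}-1|^4$, which is bounded below uniformly there. The pointwise limit \eqref{eq:Phi0} then gives
\[
N_{\tau,0}(\theta)=M^2|e^{i\theta}-1|^4\,\Phi_{\tau,0}(\theta)=-\tfrac14|e^{i\theta}-1|^4\,r_\tau(\cos\theta),
\]
using Lemma~\ref{lem:rpoly}. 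One could also check this directly, since only the first two terms of $S$ survive, matching the structure of Lemma~\ref{lem:scalar-factor}. By Proposition~\ref{prop:rpositive}, $r_\tau(\cos\theta)>0$, and $|e^{i\theta}-1|\ge2\sin(\theta_\star/2)$. So the continuous function $N_{\tau,0}$ is strictly negative on the compact interval $[\theta_\star,\pi]$. It therefore attains a maximum $-2m_1<0$, with $m_1>0$.

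\emph{Step 3: uniform continuity.} $N$ is jointly continuous on the compact set $[0,q_2]\times[\theta_\star,\pi]$, hence uniformly continuous there. So there is $q_1\in(0,q_2]$ such that $|N_{\tau,q}(\theta)-N_{\tau,0}(\theta)|\le m_1$ for all $0<q\le q_1$ and all $\theta\in[\theta_\star,\pi]$. This gives $N_{\tau,q}\le -m_1$, which is \eqref{eq:outerN}, with constants depending only on $(\tau,c,\theta_\star)$.

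The main obstacle is Step~2: justifying that the limit of $N_{\tau,q}$ (and not just of $\Phi_{\tau,q}$) is exactly the negative multiple of $r_\tau$. The cleanest route is to multiply the convergent identity \eqref{eq:Ndef} by the convergent positive denominator, which is valid precisely because $\theta\ge\theta_\star$ keeps $|e^{i\theta}-1|$ away from zero. This is also where the degenerate behavior near $\theta=0$, deferred to the inner analysis, is excluded.
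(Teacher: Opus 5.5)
Your proposal is correct and follows essentially the paper's argument: both combine the limiting negativity $\Phi_{\tau,0}<0$ from Proposition~\ref{prop:rpositive} with the fact that $|D_1|$ and $|D_2|$ stay bounded away from zero on $[\theta_\star,\pi]$, and then use a compactness/continuity step. The only difference is the order of operations. The paper first shows $\Phi_{\tau,q}\to\Phi_{\tau,0}$ uniformly and then multiplies by the denominator, whereas you extend $N_{\tau,q}$ jointly continuously to $q=0$ and apply uniform continuity directly; your version is slightly cleaner, because $N_{\tau,q}$ is a trigonometric polynomial whose coefficients are continuous in $q$.
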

\begin{proof}
By Proposition~\ref{prop:rpositive} and \eqref{eq:Phi-r},
$\Phi_{\tau,0}(\theta)<0$ on $[0,\pi]$. For fixed
$\theta_\star>0$, the denominators in the rational representation of
$\Phi_{\tau,q}$ are uniformly bounded away from zero on
$[\theta_\star,\pi]$ for all sufficiently small $q$. Consequently,
$\Phi_{\tau,q}\to\Phi_{\tau,0}$ uniformly on $[\theta_\star,\pi]$ as
$q\downarrow0$. Since $\Phi_{\tau,0}$ is strictly negative on this
compact interval, there exist $\eta>0$ and $q_1>0$ such that
\[
        \Phi_{\tau,q}(\theta)\le-\eta,
        \qquad
        \theta\in[\theta_\star,\pi],\quad 0<q\le q_1.
\]
The denominator in \eqref{eq:Ndef} is positive and uniformly bounded
below on the same interval. Multiplying by this denominator therefore
yields \eqref{eq:outerN}, after possibly decreasing $q_1$.
\end{proof}}
Corollary~\ref{cor:outer} establishes the required negativity uniformly on every frequency interval bounded away from the origin. It therefore remains to control the low-frequency regime, which is singular because the limiting denominator vanishes at \(\theta=0\). This requires a joint analysis in \(q\) and \(\theta\).

\mg{For \(0\leq\theta\leq 1/2\), a Taylor expansion of \(N_{\tau,q}\) at \(\theta=0\) yields
\begin{equation}\label{eq:tay6}
N_{\tau,q}(\theta)
=
a_0(q)+a_2(q)\theta^2+a_4(q)\theta^4+R_6(\theta,q),
\qquad
\abs{R_6(\theta,q)}\leq C_6\theta^6,
\end{equation}
where \(C_6<\infty\) can be chosen uniformly for all sufficiently small \(q\geq0\).} 
\wubtext{For fixed $\tau$ and $c$, the numerator in \eqref{eq:Ndef}--\eqref{eq:S-correct} is a finite trigonometric polynomial with coefficients analytic in $q$ near zero, since $\rho(0)=1$.\footnote{\mgb{Since $\rho(0)=1\ne0$, both $\rho(q)$ and
$z=\rho(q)e^{i\theta}$ remain nonzero near $(q,\theta)=(0,0)$.
Consequently, all negative powers of $z$, $\bar z$, and $\rho$ appearing
in $N_{\tau,q}$ are real-analytic there, as are their finite
sums and products. Hence $N_{\tau,q}$ is real-analytic
near $(0,0)$.}} Its sixth derivative with respect to $\theta$ is therefore continuous and uniformly bounded for $0\leq\theta\leq1/2$ and sufficiently small $q\geq0$. This gives the stated remainder bound.} 

\mg{Direct, albeit lengthy, expansion of} the same numerator \mg{in \(q\)} \wubtext{and $\theta$} \mg{gives} \wubtext{the following leading coefficients. The intermediate algebra is lengthy and is therefore omitted here for brevity.}
\mg{\begin{equation}\label{eq:coeff-asymp}
a_0(q)=-Bq^3+\cO(q^4),
\qquad
a_2(q)=-\lambda q+\cO(q^2),
\qquad
a_4(q)=-A_0+\cO(q),
\end{equation}
where
\begin{equation}\label{eq:constants-coeff}
B=\frac{(\tau+1)c(1-c)^2}{M^3},
\qquad
\lambda=\frac{(\tau+1)(2-3c)}{M},
\qquad
A_0=\frac{M(\tau+1)}{3}.
\end{equation}}
\wubtext{In particular, at $q=0$, \eqref{eq:Ndef} and Lemma~\ref{lem:Phi0-endpoint} give $a_4(0)=M^2\Phi_{\tau,0}(0)=-M(\tau+1)/3$.}

\mg{Since \(c\in(0,2/3)\), all three constants are strictly positive. Hence, for some \(q_\ast>0\),
\[
a_0(q)\leq-\frac{B}{2}q^3,
\qquad
a_2(q)\leq-\frac{\lambda}{2}q,
\qquad
a_4(q)\leq-\frac{A_0}{2},
\qquad
0<q\leq q_\ast.
\]
Choose \(0<\theta_\star\leq 1/2\) such that
\[
C_6\theta_\star^2\leq \frac{A_0}{4}.
\]
Then, for \(0\leq\theta\leq\theta_\star\) and \(0<q\leq q_\ast\),
\begin{equation}\label{eq:near-neg}
N_{\tau,q}(\theta)
\leq
-\frac{B}{2}q^3
-\frac{\lambda}{2}q\theta^2
-\frac{A_0}{4}\theta^4
<0.
\end{equation}
Thus \(N_{\tau,q}\) is strictly negative in a neighborhood of the origin. On the complementary interval \(\theta\in[\theta_\star,\pi]\), Corollary~\ref{cor:outer} yields strict negativity for all sufficiently small \(q\). Taking the minimum of the corresponding admissible \(q\)-bounds therefore gives \(q_{\rm fdi}(\tau,c)>0\) such that
\beq
\Phi_{\tau,q}(\theta)<0,
\qquad
\theta\in[0,2\pi),\quad
0<q\leq q_{\rm fdi}(\tau,c).
\label{ineq-Ntauq-expansion}
\eeq
Hence hypothesis \textbf{(H2)} holds for all sufficiently small \(q>0\).}
\section{Iteration Complexity with Explicit Constants}\label{subsec:main-proof}
While Theorem~\ref{thm:main-nonexplicit-constants} establishes accelerated rates, it does not provide explicit values for the prefactor \(C\) in the error bounds or for the threshold \(\kappa_0\). Consequently, it does not yield a fully explicit characterization of the iteration complexity. 
\sa{The following result shows that $\kappa_0$ depends polynomially on $\tau$, while $C$ depends polynomially on $\tau$ and $\kappa$, and provides explicit admissible formulas for both.}
\begin{theorem}[Polynomial bounds on the threshold and prefactor]
\label{thm:main}
In the setting of Theorem~\ref{thm:main-nonexplicit-constants}, the threshold
\(\kappa_0(\tau,c)\) can be chosen to depend polynomially on
\(M=2\tau+1\), while the prefactor \(C\) depends polynomially on \(M\) and
\(\kappa\). More precisely, under the constant-history initialization
\[
x_{-\tau-1}=\cdots=x_0=x_{\rm init},
\]
one may choose
\begin{equation}\label{eq:main-blackbox}
\kappa_0(\tau,c)
=
\mathcal O_c(M^8),
\qquad
C
=
\mathcal O_c(M^5\kappa^2)
\lVert x_{\rm init}-x_\star\rVert .
\end{equation}
\end{theorem}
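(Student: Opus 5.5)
We will re-run the proof of Theorem~\ref{thm:main-nonexplicit-constants} while tracking every constant. The two places where that argument was only qualitative are (i) the threshold in $q$ and (ii) the size of the certificate $(X,\varepsilon)$ in Lemma~\ref{lemm:ratecert}. Concretely, we want $q_0(\tau,c)\ge \mathrm{const}_c\,M^{-4}$, which gives $\kappa_0=q_0^{-2}=\mathcal O_c(M^8)$. We also want $\sqrt{\lambda_{\max}(X)/\varepsilon}\,\|\Xi_0\|$ to be at most $\mathcal O_c(M^5\kappa^2)\|x_{\rm init}-x_\star\|$.

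\textbf{Threshold for (H1).} Rather than invoking continuity of roots, we apply a Rouch\'e argument of the kind used in Lemmas~\ref{lem:q-slow} and~\ref{lem:q-tinys-slowpair}.
\begin{itemize}
\item The slow pair of $P_{\tau,\alpha m}$ sits near $1\pm i q'$ for a small scale $q'$; we rescale and compare with the quadratic $w(w+2/M)+1/(2M^2)$.
\item The $\tau$ fast roots are compared against $P_\infty$, whose root margin $1-r_\ast$ we lower-bound by $\Omega(M^{-2})$. This uses Lemma~\ref{lem:q-nocross} together with the explicit Cayley polynomial \eqref{eq:q-Hpoly}.
\end{itemize}
Together these should give (H1) with margin $\rho-r_\ast\gtrsim q/M$ once $q\le c'M^{-k}$ for a small power $k$.

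\textbf{Threshold for (H2).} Here we make the two-region argument of Section~\ref{sec:limit} quantitative. Near the origin, we carry the expansion \eqref{eq:coeff-asymp} with explicit remainders. This means bounding the $\mathcal O(q^4)$, $\mathcal O(q^2)$ and $\mathcal O(q)$ errors, and the Taylor constant $C_6$, by polynomials in $M$, using that $N_{\tau,q}$ is a trigonometric polynomial of degree at most $2\tau+2$ whose coefficients are polynomial in $M$. Away from the origin, we replace Corollary~\ref{cor:outer} by an explicit bound on $|\Phi_{\tau,q}-\Phi_{\tau,0}|$, which is $\mathcal O(\mathrm{poly}(M)\,q/\theta^{k})$. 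We combine this with the quantitative margin of Proposition~\ref{prop:rpositive}: from \eqref{eq:outer-positive} and \eqref{eq:inner-positive}, $r_\tau\gtrsim M^2$, so $\Phi_{\tau,0}\lesssim -1$. The crossover frequency $\theta_\star$ is then chosen as a power of $M^{-1}$. Balancing the inner estimate $-\tfrac B2q^3-\tfrac\lambda2 q\theta^2-\tfrac{A_0}{4}\theta^4$ (with $B\sim M^{-2}$, $\lambda\sim1$, $A_0\sim M^2$) against these error terms is what should produce $q\lesssim M^{-4}$. \emph{This bookkeeping is the main obstacle.} The $q^3$ term at $\theta=0$ is tiny, so the remainder has to be shown to vanish to matching order, with each monomial coefficient tracked as a function of $M$.

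\textbf{Prefactor.} We make Lemma~\ref{lem:positive-storage-kyp} explicit, using the standard KYP bounds: $\varepsilon$ is proportional to the FDI margin $\eta$ divided by the $H_\infty$-type norms of the resolvent $(e^{i\theta}I-A)^{-1}$, and $\lambda_{\max}(X)$ is bounded by $\|(zI-A)^{-1}\|$-type quantities times the multiplier norms.
\begin{itemize}
\item The FDI margin is $\eta\gtrsim \min_\theta |N_{\tau,q}|/(M^2|D_1|^2|D_2|^2)$, rescaled by $|1-mg|^{-2}$ through Lemma~\ref{lem:fdi-congruence}.
\item The resolvent norms are governed by the margin $\rho-r_\ast\sim q/M$, which makes each of them polynomial in $M$ and $\kappa=q^{-2}$.
\end{itemize}
Constant-history initialization gives $\widehat v_{-j}=-m\,y_{-j}+\nabla f(y_{-j})$ with $y_{-j}=x_{\rm init}$, so $\|\Xi_0\|\lesssim \kappa\sqrt{\tau}\,\|x_{\rm init}-x_\star\|$.

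Collecting the powers of $M$ and $\kappa$ should yield $C=\mathcal O_c(M^5\kappa^2)\|x_{\rm init}-x_\star\|$. If the generic KYP bound turns out to be too lossy, we would instead build $X$ directly from the Schur decomposition of the augmented realization, and bound its condition number through the explicit companion structure of $P_{\tau,\delta_m}(\rho\,\cdot)$.
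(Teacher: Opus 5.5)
Your plan for (H2) uses the same two-region scheme the paper quantifies. Near the origin you track the remainders in \eqref{eq:coeff-asymp}; away from it you perturb off $\Phi_{\tau,0}=-r_\tau(\cos\theta)/(4M^2)\le-\Omega(1)$, using $r_\tau\gtrsim M^2$ from the proof of Proposition~\ref{prop:rpositive}. That part is acceptable as a roadmap, apart from the exponent $M^{-4}$ being announced rather than computed.

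Your (H1) sketch, however, misreads the midpoint polynomial.
\begin{itemize}
\item Its gain $\alpha m=(1+q^2)/(2M^2)$ does not vanish as $q\downarrow0$. So $P_{\tau,\alpha m}\to P_\infty(\cdot;\tfrac1{2M^2},M-\tfrac12)$, a fixed polynomial, and there is no slow pair at a $q$-dependent scale. With $z=1+qw$ the constant term is $m/M^2\approx 1/(2M^2q^2)$, so the quadratic $w(w+2/M)+1/(2M^2)$ never arises.
\item Writing instead $z=1+\zeta/M$ (large $\tau$, $q\downarrow0$), the roots near $1$ approximately solve $\zeta^2e^{\zeta/2}+\tfrac12(1+\zeta)=0$. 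Hence $\abs{z-1}=\Theta(1/M)$, while $z^\tau\approx e^{\zeta/2}$ is $\Theta(1)$ away from $1$. No Rouch\'e comparison with a quadratic is therefore uniform in $\tau$.
\item The other $\tau$ roots are not near the origin either. The relation $z^\tau(z-1)\approx-\delta_m\nu\approx-1/(2M)$ puts them within $O(\log M/M)$ of the unit circle.
\end{itemize}
What (H1) needs is a root margin for $P_\infty(\cdot;\tfrac1{2M^2},M-\tfrac12)$ on circles $\abs{z}=r<1$, i.e.\ a rate-weighted version of Lemma~\ref{lem:q-nocross}, plus a perturbation bound in $q$. This is the kind of argument behind the paper's $q_{\rm mid}=\min\{\bar q,1/(80M^2)\}$. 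It does not change $\kappa_0$, since (H2) is the bottleneck. But the margin you carry into the prefactor, $\rho-r_\ast\sim q/M$, is wrong: it is of order $1/M$ uniformly for small $q$.

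The genuine gap is the prefactor. The mechanism you invoke does not yield polynomial bounds, let alone $M^5\kappa^2$.
\begin{itemize}
\item An eigenvalue margin alone controls the resolvent of a non-normal companion-type matrix of size $\Theta(\tau)$ only up to factors that can be exponential in $\tau$. You would need lower bounds on $\abs{P_{\tau,\delta_m}(\rho e^{i\theta})}$ together with bounds on the adjugate entries.
\item The certificate $X$ of Lemma~\ref{lem:positive-storage-kyp} comes from the strict bounded-real lemma for the loop-transformed realization, with $\bar A=A_a-B_aD_-^{-1}C_-$. Its resolvent is $(zI-A_a)^{-1}-(zI-A_a)^{-1}B_aH_-(z)^{-1}C_-(zI-A_a)^{-1}$. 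So you need quantitative control of $H_-^{-1}$ and of the multiplier $\Psi$, whose entries carry $a=(\kappa-1)/2$.
\item The $\kappa$-dependence degenerates in several places. At $\theta=0$, $\abs{1-mg_\rho}\approx\kappa/(2c(2-c))$, so the margin of the transformed FDI decays polynomially in $\kappa$.
\item If your margin is read as $\min_\theta\abs{N_{\tau,q}}$ divided by the denominator $M^2\abs{D_1}^2\abs{D_2}^2$, it loses a further factor of order $q^3M^{-4}$. That denominator ranges from $\Theta(q^4/M^2)$ to $\Theta(M^2)$, so the ratio must be bounded pointwise.
\end{itemize}
Whether $\sqrt{\lambda_{\max}(X)/\varepsilon}\,\norm{\Xi_0}$ ends up as $\kappa^2$ rather than a higher power depends on cancellations among these factors that you never exhibit.

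The paper's sketch supplies the missing device. First, it establishes a pointwise margin $-\Phi_{\tau,q}(\theta)\ge\eta_c^\Phi$ that is uniform in $q$ and $M$. Second, it normalizes the residual by the half-width, $\widehat w_k=\widehat v_k/\omega_q$ with $\omega_q=(\kappa-1)/2$. The nonlinearity then obeys the $\kappa$-free bound $\norm{\widehat w_k}\le\norm{\widehat y_k}$, and all condition-number dependence sits in the linear relation $\widehat u_k=m_q\widehat y_k+\omega_q\widehat w_k$. It then derives gain bounds for the recursion driven by $\widehat w$ and tracks the powers of $\omega_q$ explicitly. Without something of this kind, the exponents in \eqref{eq:main-blackbox} are asserted (``should yield'') rather than derived.
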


\begin{proof}
The result follows by keeping quantitative track of the constants in the
asymptotic analysis underlying Theorem~\ref{thm:main-nonexplicit-constants}.
We give the main estimates and indicate how they enter the argument; the
omitted calculations consist of explicit, but lengthy, bounds on the
coefficients appearing in the stability and frequency-domain estimates.

Work in the normalization~\eqref{eq:norm}--\eqref{eq:params}, translate the
minimizer to the origin, and write
\[
q=\kappa^{-1/2},
\qquad
M=2\tau+1,
\qquad
\rho=1-\frac{cq}{M}.
\]
We first quantify the two hypotheses \textbf{(H1)} and \textbf{(H2)} used in
the proof of Theorem~\ref{thm:main-nonexplicit-constants}.

For the midpoint-stability condition, a quantitative perturbation argument
for the roots of the midpoint polynomial \(P_{\tau,q}\) shows that, whenever 
$
0<q\leq q_{\rm mid}(\tau,c),
$
where
\[
q_{\rm mid}(\tau,c)
:=
\min\left\{
\bar q,\frac{1}{80M^2}
\right\},
\qquad
\bar q:=\frac{M}{2M+1},
\]
every root of \(P_{\tau,q}\) lies in the disk
$
\{z\in\mathbb C:|z|<\rho\}.
$
Thus \textbf{(H1)} holds. In particular,
$
q_{\rm mid}
=
\Theta(M^{-2}).
$

We next quantify the frequency-domain condition. After some lengthy but trivial computations, one can show that \eqref{ineq-Ntauq-expansion} holds for 
$
 q_{\rm fdi}
=
\Theta_c(M^{-4}),
$
such that, for every
\(0<q\leq q_{\rm fdi}\),
\[
-\Phi_{\tau,q}(\theta)
\geq
\eta_c^\Phi,
\qquad
0\leq\theta\leq\pi,
\]
where
\[
\eta_c^\Phi
:=
\min\left\{
\frac{c(1-c)^2}{72},
\frac{1}{54\pi^4}
\right\}>0.
\]
Hence \textbf{(H2)} holds uniformly over the full frequency range.

Consequently, with
\[
q_0(\tau,c)
:=
\min\left\{
q_{\rm mid}(\tau,c),
q_{\rm fdi}(\tau,c)
\right\},
\]
both \textbf{(H1)} and \textbf{(H2)} hold whenever
\(0<q\leq q_0(\tau,c)\). Since the frequency-domain restriction is
the more stringent one asymptotically,
\[
q_0(\tau,c)
=
\Theta_c(M^{-4}).
\]
\mgb{To control the transient prefactor, recall the normalization~\eqref{eq:norm}, under which the strong-convexity and smoothness constants become $1$ and $\kappa=L/\mu$, respectively. Define
$
m_q:=\frac{1+\kappa}{2},
\qquad
\omega_q:=\frac{\kappa-1}{2},
$
the midpoint and half-width of the interval $[1,\kappa]$. We separate the normalized gradient into a linear component with slope $m_q$ and a remainder by setting
$
\widehat v_k:=\widehat u_k-m_q\widehat y_k.
$
Thus, $\widehat v_k$ measures the deviation of the normalized gradient from its midpoint-slope linear approximation. Smoothness and strong convexity imply
$
\|\widehat v_k\|\leq\omega_q\|\widehat y_k\|.
$
We therefore introduce the rescaled remainder
\[
\widehat w_k:=\frac{\widehat v_k}{\omega_q},
\qquad
\|\widehat w_k\|\leq\|\widehat y_k\|.
\]
This rescaling allows us to work with a bound on the nonlinear remainder that is independent of $\kappa$, while retaining the condition-number dependence explicitly in the linear relation $\widehat u_k=m_q\widehat y_k+\omega_q\widehat w_k$. This separation facilitates sharper estimates of the transient prefactor.}

The uniform FDI margin and midpoint-stability estimate then yield
quantitative gain bounds for the rate-weighted recursion driven by
\(\widehat w\). Tracking the factors of \(\omega_q\) when these normalized
bounds are converted back to the original midpoint variables, and combining
them with the constant-history bound on the initial augmented state, gives
\[
\|x_k-x_\star\|
\leq
C_{\rm cl}^{\rm norm}\rho^k,
\qquad
\forall k\geq0,
\quad
\mbox{with}
\quad
C_{\rm cl}^{\rm norm}
=
\cO_c\!\left(
M^5\kappa^2
\|x_{\rm init}-x_\star\|
\right).
\]

Finally, since \(\kappa=q^{-2}\), the condition
\(q\leq q_0(\tau,c)\) is equivalent to
$
\kappa
\geq
\kappa_0(\tau,c)
:=
\bigl(q_0(\tau,c)\bigr)^{-2}.
$
Therefore,
$
\kappa_0(\tau,c)
=
\cO_c(M^8).
$
Undoing the normalization gives
\begin{equation}\label{eq:proof-unscaled-params}
\alpha
=
\frac{q^2}{M^2\mu}
=
\frac{1}{M^2L},
\qquad
\beta
=
1-\frac{2q}{M}
=
1-\frac{2}{M\sqrt{\kappa}},
\qquad
\rho
=
1-\frac{c}{M\sqrt{\kappa}}.
\end{equation}
Thus the claimed rate, threshold, and prefactor bounds follow.
\end{proof}

\begin{remark}[iteration complexity and lower-bound comparison]\label{rem:lower-bound-comparison}
\sa{Since \(\rho=1-c/(M\sqrt\kappa)\) and $-\log(1-t)\ge t$ for $0<t<1$, we have}
\begin{equation}\label{eq:iteration-count-explicit}
        k\ge\frac{M\sqrt\kappa}{c}
        \log\frac{C_{\rm cl}^{\rm norm}}{\epsilon}
        \quad\Longrightarrow\quad
        \|x_k-x_\star\|\le\epsilon.
\end{equation}
Under constant-history initialization,
\[
        \log C_{\rm cl}^{\rm norm}
        =\cO_c\!\left(\log M+\log\kappa+
        \sa{\log\bigl(1+\|x_{\rm init}-x_\star\|\bigr)}
        \right).
\]
Thus the leading accuracy-dependent term is
$
        \tilde{\cO}\!\left(\tau\sqrt\kappa\log\frac1\epsilon\right),
$
where $\tilde{\cO}$ hides some logarithmic overhead from the explicit prefactor. This matches, up to that logarithmic overhead, the delayed first-order span-model lower-bound scale of Arjevani, Shamir, and Srebro on quadratic instances (Theorem \ref{thm:asr-lower}).
\end{remark}

The contraction rate of Theorem~\ref{thm:main}, together with the polynomial prefactor bound \eqref{eq:main-blackbox}, pins down the iteration complexity of delayed GMM and shows it to be optimal, up to logarithmic factors, on the class for which a matching lower bound is known. \mgb{We summarize these observations in the following corollary.}

\begin{corollary}[iteration complexity and near-optimality]\label{cor:main-complexity}
Fix $\tau\ge1$ and $c\in(0,2/3)$, and let $\kappa\ge\kappa_0(\tau,c)$ as in Theorem~\ref{thm:main}.  For every $f\in\Sml$ and every accuracy $\epsilon>0$, delayed GMM with the parameters of Theorem~\ref{thm:main} \wubtext{and constant-history initialization $x_{-\tau-1}=\cdots=x_0=x_{\rm init}$} attains $\norm{x_k-x_\star}\le\epsilon$ after
\begin{equation}\label{eq:main-complexity}
        k\ \ge\ \frac{(2\tau+1)\sqrt\kappa}{c}\,\log\frac{C}{\epsilon}
\end{equation}
iterations, where $C$ is the prefactor of Theorem~\ref{thm:main}. The prefactor enters only inside the logarithm: by the constant-history bound \eqref{eq:main-blackbox}, $\log C=\cO_c\bigl(\log(2\tau+1)+\log\kappa+\log\norm{x_{\rm init}-x_\star}\bigr)$, so the cost beyond $\tfrac{(2\tau+1)\sqrt\kappa}{c}\log\tfrac1\epsilon$ is logarithmic in $\tau$ and $\kappa$.  This meets the delayed first-order span-model lower bound $\Omega\bigl(\tau\sqrt\kappa\,\log\tfrac1\epsilon\bigr)$ of Arjevani, Shamir, and Srebro on quadratic instances \mgb{(Theorem \ref{thm:asr-lower})} up to a factor logarithmic in $\tau$ (equivalently $M=2\tau+1$) and $\kappa$. %
\end{corollary}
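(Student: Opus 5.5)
The plan is to derive the corollary directly from Theorem~\ref{thm:main}, which supplies the linear bound $\norm{x_k-x_\star}\le C\rho^k$ with $\rho=1-c/(M\sqrt\kappa)$, $M=2\tau+1$. The threshold $\kappa\ge\kappa_0(\tau,c)$ and the constant-history initialization are exactly the hypotheses that theorem requires. The only remaining work is to invert this bound into an iteration count, bound $\log C$, and compare with Theorem~\ref{thm:asr-lower}.

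\emph{Step 1: the iteration count.} Write $t:=c/(M\sqrt\kappa)\in(0,1)$. This holds since $c<2/3$, $M\ge3$ and $\kappa\ge1$. The inequality $\log(1-t)\le -t$ gives $\rho^k\le e^{-tk}$. Hence, whenever $k\ge t^{-1}\log(C/\epsilon)$, we get $C\rho^k\le Ce^{-\log(C/\epsilon)}=\epsilon$, which is \eqref{eq:main-complexity}. This is precisely the argument of Remark~\ref{rem:lower-bound-comparison}, so I would cite it. If $C\le\epsilon$, the bound already holds at $k=0$ and the statement is vacuous.

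\emph{Step 2: the logarithm of the prefactor.} From \eqref{eq:main-blackbox} we have $C\le K_c M^5\kappa^2\norm{x_{\rm init}-x_\star}$ for some constant $K_c$ depending only on $c$. Taking logarithms gives
\[
\log C\le \log K_c+5\log M+2\log\kappa+\log\norm{x_{\rm init}-x_\star},
\]
which is $\cO_c(\log M+\log\kappa+\log\norm{x_{\rm init}-x_\star})$. Here $\log\norm{x_{\rm init}-x_\star}$ should be read as $\log(1+\norm{x_{\rm init}-x_\star})$, as in the remark, to avoid sign issues. Substituting into \eqref{eq:main-complexity} shows that the total count is $\frac{M\sqrt\kappa}{c}\log\frac1\epsilon$ plus $\frac{M\sqrt\kappa}{c}\,\cO_c(\log M+\log\kappa+\ldots)$. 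So the overhead beyond the leading term is a factor logarithmic in $\tau$ and $\kappa$.

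\emph{Step 3: comparison with the lower bound.} Invert \eqref{eq:asr-sc}: reaching error $\epsilon$ requires $k\gtrsim(\sqrt\kappa-1)(\tau+1)\log(1/\epsilon)/5$ for quadratic instances in $\Sml$. The distance-to-minimizer version follows from $F-F_\star\le \frac{L}{2}\norm{x-x_\star}^2$, which only shifts $\log(1/\epsilon)$ by constants and $\log\kappa$. Since $M=2\tau+1\le 2(\tau+1)$, the upper bound is within a constant $(2/c)$ times the lower bound, times the logarithmic factor from Step 2. One caveat to state explicitly is that delayed GMM fits the span model \eqref{eq:span-delay} only after the appropriate index shift and for zero/constant initialization. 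This is the single point needing care. I expect no real obstacle here, since all the substantive content is in Theorem~\ref{thm:main}.
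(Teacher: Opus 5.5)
Your proposal is correct and follows the paper's route: the corollary is read off from Theorem~\ref{thm:main} exactly as in Remark~\ref{rem:lower-bound-comparison}, by inverting the rate via $-\log(1-t)\ge t$, taking the logarithm of the $\cO_c(M^5\kappa^2)$ prefactor, and comparing with \eqref{eq:asr-sc}. Your extra care about converting function-value accuracy to distance, and about the index shift by $\tau$ needed to place delayed GMM inside the span model \eqref{eq:span-delay} (it already uses $\nabla f(x_{\rm init})$ at its first update), addresses a point the paper passes over; one harmless slip is that, because of the factor $5$ in \eqref{eq:asr-sc}, the upper-to-lower ratio is about $(10/c)\sqrt\kappa/(\sqrt\kappa-1)$ rather than $2/c$.
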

\section{Acceleration under delay for smooth convex objectives}\label{sec:convex-restart}

Our strongly convex analysis can be extended to smooth convex objectives that are not strongly convex. Let $S_{0,L}$ denote the class of functions that are $L$-smooth and convex (but not necessarily strongly convex) \sa{on $\R^d$}. Let $f\in S_{0,L}$, assume that \sa{$\arg\min_x f(x)$} is nonempty, and fix \sa{an arbitrary} $x_\star\in\sa{\arg\min_x f(x)}$.  Write
\[
        R_\star:=\norm{x_0-x_\star}.
\]
The quantity $R_\star$ will appear only in the convergence bound, and it is not assumed to be known to the algorithm. %
\sa{We consider the Catalyst framework~\cite{lin2015} and state}
the standard relative-error accelerated proximal-point estimate.

\begin{lemma}[relative-error accelerated proximal point]\label{lem:convex-catalyst-outer}
\sa{Let $f\in S_{0,L}$ %
and $\lambda>0$. Given some arbitrary $v_0$,} set $x_0=v_0$ and $a_0=1$.  For each $s\ge1$, define
\begin{equation}\label{eq:catalyst-subproblem}
        H_s(x):=f(x)+\frac{\lambda}{2}\norm{x-v_{s-1}}^2,
        \qquad
        H_s^\star:=\min_x H_s(x),
\end{equation}
and suppose that $x_s$ satisfies the relative accuracy condition
\begin{equation}\label{eq:catalyst-relative-criterion}
        H_s(x_s)-H_s^\star
        \le
        \frac{\delta_s\lambda}{2}\norm{x_s-v_{s-1}}^2,
        \qquad
        \delta_s:=\frac{1}{(s+1)^2}.
\end{equation}
Choose $a_s\in(0,1)$ as the positive solution of
\begin{equation}\label{eq:catalyst-a-recurrence}
        a_s^2=(1-a_s)a_{s-1}^2,
\end{equation}
and set
\begin{equation}\label{eq:catalyst-extrapolation}
        \theta_s
        :=
        \frac{a_{s-1}(1-a_{s-1})}{a_{s-1}^2+a_s},
        \qquad
        v_s:=x_s+\theta_s(x_s-x_{s-1}).
\end{equation}
Then, for every $s\ge1$,
\begin{equation}\label{eq:catalyst-outer-rate}
        f(x_s)-f(x_\star)
        \le
        \frac{4\lambda R_\star^2}{(s+1)^2}.
\end{equation}
\end{lemma}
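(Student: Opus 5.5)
The plan is to follow the standard estimate-sequence analysis of inexact accelerated proximal point (Catalyst, Lin--Mairal--Harchaoui), adapted to the relative error criterion \eqref{eq:catalyst-relative-criterion}. First I would record the key one-step inequality for an inexact proximal step. Let $x_s^\star:=\arg\min H_s$. Since $H_s$ is $\lambda$-strongly convex, the criterion gives
\[
H_s(x)\ge H_s(x_s)-\varepsilon_s+\tfrac{\lambda}{2}\|x-x_s^\star\|^2,\qquad \varepsilon_s:=\tfrac{\delta_s\lambda}{2}\|x_s-v_{s-1}\|^2 .
\]
Combining this with $\|x-x_s^\star\|^2\ge(1-\delta_s)\|x-x_s\|^2-(\delta_s^{-1}-1)\|x_s-x_s^\star\|^2$ and $\|x_s-x_s^\star\|^2\le 2\varepsilon_s/\lambda$ yields an approximate prox inequality of the form
\[
f(x)\ge f(x_s)+\lambda\langle v_{s-1}-x_s,x-x_s\rangle+\tfrac{\lambda}{2}(1-c\,\delta_s)\|x-x_s\|^2-(\text{terms in }\delta_s\|x_s-v_{s-1}\|^2).
\]
The key feature of the relative criterion is that the error terms are proportional to $\|x_s-v_{s-1}\|^2$, which can be absorbed by the $\tfrac{\lambda}{2}\|x_s-v_{s-1}\|^2$ term appearing in the standard descent identity rather than accumulating.

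Next I would introduce the auxiliary sequence $z_s:=x_{s-1}+a_{s-1}^{-1}(x_s-x_{s-1})$, which realizes the extrapolation \eqref{eq:catalyst-extrapolation} through $v_s=(1-a_s)x_s+a_s z_s$. A direct computation using \eqref{eq:catalyst-a-recurrence} should confirm this equals $x_s+\theta_s(x_s-x_{s-1})$. I would then apply the approximate prox inequality at $x=(1-a_{s-1})x_{s-1}+a_{s-1}x_\star$ and use convexity of $f$. This gives a Lyapunov recursion
\[
\Phi_s:=\frac{f(x_s)-f_\star}{a_{s-1}^2}+\frac{\lambda}{2}\|z_s-x_\star\|^2 .
\]
With the specific choice $\delta_s=(s+1)^{-2}$, I expect $\Phi_s\le(1+\delta_s)\Phi_{s-1}$, possibly with a factor like $(1-\delta_s)^{-1}$.

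Then I would use $\prod_{s\ge1}(1+\delta_s)\le\prod(1+1/(s+1)^2)<2$ to bound $\Phi_s\le 2\Phi_0$, where $\Phi_0$ is essentially $\tfrac{\lambda}{2}R_\star^2$ (from $a_0=1$, $z_0=x_0$). The standard estimate $a_{s-1}\le 2/(s+1)$ follows by induction from \eqref{eq:catalyst-a-recurrence}, via $1/a_s-1/a_{s-1}\ge 1/2$. Together these give $f(x_s)-f_\star\le a_{s-1}^2\cdot 2\Phi_0\le 4\lambda R_\star^2/(s+1)^2$.

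The main obstacle is the first step: making the absorption of the inexactness terms exact. The cross term $\lambda\langle v_{s-1}-x_s,x-x_s\rangle$ evaluated at the convex combination produces $-\tfrac{\lambda}{2}\|x_s-v_{s-1}\|^2$. I must check that the negative contributions proportional to $\delta_s\|x_s-v_{s-1}\|^2$ and $\delta_s\|x-x_s\|^2$ fit within it and within a multiplicative $(1+\delta_s)$ on the $\|z-x_\star\|^2$ term, so that the constant lands exactly at $4$. If the constants do not close, I would fall back on the cited Catalyst result~\cite{lin2018catalyst}, which states precisely this relative-error criterion bound.
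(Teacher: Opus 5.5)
Your overall plan (a self-contained estimate-sequence argument with $z_s$, $\Phi_s$, and the bound $a_{s-1}\le 2/(s+1)$) works. However, the displayed approximate-prox inequality is false as written, and it is exactly the step everything rests on. For merely convex $f$, even an exact proximal step ($\delta_s=0$) yields only $f(x)\ge f(x_s)+\lambda\langle v_{s-1}-x_s,x-x_s\rangle$. This holds with equality when $f$ is affine, so there is no room for an extra $+\frac{\lambda}{2}(1-c\delta_s)\|x-x_s\|^2$. Pushed through the three-point identity, that term would put roughly twice the correct weight on $\|z_s-x_\star\|^2$.

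The fix uses your own ingredients, kept in $H_s$-form, with the Young parameter taken equal to $\delta_s$. Combine $H_s(x)\ge H_s(x_s)-\varepsilon_s+\frac{\lambda}{2}\|x-x_s^\star\|^2$, the bound $\|x-x_s^\star\|^2\ge(1-\delta_s)\|x-x_s\|^2-(\delta_s^{-1}-1)\|x_s-x_s^\star\|^2$, and $\|x_s-x_s^\star\|^2\le 2\varepsilon_s/\lambda$. This gives $H_s(x)\ge H_s(x_s)+\frac{\lambda}{2}(1-\delta_s)\|x-x_s\|^2-\varepsilon_s/\delta_s$. Since $\varepsilon_s/\delta_s=\frac{\lambda}{2}\|x_s-v_{s-1}\|^2$, it cancels exactly the proximal term inside $H_s(x_s)$:
\[
f(x)+\tfrac{\lambda}{2}\|x-v_{s-1}\|^2\ \ge\ f(x_s)+\tfrac{\lambda}{2}(1-\delta_s)\|x-x_s\|^2\qquad\text{for all }x .
\]
No cross term or three-point identity is needed.

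At $x=(1-a_{s-1})x_{s-1}+a_{s-1}x_\star$ one has $x-v_{s-1}=a_{s-1}(x_\star-z_{s-1})$ and $x-x_s=a_{s-1}(x_\star-z_s)$. Convexity, division by $a_{s-1}^2$, the identity $(1-a_{s-1})/a_{s-1}^2=1/a_{s-2}^2$, and $f(x_s)\ge f_\star$ then give $(1-\delta_s)\Phi_s\le\Phi_{s-1}$. Here $\Phi_0:=\frac{\lambda}{2}R_\star^2$, because the term $(1-a_0)(f(x_0)-f_\star)$ vanishes. So the per-step factor is $(1-\delta_s)^{-1}$, which exceeds $1+\delta_s$. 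Your estimate $\prod(1+\delta_s)<2$ therefore does not cover it, but the telescoping product $\prod_{j=1}^s(1-\delta_j)^{-1}=\frac{2(s+1)}{s+2}<2$ does, and the constant $4$ follows.

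With this repair, your route is a genuine alternative to the paper's proof, which re-derives nothing. The paper identifies the scheme with Algorithm~2 of Lin--Mairal--Harchaoui ($\kappa\leftrightarrow\lambda$, $\mu=q=0$, $\alpha_s=a_s$, stopping rule (C2)). It quotes their bound $f(x_s)-f_\star\le\frac{A_{s-1}}{\prod_{j=1}^s(1-\delta_j)}\frac{\lambda}{2}R_\star^2$ with $A_{s-1}=\prod_{j=1}^{s-1}(1-a_j)$, and finishes with their Lemma~4 ($A_{s-1}\le 4/(s+1)^2$) and the same telescoping product. Since the recurrence telescopes to $A_{s-1}=a_{s-1}^2$, your induction $1/a_s-1/a_{s-1}\ge\frac12$ is exactly their Lemma~4, and your Lyapunov recursion is the content of their Theorem~7.

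The citation route is shorter but depends on matching notation with an external result. Your route is self-contained, makes transparent why the relative criterion costs only the factor $1-\delta_s$, and shows that $L$-smoothness plays no role. Your extrapolation claim is also correct: by the recurrence, $\theta_s=a_s(1-a_{s-1})/a_{s-1}$, so $v_s=(1-a_s)x_s+a_sz_s$.
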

\begin{proof}
The result is a direct specialization of Theorem~7 and Proposition~9 of Lin, Mairal, and Harchaoui~\cite{lin2018catalyst}. We verify the correspondence and constants for completeness.

In the notation of \cite{lin2018catalyst}, the proximal regularization parameter denoted by $\kappa$ corresponds to $\lambda$ here. Since the outer objective is convex, we take $\mu=0$. 
Algorithm~2 of \cite{lin2018catalyst} therefore applies with the algorithm parameters $a_0=1$, $\alpha_s=a_s$ and $
q=\frac{\mu}{\mu+\lambda}=0.
$, and equations~(10)--(11) of Algorithm 2 reduce exactly to
\eqref{eq:catalyst-a-recurrence}--\eqref{eq:catalyst-extrapolation}. Likewise, their relative stopping criterion (C2) coincides with
\eqref{eq:catalyst-relative-criterion}.

Theorem~7 of \cite{lin2018catalyst} yields
\beq
f(x_s)-f(x_\star)
\le
\frac{A_{s-1}}
{\prod_{j=1}^s(1-\delta_j)}
\frac{\lambda}{2}\norm{x_0-x_\star}^2,
\label{ineq-thm7-bound}
\eeq
where
$
A_s:=\prod_{j=1}^s(1-a_j),
$ and $A_0:=1$. By \cite[Lemma 4]{lin2018catalyst},

$$
A_s
\le
\frac{4}
{\left(2+s\sqrt{\gamma_0/\lambda}\right)^2},
$$
where
$
\gamma_0
:=
(\lambda+\mu)a_0(a_0-q).
$
Since $\mu=0$, $q=0$, and $a_0=1$, we obtain $\gamma_0=\lambda$, and hence
$
A_s
\le
{4}/{(s+2)^2}.
$
For the choice
$
\delta_j=\frac{1}{(j+1)^2},
$
the product in the denominator telescopes:
$$
\prod_{j=1}^s(1-\delta_j)
=
\prod_{j=1}^s
\frac{j}{(j+1)} \cdot \frac{(j+2)}{(j+1)}
=
\frac{s+2}{2(s+1)}
\ge
\frac12.
$$
Substituting these estimates into \eqref{ineq-thm7-bound}, we  conclude.
\end{proof}

We now instantiate each proximal subproblem with delayed GMM.  Fix $\tau\ge1$ and $c\in(0,2/3)$, and recall
$
        M:=2\tau+1.
$
For a regularization parameter $\lambda>0$, define the condition number of the proximal subproblems (condition number of the regularized function $H_s$) by
\begin{equation}\label{eq:catalyst-inner-kappa}
        \chi_\lambda
        :=
        \frac{L+\lambda}{\lambda}
        =
        1+\frac{L}{\lambda}.
\end{equation}
Assume $\chi_\lambda\ge\kappa_0(\tau,c)$, where $\kappa_0(\tau,c)$ is the threshold of Theorem~\ref{thm:main}.  By Theorem~\ref{thm:main}, under constant-history initialization there is a dimensionless prefactor $B_{\tau,c}(\chi_\lambda)$ such that delayed GMM applied to any $\lambda$-strongly convex and $(L+\lambda)$-smooth objective satisfies
\begin{equation}\label{eq:catalyst-inner-distance-rate}
        \norm{z_t-z_\star}
        \le
        B_{\tau,c}(\chi_\lambda)
        \left(1-\frac{c}{M\sqrt{\chi_\lambda}}\right)^t
        \norm{z_0-z_\star}.
\end{equation}
Moreover, Theorem~\ref{thm:main} gives
\begin{equation}\label{eq:catalyst-inner-prefactor}
        B_{\tau,c}(\chi_\lambda)
        =
        \cO_c\!\left(M^5\chi_\lambda^2\right).
\end{equation}

At outer iteration $s$, initialize the entire inner delayed history at the current proximal center $v_{s-1}$ and apply delayed GMM to $H_s$ in \eqref{eq:catalyst-subproblem} with parameters
\begin{equation}\label{eq:catalyst-inner-parameters}
        \alpha_{\rm in}
        =
        \frac{1}{M^2(L+\lambda)},
        \qquad
        \beta_{\rm in}
        =
        1-\frac{2}{M\sqrt{\chi_\lambda}},
        \qquad
        \nu_{\rm in}
        =
        M-\frac12.
\end{equation}
Thus, an inner delayed gradient evaluation at a query point $u$ uses
\begin{equation}\label{eq:catalyst-inner-gradient}
        \nabla H_s(u)
        =
        \nabla f(u)+\lambda(u-v_{s-1}).
\end{equation}
More, specifically \DREAM~as an inner solver for the subproblem \eqref{eq:catalyst-subproblem} consists of the updates
$$
y_j
=
z_j+\nu_{\rm in}(z_j-z_{j-1})
\quad
\mbox{followed by}
\quad
z_{j+1}
=
z_j
-
\alpha_{\rm in}\nabla H_s(y_{j-\tau})
+
\beta_{\rm in}(z_j-z_{j-1}),
$$
with initialization 
$z_{-\tau-1}=\dots=z_{-1}=z_0=v_{s-1}$.

\begin{theorem}[accelerated delayed rate for smooth convex objectives]\label{thm:convex-regularized}
Fix $\tau\ge1$ and $c\in(0,2/3)$, and choose $\lambda>0$ such that
\[
        \chi_\lambda=1+\frac{L}{\lambda}\ge\kappa_0(\tau,c).
\]
For each outer iteration $s\ge1$, run delayed GMM on $H_s$ with the parameters \eqref{eq:catalyst-inner-parameters}, with constant inner history initialized at $v_{s-1}$, for
\begin{equation}\label{eq:catalyst-inner-budget}
        T_s
        :=
        \left\lceil
        \frac{M\sqrt{\chi_\lambda}}{2c}
        \log\!\left(
        4\chi_\lambda B_{\tau,c}(\chi_\lambda)^2(s+1)^2
        \right)
        \right\rceil
\end{equation}
inner iterations, and denote the resulting point by $x_s$.  Update $a_s$ and $v_s$ according to \eqref{eq:catalyst-a-recurrence}--\eqref{eq:catalyst-extrapolation}.

Then the algorithm does not require any a priori bound on $R_\star=\norm{x_0-x_\star}$, and for every $s\ge1$,
\begin{equation}\label{eq:convex-catalyst-rate}
        f(x_s)-f(x_\star)
        \le
        \frac{4\lambda R_\star^2}{(s+1)^2}.
\end{equation}
\mgb{Consequently, if
\begin{equation}\label{eq:convex-catalyst-outer-count}
        K_\epsilon
        :=
        \max\left\{1,
        \left\lceil
        2\sqrt{\frac{\lambda R_\star^2}{\epsilon}}
        \right\rceil
        \right\},
\end{equation}
then $f(x_{K_\epsilon})-f(x_\star)\le\epsilon$. The total number
$
        N_\epsilon:=\sum_{s=1}^{K_\epsilon}T_s
$
of delayed-GMM inner iterations satisfies
\begin{equation}\label{eq:convex-catalyst-complexity-general}
\begin{split}
        N_\epsilon
        \le{}
        K_\epsilon
        &+
        \frac{M\sqrt{\chi_\lambda}}{2c}
        K_\epsilon
        \left[
        \log\!\left(4\chi_\lambda B_{\tau,c}(\chi_\lambda)^2\right)
        +2\log(K_\epsilon+1)
        \right].
\end{split}
\end{equation}
Since
\[
        K_\epsilon
        \le
        1+2\sqrt{\frac{\lambda R_\star^2}{\epsilon}}
        \qquad\text{and}\qquad
        \lambda \cdot \chi_\lambda=L+\lambda,
\]
it follows that
\begin{equation}\label{eq:convex-catalyst-complexity-tilde}
        N_\epsilon
        =
        \widetilde{\cO}\!\left(
        (2\tau+1)
        \left[
        \sqrt{\chi_\lambda}
        +
        \sqrt{\frac{(L+\lambda)R_\star^2}{\epsilon}}
        \right]
        \right),
\end{equation}
where $\widetilde{\cO}$ hides logarithmic factors in $c,\tau$, $\chi_\lambda$, and $\lambda R_\star^2/\epsilon$.}

\end{theorem}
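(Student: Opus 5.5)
The plan is to verify the relative accuracy condition \eqref{eq:catalyst-relative-criterion} for every outer iteration. Lemma~\ref{lem:convex-catalyst-outer} then gives \eqref{eq:convex-catalyst-rate} directly, and the complexity bounds follow by summing the budgets $T_s$.

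\textbf{Step 1: the inner accuracy condition.} Fix $s\ge1$. The subproblem $H_s$ is $\lambda$-strongly convex and $(L+\lambda)$-smooth, so its condition number is $\chi_\lambda\ge\kappa_0(\tau,c)$. The inner parameters \eqref{eq:catalyst-inner-parameters} are exactly those of Theorem~\ref{thm:main} for this subproblem, and the history is constant at $z_0=v_{s-1}$. Let $z_\star$ be the unique minimizer of $H_s$ and set $\rho_{\rm in}=1-c/(M\sqrt{\chi_\lambda})$. Then \eqref{eq:catalyst-inner-distance-rate} gives
\[
\norm{x_s-z_\star}\le B\rho_{\rm in}^{T_s}\norm{v_{s-1}-z_\star},\qquad B:=B_{\tau,c}(\chi_\lambda).
\]
By smoothness,
\[
H_s(x_s)-H_s^\star\le\tfrac{L+\lambda}{2}\norm{x_s-z_\star}^2.
\]
To control the right side of \eqref{eq:catalyst-relative-criterion} from below, use the triangle inequality:
\[
\norm{x_s-v_{s-1}}\ge\norm{v_{s-1}-z_\star}-\norm{x_s-z_\star}\ge(1-B\rho_{\rm in}^{T_s})\norm{v_{s-1}-z_\star}.
\]
Set $\epsilon_s:=B\rho_{\rm in}^{T_s}$. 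It then suffices to show
\[
(L+\lambda)\,\epsilon_s^2\le\delta_s\lambda(1-\epsilon_s)^2,
\qquad\text{equivalently}\qquad
\chi_\lambda\epsilon_s^2\le(1-\epsilon_s)^2/(s+1)^2.
\]
Using $\log(1/\rho_{\rm in})\ge c/(M\sqrt{\chi_\lambda})$ and the definition \eqref{eq:catalyst-inner-budget}, we get
\[
\rho_{\rm in}^{2T_s}\le\bigl(4\chi_\lambda B^2(s+1)^2\bigr)^{-1},
\qquad\text{so}\qquad
\epsilon_s^2\le\frac{1}{4\chi_\lambda(s+1)^2}.
\]
In particular $\epsilon_s\le1/2$, so $(1-\epsilon_s)^2\ge1/4$, and the required inequality holds. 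If $v_{s-1}=z_\star$, then $x_s=z_\star$ (the constant history is a fixed point) and the condition is trivial. Nothing in this argument uses $R_\star$, so the algorithm needs no a priori bound on it.

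\textbf{Step 2: complexity.} Lemma~\ref{lem:convex-catalyst-outer} yields \eqref{eq:convex-catalyst-rate}. Taking $K_\epsilon$ as in \eqref{eq:convex-catalyst-outer-count} ensures $4\lambda R_\star^2/(K_\epsilon+1)^2\le\epsilon$. For the total count, bound each ceiling by one plus its argument and each $\log(s+1)$ by $\log(K_\epsilon+1)$, then sum over $s\le K_\epsilon$; this gives \eqref{eq:convex-catalyst-complexity-general}. Finally, substitute $K_\epsilon\le1+2\sqrt{\lambda R_\star^2/\epsilon}$, $\lambda\chi_\lambda=L+\lambda$, and $\log B=\cO_c(\log M+\log\chi_\lambda)$ from \eqref{eq:catalyst-inner-prefactor}. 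This gives \eqref{eq:convex-catalyst-complexity-tilde}, because $\sqrt{\chi_\lambda}\sqrt{\lambda R_\star^2/\epsilon}=\sqrt{(L+\lambda)R_\star^2/\epsilon}$.

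\textbf{Main obstacle.} The delicate step is Step~1. The criterion is relative to $\norm{x_s-v_{s-1}}$, not to the unknown distance $\norm{v_{s-1}-z_\star}$, so one must lower-bound $\norm{x_s-v_{s-1}}$. The factor $4$ in the budget $T_s$ is exactly what makes $\epsilon_s\le1/2$ and closes that gap.
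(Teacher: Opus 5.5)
Your proposal is correct and follows essentially the same route as the paper: it verifies the relative criterion \eqref{eq:catalyst-relative-criterion} using the inner distance bound, $(L+\lambda)$-smoothness, and a triangle-inequality lower bound on $\norm{x_s-v_{s-1}}$, with the factor $4$ in $T_s$ closing the gap, and then sums the budgets $T_s$ exactly as the paper does. The only minor difference is how you bound $\norm{x_s-z_\star}$: you take it directly from the inner contraction, which is slightly tighter and shorter, whereas the paper first bounds $H_s(x_s)-H_s^\star$ and then returns to $\norm{x_s-p_s}$ through $\lambda$-strong convexity of $H_s$.
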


\begin{proof}
Fix an outer iteration $s$ and let
\[
        p_s:=\arg\min_x H_s(x).
\]
Since $H_s$ is $\lambda$-strongly convex and $(L+\lambda)$-smooth, its condition number is $\chi_\lambda$.  Theorem~\ref{thm:main}, applied to the inner run initialized at $z_0=v_{s-1}$ with constant initialization (with $z_{-\tau-1}= \dots =z_{-1}=z_0=v_{s-1}$) gives
\begin{equation}\label{eq:catalyst-proof-distance}
        \norm{z_t-p_s}
        \le
        B_{\tau,c}(\chi_\lambda)
        \rho_\lambda^t
        \norm{v_{s-1}-p_s},
        \qquad
        \rho_\lambda
        :=
        1-\frac{c}{M\sqrt{\chi_\lambda}}.
\end{equation}
By $(L+\lambda)$-smoothness and $\nabla H_s(p_s)=0$,
\begin{equation}\label{eq:catalyst-proof-gap-from-distance}
\begin{split}
        H_s(z_t)-H_s^\star
        \le
        \frac{L+\lambda}{2}\norm{z_t-p_s}^2
        \le
        \frac{\lambda\chi_\lambda}{2}
        B_{\tau,c}(\chi_\lambda)^2
        \rho_\lambda^{2t}
        \norm{v_{s-1}-p_s}^2.
\end{split}
\end{equation}
Since $1-u\le e^{-u}$ for $u\in(0,1)$, the choice \eqref{eq:catalyst-inner-budget} implies
$
        B_{\tau,c}(\chi_\lambda)^2
        \rho_\lambda^{2T_s}
        \le
       {\delta_s}/{(4\chi_\lambda)}
$
with $\delta_s = 1/((s+1)^2)$. Hence, 
\begin{equation}\label{eq:catalyst-proof-gap-small}
        H_s(x_s)-H_s^\star
        \le
        \frac{\delta_s\lambda}{8}
        \norm{v_{s-1}-p_s}^2,
\end{equation}
where $x_s = z_{T_s}$. Strong convexity also gives
\[
        \norm{x_s-p_s}^2
        \le
        \frac{2}{\lambda}\bigl(H_s(x_s)-H_s^\star\bigr)
        \le
        \frac{\delta_s}{4}\norm{v_{s-1}-p_s}^2.
\]
Because $\delta_s\le1$,
\[
        \norm{x_s-v_{s-1}}
        \ge
        \norm{v_{s-1}-p_s}-\norm{x_s-p_s}
        \ge
        \frac12\norm{v_{s-1}-p_s}.
\]
Combining this inequality with \eqref{eq:catalyst-proof-gap-small} yields
\[
        H_s(x_s)-H_s^\star
        \le
        \frac{\delta_s\lambda}{2}
        \norm{x_s-v_{s-1}}^2.
\]
Thus the relative accuracy condition \eqref{eq:catalyst-relative-criterion} holds at every outer iteration.  Lemma~\ref{lem:convex-catalyst-outer} therefore gives \eqref{eq:convex-catalyst-rate}.  The choice \eqref{eq:convex-catalyst-outer-count} then implies $f(x_{K_\epsilon})-f(x_\star)\le\epsilon$.

It remains to count inner iterations.  Summing \eqref{eq:catalyst-inner-budget} and using $s+1\le K_\epsilon+1$ gives
\[
\begin{split}
        N_\epsilon
        &\le
        K_\epsilon
        +
        \frac{M\sqrt{\chi_\lambda}}{2c}
        \sum_{s=1}^{K_\epsilon}
        \left[
        \log\!\left(4\chi_\lambda B_{\tau,c}(\chi_\lambda)^2\right)
        +2\log(s+1)
        \right]\\
        &\le
        K_\epsilon
        +
        \frac{M\sqrt{\chi_\lambda}}{2c}
        K_\epsilon
        \left[
        \log\!\left(4\chi_\lambda B_{\tau,c}(\chi_\lambda)^2\right)
        +2\log(K_\epsilon+1)
        \right],
\end{split}
\]
which is \eqref{eq:convex-catalyst-complexity-general}.  Since
\[
        \sqrt{\chi_\lambda} \cdot \sqrt{\lambda}
        =
        \sqrt{L+\lambda},
\]
substituting \eqref{eq:convex-catalyst-outer-count} gives \eqref{eq:convex-catalyst-complexity-tilde}.
\end{proof}\mgb{
\begin{corollary}[Iteration complexity and near-optimality]\label{cor:main-complexity-convex}
Fix $\tau\ge1$ and $c\in(0,2/3)$, and let $f\in S_{0,L}$. Define
\beq\label{eq-catalyst-param-choice}
        \bar\kappa_0(\tau,c)
        :=
        \max\{2,\kappa_0(\tau,c)\},
        \qquad
        \lambda
        :=
        \frac{L}{\bar\kappa_0(\tau,c)-1}.
\eeq
Then $\chi_\lambda=\bar\kappa_0(\tau,c)$ and $L+\lambda\le 2L$. Hence, by \eqref{eq:convex-catalyst-complexity-tilde}, for every target accuracy $\epsilon>0$, the proximal-point-type method of Theorem~\ref{thm:convex-regularized}, with delayed GMM applied to the inner problems, produces an outer iterate $x_{K_\epsilon}$ satisfying
\[
        f(x_{K_\epsilon})-f(x_\star)\le\epsilon
\]
using a total of
\begin{equation}\label{eq:main-complexity-convex}
        N_\epsilon
        =
        \widetilde{\cO}\!\left(
        \tau
        \left[
        \sqrt{\bar\kappa_0(\tau,c)}
        +
        \sqrt{\frac{LR_\star^2}{\epsilon}}
        \right]
        \right)
\end{equation}
delayed-GMM inner iterations. Moreover, in the high-accuracy regime
\begin{equation}\label{eq:convex-catalyst-high-accuracy}
        \epsilon
        \le
        \lambda R_\star^2
        =
        \frac{LR_\star^2}{\bar\kappa_0(\tau,c)-1},
\end{equation}
the first term in \eqref{eq:main-complexity-convex} is absorbed by the second, and the complexity simplifies to
\begin{equation}\label{eq:convex-complexity}
        N_\epsilon
        =
        \widetilde{\cO}\!\left(
        \tau\sqrt{\frac{LR_\star^2}{\epsilon}}
        \right).
\end{equation}
Thus, in this regime, the iteration complexity matches, up to logarithmic factors, the delayed first-order span-model lower-bound scale
\[
        \Omega\!\left(
        \tau\sqrt{\frac{LR_\star^2}{\epsilon}}
        \right)
\]
of Arjevani, Shamir, and Srebro on quadratic instances; see Theorem~\ref{thm:asr-lower}.
\end{corollary}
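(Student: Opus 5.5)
The corollary follows from Theorem~\ref{thm:convex-regularized} once its hypothesis and the constants in \eqref{eq:convex-catalyst-complexity-tilde} are checked for the choice \eqref{eq-catalyst-param-choice}; I would argue directly from that theorem.

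\emph{Admissibility of $\lambda$.} Since $\bar\kappa_0\ge2$, we have $\lambda=L/(\bar\kappa_0-1)>0$ and
\[
\chi_\lambda=1+\frac{L}{\lambda}=1+(\bar\kappa_0-1)=\bar\kappa_0\ge\kappa_0(\tau,c).
\]
Hence Theorem~\ref{thm:convex-regularized} applies and yields $f(x_{K_\epsilon})-f(x_\star)\le\epsilon$ with the inner count \eqref{eq:convex-catalyst-complexity-tilde}. In addition, $\lambda\le L$ because $\bar\kappa_0-1\ge1$, so $L+\lambda\le2L$.

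\emph{General complexity bound.} Substituting $\chi_\lambda=\bar\kappa_0$ and $(L+\lambda)R_\star^2/\epsilon\le 2LR_\star^2/\epsilon$ into \eqref{eq:convex-catalyst-complexity-tilde}, and absorbing $2\tau+1\le3\tau$ and the factor $\sqrt2$ into constants, gives \eqref{eq:main-complexity-convex}. The hidden logarithms come from \eqref{eq:convex-catalyst-complexity-general}. By \eqref{eq:catalyst-inner-prefactor}, $\log(4\chi_\lambda B_{\tau,c}(\chi_\lambda)^2)=\cO_c(\log M+\log\bar\kappa_0)$, and $\log(K_\epsilon+1)=\cO(\log(2+\lambda R_\star^2/\epsilon))$. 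So they are indeed logarithmic in $\tau$, $\bar\kappa_0$ (itself polynomial in $M$ by Theorem~\ref{thm:main}), and $LR_\star^2/\epsilon$. One point needs care: the $K_\epsilon\le1$ contribution must be multiplied by $M\sqrt{\chi_\lambda}$, which is exactly where the $\sqrt{\bar\kappa_0}$ term comes from. This is why I would work from \eqref{eq:convex-catalyst-complexity-general} rather than only the $\widetilde\cO$ form.

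\emph{High-accuracy regime.} If $\epsilon\le\lambda R_\star^2$, then
\[
\sqrt{\bar\kappa_0}=\sqrt{\chi_\lambda}\le\sqrt{\chi_\lambda\lambda R_\star^2/\epsilon}=\sqrt{(L+\lambda)R_\star^2/\epsilon}\le\sqrt{2LR_\star^2/\epsilon}.
\]
The first term in \eqref{eq:main-complexity-convex} is therefore dominated by the second, which gives \eqref{eq:convex-complexity}. Comparison with the convex case of Theorem~\ref{thm:asr-lower} is then immediate: setting the bound $\frac{L(\tau+1)^2R^2}{45k^2}\le\epsilon$ forces $k=\Omega(\tau\sqrt{LR^2/\epsilon})$.

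\emph{Where the care lies.} The only delicate step is the bookkeeping of the $\widetilde\cO$: making sure no hidden factor is polynomial rather than logarithmic. In particular, $B_{\tau,c}$ enters only inside a logarithm, and the ratio $\lambda R_\star^2/\epsilon$ appears only logarithmically and under a square root. I would make this explicit by writing out \eqref{eq:convex-catalyst-complexity-general} with $K_\epsilon\le1+2\sqrt{\lambda R_\star^2/\epsilon}$ and expanding the product.
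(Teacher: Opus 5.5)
Your proposal is correct and follows essentially the same route as the paper: you check $\chi_\lambda=\bar\kappa_0\ge\kappa_0(\tau,c)$ and $L+\lambda\le 2L$, substitute into the complexity bound of Theorem~\ref{thm:convex-regularized}, and in the regime $\epsilon\le\lambda R_\star^2$ absorb $\sqrt{\bar\kappa_0}$ into $\sqrt{2LR_\star^2/\epsilon}$. The differences are cosmetic: you obtain the absorption via $\chi_\lambda\lambda=L+\lambda$ instead of the paper's $\bar\kappa_0\le 2(\bar\kappa_0-1)$, giving the same $\sqrt2$ constant, and you track the logarithmic factors explicitly through the non-$\widetilde{\cO}$ bound, which the paper leaves implicit.
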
}

\begin{proof}
\mgb{By the choice of parameters \eqref{eq-catalyst-param-choice},
\[
        \chi_\lambda
        =
        \bar\kappa_0(\tau,c),
        \qquad
        L+\lambda
        =
        L\frac{\bar\kappa_0(\tau,c)}
        {\bar\kappa_0(\tau,c)-1}
        \le 2L,
\]
where the last inequality uses $\bar\kappa_0(\tau,c)\ge2$. Substituting these relations, together with $M=2\tau+1$, into the general complexity bound \eqref{eq:convex-catalyst-complexity-tilde} gives
\[
        N_\epsilon
        =
        \widetilde{\cO}\!\left(
        \tau
        \left[
        \sqrt{\bar\kappa_0(\tau,c)}
        +
        \sqrt{\frac{LR_\star^2}{\epsilon}}
        \right]
        \right),
\]
which is \eqref{eq:main-complexity-convex}. The polynomial dependence of the inner prefactor $B_{\tau,c}$ in \eqref{eq:catalyst-inner-prefactor} contributes only logarithmic factors to this bound.}

\mgb{Now suppose that $\epsilon$ satisfies \eqref{eq:convex-catalyst-high-accuracy}. Then
$
        {LR_\star^2}/{\epsilon}
        \ge
        \bar\kappa_0(\tau,c)-1.
$
Since $\bar\kappa_0(\tau,c)\ge2$, we have
\[
        \bar\kappa_0(\tau,c)
        \le
        2\bigl(\bar\kappa_0(\tau,c)-1\bigr)
        \le
        2\frac{LR_\star^2}{\epsilon},
\]
and therefore
\[
        \sqrt{\bar\kappa_0(\tau,c)}
        \le
        \sqrt{2}\,
        \sqrt{\frac{LR_\star^2}{\epsilon}}.
\]
Thus the first term in \eqref{eq:main-complexity-convex} is absorbed by the second, yielding \eqref{eq:convex-complexity}. Comparing this with the lower bound of Theorem~\ref{thm:asr-lower} gives the claimed near-optimality up to logarithmic factors.}
\end{proof}
\section{Numerical Experiments}

We use numerical experiments to test three consequences of the preceding analysis. First, on strongly convex quadratics, we verify the predicted loss of \(\sqrt{\kappa}\)-acceleration for classical heavy-ball (HB) and Nesterov accelerated gradient (NAG) under delayed gradients. Second, we examine whether the delay-matched generalized momentum method, referred to as \DREAM, preserves the \(\kappa^{-1/2}\) spectral-gap scaling predicted by the theory. Third, we test whether the same behavior is visible on a nonlinear smooth strongly convex objective and whether the proposed parameters remain effective when the design delay is only a conservative upper bound. In all comparisons, the classical methods are numerically tuned, whereas \DREAM\ uses the prescribed closed-form parameters; thus, the numerical protocol favors the baselines rather than the proposed method.
\subsection{Quadratic objectives and spectral-radius diagnostics}
\label{subsec:numerics-quadratic}
For $x\in\mathbb{R}^d$, we consider the quadratic objective
$
    f_q(x) = \frac{1}{2}x^TQx
$
where $Q\succ 0$. This centered form entails no loss of generality: any strongly convex quadratic can be written as \sa{$f(x)=\frac{1}{2}(x-x_*)^\top Q(x-x_*)+f_*$}, where $x_*$ is its minimizer and $f_*=f(x_*)$, and the methods we consider are invariant under translations, so shifting the minimizer to the origin moves the iterates by $x_*$ without altering the optimization dynamics. %
For \(x\in\mathbb{R}^d\), consider
\begin{equation}
    f_q(x)=\frac12 x^\top Qx,
    \qquad Q\succ0,
    \qquad \sigma(Q)\subseteq[\mu,L],
    \qquad \kappa=L/\mu.
\end{equation}
For a curvature mode \(\lambda\in[\mu,L]\), the delayed GMM recursion is governed by the characteristic polynomial
\begin{equation}
    P_{\tau,\delta}(z)
    =z^\tau(z-1)(z-\beta)
      +\delta\bigl((1+\nu)z-\nu\bigr),
    \qquad \delta=\alpha\lambda.
\end{equation}
Given a finite curvature grid \(\Lambda\subset[\mu,L]\), we evaluate the empirical worst-case spectral radius
\begin{equation}
    \widehat\rho(\theta;\kappa,\tau)
    =\max_{\lambda\in\Lambda}\max\{|z|:P_{\tau,\alpha\lambda}(z)=0\},
\end{equation}
where \(\theta=\alpha\) for gradient descent (GD), \(\theta=(\alpha,\beta)\) for HB and NAG, with \(\nu=0\) for HB and \(\nu=\beta\) for NAG. For these three methods we report the smallest stable value found by the parameter search,
\begin{equation}
    \widehat\rho^*(\kappa,\tau)
    =\min_{\theta:\,\widehat\rho(\theta;\kappa,\tau)<1}
      \widehat\rho(\theta;\kappa,\tau).
\end{equation}
For \DREAM, no numerical minimization is performed; we evaluate the spectral radius at the prescribed delay-matched parameters.

To distinguish accelerated from non-accelerated scaling, suppose
$
    1-\rho = \Theta( C\kappa^{-p}).
$
Since \(-\log\rho= 1-\rho + \mathcal{O}(1-\rho)^2\) as \(\rho \to 1\), a least-squares fit of
$
    \log(-\log\rho)$
    against $
    \log\kappa$
has slope approximately \(-p\). Thus, a slope \sa{close to} \(-1/2\) indicates accelerated \(\sqrt\kappa\)-dependence, whereas a slope %
\sa{approximately} \(-1\) indicates the non-accelerated \(\kappa\)-dependence.

\noindent

\begin{figure}[h]
    \centering
    \includegraphics[width=\textwidth]{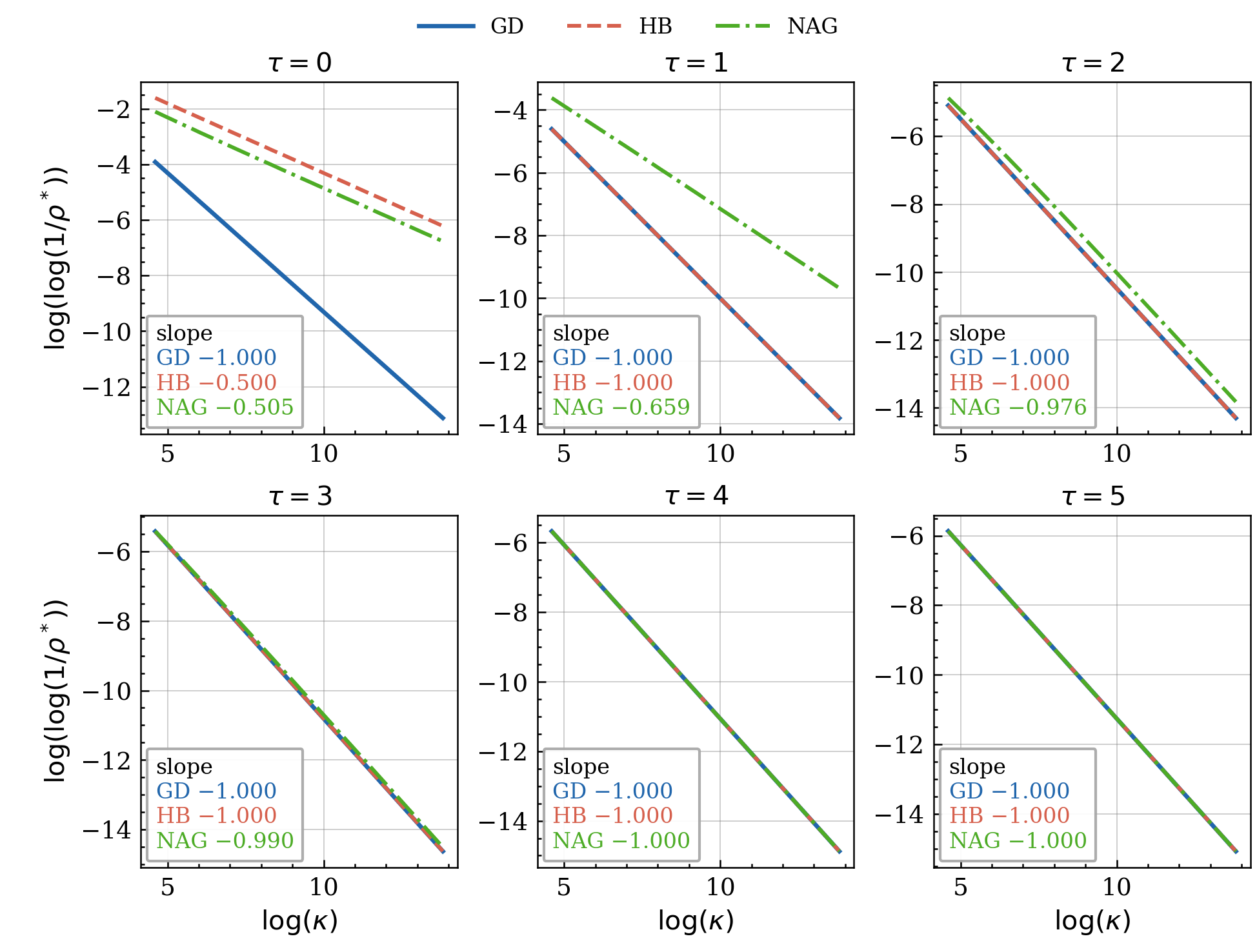}
   \caption{\mgb{Empirical worst-case contraction factor for GD, HB, and NAG after numerical parameter tuning. The vertical axis is \(\log(-\log \widehat\rho^*)\), so the fitted slope estimates the condition-number exponent of the spectral gap. HB loses the \(-1/2\) accelerated slope after one step of delay; NAG degrades more gradually but approaches the non-accelerated slope \(-1\) as the delay increases.}}
\label{fig:Delay_kills_Acceleration_plot}
\end{figure}

\subsubsection{Delay destroys classical momentum acceleration}%

We first optimize GD, HB, and NAG separately for each delay. We set \(\mu=1\), use 33 logarithmically spaced condition numbers in \([10^2,10^6]\), and consider \(\tau\in\{0,1,\ldots,5\}\). For each parameter choice, we approximate the worst-case spectral radius of $P_{\tau,\delta}(z)$, with $\delta=\alpha\lambda$, over $10$ equally spaced values of $\lambda\in[\mu,L]$, including the endpoints. We then minimize this worst-case radius over the candidate algorithm parameters using a grid search. 
{For GD, we search over $N_\alpha=500$ equally spaced stepsizes in $[10^{-12},\alpha_{\mathrm{hi}}]$, where $\alpha_{\mathrm{hi}}$ is determined by starting from $\alpha_0=0.01/L$, halving until stability is achieved, and then doubling until instability occurs. The stepsize yielding the smallest worst-case spectral radius is selected.} {For HB and NAG, $\beta$ is selected through an initial grid search over $25$ equally spaced values of $s=\log(1-\beta)$ in $[\log(10^{-7}),0]$. For each candidate $\beta$, we repeat the stepsize search described above using $N_\alpha=150$ stepsizes and select the $(\alpha,\beta)$ pair with the smallest worst-case spectral radius. We then perform $2$ successive refinements, each evaluating $25$ candidate values in a smaller interval $[s_{\mathrm{best}}-\Delta,s_{\mathrm{best}}+\Delta]\cap[\log(10^{-7}),0]$, where $s_{\mathrm{best}}$ corresponds to the best parameter pair found and $\Delta$ is the previous grid spacing. The $\alpha$ search is repeated for each candidate $\beta$ in both refinements, and the best stable $(\alpha,\beta)$ pair across all $3$ rounds is retained.}

Figure~\ref{fig:Delay_kills_Acceleration_plot} shows a sharp change in scaling once delay is introduced. At \(\tau=0\), HB and NAG have fitted slopes \(-0.500\) and \(-0.505\), respectively, while GD has slope \(-1.000\), reproducing the standard accelerated/non-accelerated dichotomy. {With one-step delay, HB already has slope $-1.000$, while NAG has the intermediate slope $-0.659$.} 
\sa{For \(\tau=2\), the NAG slope %
degrades} to \(-0.976\), and for \(\tau\ge3\) all three methods are essentially indistinguishable at the level of the condition-number exponent. These computations are consistent with Theorem~\ref{thm:q-naglower} \sa{and Theorem~\ref{thm:q-hblower}, which show} that classical momentum cannot retain the \(\sqrt{\kappa}\)-accelerated rate in the presence of any fixed positive delay, even 
\sa{in the case that its free parameters are optimally tuned.} Moreover, Lemma~\ref{lem:q-nag-tau1-achievability} establishes a matching non-accelerated \(\Theta(\kappa^{-2/3})\) rate for NAG when \sa{$\tau=1$}.
The numerical results closely track this theoretical prediction, providing empirical confirmation of the \(\kappa^{-2/3}\) scaling.

\begin{figure}[h]
  \centering
  \includegraphics[width=12.4cm]{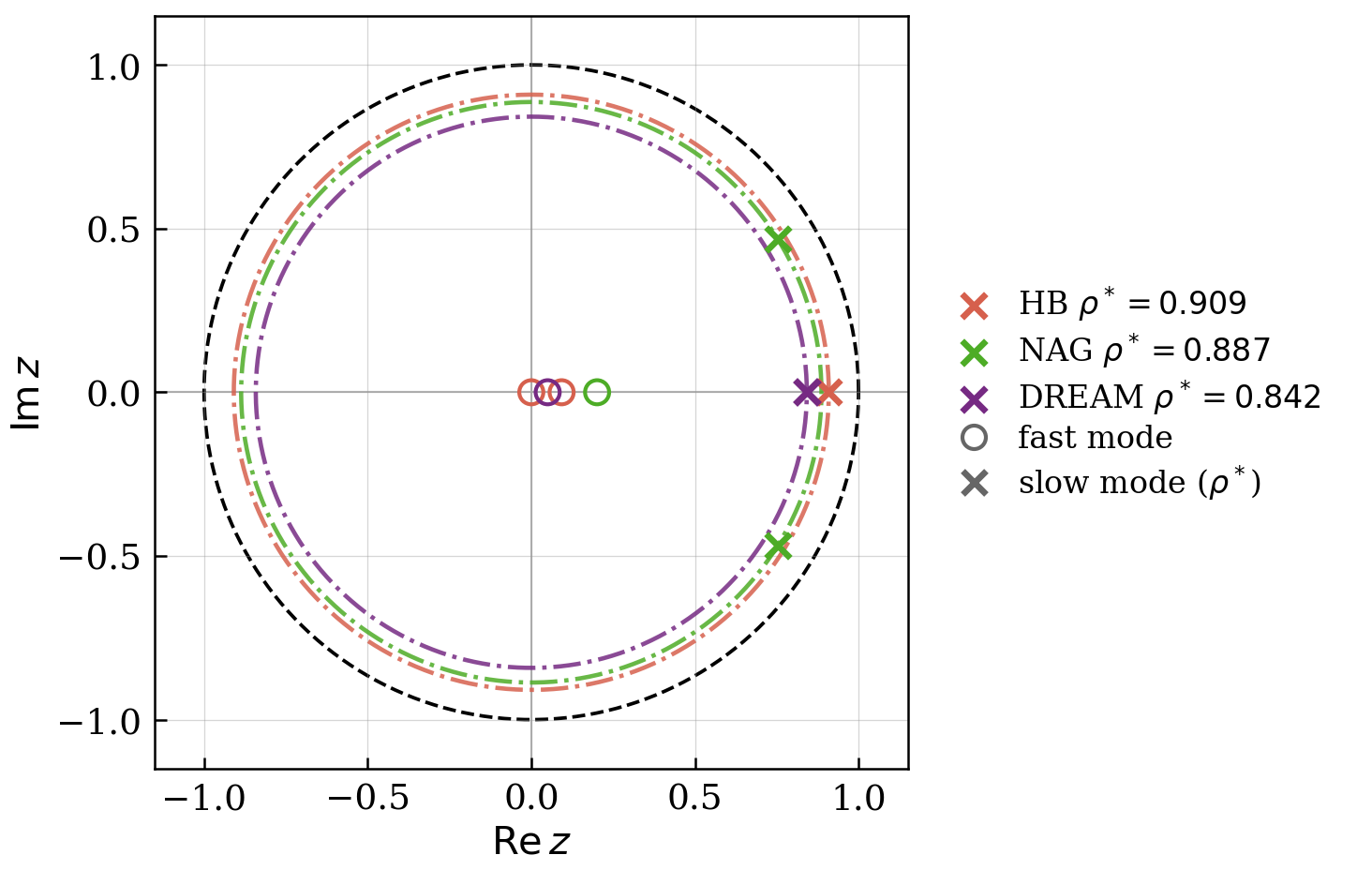}
  \caption{Pole locations for \(\tau=1\) and \(\kappa=10\). The method-specific dash-dotted circles have radii equal to the corresponding worst-case spectral radii. The proposed method \DREAM~has the fastest convergence.}
  \label{fig:spectra-tau1}
\end{figure}

\subsubsection{Convergence rate comparison for one-step delay}

For the one-step delay ($\tau=1$), we also provide a direct numerical comparison of the convergence rates for GD, HB and NAG. We set \(\tau=1\) and \(\kappa=10\). HB, and NAG use their numerically optimized parameters, \hamzatext{obtained using $10$ logarithmically spaced values of $\lambda \in [\mu,L]$}, whereas \DREAM\ uses the closed-form parameterization from Proposition~\ref{prop:q-gmmaccel} specialized to \(\tau=1\). Figure~\ref{fig:spectra-tau1} plots the roots of the characteristic polynomial \(P_{1,\delta}(z)\) at the empirically worst-case curvature mode \mgb{\(\lambda_\star\in[\mu,L]\), defined as the mode that maximizes the spectral radius of \(P_{1,\alpha\lambda}(z)\) over \(\lambda\in[\mu,L]\)}. The dashed black circle is the unit circle, and each dash-dotted circle has radius equal to the corresponding worst-case contraction factor.

For the three momentum methods shown in the figure, \mgb{the estimated rates are}
\begin{equation}
 \mgb{\widehat\rho^*_{\rm HB}} \approx 0.909,
    \qquad
   \mgb{\widehat\rho^*_{\rm NAG}} \approx 0.887,
    \qquad
    \mgb{\widehat\rho^*_{\small\rm\mbox{\DREAM}}}\approx 0.842.
\end{equation}%
For \DREAM, the last value agrees with the exact one-step delay formula
$
     \rho_{\small\rm\mbox{\DREAM}}
    =1-\frac{1}{2\sqrt\kappa},
$
at \(\kappa=10\). Thus, the dominant \DREAM~root lies visibly farther inside the unit disk than the dominant roots of HB and NAG; even if we choose the best parameters for HB and NAG numerically. As predicted by our theory, NAG is faster than HB but \DREAM\ is the fastest in terms of the (worst-case) convergence rate. \mg{We observe a similar behavior for larger choices of %
\sa{$\kappa$ values as well}.} %

\begin{figure}[h]
  \centering
  \includegraphics[width=14cm]{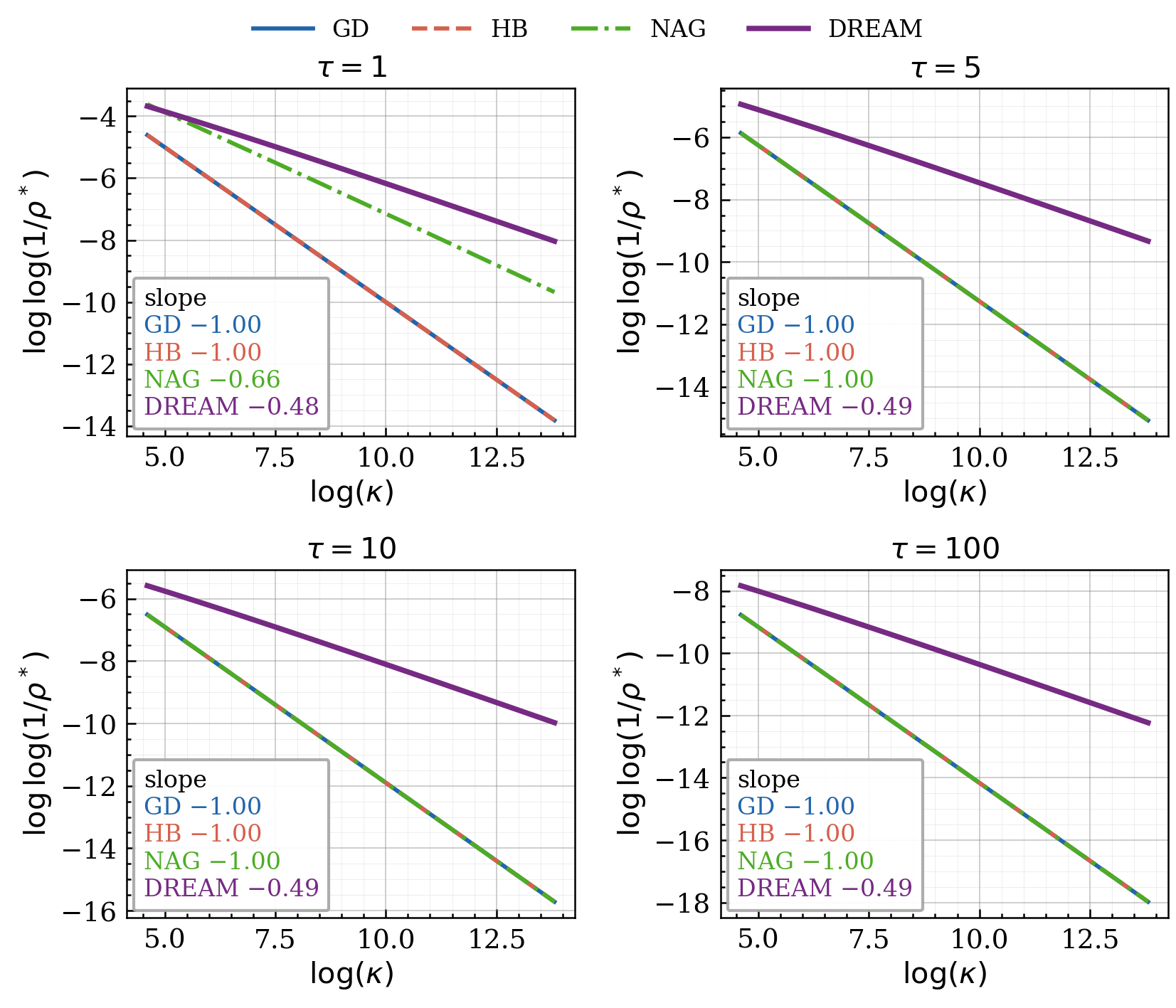}
  \caption{\mgb{Condition-number scaling on quadratic objectives for delays up to \(\tau=100\). GD, HB, and NAG are numerically tuned; \DREAM\ uses the fixed parameters in \eqref{eq:numerics-dream-params}. The proposed method retains a slope close to \(-1/2\) for every tested delay, while the classical methods approach slope \(-1\).}}
  \label{fig:gmm-slopes}
\end{figure}
\subsubsection{\mgb{Accelerated behavior of \DREAM\ at larger delays}}
\label{sec:GMM-recover-large-delay}
We next test whether the accelerated exponent persists well beyond the one-step setting. We consider \(\tau\in\{1,5,10,100\}\) and \(\kappa\in[10^2,10^6]\). GD, HB, and NAG are again tuned numerically\hamzatext{, using $50$ logarithmically spaced values of $\lambda\in[\mu,L]$,} while \DREAM\ is evaluated without tuning at
\begin{equation}
    M=2\tau+1,
    \qquad
    \alpha=\frac{1}{M^2L},
    \qquad
    \beta=1-\frac{2}{M\sqrt\kappa},
    \qquad
    \nu=M-\frac12,
    \label{eq:numerics-dream-params}
\end{equation}
based on the parameters of Theorem~\ref{thm:q-alldelay}.
\mgb{For consistency across delays, we use the general parameterization~\eqref{eq:numerics-dream-params} for all $\tau\in\{1,5,10,100\}$, including $\tau=1$. This differs from the sharper one-step parameterization of Proposition~\ref{prop:q-gmmaccel}, used in the pole comparison of Figure~\ref{fig:spectra-tau1}}.

Figure~\ref{fig:gmm-slopes} shows the resulting rate diagnostic, and Table~\ref{tab:large-delay-slopes} summarizes the fitted slopes displayed in the figure. The classical methods approach the non-accelerated value \(-1\); NAG retains an intermediate exponent at \(\tau=1\) but is effectively non-accelerated by \(\tau=5\). In contrast, the \DREAM\ slope remains between \(-0.48\) and \(-0.49\) from \(\tau=1\) through \(\tau=100\). Increasing the delay therefore changes the multiplicative constant in the spectral gap, as predicted by the \(1/(2\tau+1)\) factor, but does not change its observed \(\kappa^{-1/2}\) dependence.

\begin{table}[h]
    \centering
    \caption{Fitted slopes of \(\log(-\log\rho)\) versus \(\log\kappa\) in the larger-delay quadratic experiment. Values near \(-1/2\) indicate accelerated scaling; values near \(-1\) indicate non-accelerated scaling.}
    \label{tab:large-delay-slopes}
    \begin{tabular}{lrrrr}
        \toprule
        Method & \(\tau=1\) & \(\tau=5\) & \(\tau=10\) & \(\tau=100\) \\
        \midrule
        GD     & -1.00 & -1.00 & -1.00 & -1.00 \\
        HB     & -1.00 & -1.00 & -1.00 & -1.00 \\
        NAG    & -0.66 & -1.00 & -1.00 & -1.00 \\
        \DREAM & -0.48 & -0.49 & -0.49 & -0.49 \\
        \bottomrule
    \end{tabular}
\end{table}

\begin{figure}[h]
    \centering
    \includegraphics[width=0.9\textwidth]{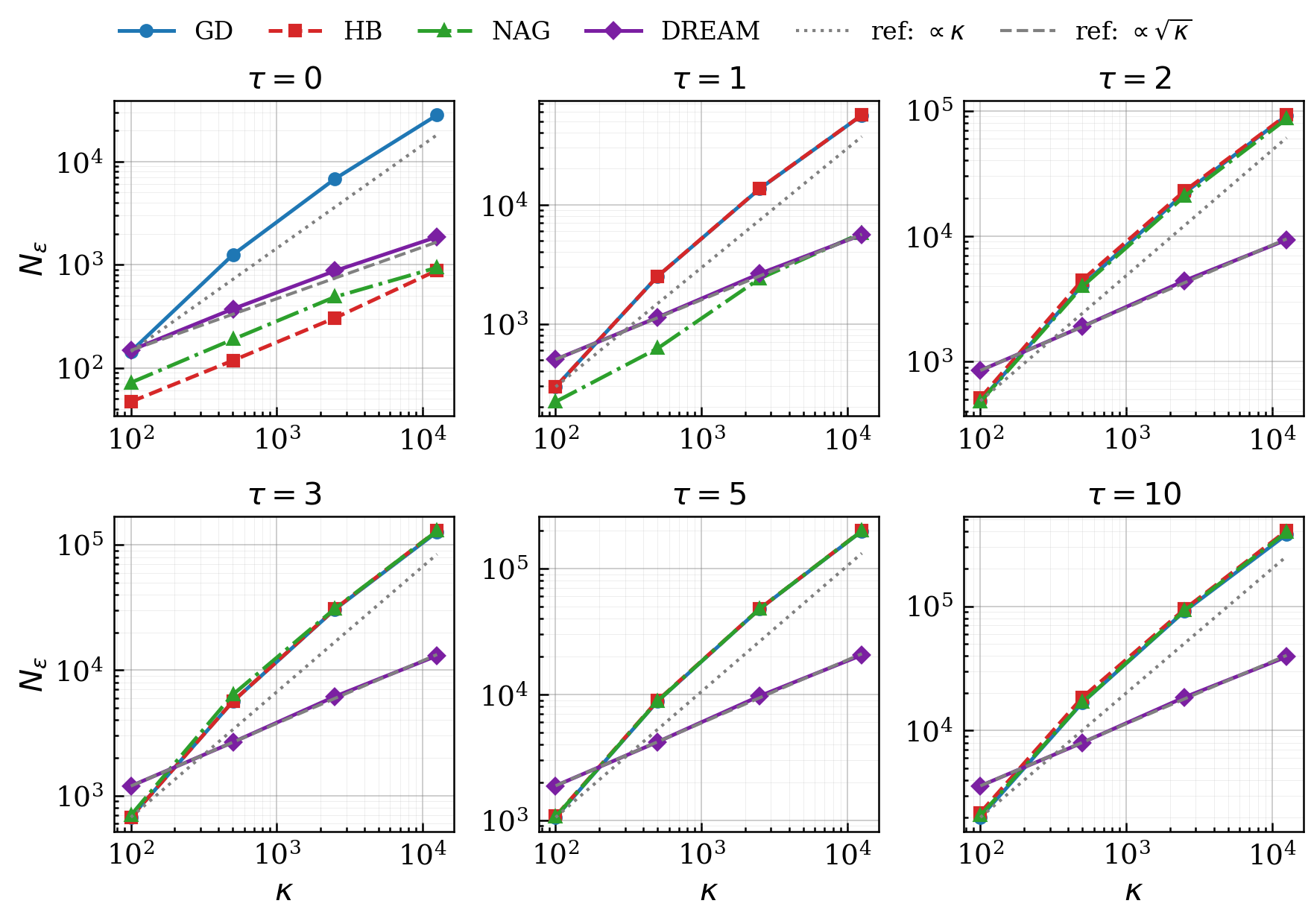}
    \caption{\mgb{First-hit stopping time \(N_\varepsilon\) for controlled-condition-number regularized logistic regression with \(\varepsilon=10^{-8}\). Dotted reference lines show \(\cO(\kappa)\) and \(\cO(\sqrt\kappa)\) scaling. Classical momentum \sa{schemes, i.e., HB and NAG,} progressively approaches the non-accelerated trend as the delay grows, whereas \DREAM\ remains close to the accelerated reference over the tested range.}}
    \label{fig:strongly-convex-stopping-time-plot}
\end{figure}
\subsection{Strongly convex setting: $\ell_2$-regularized logistic regression}
We next test whether the quadratic picture persists for a nonlinear smooth strongly convex problem. We use \(\ell_2\)-regularized logistic regression,
\begin{equation}
    f(x)
    =\frac1m\sum_{i=1}^m
      \log\!\left(1+\exp(-y_i a_i^\top x)\right)
      +\frac{\mu_r}{2}\|x\|^2,
    \qquad y_i\in\{-1,1\}.
    \label{eq:numerics-logistic}
\end{equation}
Writing \(A\in\mathbb{R}^{m\times d}\) for the feature matrix, the Hessian satisfies
\begin{equation}
    \mu_r I
    \preceq \nabla^2 f(x)
    \preceq \frac{1}{4m}A^\top A+\mu_r I;
\end{equation}
hence, a valid global smoothness and strong-convexity pair is
\begin{equation}
    \mu_{\rm global}=\mu_r,
    \qquad
    L_{\rm global}=\lambda_{\max}\!\left(\frac{A^\top A}{4m}\right)+\mu_r.
\end{equation}
\begin{hamzablock}

    To vary conditioning in a controlled way, we use \(m=1000\), \(d=10\), and \(\mu_r=0.1\), and construct the feature matrix as
    \begin{equation}
        A=\sqrt{4m}\,U\operatorname{diag}(\sqrt{c \tilde{\mathbf g}})V^\top + y \mathbf{1}_d^\top,
    \end{equation}
    where $U\in\mathbb{R}^{m\times d}$ has orthonormal columns and $V\in\mathbb{R}^{d\times d}$ is orthogonal, both obtained from the QR factorizations of \mgb{random matrices with i.i.d. standard normal entries.} %
    of dimensions $m\times d$ and $d\times d$, respectively. Here, $y\in\{-1,1\}^m$ is the vector of class labels, and $\mathbf{1}_d\in\mathbb{R}^d$ denotes the vector of ones. The class labels are assigned deterministically, with $y_i=+1$ for $i=1,\ldots,m/2$ and $y_i=-1$ for $i=m/2+1,\ldots,m$. The class-dependent offset term $y\mathbf{1}_d^\top$ introduces a separation between the two classes. Since this additional term changes the spectrum of $A^\top A/(4m)$, the entries of the vector $\tilde g$ are scaled by the constant $c$ to achieve the prescribed global condition-number bound.

    \mgb{To set the condition number of the objective to a desired value
$\kappa_{\mathrm{target}}> 1$}, we initialize $\tilde{\mathbf g}\in\mathbb{R}^d$ with logarithmically spaced entries $\tilde g_j=\mu_r(\kappa_{\mathrm{target}}-1)^{(j-1)/(d-1)}$, for $j=1,\ldots,d$, and set $\mathbf g=c\tilde{\mathbf g}$, where $c>0$ is determined numerically, to a relative tolerance of $10^{-8}$, so that
    
    $$
    \lambda_{\max}\left(\frac{A(c)^\top A(c)}{4m}\right)
    =\mu_r(\kappa_{\mathrm{target}}-1).
    $$
    
    The numerical rescaling accounts for the class-dependent mean term, which changes the eigenvalues of $A^\top A/(4m)$ from those prescribed by $\mathbf g$ alone. Consequently, $L_{\mathrm{global}}=\kappa_{\mathrm{target}}\mu_r$, and the global condition-number bound used to choose the algorithm parameters is $\kappa_{\mathrm{global}}:=L_{\mathrm{global}}/\mu_{\mathrm{global}}=\kappa_{\mathrm{target}}$.
\end{hamzablock} \\
\sa{Given $\varepsilon>0$,} for trajectory-based comparisons we define
\begin{equation}
    r_k=\frac{\|x_k-x^\star\|}{\|x_0-x^\star\|},
    \qquad
    N_\varepsilon\sa{:=}\min\{k:r_k\le\varepsilon\}.
    \label{eq:numerics-stopping-time}
\end{equation}
\hamzatext{where $x^\star$ denotes the minimizer of the objective function. We compute a numerical approximation to $x^\star$ using L-BFGS-B followed by damped Newton refinement.} \\

At fixed \(\varepsilon\), accelerated scaling corresponds to \(N_\varepsilon=O(\sqrt\kappa)\), whereas non-accelerated scaling corresponds to \(N_\varepsilon=O(\kappa)\). We use \(\varepsilon=10^{-8}\) and delays \(\tau\in\{0,1,2,3,5,10\}\). \hamzatext{For each instance and delay, GD, HB, and NAG are tuned by evaluating a grid of candidate parameters and selecting those with the smallest empirically estimated contraction factor, obtained by fitting the decay of $\log\|x_k-x^\star\|$ against the iteration index $k$. To reduce computational cost, large candidate grids are evaluated in two stages: candidates are first ranked using short trajectories covering $12\%$ of the iteration budget, after which the $48$ highest-ranked candidates are evaluated over the full horizon. This tuning criterion reflects the geometric convergence rates studied in our theoretical analysis, while the resulting iteration counts $N_\varepsilon$ measure the corresponding finite-accuracy performance.}
\DREAM{} is not tuned and uses \eqref{eq:numerics-dream-params} with \((L,\kappa)=(L_{\rm global},\kappa_{\rm global})\).

Figure~\ref{fig:strongly-convex-stopping-time-plot} displays the first-hit iteration counts. With no delay, HB and NAG follow the \(\sqrt\kappa\) reference trend while GD follows the \(\kappa\) trend. Under delay, HB and then NAG move toward the non-accelerated reference; by \(\tau=2\)--\(3\), their iteration counts scale similarly to GD over the tested range. \DREAM\ continues to track the \(\sqrt\kappa\) guide for every tested delay. The experiment therefore reproduces on a nonlinear objective the same qualitative separation observed on quadratics: delay changes the constant for the delay-matched method but not the observed condition-number exponent.

\begin{figure}[h]
    \centering
    \includegraphics[width=15.8cm]{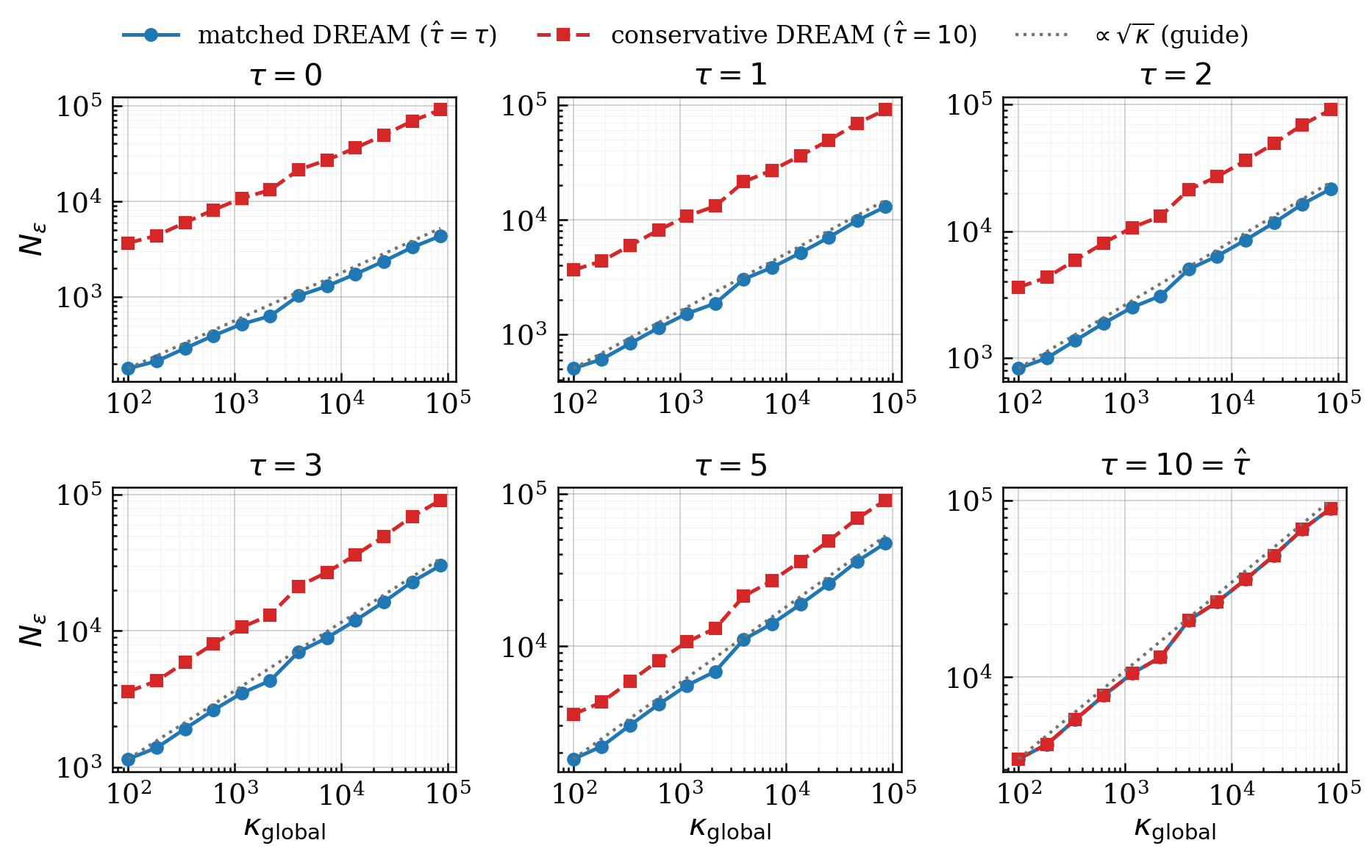}
    \caption{Robustness to delay overestimation on controlled-condition-number least squares. The matched design uses \(\widehat\tau=\tau\), while the conservative design uses \(\widehat\tau=10\) for every actual delay. Both exhibit approximately \(\sqrt\kappa\) scaling over the tested range; conservatism primarily increases the iteration-count prefactor. \mgb{A smaller \(N_\epsilon\) indicates that fewer iterations are required to reach \(\epsilon\)-accuracy and therefore corresponds to better performance}.}
    \label{fig:delay-overestimation-ls}
\end{figure}

\begin{figure}[h]
    \centering
    \includegraphics[width=15.8cm]{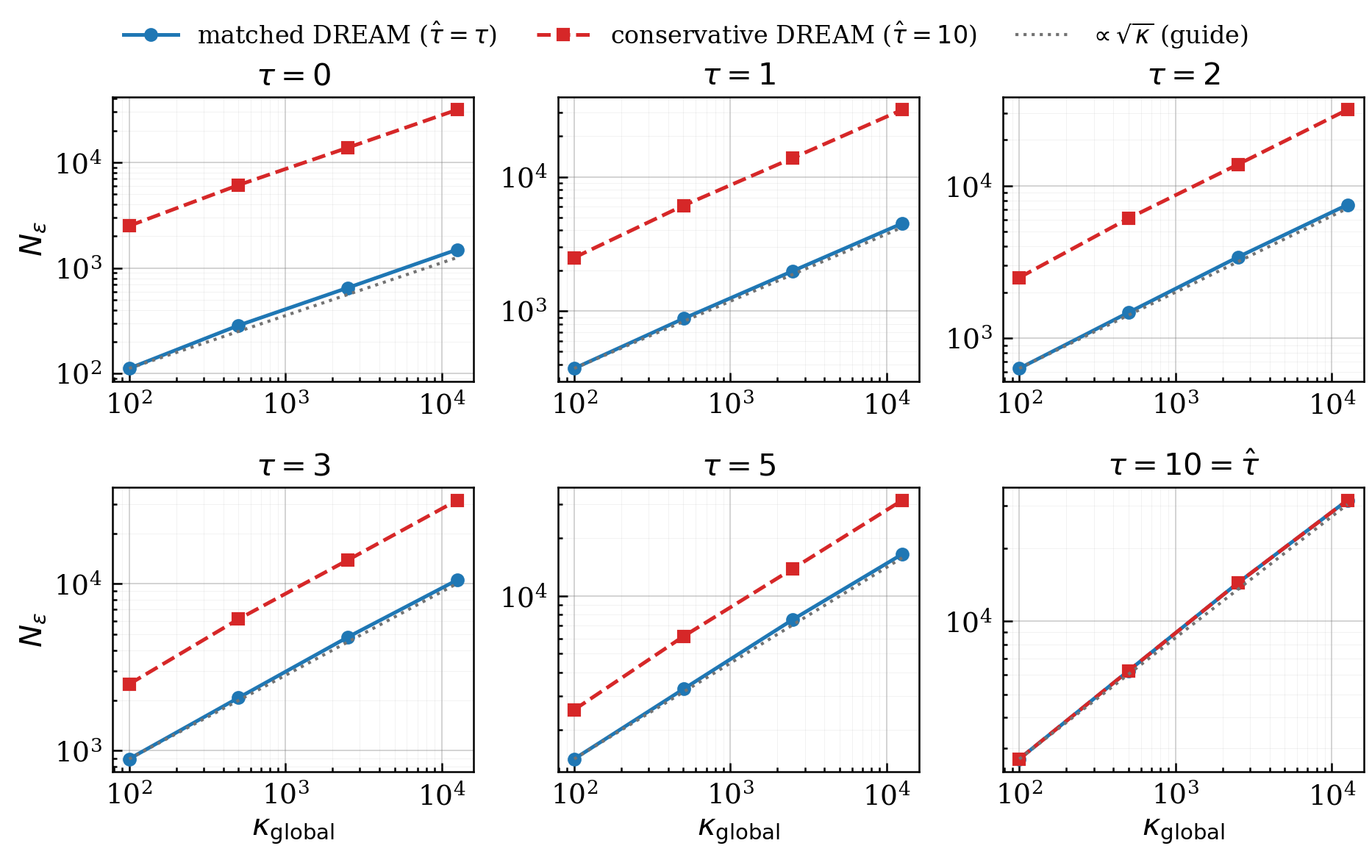}
    \caption{Robustness to delay overestimation on controlled-condition-number regularized logistic regression. The matched and conservative designs have nearly parallel log-log trends, and they coincide when \(\tau=\widehat\tau=10\). \mgb{A smaller \(N_\epsilon\) indicates that fewer iterations are required to reach \(\epsilon\)-accuracy and therefore corresponds to better performance}.}
    \label{fig:delay-overestimation-logistic}
\end{figure}
\subsection{Robustness to conservative delay estimates}
\label{subsec:numerics-delay-mismatch}

The theoretical parameterization uses the true delay \(\tau\). In practice, one may only know an upper bound. To assess this mismatch empirically, let \(\widehat\tau\) denote the delay used to choose the parameters while the recursion itself uses the true delay \(\tau\). We compare the matched choice \(\widehat\tau=\tau\) with the conservative fixed choice \(\widehat\tau=10\) for
\begin{equation}
    \tau\in\{0,1,2,3,5,10\}.
\end{equation}
In each case, the parameters are obtained from \eqref{eq:numerics-dream-params} after replacing \(\tau\) by \(\widehat\tau\). Thus, only the parameter design uses the conservative delay; the gradient in the recursion remains delayed by the actual \(\tau\). We test the two designs on controlled-condition-number least-squares and regularized logistic-regression instances and use the stopping rule \eqref{eq:numerics-stopping-time} with \(\varepsilon=10^{-6}\). Figures~\ref{fig:delay-overestimation-ls} and~\ref{fig:delay-overestimation-logistic} show that designing for \(\widehat\tau=10\) increases the iteration count when the actual delay is smaller, but the log-log slope with respect to \(\kappa\) remains close to the accelerated value. The fitted exponents for the conservative design are approximately \(0.49\) for least squares and \(0.52\) for logistic regression. When \(\tau=10\), the matched and conservative designs coincide, as they should.

This final experiment is intended as a robustness check rather than a theoretical guarantee. Our analysis assumes that the delay is known and establishes convergence for the corresponding matched-delay parameterization. The numerical results indicate that the accelerated \(\mathcal{O}(\sqrt{\kappa})\) scaling persists even when only an upper bound on the actual delay is available. Over the tested range, delay overestimation appears to primarily increase the multiplicative prefactor \(C\), while leaving the observed \(\mathcal{O}(\sqrt{\kappa})\) dependence on the condition number unchanged.

\section{Conclusion}
\sa{We studied acceleration of first-order methods under fixed gradient delays. 
Our results show that classical momentum schemes are fundamentally sensitive 
to stale gradient information: neither heavy-ball nor Nesterov's accelerated 
gradient method can, in general, be retuned to retain the accelerated 
condition-number dependence under a fixed positive delay. In contrast, by 
decoupling extrapolation from momentum and matching the extrapolation scale 
to the delay, DREAM retains the accelerated $\sqrt{\kappa}$ dependence while 
updating the iterate at every iteration using the available delayed gradient. For smooth strongly convex objectives, the resulting iteration complexity has the near-optimal scaling $\widetilde{\mathcal O}(\tau\sqrt{\kappa}\log(1/\epsilon))$, and a regularization-and-restart construction extends the approach to smooth convex objectives. Our analysis also provides an analytic, all-delay IQC certificate that avoids numerical semidefinite programs or frequency-domain searches. The numerical experiments support the predicted separation between classical and delay-matched momentum and indicate that the accelerated scaling can be robust to conservative estimates of the delay.}

\sa{Several directions remain open. Our theory assumes a fixed deterministic delay  known to the algorithm, whereas practical asynchronous systems may exhibit time-varying, heterogeneous, or stochastic delays. Extending delay-matched momentum and the accompanying analytic certificates to these settings is a natural next step. It would also be interesting to study stochastic gradients and nonconvex objectives; more broadly, our results suggest that the adverse effect of delayed information need not preclude acceleration: appropriately adapting the algorithmic dynamics to the structure of the delay  can preserve accelerated condition-number dependence even when fresh gradients are unavailable.}

\section*{Acknowledgements} \mgb{This work was supported in part by the Office of Naval Research under Grant N00014-24-1-2628 and N00014-24-1-2666. The authors used generative artificial intelligence (AI) tools to assist with language editing, improving the exposition, drafting and revising portions of the mathematical proof, \hamzatext{developing and refining code for numerical experiments}, and refining constants in certain estimates. All AI-assisted material was carefully reviewed, corrected where necessary, and further developed by the authors. The authors assume full responsibility for the mathematical correctness and all other content of the final manuscript.}

\bibliographystyle{alpha}
\IfFileExists{Delayed_Acceleration_Arxiv/references_main.bib}
    {\bibliography{Delayed_Acceleration_Arxiv/references_main}}
    {\bibliography{references_main}}
\appendix
\section{Proofs of the supporting lemmas}\label{subsec:lemma-proofs}

We prove the supporting lemmas of Section~\ref{subsec:master-proof} in the order they were stated. The proofs are independent of one another and of the main argument. Throughout, fix the rate-weighted signals
\[
        \widehat x_k=\rho^{-k}x_k,
        \qquad \widehat y_k=\rho^{-k}y_k,
        \qquad \widehat u_k=\rho^{-k}u_k,
        \qquad \widehat v_k=\widehat u_k-m\widehat y_k .
\]

\subsection{Proof of Lemma~\ref{lem:rate-weighted-plant}}
If \(Y(z)=g_{\tau,q}(z)U(z)\) denotes the unweighted
zero-state input-output relation, then rate weighting corresponds to
evaluating the old transform at \(\rho z\).  Namely,
\[
        \widehat Y(z)
        =\sum_{k\ge0}\rho^{-k}y_k z^{-k}
        =\sum_{k\ge0}y_k(\rho z)^{-k}
        =Y(\rho z),
        \qquad
        \widehat U(z)=U(\rho z).
\]
Thus
\[
        \widehat Y(z)
        =Y(\rho z)
        =g_{\tau,q}(\rho z)U(\rho z)
        =g_{\tau,q}(\rho z)\widehat U(z).
\]
Consequently the transfer from \(\widehat u\) to \(\widehat y\) is
\mgb{
\[
        g_\rho(z):=g_{\tau,q}(\rho z),
        \qquad G_\rho(z):=g_\rho(z)I_d.
\]}
Equivalently, rate weighting divides every open-loop pole by \(\rho\):
a pole \(p\) of \(g_{\tau,q}\) becomes a pole \(p/\rho\) of
\(g_{\tau,q}(\rho z)\).
 
\subsection{Proof of Lemma~\ref{lem:weighted-realization}}
The midpoint change of variables gives
\[
        u_{k-\tau}=m y_{k-\tau}+v_{k-\tau}.
\]
Using the formula for \(y\) at the delayed time \(k-\tau\),
\[
        y_{k-\tau}=(1+\nu)x_{k-\tau}-\nu x_{k-\tau-1},
\]
the state recursion becomes
\begin{align*}
        x_{k+1}
        ={}&(1+\beta)x_k-\beta x_{k-1}
        -\alpha m y_{k-\tau}-\alpha v_{k-\tau}  \\
        ={}&(1+\beta)x_k-\beta x_{k-1}
        -\alpha m(1+\nu)x_{k-\tau}
        +\alpha m\nu x_{k-\tau-1}
        -\alpha v_{k-\tau}.
\end{align*}
Multiply this identity by \(\rho^{-(k+1)}\).  Since
\(x_j=\rho^j\widehat x_j\) and \(v_j=\rho^j\widehat v_j\), the terms are
rescaled as
\[
\begin{aligned}
        \rho^{-(k+1)}x_k &= \rho^{-1}\widehat x_k,
        &\qquad
        \rho^{-(k+1)}x_{k-1} &= \rho^{-2}\widehat x_{k-1},\\
        \rho^{-(k+1)}x_{k-\tau} &= \rho^{-(\tau+1)}\widehat x_{k-\tau},
        &
        \rho^{-(k+1)}x_{k-\tau-1} &= \rho^{-(\tau+2)}\widehat x_{k-\tau-1},\\
        \rho^{-(k+1)}v_{k-\tau} &= \rho^{-(\tau+1)}\widehat v_{k-\tau}.
\end{aligned}
\]
Substitution gives the first identity in \eqref{eq:weighted-realization-x-app}.  For the output, multiply
\(y_k=(1+\nu)x_k-\nu x_{k-1}\) by \(\rho^{-k}\).  Then
\[
        \rho^{-k}x_k=\widehat x_k,
        \qquad
        \rho^{-k}x_{k-1}=\rho^{-1}\widehat x_{k-1},
\]
which gives \eqref{eq:weighted-realization-y-app}.

\subsection{Proof of Lemma~\ref{lem:midpoint-schur}}
Throughout, $\alpha=q^2/M^2$, $\nu=M-\tfrac12$, $m=\tfrac12(1+q^{-2})$, and
$\delta_m=\alpha m$; recall $\alpha,m,\nu>0$ and $\tau\ge1$.

\emph{Step 1: loop elimination and the denominator.}
Clearing the delay factor from the open-loop transfer gives
\[
        g_{\tau,q}(z)=\frac{N_0(z)}{D_0(z)},
        \qquad
        N_0(z)=-\alpha\bigl[(1+\nu)z-\nu\bigr],
        \qquad
        D_0(z)=z^{\tau}(z-1)(z-\beta),
\]
so the rate-weighted plant is \mgb{$g_\rho(z)=g_{\tau,q}(\rho z)=N(z)/D(z)$} 
with
$N(z)=N_0(\rho z)$ and $D(z)=D_0(\rho z)$. Substituting
$\widehat U=m\widehat Y+\widehat V$ into $\widehat Y=\mgb{g_\rho}\,\widehat U$ and collecting the
$\widehat Y$ terms yields \mgb{$(1-mg_\rho)\widehat Y=g_\rho\,\widehat V$. Note
\[
1-mg_\rho=\frac{D-mN}{D},
\]
\begin{align*}
D(z)-mN(z)
 &=(\rho z)^\tau(\rho z-1)(\rho z-\beta)
   +\delta_m[(1+\nu)\rho z-\nu]\\
 &=P_{\tau,\delta_m}(\rho z).
\end{align*}}
\mg{This} is a non-zero polynomial of degree $\tau+2$. Noting that the function $1-m\mgb{g_\rho}$ is not
identically zero, we obtain
\mgb{$$ \widehat Y=\widetilde g_\rho\,\widehat V,
        \qquad
        \widetilde g_\rho=\frac{g_\rho}{1-mg_\rho}=\frac{N}{D-mN}$$}with denominator the rate-scaled midpoint characteristic polynomial
$P_{\tau,\delta_m}(\rho\,\cdot)$. In particular the roots of $D-mN$ are
$\{r/\rho: P_{\tau,\delta_m}(r)=0\}$.

\emph{Step 2: no pole--zero cancellation.}
The numerator $N_0$ has the single finite zero
\[
        z_\ast=\frac{\nu}{1+\nu}=\frac{M-\tfrac12}{M+\tfrac12}=\frac{2M-1}{2M+1}
        \in(0,1),
\]
while the roots of $D_0$ are $0$, $1$, and $\beta$. Clearly $z_\ast\neq0$ and
$z_\ast\neq1$. The equality $z_\ast=\beta=1-\tfrac{2q}{M}$ would force
$\tfrac{2q}{M}=1-\tfrac{2M-1}{2M+1}=\tfrac{2}{2M+1}$, i.e.\ $q=\tfrac{M}{2M+1}$,
which the standing hypothesis $q<\tfrac{M}{2M+1}$ excludes. Hence
$\gcd(N_0,D_0)=1$, and rescaling $z\mapsto\rho z$ preserves coprimeness, so
$\gcd(N,D)=1$. Finally, any common zero $z_0$ of $N$ and $D-mN$ would satisfy
$N(z_0)=0$ and $D(z_0)=\bigl(D(z_0)-mN(z_0)\bigr)+mN(z_0)=0$, contradicting
$\gcd(N,D)=1$; therefore
\[
        \gcd\bigl(N,\;D-mN\bigr)=1 .
\]
Thus $\mgb{\widetilde g_\rho}=N/(D-mN)$ is in lowest terms, and its poles are
\emph{exactly} the roots of $D-mN=P_{\tau,\delta_m}(\rho\,\cdot)$. Since
$\deg N=1<\tau+2=\deg(D-mN)$, the transfer \mgb{$\widetilde g_\rho$} is strictly proper and
has no pole at infinity.

\emph{Step 3: spectrum of the realization.}
Collecting the terms of \eqref{eq:weighted-realization-x-app} by lag, the
homogeneous $\widehat x$-recursion reads
\beq\widehat x_{k+1}=\sum_{j=0}^{\tau+1}a_j\widehat x_{k-j} 
\label{eq-homogen-recursion}\eeq with
\mgb{\[
a_j=\frac{1+\beta}{\rho}\mathbf1_{\{j=0\}}
 -\frac{\beta}{\rho^2}\mathbf1_{\{j=1\}}
 -\frac{\alpha m(1+\nu)}{\rho^{\tau+1}}\mathbf1_{\{j=\tau\}}
 +\frac{\alpha m\nu}{\rho^{\tau+2}}\mathbf1_{\{j=\tau+1\}},
 \quad 0\le j\le\tau+1.
\]}\mgb{Note that, for $\tau=1$, there are two terms that contribute to $a_1$. Let $F\in\mathbb R^{(\tau+2)\times(\tau+2)}$ be the scalar companion matrix of the
recursion $\widehat x_{k+1}=\sum_{j=0}^{\tau+1}a_j\widehat x_{k-j}$, i.e., the matrix
\[
F:=\begin{bmatrix}
a_0 & a_1 & a_2 & \cdots & a_{\tau} & a_{\tau+1}\\
1   & 0   & 0   & \cdots & 0        & 0\\
0   & 1   & 0   & \cdots & 0        & 0\\
\vdots & \ddots & \ddots & \ddots & \vdots & \vdots\\
0   & \cdots & 0 & 1 & 0 & 0\\
0   & \cdots & 0 & 0 & 1 & 0
\end{bmatrix},
\]
whose first row implements the recursion and whose subdiagonal of ones shifts the
stored history. Expanding along the first row gives
$\det(zI-F)=z^{\tau+2}-\sum_{j=0}^{\tau+1}a_jz^{\tau+1-j}$.}
\mgb{Let
$E\in\mathbb R^{(\tau+2)\times\tau}$ have its only nonzero entry
$E_{1,\tau}=-\alpha/\rho^{\tau+1}$.}

Splitting the state as
$\xi_k=(X_k^{\mathsf T},W_k^{\mathsf T})^{\mathsf T}$ with
$X_k=(\widehat x_k,\dots,\widehat x_{k-\tau-1})^{\mathsf T}$ and
$W_k=(\widehat v_{k-1},\dots,\widehat v_{k-\tau})^{\mathsf T}$, the state matrix is
block upper triangular, and given by 
\[
        \mgb{A=A_0\otimes I_d, \quad A_0=\begin{bmatrix} F & E\\ 0 & J\end{bmatrix}},
\]
where $E$ records the delayed input $\widehat v_{k-\tau}$ in \eqref{eq:weighted-realization-y-app}, $J$ is the $\tau\times\tau$ down-shift with
\mgb{$$W_{k+1}=JW_k+e_1\widehat v_k,
\qquad
J:=\begin{bmatrix}
0 & 0 & \cdots & 0 & 0\\
1 & 0 & \cdots & 0 & 0\\
0 & 1 & \ddots & \vdots & \vdots\\
\vdots & \ddots & \ddots & 0 & 0\\
0 & \cdots & 0 & 1 & 0
\end{bmatrix}\in\mathbb R^{\tau\times\tau}$$
where $e_1\in\mathbb{R}^\tau$ is the first basis vector. For the matrix $A$,} the lower-left block
vanishes because the stored inputs evolve independently of the
$\widehat x$-states. %
The matrix $J$ is
nilpotent with \mgb{$\det (zI_{\tau+2}-J)=z^{\tau}$}, so\mgb{
 \[
\det(zI_{2\tau+2}-A_0)
 =z^\tau\det(zI_{\tau+2}-F),\qquad
 \det(zI_{(2\tau+2)d}-A)
 =\bigl [\det(zI_{2\tau+2}-A_0)\bigr]^d.
\]
}
We conclude that the spectrum \mgb{$\sigma(A)=\sigma(A_0)=\sigma(F)\cup\{0\}$}.

For the companion block, $\det(zI-F)=z^{\tau+2}-\sum_{j=0}^{\tau+1}a_jz^{\tau+1-j}$.
Multiplying by $\rho^{\tau+2}$ and inserting the coefficients above gives
\[
        \rho^{\tau+2}\det(zI-F)
        =\rho^{\tau+2}z^{\tau+2}-(1+\beta)\rho^{\tau+1}z^{\tau+1}
        +\beta\rho^{\tau}z^{\tau}
        +\alpha m(1+\nu)\rho z-\alpha m\nu
        =P_{\tau,\delta_m}(\rho z),
\]
by expanding $(\rho z)^\tau(\rho z-1)(\rho z-\beta)+\delta_m[(1+\nu)\rho z-\nu]$.
Thus $\det(zI-F)$ equals $D-mN$ up to the nonzero factor $\rho^{-(\tau+2)}$, and by
Step~2 its roots are exactly the poles of \mgb{$\widetilde g_\rho$}; that is,
$\sigma(F)=\mathcal P(\widetilde G)$. Moreover
$P_{\tau,\delta_m}(0)=-\delta_m\nu\neq0$, so $0\notin\operatorname{spec}(F)$ and the
union is disjoint:
\[
        \sigma(A)=\sigma(A_0)=\mathcal P(\widetilde G)\cup\{0\},
\]
\mgb{as sets. Each algebraic multiplicity in $A_0$ is multiplied by $d$ in $A$}.

\emph{Step 4: the Schur equivalences.}
Since $0$ lies in the open unit disk, $A$ is Schur if and only if
$\mgb{\mathcal P({\widetilde g}_\rho)}\subset\{|z|<1\}$, i.e.\ if and only if \mgb{${\widetilde g}_\rho$} is
Schur. By Steps~1--2 the poles of $\widetilde G$ are $\{r/\rho:P_{\tau,\delta_m}(r)=0\}$,
and $|r/\rho|<1\iff|r|<\rho$. Hence $\widetilde G$ (equivalently $A$) is Schur if and
only if every root of the midpoint characteristic polynomial $P_{\tau,\delta_m}$ lies
in $\{|z|<\rho\}$, which is the midpoint-pole hypothesis \textbf{(H1)}.

\subsection{Proof of Lemma~\ref{lem:fdi-congruence}}
By the identity~\eqref{eq:fdi-equivalence}, it suffices to show that
$1-m \mgb{g_\rho(e^{i\theta})}\ne0$ for all $\theta$; multiplication by the resulting positive
scalar $\lvert 1-m \mgb{g_\rho}\rvert^{-2}$ cannot change the sign of the quadratic form, and the
equivalence of the two strict FDIs follows.

First, \mgb{$g_\rho$} is finite on the unit circle: with $g_{\tau,q}=N_0/D_0$ in lowest terms, the
poles of \mgb{$g_\rho(z)=g_{\tau,q}(\rho z)$}
lie at $z\in\{0,\,1/\rho,\,\beta/\rho\}$, none of
modulus one, since $0<\beta=1-2q/M<1-cq/M=\rho<1$ \mgb{and $\rho \neq \beta$}. Next, from
\[
        1-m \mgb{g_\rho(z)}=\frac{D_0(\rho z)-mN_0(\rho z)}{D_0(\rho z)}
        =\frac{P_m(\rho z)}{D_0(\rho z)},
        \qquad
        P_m:=D_0-mN_0,
\]
a zero $1-m \mgb{g_\rho}(e^{i\theta})=0$ would force $P_m(\rho e^{i\theta})=0$, i.e.\ a root of the
midpoint characteristic polynomial of modulus exactly $\rho$. This contradicts
\textbf{(H1)}, which places every root of $P_m$ in $\{|z|<\rho\}$. Hence
$1-m \mgb{g_\rho}(e^{i\theta})\neq0$ for all $\theta\in[0,2\pi)$, and the claim follows.

\begin{proof}[Proof of Lemma~\ref{lem:positive-storage-kyp}]
The proof has five steps. The main point is to convert the indefinite supply into the standard bounded-real supply by a causal, stably invertible change of input. 
\medskip
\noindent\emph{Step 1: Exact factorization of the supply.}
Set $w:=1-h$ and define
\[
        e_{+,k}:=\widehat r_k+(w \widehat s)_k,
        \qquad
        e_{-,k}:=\widehat r_k-(w \widehat s)_k.
\]
Since
\[
        (w \widehat s)_k=\widehat s_k-(h\widehat s)_k
        =\widehat s_k-\sum_{j=1}^{J}c_j \widehat s_{k-j},
\]
the polarization identity
\[
        \|p+q\|^2-\|p-q\|^2=4\Rea\langle p,q\rangle
\]
yields, for every $k$,
\begin{align}
\frac12\bigl(\|e_{+,k}\|^2-\|e_{-,k}\|^2\bigr)
&=2\Rea\langle \widehat r_k,(w \widehat s)_k\rangle \notag\\
&=2\Rea\langle \widehat  r_k,\widehat s_k\rangle
  -2\sum_{j=1}^{J}c_j\Rea\langle \widehat r_k,\widehat s_{k-j}\rangle \notag\\
&=\sigma_k.\label{eq:kyp-factor-time}
\end{align}
Thus the original supply is \emph{exactly} the difference-of-squares supply associated with $(e_+,e_-)$. No summation in time, boundary term, or zero-initial-condition assumption is involved in \eqref{eq:kyp-factor-time}.

Using $\widehat r=a \widehat y+\widehat v$ and $\widehat s=a \widehat y-\widehat v$ gives
\begin{align*}
        e_+
        &=\widehat r+ \widehat s-h \widehat s
          =a(2-h)\widehat y+h \widehat v,\\
        e_-
        &=\widehat r-\widehat s+h \widehat s
          =ah \widehat y+(2-h) \widehat v.
\end{align*}
Hence
\begin{equation}\label{eq:Psi-factor}
        \begin{bmatrix}e_+\\e_-\end{bmatrix}
        =\mgb{\Psi_d}
\begin{bmatrix}\widehat y\\ \widehat v\end{bmatrix}.
\end{equation}
For zero initial state, $\widehat y=\mgb{\mathcal{G}}\widehat v$, and therefore the transfer \mgb{matrices} from $\widehat v$ to $e_+$ and $e_-$ are
\begin{equation}\label{eq:Hpm-def}
        H_+:=a(2-h) \mgb{\mathcal G}+h\mgb{\I_d},
        \qquad
        H_-:=ah \mgb{\mathcal G}+(2-h)\mgb{\I_d}.
\end{equation}
On the unit circle, \eqref{eq:Psi-factor} implies
\mgb{\begin{align*}
\begin{bmatrix} \mathcal G\\I_d\end{bmatrix}^{\!*}
\widetilde\Pi_d
\begin{bmatrix}\mathcal G\\I_d\end{bmatrix}
&=
\frac{1}{2}
\begin{bmatrix}H_+\\H_-\end{bmatrix}^{\!*}
J_{0,d}
\begin{bmatrix}H_+\\H_-\end{bmatrix}\\
&=\frac{1}{2}\bigl(H_+^*H_+-H_-^*H_-\bigr).
\end{align*}}
Consequently, \eqref{eq:kyp-strict-fdi-input-margin} is equivalent to
\begin{equation}\label{eq:kyp-Hpm}
        H_+(e^{i\theta})^*H_+(e^{i\theta})
        -H_-(e^{i\theta})^*H_-(e^{i\theta})
        \preceq -2\eta\mgb{\I_d}
\end{equation}
for every $\theta$.

\medskip
\mg{\noindent\emph{Step 2: Stable invertibility of $H_-$.} 
We show that $H_-^{-1}$ exists as a stable, proper
rational transfer matrix. Since $H_-$ is itself stable
and proper, it suffices to establish that $H_-(z)$
is nonsingular for every $|z|\ge 1$, including at the infinity limit.}
For $t\in[0,1]$, define the homotopy
\[
        H_+^{(t)}:=ta(2-h) \mgb{\mathcal{G}} +h\mgb{\I_d},
        \qquad
        H_-^{(t)}:=tah \mgb{\mathcal{G}}+(2-h)\mgb{\I_d},
\]
and, for fixed $\theta$, 
\mgb{\[
        \Phi_t(\theta):=
        \begin{bmatrix}t \mathcal G(e^{i\theta})\\I_d\end{bmatrix}^{\!*}
        \widetilde\Pi_d(e^{i\theta})
        \begin{bmatrix}t \mathcal G(e^{i\theta})\\I_d\end{bmatrix}.
\]}
\mgb{Partition $\widetilde\Pi_d=[(\widetilde\Pi_d)_{ij}]_{i,j=1}^2$
into $d\times d$ blocks.} %
Directly from \mgb{$\widetilde\Pi_d=\frac{1}{2}\Psi_d^{\sim}J_{0,d}\Psi_d$,}
\begin{equation}\label{eq:Pi-diagonal-blocks}
    \mgb{(\widetilde\Pi_d)_{11}
        =2a^2\Rea(1-h)I_d,
        \qquad
        (\widetilde\Pi_d)_{22}
        =-2\Rea(1-h)I_d,}
\end{equation}
on the unit circle. Moreover,
\[
        \Rea(1-h(e^{i\theta}))
        \ge 1-|h(e^{i\theta})|
        \ge 1-s_h>0.
\]
Thus \mgb{$(\widetilde\Pi_d)_{11}(e^{i\theta})\succ0$ and
$(\widetilde\Pi_d)_{22}(e^{i\theta})\prec0$}  %
for every $\theta$.
For any fixed $u\in\C^{d}$, the scalar function
\[
        f_{\theta,u}(t):=u^*\Phi_t(\theta)u
\]
is a real quadratic polynomial whose $t^2$-coefficient is \mgb{$u^*\mathcal G^*(\widetilde\Pi_d)_{11} \mathcal Gu\ge0$}.
Hence $f_{\theta,u}$ is convex. Since
\mgb{\[
        \Phi_0(\theta)=(\widetilde\Pi_d)_{22}(e^{i\theta})\prec0,
        \qquad
        \Phi_1(\theta)\preceq-\eta I_d\prec0,
\]}
convexity gives, for $0\le t\le1$,
\[
        u^*\Phi_t(\theta)u
        \le (1-t)u^*\Phi_0(\theta)u+t u^*\Phi_1(\theta)u<0
\]
whenever $u\ne0$. Therefore
\begin{equation}\label{eq:Phi-negative}
        \Phi_t(\theta)\prec0,
        \qquad t\in[0,1],\quad 0\le\theta<2\pi.
\end{equation}
As in Step~1,
\begin{equation}\label{eq:Phi-Hpm}
        \Phi_t(\theta)
        =\frac12\left(
          H_+^{(t)}(e^{i\theta})^*H_+^{(t)}(e^{i\theta})
          -H_-^{(t)}(e^{i\theta})^*H_-^{(t)}(e^{i\theta})
        \right).
\end{equation}
It follows that $H_-^{(t)}(e^{i\theta})$ is nonsingular for every $(t,\theta)$. Indeed, if
$H_-^{(t)}(e^{i\theta})u=0$ with $u\ne0$, then \eqref{eq:Phi-Hpm} would imply
\[
        u^*\Phi_t(\theta)u
        =\frac12\|H_+^{(t)}(e^{i\theta})u\|^2\ge0,
\]
contrary to \eqref{eq:Phi-negative}.

We now exclude zeros in the whole exterior disc, not merely on its boundary. Because \mgb{$\mathcal G$} is stable and proper, it is analytic on a neighborhood of $\{z:|z|\ge1\}\cup\{\infty\}$. Introduce $\lambda=z^{-1}$ and define
\[
        \widehat H_-^{(t)}(\lambda):=
        \begin{cases}
        H_-^{(t)}(\lambda^{-1}),&\lambda\ne0,\\
        2\mgb{\I_d},&\lambda=0.
        \end{cases}
\]
The value at $\lambda=0$ is the removable limit because $h(\infty)=0$; hence
$\widehat H_-^{(t)}$ is analytic on a neighborhood of the closed unit disc $\overline\D$, uniformly in $t\in[0,1]$. Set
\[
        d_t(\lambda):=\det \widehat H_-^{(t)}(\lambda).
\]
By the preceding paragraph, $d_t(\lambda)\ne0$ for $|\lambda|=1$, for every $t$. Therefore the winding number
\[
        N(t):=\frac{1}{2\pi i}
        \int_{|\lambda|=1}\frac{d_t'(\lambda)}{d_t(\lambda)}\,d\lambda
\]
is well-defined. By the argument principle, $N(t)$ is the number of zeros of $d_t$ in $\D$, counted with multiplicity. Since $(t,\lambda)\mapsto d_t(\lambda)$ is continuous and never vanishes on $[0,1]\times\partial\D$, the integer-valued function $N(t)$ is constant.

At $t=0$,
\[
        \widehat H_-^{(0)}(\lambda)
        =\left(2-\sum_{j=1}^J c_j\lambda^j\right)\mgb{\I_d}.
\]
For $|\lambda|\le1$,
\[
        \left|\sum_{j=1}^J c_j\lambda^j\right|
        \le s_h<1,
\]
so $\widehat H_-^{(0)}(\lambda)$ is nonsingular throughout $\overline\D$. Hence $N(0)=0$, and therefore $N(t)=0$ for all $t$. Together with nonsingularity on $\partial\D$ and the identity
$\widehat H_-^{(t)}(0)=\mgb{2\I_d}$, this shows that
\[
        \det H_-^{(t)}(z)\ne0
        \qquad\text{for every }|z|\ge1,
\]
and that there is no zero at infinity. \mg{
In particular,
\begin{equation}\label{eq:Hminus-minphase}
    \det H_-(z)\ne 0 \quad (|z|\ge 1),
    \qquad H_-(\infty)=2I_d.
\end{equation}
Since $H_-$ is stable and proper, these conditions imply
that $H_-^{-1}$ is also stable and proper.
}

\medskip
\noindent\emph{Step 3: Augmented realization and the change of input $\widehat v\mapsto e_-$.} 
\mgb{We use the Schur realization $(A,B,C,D)$ of
$\mathcal G$ specified in~\eqref{eq:plant-realization}}:
\begin{equation}\label{eq:plant-realization-kyp-proof}
        \xi_{k+1}=A\xi_k+B \widehat v_k,
        \qquad
        \widehat y_k=C\xi_k+D \widehat v_k,
\end{equation}
with $A$ Schur. Since
\[
        \widehat s_k=aC\xi_k+(aD-\mgb{\I_d}) \widehat v_k,
        \qquad
        (h \widehat s)_k=\sum_{j=1}^{J}c_j \widehat s_{k-j},
\]
augment $\xi_k$ by the filter states
\[
        q_k:=
        \begin{bmatrix}
          \widehat s_{k-1}\\ \widehat s_{k-2}\\\vdots\\\widehat s_{k-J}
        \end{bmatrix},
        \qquad
        \Xi_k:=\begin{bmatrix}\xi_k\\q_k\end{bmatrix}.
\]
Let
\[
        L_h:=\begin{bmatrix}c_1\I_d&c_2\I&\cdots&c_J\I_d\end{bmatrix},
        \qquad
        S:=
        \begin{bmatrix}
          0&0&\cdots&0\\
          \I_d&0&\cdots&0\\
          0&\I_d&\ddots&\vdots\\
          \vdots&\ddots&\ddots&0
        \end{bmatrix},
        \qquad
        E:=\begin{bmatrix}\I_d\\0\\\vdots\\0\end{bmatrix}.
\]
\mgb{Here $L_h\in\mathbb C^{d\times Jd}$, $S\in\mathbb C^{Jd\times Jd}$,
$E\in\mathbb C^{Jd\times d}$, and each displayed zero block is $d\times d$.}
Then $(h \widehat s)_k=L_hq_k$ and
\[
        q_{k+1}=Sq_k+E \widehat s_k
        =Sq_k+EaC\xi_k+E(aD-\I_d)\widehat  v_k.
\]
Thus
\begin{equation}\label{eq:augmented-state}
        \Xi_{k+1}=A_a\Xi_k+B_a \widehat v_k,
\end{equation}
where
\begin{equation}\label{eq:Aa-Ba}
        A_a=
        \begin{bmatrix}
          A&0\\
          EaC&S
        \end{bmatrix},
        \qquad
        B_a=
        \begin{bmatrix}
          B\\
          E(aD-\I_d)
        \end{bmatrix}.
\end{equation}
\mgb{The matrices $A_a$ and $B_a$ have dimensions $n_a\times n_a$ and
$n_a\times d$, respectively.} Because $A_a$ is block lower triangular with diagonal blocks $A$ and the nilpotent matrix $S$, it is Schur.

From Step~1,
\[
        e_+=2a \widehat y- (h \widehat s),
        \qquad
        e_-=2 \widehat v+ (h \widehat s).
\]
Therefore
\begin{equation}\label{eq:epm-realization}
        e_{+,k}=C_+\Xi_k+D_+\widehat v_k,
        \qquad
        e_{-,k}=C_-\Xi_k+D_- \widehat v_k,
\end{equation}
with
\begin{equation}\label{eq:Cpm-Dpm}
\begin{aligned}
        C_+&=\begin{bmatrix}2aC&-L_h\end{bmatrix},
        &D_+&=2aD,\\
        C_-&=\begin{bmatrix}0&L_h\end{bmatrix},
        &D_-&=2\mgb{\I_d}.
\end{aligned}
\end{equation}
\mgb{Here $C_\pm\in\mathbb C^{d\times n_a}$ and
$D_\pm\in\mathbb C^{d\times d}$.} In particular, $D_-$ is invertible. Hence, pointwise in time,
\begin{equation}\label{eq:v-from-eminus}
        \widehat v_k=D_-^{-1}(e_{-,k}-C_-\Xi_k).
\end{equation}
Thus replacing $\widehat v_k$ by $e_{-,k}$ is an invertible static change of input coordinates on the augmented state space. Substitution of \eqref{eq:v-from-eminus} into \eqref{eq:augmented-state} and \eqref{eq:epm-realization} yields the equivalent realization
\begin{equation}\label{eq:bar-realization}
        \Xi_{k+1}=\bar A\Xi_k+\bar B e_{-,k},
        \qquad
        e_{+,k}=\bar C\Xi_k+\bar D e_{-,k},
\end{equation}
where
\begin{equation}\label{eq:bar-matrices}
\begin{aligned}
        \bar A&=A_a-B_aD_-^{-1}C_- ,
        &\bar B&=B_aD_-^{-1},\\
        \bar C&=C_+-D_+D_-^{-1}C_- ,
        &\bar D&=D_+D_-^{-1}.
\end{aligned}
\end{equation}

\medskip
\noindent\emph{Step 4: Stability of the transformed realization and strict bounded realness.}
For zero initial state,
\[
        e_+=H_+ \widehat v,
        \qquad
        e_-=H_- \widehat v.
\]
By~\eqref{eq:Hminus-minphase}, $H_-^{-1}$ is stable and proper. Hence the transfer from the new input $e_-$ to the output $e_+$ is
\begin{equation}\label{eq:K-def}
        K=H_+H_-^{-1}.
\end{equation}
The realization in \eqref{eq:bar-realization} has precisely this transfer.

\mgb{We next verify that the state matrix is Schur stable. This does not follow from stability of the transfer matrix alone, since some eigenvalues of the state matrix may not appear as poles of the transfer matrix.}\footnote{\mgb{Indeed, the adjugate formula for the matrix inversion gives $G(z)=D+C\operatorname{adj}(zI-A)B/\det(zI-A)$. Although the roots of $\det(zI-A)$ are precisely the eigenvalues of $A$, factors of this denominator may cancel with corresponding factors in the numerators of all entries of $G$. Consequently, some eigenvalues of $A$ may not appear as poles of $G$, and a stable transfer matrix need not have a Schur-stable state matrix.}}
For $|z|\ge1$, $z\mgb{\I_{n_a}}-A_a$ is invertible \mgb{where $n_a=n+Jd$}. Using \eqref{eq:bar-matrices}, the determinant identity and Sylvester's determinant theorem give
\begin{align}
\det(zI_{n_a}-\bar A)
&{=\det(zI_{n_a}-A_a)
  \det\!\left(I_d+D_-^{-1}C_-(zI_{n_a}-A_a)^{-1}B_a\right)\notag}\\
&=\det(zI_{n_a}-A_a)\det\!\left(D_-^{-1}H_-(z)\right).
\label{eq:Abar-determinant}
\end{align}
The first factor in \eqref{eq:Abar-determinant} is nonzero because $A_a$ is Schur; the second is nonzero by \eqref{eq:Hminus-minphase}. Hence
$\det(z\mgb{\I_{n_a}}-\bar A)\ne0$ for every $|z|\ge1$, so $\bar A$ is Schur.

Finally, rewrite \eqref{eq:kyp-Hpm} as
\[
        H_-^*H_- - H_+^*H_+\succeq2\eta\mgb{\I_d}
\]
on the unit circle and pre- and post-multiply by $H_-^{-*}$ and $H_-^{-1}$. Using \eqref{eq:K-def},
\begin{equation}\label{eq:strict-br-margin}
        \I_d-K(e^{i\theta})^*K(e^{i\theta})
        \succeq
        2\eta\,H_-(e^{i\theta})^{-*}H_-(e^{i\theta})^{-1}
        \succ0.
\end{equation}
Since the unit circle is compact and $H_-$ is continuous and nonsingular there, \eqref{eq:strict-br-margin} has a uniform positive margin. In particular,
\[
        \|K\|_{\mathcal{H}_\infty}:=\sup_\theta\|K(e^{i\theta})\| <1.
\]
\mgb{We invoke the Kalman--Yakubovich--Popov (KYP) lemma in the form of the strict discrete-time bounded-real lemma~\cite[Theorem~2.2(b)]{de1992discrete}: if $\bar A$ is Schur and $K(z)=\bar C(zI-\bar A)^{-1}\bar B+\bar D$ satisfies $\|K\|_{\mathcal H_\infty}<1$, then there exists a Hermitian matrix $X_0$ such that
\beq
R_0:=\bar B^{*}X_0\bar B+\bar D^{*}\bar D-I_d\prec0
\label{ineq-R0}
\eeq
and
\beq
\bar A^{*}X_0\bar A-X_0+\bar C^{*}\bar C
-\bigl(\bar A^{*}X_0\bar B+\bar C^{*}\bar D\bigr)R_0^{-1}\bigl(\bar B^{*}X_0\bar A+\bar D^{*}\bar C\bigr)\prec0 .
\label{ineq-schur-complement}
\eeq
The left-hand side of \eqref{ineq-schur-complement} is the Schur complement of the block $R_0$ in the Hermitian matrix
\begin{equation}\label{eq:br-lmi}
\mathcal M_0:=
\begin{bmatrix}
\bar A^{*}X_0\bar A-X_0+\bar C^{*}\bar C & \bar A^{*}X_0\bar B+\bar C^{*}\bar D\\[1mm]
\bar B^{*}X_0\bar A+\bar D^{*}\bar C & R_0
\end{bmatrix},
\end{equation}
and a Hermitian matrix with a negative definite block $R_0$ is negative definite if and only if the Schur complement of $R_0$ is negative definite. Hence \eqref{ineq-R0}--\eqref{ineq-schur-complement} are equivalent to $\mathcal M_0\prec0$.}

\noindent\emph{Step 5: Positivity of the storage
and return to the original input.}
\mgb{We have necessarily $X_0\succ0$. To see this, note that $Q:=X_0-\bar A^{*}X_0\bar A\succ0$ by the top-left block of \eqref{eq:br-lmi} and $\bar A$ is Schur, therefore by the definition of $Q$ and using the identity $X_0= Q+\bar A^{*}X_0\bar A$ recursively
$$X_0=Q+\bar A^{*}X_0\bar A=Q+\bar A^{*}Q\bar A+(\bar A^{*})^{2}X_0\bar A^{2}=\cdots=\sum_{k=0}^{N-1}(\bar A^{*})^{k}Q\bar A^{k}+(\bar A^{*})^{N}X_0\bar A^{N}.$$
Since $\bar A$ is Schur, ${\bar A}^N \to 0$ as $N\to \infty$ and we get
$X_0=\sum_{k\ge0} {({\bar A}^{*})}^{k}Q{\bar A}^{k}$, and every 
summand is positive semidefinite with the $k=0$ summand equal to $Q$; hence $X_0\succeq Q\succ0$. This is the KYP lemma for the frequency-domain inequality $K(e^{i\theta})^{*}K(e^{i\theta})-I_d\prec0$, i.e., for the quadratic form $\|e_+\|^2-\|e_-\|^2$ in the output--input pair of $K$.\footnote{\mgb{The strictness of the inequality matters. If only the nonstrict inequality $K(e^{i\theta})^{*}K(e^{i\theta})-I_d\preceq0$ is available, then the existence of $X_0\succ0$ satisfying the corresponding nonstrict matrix inequality requires assumptions beyond the stability of $\bar A$, namely a minimal realization~\cite[Theorem~1.1]{ball2018standard}. In the strict case used here, stability of $\bar A$ suffices.}} We apply it with state $\Xi_k=(\xi_k,q_k)$, input $e_{-,k}$, output $e_{+,k}$, and $(\bar A,\bar B,\bar C,\bar D)$ as in \eqref{eq:bar-matrices}, so that $K=H_+H_-^{-1}$ by \eqref{eq:K-def}; its hypotheses are exactly the conclusions of Step~4, namely \eqref{eq:Abar-determinant} and \eqref{eq:strict-br-margin}. Since the supply of Step~1 is $\sigma_k=\tfrac12(\|e_{+,k}\|^2-\|e_{-,k}\|^2)$, we set $X:=\tfrac12X_0$; halving \eqref{eq:br-lmi} gives \begin{equation}\label{eq:strict-kyp-matrix}
\mathcal M_X:=
\begin{bmatrix}
\bar A^*X\bar A-X+\frac12\bar C^*\bar C
&
\bar A^*X\bar B+\frac12\bar C^*\bar D\\[1mm]
\mgb{\bar B^* X \bar A+ \frac{1}{2}\bar D^* \bar C}
&
\bar B^*X\bar B+\frac12(\bar D^*\bar D-\I_d)
\end{bmatrix}
\prec0.
\end{equation}%
}
Since \eqref{eq:strict-kyp-matrix} is a strict finite-dimensional inequality, there exists $\delta>0$ such that
$\mathcal M_X\preceq-\delta\mgb{\I_{n_a+d}}$. Using this inequality and evaluating this quadratic form at
$\mgb{w_k:=}\operatorname{col}(\Xi_k,e_{-,k})$ gives, along every trajectory of \eqref{eq:bar-realization}, 
\mgb{$$
\begin{aligned}
w_k^{*}\mathcal M_X w_k
&=\Xi_k^{*}(\bar A^{*}X\bar A-X)\Xi_k+2\Ree\,\Xi_k^{*}\bar A^{*}X\bar B e_{-,k}+e_{-,k}^{*}\bar B^{*}X\bar B e_{-,k}\\
&\quad+\tfrac12\bigl(\Xi_k^{*}\bar C^{*}\bar C\Xi_k+2\Ree\,\Xi_k^{*}\bar C^{*}\bar D e_{-,k}+e_{-,k}^{*}\bar D^{*}\bar D e_{-,k}\bigr)-\tfrac12\|e_{-,k}\|^{2}\\
&=(\bar A\Xi_k+\bar Be_{-,k})^{*}X(\bar A\Xi_k+\bar Be_{-,k})-\Xi_k^{*}X\Xi_k+\tfrac12\|\bar C\Xi_k+\bar De_{-,k}\|^{2}-\tfrac12\|e_{-,k}\|^{2}\\
&=V(\Xi_{k+1})-V(\Xi_k)+\tfrac12\bigl(\|e_{+,k}\|^{2}-\|e_{-,k}\|^{2}\bigr) \leq -\delta \|w_k\|^2.
\end{aligned}
$$}
\mgb{Since $\|w_k\|^2 = \|\Xi_k\|^2+\|e_{-,k}\|^2$}, this leads to
\begin{equation}\label{eq:kyp-br-dissipation}
        V(\Xi_{k+1})-V(\Xi_k)
        +\frac12\bigl(\|e_{+,k}\|^2-\|e_{-,k}\|^2\bigr)
        \le
        -\delta\bigl(\|\Xi_k\|^2+\|e_{-,k}\|^2\bigr),
\end{equation}
where $V(\Xi)=\Xi^*X\Xi$.

\medskip

It remains only to express the strict dissipation bound in terms of the original input $
\widehat v_k$. From \eqref{eq:epm-realization},
\[
        e_{-,k}=C_-\Xi_k+D_- \widehat v_k,
        \qquad D_-=2\I_d.
\]
Hence
\[
        \begin{bmatrix}\Xi_k\\e_{-,k}\end{bmatrix}
        =
        T\begin{bmatrix}\Xi_k\\ \widehat v_k\end{bmatrix},
        \qquad
T:=\begin{bmatrix}\I_{n_a}&0\\C_-&D_-\end{bmatrix} \mgb{\in \mathbb{C}^{(n_a+d)\times(n_a+d)}}.
\]
The block triangular matrix $T$ is invertible because $D_-$ is invertible. Therefore, with
$\gamma:=\sigma_{\min}(T)^2>0$,
\begin{equation}\label{eq:norm-equivalence}
        \|\Xi_k\|^2+\|e_{-,k}\|^2
        \ge
        \gamma\bigl(\|\Xi_k\|^2+\|\widehat v_k\|^2\bigr)
\end{equation}
for all $(\Xi_k, \widehat v_k)$.

Combining the exact identity \eqref{eq:kyp-factor-time}, the dissipation inequality
\eqref{eq:kyp-br-dissipation}, and \eqref{eq:norm-equivalence}, we conclude that
\[
        V(\Xi_{k+1})-V(\Xi_k)+\sigma_k
        \le
        -\delta\gamma\bigl(\|\Xi_k\|^2+\|\widehat v_k\|^2\bigr).
\]
Thus \eqref{eq:positive-storage-kyp} holds with
$\varepsilon:=\delta\gamma>0$.
\end{proof}

\section[Exact stability-boundary for quadratics]{Optimal parameter design for GMM on quadratics}\label{sec:design}

We now determine exactly \sa{the supremum of the rate constants certified by Theorem~\ref{thm:q-gate}}
over all admissible extrapolation
parameters~$\nu$, and we prove that this optimal constant degrades exactly like
$1/\tau$ as the delay grows. %

Theorem~\ref{thm:q-gate} reduces the design of an accelerated delayed method on quadratics to a single scalar question. It shows that whenever the limiting polynomial $\Pinf(\cdot;\delta,\nu)$ is strictly Schur for every gain $0<\delta\le\delta_L$, the method run with $\alpha=\delta_L/L$ and $\beta=1-2\sqrt{\delta_L}\,q$ attains the accelerated rate
$\rho_{\tau,\kappa}\mathrel{\le}1-cq$ for every $0<c<\sqrt{\delta_L}$ and %
\sa{all sufficiently large $\kappa$}
Thus, 
for a given admissible gain interval, the supremum of the rate
constants certified by Theorem~\ref{thm:q-gate} is $\sqrt{\delta_L}$. %
\sa{Since the extrapolation parameter $\nu$ remains free, it is natural to choose $\nu$ so as to maximize the length of this stability interval.} To make this precise, fix an integer $\tau\ge1$ and for $\nu>\tau$ let
\begin{equation}\label{eq:q-deltaLdef}
        \delta_L(\nu):=\inf\bigl\{\delta>0:\Pinf(\cdot;\delta,\nu)\ \text{has a root on the unit circle}\bigr\}
\end{equation}
be the first positive gain at which a root lies on the unit circle.

By Corollary~\ref{coro:q-smallgain} and Lemma~\ref{lem:q-firstlobe} below,
$\delta_L(\nu)>0$ for every $\nu>\tau$,
and then $\Pinf(\cdot;\delta,\nu)$ is strictly Schur for all
$0<\delta<\delta_L(\nu)$. \sa{Hence, every $\widehat\delta_L<\delta_L(\nu)$ is admissible in Theorem~\ref{thm:q-gate}. Letting $\widehat\delta_L\uparrow\delta_L(\nu)$ shows that the supremum of the certifiable rate constants for a fixed $\nu$ is $\sqrt{\delta_L(\nu)}$.}
The design problem is therefore
\begin{equation}\label{eq:q-designproblem}
        \max_{\nu>\tau}\ \delta_L(\nu),
\end{equation}
whose square root is the supremum of rate constants certifiable by Theorem~\ref{thm:q-gate}. The difficulty in
\eqref{eq:q-designproblem} is that a  root $z= re^{i\theta}$ of $P_\infty$ may reach the unit circle at
any frequency $\theta \in [0,2\pi)$ so $\delta_L(\nu)$ is a priori governed by an infinite family of
competing crossing conditions. We resolve this difficulty completely below: for
every admissible $\nu$, strict stability is first lost at a single %
\sa{explicitly characterized} low-frequency crossing (Lemma~\ref{lem:q-firstlobe}), and consequently
\eqref{eq:q-designproblem} is solved exactly (Corollary~\ref{cor:q-designoptimal}).

\paragraph{The stability boundary.}
At a loss of strict stability, a root of $\Pinf$ lies on the unit circle,
\sa{say} $z=e^{i\theta}$. Since $\Pinf$ has real coefficients, its \sa{complex} roots occur in conjugate pairs, so $e^{i\theta}$ and $e^{-i\theta}$ cross together and we may take
\sa{$\theta\in(0,\pi)$}. From \eqref{eq:q-Pinf}, a unit-circle root satisfies
\begin{equation}\label{eq:q-boundeq}
        e^{i\tau\theta}(e^{i\theta}-1)^2+\delta\bigl[(1+\nu)e^{i\theta}-\nu\bigr]=0 .
\end{equation}
This is one complex equation---two real equations---in the two real unknowns
$(\delta,\nu)$; as $\theta$ ranges over $(0,\pi)$
 it traces the \emph{stability
boundary} in the $(\nu,\delta)$ plane. The next proposition parametrizes it
explicitly.

\begin{proposition}[the stability boundary]\label{prop:q-boundary}
Fix an integer $\tau\ge1$ and write $u:=\theta/2$ and $M=2\tau+1$. For $\theta\in(0,\pi)$, the equation
\eqref{eq:q-boundeq} \mgb{admits a solution $(\delta,\nu) \in (0,\infty)\times \R$ if and only if $\cos(Mu)>0$. When this condition holds, the solution is unique and is given by} %
\begin{equation}\label{eq:q-DN}
        \delta=D_\tau(u):=\frac{4\sin^2u\,\cos(Mu)}{\cos u},
        \qquad
        \nu=N_\tau(u):=\frac{\sin(2\tau u)}{2\sin u\,\cos(Mu)} .
\end{equation}
Both $D_\tau$ and $N_\tau$ are positive on the interval
\begin{equation}\label{eq:q-band}
        \mathcal I_\tau:=\Bigl(0,\tfrac{\pi}{2M}\Bigr),
\end{equation}
and $D_\tau$ vanishes at its two endpoints $u=0$ and $u=\pi/(2M)$.
\end{proposition}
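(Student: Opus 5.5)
The plan is to reduce the boundary equation \eqref{eq:q-boundeq} to a symmetric form by factoring out half-angle phases. With $z=e^{i\theta}=e^{2iu}$ we have $(e^{i\theta}-1)^2=e^{2iu}(2i\sin u)^2=-4\sin^2u\,e^{2iu}$. Multiplying the affine term by $e^{-iu}$ gives $(1+\nu)e^{iu}-\nu e^{-iu}=\cos u+i(1+2\nu)\sin u$. Multiplying \eqref{eq:q-boundeq} by $e^{-iu}$ then turns it into
\[
-4\sin^2u\,e^{i(2\tau+1)u}+\delta\bigl[\cos u+i(1+2\nu)\sin u\bigr]=0,
\]
that is, $\delta\bigl[\cos u+i(1+2\nu)\sin u\bigr]=4\sin^2u\,e^{iMu}$. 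Since $u\in(0,\pi/2)$, both $\sin u$ and $\cos u$ are positive. The equation is now linear in the two real unknowns $\delta$ and $\delta(1+2\nu)$, so I will simply match real and imaginary parts.

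Matching real parts gives $\delta\cos u=4\sin^2u\cos(Mu)$. This yields the formula for $D_\tau(u)$ in \eqref{eq:q-DN} and shows that $\delta>0$ holds if and only if $\cos(Mu)>0$, which settles the existence criterion. Matching imaginary parts gives $\delta(1+2\nu)\sin u=4\sin^2u\sin(Mu)$, so $1+2\nu=\tan(Mu)\cos u/\sin u$. Uniqueness is immediate from the linear structure: $\delta$ is determined first, and $\nu$ is then determined whenever $\delta\neq0$. Conversely, these formulas do satisfy the equation, which gives the ``if'' direction.

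It remains to simplify $\nu$. We have
\[
2\nu=\frac{\sin(Mu)\cos u-\cos(Mu)\sin u}{\sin u\cos(Mu)}=\frac{\sin((M-1)u)}{\sin u\cos(Mu)}.
\]
Since $M-1=2\tau$, this is exactly $N_\tau(u)$. This subtraction identity is the only step where a slip is likely, so I will double-check it against the argument identity $\tan(Mu)=2M\tan u$ used in Lemma~\ref{lem:q-nocross}, which corresponds to $1+2\nu=2M$.

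Finally, on $\mathcal I_\tau=(0,\pi/(2M))$ we have $Mu\in(0,\pi/2)$, so $\cos(Mu)>0$. Also $2\tau u<Mu<\pi/2$, so $\sin(2\tau u)>0$, and $\sin u,\cos u>0$. Hence $D_\tau$ and $N_\tau$ are both positive there. At $u=0$ the factor $\sin^2u$ vanishes, and at $u=\pi/(2M)$ the factor $\cos(Mu)$ vanishes, so $D_\tau$ vanishes at both endpoints. No real obstacle is expected; the main care points are the half-angle phase bookkeeping and the trigonometric subtraction in the formula for $\nu$.
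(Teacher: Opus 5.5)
Your proposal is correct and is essentially the paper's own argument: factor the half-angle phase $e^{iu}$ out of \eqref{eq:q-boundeq} to get $\delta\bigl[\cos u+i(1+2\nu)\sin u\bigr]=4\sin^2u\,e^{iMu}$, read $\delta=D_\tau(u)$ and the criterion $\cos(Mu)>0$ off the real part, recover $\nu=N_\tau(u)$ from the imaginary part via $\sin(Mu)\cos u-\cos(Mu)\sin u=\sin(2\tau u)$, and then check the signs on $\mathcal I_\tau$ and the zeros of $D_\tau$ at the endpoints. Your cross-check against $\tan(Mu)=2M\tan u$ from Lemma~\ref{lem:q-nocross} is consistent, since there $u=\arctan\omega$ is the same half-angle and $1+2\nu=2M$, but the proof does not need it.
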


\begin{proof}
Using $(e^{i\theta}-1)^2=-4\sin^2u\,e^{i\theta}$ and $\theta=2u$, equation
\eqref{eq:q-boundeq} is equivalent to
\[
        \delta\bigl[(1+\nu)e^{2iu}-\nu\bigr]=4\sin^2u\,e^{i(2\tau+2)u}.
\]
Factor $e^{iu}$ from each side. On the left,
$(1+\nu)e^{2iu}-\nu=e^{iu}\bigl[\cos u+i(1+2\nu)\sin u\bigr]$; on the right,
$e^{i(2\tau+2)u}=e^{iu}e^{iMu}$ because $2\tau+2=M+1$. Cancelling $e^{iu}$ leaves
\[
        \delta\bigl[\cos u+i(1+2\nu)\sin u\bigr]
        =4\sin^2u\,\bigl[\cos(Mu)+i\sin(Mu)\bigr].
\]
Equating real and imaginary parts, with $\delta$ and $\nu$ real, gives
\begin{equation}\label{eq:q-realimag}
        \delta\cos u=4\sin^2u\,\cos(Mu),
        \qquad
        \delta(1+2\nu)\sin u=4\sin^2u\,\sin(Mu).
\end{equation}
\mgb{Because $u\in(0,\pi/2)$, we have $\cos u>0$ and
$\sin u>0$. Thus any solution with $\delta>0$ must satisfy
$\cos(Mu)>0$. In particular, the denominator defining
$N_\tau(u)$ is nonzero.
The first equation in~\eqref{eq:q-realimag} then uniquely determines
$\delta=D_\tau(u)$.}
Dividing the second by the first and
using the angle-difference identity
$\cos u\,\sin(Mu)-\sin u\,\cos(Mu)=\sin\!\big((M-1)u\big)=\sin(2\tau u)$,
\begin{equation}\label{eq:q-oneplus2nu}
        1+2\nu=\frac{\cos u\,\sin(Mu)}{\sin u\,\cos(Mu)},
        \qquad\text{hence}\qquad
        2\nu=\frac{\sin(2\tau u)}{\sin u\,\cos(Mu)},
\end{equation}
which is $\nu=N_\tau(u)$. Conversely, $(\delta,\nu)=(D_\tau(u),N_\tau(u))$ solves
\eqref{eq:q-realimag} and hence \eqref{eq:q-boundeq}. On $\mathcal I_\tau$ one has
$Mu\in(0,\tfrac\pi2)$, so $\cos(Mu)>0$ and $\cos u>0$, whence $D_\tau(u)>0$; and
$2\tau u<2\tau\cdot\tfrac{\pi}{2M}=\tfrac{\tau\pi}{M}<\tfrac\pi2$, so
$\sin(2\tau u)>0$ and $N_\tau(u)>0$. Finally $\sin u$ vanishes at $u=0$ and
$\cos(Mu)$ at $u=\pi/(2M)$, so $D_\tau=0$ at both endpoints.
\end{proof}

\paragraph{Stability loss through unit-circle crossings.}
Fix $\nu\in(\tau,\infty)$. \sa{As the gain $\delta$ increases from zero,
strict Schur stability persists until a root of
$\Pinf(\cdot;\delta,\nu)$ first reaches the unit circle. Thus,
by \eqref{eq:q-deltaLdef}, $\delta_L(\nu)$ is the smallest positive
gain associated with a unit-circle crossing; moreover,}
$\delta_L(\nu)>0$ by the expansion \eqref{eq:z-pm-mod-first}.

\sa{The crossing equations \eqref{eq:q-realimag} substantially restrict the
possible crossing frequencies. Since $u\in(0,\pi/2)$ and $\delta>0$,
the first equation in \eqref{eq:q-realimag} implies $\cos(Mu)>0$.
Moreover, $\nu>\tau\ge1$ implies $1+2\nu>0$, so the second equation
gives $\sin(Mu)>0$. Hence every positive-gain crossing relevant to the
design problem \eqref{eq:q-designproblem} satisfies}
\[
        Mu\in\bigl(2k\pi,\,2k\pi+\tfrac{\pi}{2}\bigr),
        \qquad k\ge0.
\]
The band corresponding to $k=0$ is precisely
$\mathcal I_\tau$ in \eqref{eq:q-band}, the interval of lowest crossing
frequencies. Borrowing the main-lobe terminology for oscillatory kernels
\cite{Harris1978}, we refer to $\mathcal I_\tau$ as the \emph{first lobe}.
It is the band adjacent to the marginal frequency $u=0$, where at zero
gain the polynomial $z^\tau(z-1)^2$ has a double root at $z=1$.
The following lemma shows that, among all positive-gain unit-circle
crossings for a fixed $\nu>\tau$, the first-lobe crossing has the
smallest gain. Consequently, $\delta_L(\nu)$ is attained on
$\mathcal I_\tau$.

\begin{lemma}[all-frequency exclusion and first-lobe optimality]\label{lem:q-firstlobe}
Fix $\tau\ge1$, $M=2\tau+1$, and $\nu>\tau$; write $A:=1+2\nu$, so that $A>M$.
Then the following hold.
\begin{enumerate}[label=\textup{(\alph*)},nosep,leftmargin=2.2em]
\item \emph{(Crossing conditions.)} Every unit-circle root of
$\Pinf(\cdot;\delta,\nu)$ with $\delta>0$ and $\theta\in(0,\pi)$ has $u=\theta/2$
satisfying
\begin{equation}\label{eq:q-crossing}
        \tan(Mu)=A\tan u,
        \qquad
        Mu\in\bigl(2k\pi,\,2k\pi+\tfrac\pi2\bigr)\ \text{for some }k\ge0,
        \qquad
        \delta=D_\tau(u).
\end{equation}
\item \emph{(Monotonicity of the gain along the boundary.)} Along the crossing set
$\{u:N_\tau(u)=\nu\}$ one has $D_\tau(u)=\Phi_A(\tan u)$, where
\begin{equation}\label{eq:q-PhiA}
        \Phi_A(t):=\frac{4t^2}{\sqrt{(1+t^2)(1+A^2t^2)}}
\end{equation}
is strictly increasing on $(0,\infty)$. 
\sa{Consequently, $D_\tau(u)$ is strictly increasing with $u$ along the crossing set.}
\item \emph{(A unique lowest crossing.)} The map $R(u):=\tan(Mu)/\tan u$ is a
strictly increasing bijection of $\mathcal I_\tau$ onto $(M,\infty)$, with
$1+2N_\tau(u)=R(u)$; hence $N_\tau$ is a strictly increasing bijection
$\mathcal I_\tau\to(\tau,\infty)$. The crossing set therefore contains exactly one
point $u_0(\nu)=N_\tau^{-1}(\nu)$ in $\mathcal I_\tau$, and every other
positive-gain crossing has $u>2\pi/M>\pi/(2M)>u_0(\nu)$.
\item \emph{(Endpoints do not preempt.)} The endpoint $z=+1$ is never a root at
positive gain, and the endpoint $z=-1$ (which can occur only for even $\tau$)
occurs at $\delta=4/A$, which exceeds $\delta_L(\nu)$.
\end{enumerate}
In particular, strict stability is first lost at the first-lobe frequency
$u_0(\nu)$, and
\begin{equation}\label{eq:q-deltaL-closed}
        \delta_L(\nu)=D_\tau\bigl(u_0(\nu)\bigr)=D_\tau\bigl(N_\tau^{-1}(\nu)\bigr).
\end{equation}
\end{lemma}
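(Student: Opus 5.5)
The plan is to build everything from Proposition~\ref{prop:q-boundary} and the real/imaginary split \eqref{eq:q-realimag}, carrying out (a)--(d) in order.

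\textbf{Part (a).} Take a unit-circle root $e^{i\theta}$ with $\theta\in(0,\pi)$ and $\delta>0$. The first equation of \eqref{eq:q-realimag} gives $\delta=D_\tau(u)$ and $\cos(Mu)>0$. Since $A=1+2\nu>0$ and $\sin u>0$, the second equation gives $\sin(Mu)>0$. Together these place $Mu$ in some band $(2k\pi,2k\pi+\pi/2)$. Dividing the two equations yields $A\tan u=\tan(Mu)$, which is \eqref{eq:q-oneplus2nu}.

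\textbf{Part (b).} On the crossing set, substitute $t=\tan u$ and $\tan(Mu)=At$. Because $\cos(Mu)>0$, we get $\cos(Mu)=(1+A^2t^2)^{-1/2}$. Also $\sin^2u/\cos u=t^2\cos u=t^2(1+t^2)^{-1/2}$. Hence $D_\tau(u)=\Phi_A(t)$, exactly as in the identity \eqref{eq:q-deltaalt} used in Lemma~\ref{lem:q-nocross}. To see that $\Phi_A$ increases, write $\Phi_A(t)^2=16t^4/\bigl((1+t^2)(1+A^2t^2)\bigr)$ and set $s=t^2$. Then $s^2/\bigl((1+s)(1+A^2s)\bigr)=1/\bigl((1+1/s)(A^2+1/s)\bigr)$. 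Each factor in the denominator decreases in $s$, so $\Phi_A$ is strictly increasing. Since $\tan$ is increasing on $(0,\pi/2)$, $D_\tau$ increases in $u$ along the crossing set.

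\textbf{Part (c).} This is the main obstacle. I need $R(u)=\tan(Mu)/\tan u$ to be strictly increasing on $\mathcal I_\tau$.
\begin{itemize}
\item Compute $R'(u)$. Its sign is that of $M\sin(2u)-\sin(2Mu)$, because $R'\propto M\sec^2(Mu)\tan u-\tan(Mu)\sec^2u$, and multiplying by $\cos^2(Mu)\cos^2u$ gives $\tfrac12\bigl(M\sin 2u-\sin 2Mu\bigr)$.
\item The function $\phi(x)=M\sin x-\sin(Mx)$ has $\phi(0)=0$ and $\phi'(x)=M(\cos x-\cos Mx)>0$ for $0<x<Mx\le\pi$. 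This covers $2u\in(0,\pi/M)$, so $R'>0$ on $\mathcal I_\tau$.
\item The limits are $R(0^+)=M$ and $R\to\infty$ as $Mu\to\pi/2$. So $R$ is a strictly increasing bijection of $\mathcal I_\tau$ onto $(M,\infty)$.
\item By \eqref{eq:q-oneplus2nu}, $1+2N_\tau=R$, so $N_\tau$ is a bijection of $\mathcal I_\tau$ onto $(\tau,\infty)$. This gives the unique first-lobe crossing $u_0(\nu)$.
\item Any crossing with $k\ge1$ has $u>2\pi/M$, which exceeds $u_0$. By (b), such a crossing has a strictly larger gain.
\end{itemize}
A subtlety remains. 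Monotonicity in (b) compares crossings for the same $A$, and all crossings for fixed $\nu$ share that $A$, so the comparison is legitimate.

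\textbf{Part (d).} At $z=1$, $\Pinf(1)=\delta\neq0$. At $z=-1$, $\Pinf(-1)=4(-1)^\tau-\delta A$, which vanishes only when $\tau$ is even and $\delta=4/A$. Note $\Phi_A(t)<4/A$ for all $t$, with $\Phi_A(t)\to4/A$ as $t\to\infty$. Therefore $D_\tau(u_0)=\Phi_A(\tan u_0)<4/A$.

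\textbf{Conclusion.} Corollary~\ref{coro:q-smallgain} and \eqref{eq:z-pm-mod-first} show that $\Pinf$ is Schur for small $\delta$, and roots vary continuously in $\delta$. Stability is therefore first lost at the smallest positive crossing gain. By (a)--(d) this gain is $D_\tau(u_0(\nu))$, which is \eqref{eq:q-deltaL-closed}.
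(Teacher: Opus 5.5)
Your proposal is correct and follows the paper's proof essentially step for step. You use the same real/imaginary split for (a), the same reduction $D_\tau(u)=\Phi_A(\tan u)$ for (b), the same sign analysis of $M\sin 2u-\sin 2Mu$ via $h(v)=M\sin v-\sin(Mv)$ for (c), and the same closing continuity argument from Corollary~\ref{coro:q-smallgain}. The only differences are minor:
\begin{itemize}
\item for the monotonicity of $\Phi_A$ you rewrite $\Phi_A^2$ in terms of $1/s$ with $s=t^2$, where the paper takes a logarithmic derivative;
\item in (d) you use the global bound $\Phi_A(t)<4/A$, which follows from $\sqrt{(1+t^2)(1+A^2t^2)}>At^2$, where the paper uses the identity $A\,D_\tau(u_0)=4\sin u_0\sin(Mu_0)<4$.
\end{itemize}
Both variants are equally valid.
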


\begin{proof}
\emph{(a)} At a positive-gain unit-circle root, $u\in(0,\tfrac\pi2)$ and
\eqref{eq:q-realimag} holds with $\delta>0$. Since $\sin u>0$ and $\sin^2u>0$, the
first equation forces $\cos(Mu)>0$ and the second forces $\sin(Mu)>0$, placing
$Mu$ in a first-quadrant band. Dividing the second equation in
\eqref{eq:q-realimag} by the first gives $\tan(Mu)=(1+2\nu)\tan u=A\tan u$, and the
first equation is $\delta=D_\tau(u)$.

\emph{(b)} Fix a crossing $u$ and put $t=\tan u>0$. Since $\cos(Mu)>0$ and
$\tan(Mu)=At$, we have $\cos(Mu)=(1+A^2t^2)^{-1/2}$. Using $\sin^2u=t^2\cos^2u$ and
$\cos u=(1+t^2)^{-1/2}$,
\[
        D_\tau(u)=\frac{4\sin^2u\,\cos(Mu)}{\cos u}
        =4t^2\cos u\,\cos(Mu)
        =\frac{4t^2}{\sqrt{(1+t^2)(1+A^2t^2)}}=\Phi_A(t).
\]
Differentiating $\log\Phi_A$,
\[
        t\,\frac{d}{dt}\log\Phi_A(t)
        =2-\frac{t^2}{1+t^2}-\frac{A^2t^2}{1+A^2t^2}
        >2-1-1=0,
\]
because each subtracted fraction lies in $(0,1)$. Hence $\Phi_A$ is strictly
increasing on $(0,\infty)$, and since $t=\tan u$ is strictly increasing on
$(0,\tfrac\pi2)$, so is $u\mapsto D_\tau(u)$ along the crossing set.

\emph{(c)} By \eqref{eq:q-oneplus2nu}, $1+2N_\tau(u)=R(u)$. On $\mathcal I_\tau$
both $\tan(Mu)>0$ and $\tan u>0$, so $R>0$; moreover $R(0^+)=M$ (since
$\tan(Mu)/\tan u\to M$) and $R(u)\to+\infty$ as $u\to(\pi/2M)^-$ (since
$\tan(Mu)\to+\infty$). For monotonicity,
\[
        (\log R)'(u)=\frac{2M}{\sin(2Mu)}-\frac{2}{\sin(2u)},
\]
which is positive iff $M\sin(2u)>\sin(2Mu)$. Setting $v=2u\in(0,\tfrac\pi M)$ and
$h(v):=M\sin v-\sin(Mv)$, one has $h(0)=0$ and
$h'(v)=M\bigl(\cos v-\cos(Mv)\bigr)>0$, because $0<v<Mv<\pi$ and $\cos$ is
strictly decreasing on $(0,\pi)$. Thus $h>0$ on $(0,\tfrac\pi M)$ and $R$ is
strictly increasing on $\mathcal I_\tau$. Consequently
$R:\mathcal I_\tau\to(M,\infty)$ and $N_\tau=(R-1)/2:\mathcal I_\tau\to(\tau,\infty)$
are strictly increasing bijections. Since $A>M$, the equation $R(u)=A$ has a unique
solution $u_0(\nu)=N_\tau^{-1}(\nu)\in\mathcal I_\tau$. Any positive-gain crossing
outside $\mathcal I_\tau$ lies in a band $k\ge1$, hence has $Mu>2\pi$, i.e.\
$u>2\pi/M$, which exceeds $\pi/(2M)$ and therefore $u_0(\nu)$.

\emph{(d)} From \eqref{eq:q-Pinf}, $\Pinf(1;\delta,\nu)=\delta>0$, so $z=+1$ is
never a root at positive gain. Likewise
$\Pinf(-1;\delta,\nu)=4(-1)^\tau-\delta A$, which vanishes only at
$\delta=4(-1)^\tau/A$: negative (hence impossible) for odd $\tau$, and equal to
$4/A$ for even $\tau$. 
\sa{Combining \eqref{eq:q-DN} and \eqref{eq:q-oneplus2nu} at the
first-lobe crossing $u_0(\nu)$ gives
\[
A D_\tau(u_0)
=
D_\tau(u_0)\bigl(1+2N_\tau(u_0)\bigr)
=
4\sin u_0\sin(Mu_0)
<4,
\]
because $u_0\in(0,\pi/2)$ and $Mu_0\in(0,\pi/2)$.
Hence, $D_\tau(u_0)<\frac{4}{A}$, which implies that the $z=-1$ crossing does not preempt the first-lobe crossing.}

Finally, assembling the parts: by (a) every \sa{complex (non-real)} positive-gain unit-circle root corresponds to a crossing $u$ with gain $D_\tau(u)$, by (c) the smallest such $u$
is $u_0(\nu)$, and by (b) $D_\tau$ is increasing along the crossing set, so the
smallest positive crossing gain is $D_\tau(u_0(\nu))$. By (d) the endpoint $z=-1$
does not undercut it. Since $\Pinf$ is strictly Schur for all small $\delta>0$
(Corollary~\ref{coro:q-smallgain}) and no root touches the circle for
$0<\delta<D_\tau(u_0(\nu))$, the first stability loss occurs exactly at
$\delta=D_\tau(u_0(\nu))$, which is \eqref{eq:q-deltaL-closed}.
\end{proof}

\paragraph{The optimal gain and matched extrapolation.}
Lemma~\ref{lem:q-firstlobe} identifies $\delta_L(\nu)$ with the value of $D_\tau$
at the unique first-lobe crossing $N_\tau^{-1}(\nu)$. Maximizing over $\nu$
therefore amounts to maximizing $D_\tau$ over $\mathcal I_\tau$, which we now carry
out.

\begin{definition}[critical gain and critical $\nu$]\label{def:q-crit}
Let $u_\tau^\star\in\mathcal I_\tau$ maximize $D_\tau$. Differentiating $\log
D_\tau$, by first-order optimality conditions, $u_\tau^\star$ solves the equation
\begin{equation}\label{eq:q-ustar}
        \frac{D_\tau'(u)}{D_\tau(u)}=2\cot u+\tan u-M\tan(Mu)=0.
\end{equation}
The left-hand side is strictly decreasing on $\mathcal I_\tau$: its derivative is
$-2\csc^2u+\sec^2u-M^2\sec^2(Mu)$, which is negative there because $u<\pi/(2M)\leq \pi/6$ gives
$-2\csc^2u+\sec^2u=-1-2\cot^2u+\tan^2u<0$, while $-M^2\sec^2(Mu)<0$. Since the
left-hand side runs from $+\infty$ as $u\to0^+$ to $-\infty$ as $u\to(\pi/2M)^-$,
the maximizer $u_\tau^\star$ is the unique root of \eqref{eq:q-ustar} in
$\mathcal I_\tau$. We set
\begin{equation}\label{eq:q-critdef}
        \delta_\tau^{\mathrm{crit}}:=D_\tau(u_\tau^\star),
        \qquad
        \nu_\tau^{\mathrm{crit}}:=N_\tau(u_\tau^\star),
        \qquad
        c_\tau^{\mathrm{crit}}:=\sqrt{\delta_\tau^{\mathrm{crit}}} .
\end{equation}
\end{definition}

\begin{corollary}[exact solution of the design problem]\label{cor:q-designoptimal}
For every $\tau\ge1$, the design problem \eqref{eq:q-designproblem} is solved by
$\nu=\nu_\tau^{\mathrm{crit}}$, with optimal value
\begin{equation}\label{eq:q-designvalue}
        \max_{\nu>\tau}\ \delta_L(\nu)
        =\max_{u\in\mathcal I_\tau}D_\tau(u)
        =D_\tau(u_\tau^\star)
        =\delta_\tau^{\mathrm{crit}} .
\end{equation}
Consequently, $c_\tau^{\mathrm{crit}}
=\sqrt{\delta_\tau^{\mathrm{crit}}}$ is the supremum of rate constants
certifiable by Theorem~\ref{thm:q-gate} over the entire admissible family
$\{\nu>\tau\}$. More precisely, for every fixed $\tau$ and every
$0<c<c_\tau^{\mathrm{crit}}$, choose an operating gain
$\bar\delta=\alpha L$ satisfying $c^2<\bar\delta<\delta_\tau^{\mathrm{crit}}$.
Then, with
\[
    \nu=\nu_\tau^{\mathrm{crit}},\qquad
    \alpha=\frac{\bar\delta}{L},\qquad
    \beta=1-2\sqrt{\bar\delta}\,q,
\]
Theorem~\ref{thm:q-gate} yields
$\rho_{\tau,\kappa}\le 1-cq$ for all sufficiently large $\kappa$.
\sa{Moreover, by \eqref{eq:q-designvalue}, no admissible $\nu>\tau$ permits a
larger stability threshold than $\delta_\tau^{\mathrm{crit}}$; hence, no
larger rate constant than $c_\tau^{\mathrm{crit}}$ can be certified by
Theorem~\ref{thm:q-gate}.}
\end{corollary}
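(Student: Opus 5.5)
The plan is to combine the closed-form expression \eqref{eq:q-deltaL-closed} for $\delta_L(\nu)$ with the reparametrization of the admissible family $\{\nu>\tau\}$ by first-lobe frequencies. By Lemma~\ref{lem:q-firstlobe}(c), $N_\tau:\mathcal I_\tau\to(\tau,\infty)$ is a strictly increasing bijection. Hence the substitution $\nu=N_\tau(u)$, $u\in\mathcal I_\tau$, is a bijective change of variables. Under it, \eqref{eq:q-deltaL-closed} reads $\delta_L(N_\tau(u))=D_\tau(u)$. Therefore
\[
\sup_{\nu>\tau}\delta_L(\nu)=\sup_{u\in\mathcal I_\tau}D_\tau(u),
\]
and the first equality in \eqref{eq:q-designvalue} holds as an identity of suprema.

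Next we show that the supremum is attained, and attained only at $u_\tau^\star$. By Proposition~\ref{prop:q-boundary}, $D_\tau$ is continuous and positive on $\mathcal I_\tau$ and vanishes at both endpoints. So it attains an interior maximum, and any interior maximizer is a critical point, i.e.\ a root of \eqref{eq:q-ustar}. Definition~\ref{def:q-crit} already shows that $(\log D_\tau)'$ is strictly decreasing on $\mathcal I_\tau$ and has exactly one root $u_\tau^\star$. Hence $u_\tau^\star$ is the unique maximizer. Pulling this back through the bijection, $\nu_\tau^{\mathrm{crit}}=N_\tau(u_\tau^\star)>\tau$ is admissible and gives $\delta_L(\nu_\tau^{\mathrm{crit}})=\delta_\tau^{\mathrm{crit}}$. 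This proves the remaining equalities in \eqref{eq:q-designvalue}, with the maximum attained.

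For the rate statement, fix $0<c<c_\tau^{\mathrm{crit}}$ and choose $\bar\delta\in(c^2,\delta_\tau^{\mathrm{crit}})$. The first task is to check hypothesis \eqref{eq:q-Pinfstable} of Theorem~\ref{thm:q-gate} with $\delta_L$ replaced by $\bar\delta$ and $\nu=\nu_\tau^{\mathrm{crit}}$. By definition \eqref{eq:q-deltaLdef}, no root of $\Pinf(\cdot;\delta,\nu)$ lies on the unit circle for $0<\delta<\delta_L(\nu)$. Corollary~\ref{coro:q-smallgain} gives strict Schur stability for all small $\delta>0$. The roots depend continuously on $\delta$ and the polynomial is monic, so the degree is constant. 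A root-counting argument then shows that the number of roots outside the closed disk is constant in $\delta$, hence zero on $(0,\delta_L(\nu))$. Since $\bar\delta<\delta_\tau^{\mathrm{crit}}=\delta_L(\nu)$, the hypothesis holds on $(0,\bar\delta]$. Because $c<\sqrt{\bar\delta}$, Theorem~\ref{thm:q-gate} then yields $\rho_{\tau,\kappa}\le1-cq$ for all small $q$, i.e.\ for all large $\kappa$.

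For optimality, consider any admissible $\nu>\tau$ and any interval $(0,\widehat\delta]$ on which \eqref{eq:q-Pinfstable} holds. Strict stability on that interval forces $\widehat\delta<\delta_L(\nu)\le\delta_\tau^{\mathrm{crit}}$. So the constants certified by Theorem~\ref{thm:q-gate}, namely $c<\sqrt{\widehat\delta}$, never exceed $c_\tau^{\mathrm{crit}}$. The only delicate point is the endpoint: stability at $\widehat\delta=\delta_L(\nu)$ itself fails because a root lies on the circle there. The argument is otherwise mechanical. The step I expect to need the most care is the persistence of strict stability up to the first crossing, which requires ruling out roots escaping to infinity. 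I would handle it exactly as in the proof of Theorem~\ref{thm:q-alldelay}, using that $\Pinf$ is monic of fixed degree $\tau+2$.
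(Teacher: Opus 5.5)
Your proposal is correct and follows essentially the same route as the paper. You reparametrize the admissible family by $\nu=N_\tau(u)$ via the bijection of Lemma~\ref{lem:q-firstlobe}(c), use \eqref{eq:q-deltaL-closed} to reduce the design problem to maximizing $D_\tau$ over $\mathcal I_\tau$ (unique maximizer $u_\tau^\star$ from Definition~\ref{def:q-crit}), apply Theorem~\ref{thm:q-gate} with $\delta_L=\bar\delta$, and get optimality from monotonicity of $\sqrt{\cdot}$. The only extra material is what you spell out explicitly. You make explicit the root-counting persistence of strict Schur stability on $(0,\delta_L(\nu))$ (monic, fixed degree $\tau+2$, no unit-circle roots) and the strict inequality $\widehat\delta<\delta_L(\nu)$, both of which the paper leaves implicit in the last step of the proof of Lemma~\ref{lem:q-firstlobe}.
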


\begin{proof}
By Lemma~\ref{lem:q-firstlobe}, $\delta_L(\nu)=D_\tau\bigl(N_\tau^{-1}(\nu)\bigr)$
for every $\nu>\tau$, and $N_\tau^{-1}$ is a bijection of $(\tau,\infty)$ onto
$\mathcal I_\tau$. Hence, as $\nu$ ranges over $(\tau,\infty)$, the argument
$N_\tau^{-1}(\nu)$ ranges over all of $\mathcal I_\tau$, so
\[
        \max_{\nu>\tau}\ \delta_L(\nu)
        =\max_{u\in\mathcal I_\tau}D_\tau(u)
        =D_\tau(u_\tau^\star)
        =\delta_\tau^{\mathrm{crit}},
\]
the maximum being attained at the interior stationary point $u_\tau^\star$ of
Definition~\ref{def:q-crit}, i.e.\ at $\nu=N_\tau(u_\tau^\star)=\nu_\tau^{\mathrm{crit}}$.
\wubtext{Since $\bar\delta<\delta_\tau^{\mathrm{crit}}$, Lemma~\hyperref[lem:q-firstlobe]{\ref*{lem:q-firstlobe}}
gives strict Schur stability for every $0<\delta\le\bar\delta$.}
The rate statement is then Theorem~\ref{thm:q-gate} applied with
$\delta_L=\bar\delta$ \hamzatext{($c^2<\delta_L<\delta_{\tau}^{crit}$)} and this $\nu$, and the supremum claim for
$c_\tau^{\mathrm{crit}}=\sqrt{\delta_\tau^{\mathrm{crit}}}$ follows because
$\sqrt{\cdot}$ is increasing and $\delta_\tau^{\mathrm{crit}}$ is the maximum of
$\delta_L\wubtext{(\nu)}$.
\end{proof}

Two remarks delimit the scope of Corollary~\ref{cor:q-designoptimal}. First, it is
an optimality statement \emph{within the family and ansatz of
Theorem~\ref{thm:q-gate}}: it identifies the supremum of certifiable rate constants from the
quadratic limiting polynomial $\Pinf$ 
\sa{under the parametrization $\alpha=\bar\delta/L$, $\beta=1-2\sqrt{\bar\delta}q$, with $\nu$ free and $0<\bar\delta<\delta_L(\nu)$.}
It is not a
lower bound over arbitrary delayed first-order methods; that role is played by the
span-model bound of Theorem~\ref{thm:asr-lower}. 
\mgb{Second, this design constant governs the quadratic case and the verification of hypothesis~\textbf{(H1)} at the midpoint gain (Section~\ref{sec:stability}). It is distinct from the restriction $c<2/3$ in the main theorem, which arises in the low-frequency analysis of the rate-weighted Zames--Falb certificate used to verify hypothesis~\textbf{(H2)}. Specifically, the coefficient $\lambda=(\tau+1)(2-3c)/M$ in~\eqref{eq:constants-coeff} is positive for $c<2/3$, as required for the strict negativity estimate~\eqref{eq:near-neg}}.

For the rigorous all-class result we do not need the exact optimizer:
Theorem~\ref{thm:q-alldelay} adopts the slightly conservative closed-form choice
$\nu=M-\tfrac12$ and $\delta_L=M^{-2}$, for which Lemma~\ref{lem:q-nocross}
excludes every unit-circle crossing on $0<\delta\le M^{-2}$ and yields the rate
constant $c<1/M$. As computed in the next subsection, this conservative choice has
the same delay scaling as the optimal one---$\nu_\tau^{\mathrm{crit}}=\Theta(\tau)$
and $c_\tau^{\mathrm{crit}}=\Theta(1/\tau)$, matching $\nu=M-\tfrac12\sim2\tau$ and
$c<1/M\sim1/(2\tau)$---so the two agree in order and differ only in constants.

\begin{hamzablock}
    Consequently, \(c_\tau^{\mathrm{crit}}=\sqrt{\delta_\tau^{\mathrm{crit}}}\) is the supremum of the rate constants certifiable by Theorem~\hyperref[main-thm:q-gate]{\wubtext{\ref*{thm:q-gate}}} over the admissible family \(\{\nu>\tau\}\). Indeed, for every fixed \(\tau\) and every \(\wubtext{0<}c<c_\tau^{\mathrm{crit}}\), choose

$$
c^2<\bar\delta<\delta_\tau^{\mathrm{crit}}.
$$

Then the choice \(\nu=\nu_\tau^{\mathrm{crit}}\), \(\alpha=\bar\delta/L\), and \(\beta=1-2\sqrt{\bar\delta}\,q\) yields \(\rho_{\tau,\kappa}\leq1-cq\) for all sufficiently large \(\kappa\). No admissible \(\nu\) yields a larger first-crossing threshold than \(\delta_\tau^{\mathrm{crit}}\).

\end{hamzablock}

\subsection[The delay scaling law: acceleration degrades as \texorpdfstring{$1/\tau$}{1/tau}]{The delay scaling law: acceleration degrades as \texorpdfstring{$1/\tau$}{1/tau}}

Having solved the design problem exactly, we now quantify how the optimal constants
behave as the delay grows. The interval $\mathcal I_\tau=(0,\pi/(2M))$ shrinks like
$1/M$, so the natural variable is the rescaled frequency
$a:=Mu\in(0,\tfrac\pi2)$, on which the maximizer $u_\tau^\star$ concentrates.
\begin{proposition}[delay scaling of the critical constants]\label{prop:q-scaling}
As $\tau\to\infty$,
\begin{equation}\label{eq:q-scaling}
        \delta_\tau^{\mathrm{crit}}=\frac{a_\star^{2}\cos a_\star}{\tau^{2}}\,(1+o(1)),
        \qquad
        c_\tau^{\mathrm{crit}}=\frac{a_\star\sqrt{\cos a_\star}}{\tau}\,(1+o(1)),
        \qquad
        \nu_\tau^{\mathrm{crit}}=\frac{2}{a_\star^{2}}\,\tau\,(1+o(1)),
\end{equation}
where $a_\star\in(0,\tfrac\pi2)$ is the unique solution of
\begin{equation}\label{eq:q-astar}
        a_\star\tan a_\star=2,
\end{equation}
Numerically, $a_\star \approx 1.0768$, $a_\star\sqrt{\cos a_\star}\approx 0.7415$ and
$2/a_\star^{2} \approx 1.7246$.
\end{proposition}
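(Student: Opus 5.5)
The plan is to rescale the maximization of $D_\tau$ over $\mathcal I_\tau$ into a fixed, $\tau$-independent problem on $(0,\tfrac\pi2)$. Substitute $u=a/M$ with $a\in(0,\tfrac\pi2)$, so that
\[
D_\tau(a/M)=\frac{4\sin^2(a/M)\cos a}{\cos(a/M)}.
\]
This gives
\[
M^2D_\tau(a/M)=F_M(a):=4a^2\cos a\cdot\frac{\sin^2(a/M)}{(a/M)^2\cos(a/M)}.
\]
The last factor tends to $1$ uniformly on $[0,\tfrac\pi2]$ as $M\to\infty$, with error $\cO(M^{-2})$. Hence $F_M\to F(a):=4a^2\cos a$ uniformly on the closed interval.

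The limit $F$ has the log-derivative $2/a-\tan a$. This is strictly decreasing, runs from $+\infty$ down to $-\infty$, and vanishes only at $a_\star$ with $a_\star\tan a_\star=2$. So $a_\star$ is the unique maximizer of $F$ on $(0,\tfrac\pi2)$. Moreover $F''(a_\star)<0$, since the log-derivative has negative derivative $-2/a^2-\sec^2a$.

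Next I would pass from convergence of functions to convergence of the maximizers and maxima. Uniform convergence gives $\max F_M\to\max F=F(a_\star)$. Since $F$ has a unique maximizer and vanishes at both endpoints, any maximizer $a_M:=Mu_\tau^\star$ of $F_M$ must converge to $a_\star$. This is the standard argmax-continuity argument: outside any neighborhood of $a_\star$, $F$ is bounded away from its maximum.

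Alternatively, I could use Definition~\ref{def:q-crit} directly. Multiply the stationarity equation \eqref{eq:q-ustar} by $u=a/M$ to get
\[
2\frac{u}{\tan u}+u\tan u-a\tan a=0.
\]
As $M\to\infty$ this tends to $2-a\tan a=0$, uniformly on compact subsets of $(0,\tfrac\pi2)$. Because $a\tan a$ is strictly increasing, the root $a_M$ converges to $a_\star$. Either way,
\[
\delta_\tau^{\mathrm{crit}}=F_M(a_M)/M^2=4a_\star^2\cos a_\star/M^2\,(1+o(1)).
\]
With $M=2\tau+1\sim2\tau$, this equals $a_\star^2\cos a_\star/\tau^2\,(1+o(1))$. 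Taking square roots gives the claim for $c_\tau^{\mathrm{crit}}$.

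For $\nu_\tau^{\mathrm{crit}}=N_\tau(u_\tau^\star)$, I would write
\[
N_\tau(a/M)=\frac{\sin(2\tau a/M)}{2\sin(a/M)\cos a}.
\]
At $a=a_M\to a_\star$, the numerator tends to $\sin a_\star$ because $2\tau/M\to1$. The denominator is $2(a_M/M)(1+o(1))\cos a_M$. Hence
\[
\nu_\tau^{\mathrm{crit}}=\frac{M\tan a_\star}{2a_\star}(1+o(1))=\frac{M}{a_\star^2}(1+o(1))=\frac{2\tau}{a_\star^2}(1+o(1)),
\]
where the middle step uses $\tan a_\star=2/a_\star$.

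The numerical values then follow by solving $a\tan a=2$. The only delicate step is justifying that the maximizer stays in a compact subset of the open interval, so that the limits can be exchanged. This follows from $F_M(a)\le C a^2$ near $0$ and $F_M(a)\le C\cos a$ near $\tfrac\pi2$, uniformly in $M$, since both vanish at the endpoints while $F_M(a_\star)\to F(a_\star)>0$.
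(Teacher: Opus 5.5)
Your proposal is correct and follows essentially the same route as the paper. Like the paper, you rescale $u=a/M$, compare $M^2D_\tau(a/M)$ with the limit profile $4a^2\cos a$ (whose unique maximizer solves $a\tan a=2$), pass to convergence of the maximizer, and then evaluate $N_\tau$ at the rescaled maximizer using $\tan a_\star=2/a_\star$; your multiplicative factorization $F_M(a)=4a^2\cos a\cdot\sin^2(a/M)/\bigl((a/M)^2\cos(a/M)\bigr)$ even yields uniform convergence on the whole closed interval, which makes the paper's separate endpoint bound $f_M(a)\le\tfrac{8}{\sqrt3}a^2\cos a$ automatic, and your alternative via the rescaled stationarity equation \eqref{eq:q-ustar} is an equally valid way to locate the maximizer.
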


\begin{proof}
Substituting $u=a/M$ into \eqref{eq:q-DN} and using $\sin(a/M)=a/M+\cO(M^{-3})$ and
$\cos(a/M)=1+\cO(M^{-2})$,
\begin{equation}\label{eq:q-Dscaled}
        M^2D_\tau\!\Bigl(\frac aM\Bigr)
        =4M^2\sin^2\!\frac aM\,\frac{\cos a}{\cos(a/M)}
        =4a^2\cos a+\cO(M^{-2}),
\end{equation}
uniformly for $a$ in compact subsets of $(0,\tfrac\pi2)$. For the extrapolation,
$2\tau/M=1-1/M$, so $\sin(2\tau u)=\sin(a-a/M)=\sin a-\tfrac{a\cos a}{M}+\cO(M^{-2})$
and $2\sin(a/M)\cos a=\tfrac{2a\cos a}{M}\bigl(1+\cO(M^{-2})\bigr)$; hence
\begin{equation}\label{eq:q-Nscaled}
        N_\tau\!\Bigl(\frac aM\Bigr)=\frac{\sin(2\tau\,a/M)}{2\sin(a/M)\,\cos a}
        =\frac{M}{2a}\tan a+\cO(1),
\end{equation}
again uniformly on compact subsets of $(0,\tfrac\pi2)$.

We first locate the maximizer. Write $f_M(a):=M^2D_\tau(a/M)$ and
$f_\infty(a):=4a^2\cos a$, so that \eqref{eq:q-Dscaled} states $f_M\to f_\infty$
uniformly on compact subsets of $(0,\tfrac\pi2)$. The limit profile $f_\infty$
vanishes as $a\to0^+$ and as $a\to(\tfrac\pi2)^-$ and is positive between, so it
attains its maximum at an interior point; its derivative $4a(2\cos a-a\sin a)$
vanishes on $(0,\tfrac\pi2)$ exactly where $a\tan a=2$, i.e.\ at $a=a_\star$, which
is therefore the \emph{unique} maximizer of $f_\infty$.

\sa{We claim that the maximizer
$a_\tau^\star:=Mu_\tau^\star$ of $f_M$ converges to $a_\star$.
Because $(0,\frac{\pi}{2})$ is open, uniform convergence on compact
subsets does not by itself preclude $a_\tau^\star$ from approaching an
endpoint. We therefore first establish uniform endpoint control.
For every $a\in(0,\frac{\pi}{2})$ and $M\geq 3$,
\[
0\leq f_M(a)
=
4M^2\sin^2(a/M)\frac{\cos a}{\cos(a/M)}
\leq
\frac{8}{\sqrt{3}}a^2\cos a,
\]
where we used $\sin(a/M)\leq a/M$ and
$\cos(a/M)\geq\cos(\pi/6)=\sqrt{3}/2$.
Since $a^2\cos a$ tends to zero as
$a\to0^+$ and as $a\to(\frac{\pi}{2})^-$, the profiles $f_M$
are uniformly small near both endpoints.}

Fix $\eta\in(0,\frac{\pi}{2})$ small enough that
$a_\star\in K:=[\eta,\frac{\pi}{2}-\eta]$, and set
$V:=f_\infty(a_\star)>0$. Shrinking $\eta$ if necessary, the preceding
bound and the corresponding endpoint behavior of $f_\infty$ give
\[
f_M(a)<\frac{V}{3}
\quad\text{and}\quad
f_\infty(a)<\frac{V}{3},\quad\mbox{on}\quad
(0,\eta]\cup
[\tfrac{\pi}{2}-\eta,\tfrac{\pi}{2})
\]
for all sufficiently large $M$. On the other hand, by the uniform
convergence in~\eqref{eq:q-Dscaled} on $K$,
$f_M(a_\star)\to f_\infty(a_\star)=V$, and hence
$f_M(a_\star)>2V/3$ for all sufficiently large $M$.
Therefore no maximizer of $f_M$ can lie in the endpoint strips. Thus,
$a_\tau^\star\in K$ for all sufficiently large $M$.

Confined to $K$, the maximizer now converges by the standard argument. Uniform
convergence on $K$ gives $\bigl|\max_K f_M-\max_K f_\infty\bigr|\le\sup_K|f_M-f_\infty|\to0$,
so $f_M(a_\tau^\star)=\max_K f_M\to\max_K f_\infty=f_\infty(a_\star)$. If $a_{\tau_j}^\star\to a'$ along a subsequence, then uniform convergence of $f_{M_j}$
to $f_\infty$ on $K$ together with the continuity of $f_\infty$ yields
$f_{M_j}(a_{\tau_j}^\star)\to f_\infty(a')$; comparing with
$f_{M_j}(a_{\tau_j}^\star)=\max_K f_{M_j}\to\max_K f_\infty=f_\infty(a_\star)$ gives
$f_\infty(a')=f_\infty(a_\star)$, so $a'$ maximizes $f_\infty$ on $K$. This forces $a'=a_\star$. As every
subsequential limit equals $a_\star$, we conclude $a_\tau^\star\to a_\star$.
Therefore
\[
        \max_{u\in\mathcal I_\tau}D_\tau(u)
        =\frac{1}{M^2}f_M(a_\tau^\star)
        =\frac{4}{M^2}\bigl(a_\star^2\cos a_\star\bigr)(1+o(1)).
\]
This is $\delta_\tau^{\mathrm{crit}}$; since $M=2\tau+1=2\tau(1+o(1))$,
\[
        \delta_\tau^{\mathrm{crit}}=\frac{4a_\star^2\cos a_\star}{M^2}(1+o(1))
        =\frac{a_\star^2\cos a_\star}{\tau^2}(1+o(1)),
\]
and $c_\tau^{\mathrm{crit}}=\sqrt{\delta_\tau^{\mathrm{crit}}}$ gives the stated
expression. Finally, evaluating \eqref{eq:q-Nscaled} at $a_\tau^\star\to a_\star$
and using $\tan a_\star=2/a_\star$ from \eqref{eq:q-astar},
\[
        \nu_\tau^{\mathrm{crit}}=N_\tau(u_\tau^\star)=\frac{M}{2a_\star}\tan a_\star\,(1+o(1))
        =\frac{M}{a_\star^2}(1+o(1))=\frac{2}{a_\star^2}\,\tau\,(1+o(1)),
\]
again with $M=2\tau(1+o(1))$. The numerical values follow from
$a_\star\approx1.0768739863$.
\end{proof}

\end{document}